\documentclass[a4paper,10pt]{amsart}

\DeclareRobustCommand{\SkipTocEntry}[5]{} 

\usepackage[utf8]{inputenc}
\usepackage[T1]{fontenc}
\usepackage[english]{babel}
\usepackage{amsmath,amsfonts,amssymb}
\usepackage{enumitem}
\usepackage{mathabx}
\usepackage{dsfont}
\usepackage{color}
\usepackage{xcolor} 
\usepackage{tikz, pgfplots}
\usepackage[scale = .75]{geometry}
\numberwithin{equation}{section}

\allowdisplaybreaks
\newcommand{\NN}{\mathbb{N}}
\newcommand{\ZZ}{\mathbb{Z}}
\newcommand{\TT}{\mathbb{T}}
\newcommand{\RR}{\mathbb{R}}
\newcommand{\cF}{\mathcal{F}}

\newcommand{\cS}{\mathcal{S}}
\newcommand{\cA}{\mathcal{A}}
\newcommand{\dd}{\mathrm{d}}
\newcommand{\ve}{\varepsilon}
\newcommand{\eps}{\varepsilon}

\newcommand{\lla}{\left\langle}
\newcommand{\rra}{\right\rangle}

\newcommand{\ds}{\displaystyle}

\definecolor{vert}{rgb}{0.0, 0.5, 0.0}

\newtheorem{theo}{Theorem}[section]
\newtheorem{prop}[theo]{Proposition}
\newtheorem{cor}[theo]{Corollary}
\newtheorem{lem}[theo]{Lemma}
\newtheorem{rema}[theo]{Remark}

\newcommand{\be}{\begin{equation}}
    \newcommand{\ee}{\end{equation}}
\newcommand{\beqn}{\begin{equation}}
    \newcommand{\eeqn}{\end{equation}}

\newcommand{\mua}{\mu_\alpha}

\newcommand{\Lah}{L_\alpha^{h}}
\newcommand{\Laht}{L_\alpha^{h,J,K}}
\newcommand{\Dh}{\Lambda_\alpha^h}
\newcommand{\Dht}{\Lambda_\alpha^{h,J,K}}
\newcommand{\Ght}{\Gamma_\alpha^{h,J,K}}

\newcommand{\Symh}{\mathcal{S}_\alpha^h}
\newcommand{\Antsymh}{\mathcal{A}_\alpha^h}

\newcommand\pare[1]{\left(#1\right)}

\newcommand{\dt}{{\Delta t}}
\newcommand{\dx}{{\Delta x}}
\newcommand{\dv}{{\Delta v}}

\newcommand{\dpi}{{\Pi_\dv}}
\newcommand{\lm}{{\ell^2_{\dx,\dv}(M^{-1})}}
\newcommand{\hm}{{H^1_{\dx,\dv}(M^{-1})}}

\definecolor{lapislazuli}{rgb}{0.15, 0.38, 0.61}

\usetikzlibrary{patterns}
\newcommand{\logLogSlopeTriangle}[5]
{
    \pgfplotsextra
    {
        \pgfkeysgetvalue{/pgfplots/xmin}{\xmin}
        \pgfkeysgetvalue{/pgfplots/xmax}{\xmax}
        \pgfkeysgetvalue{/pgfplots/ymin}{\ymin}
        \pgfkeysgetvalue{/pgfplots/ymax}{\ymax}
        
        \pgfmathsetmacro{\xArel}{#1}
        \pgfmathsetmacro{\yArel}{#3}
        \pgfmathsetmacro{\xBrel}{#1-#2}
        \pgfmathsetmacro{\yBrel}{\yArel}
        \pgfmathsetmacro{\xCrel}{\xArel}
        
        \pgfmathsetmacro{\lnxB}{\xmin*(1-(#1-#2))+\xmax*(#1-#2)} 
        \pgfmathsetmacro{\lnxA}{\xmin*(1-#1)+\xmax*#1} 
        \pgfmathsetmacro{\lnyA}{\ymin*(1-#3)+\ymax*#3} 
        \pgfmathsetmacro{\lnyC}{\lnyA+#4*(\lnxA-\lnxB)}
        \pgfmathsetmacro{\yCrel}{\lnyC-\ymin)/(\ymax-\ymin)}
        
        \coordinate (A) at (rel axis cs:\xArel,\yArel);
        \coordinate (B) at (rel axis cs:\xBrel,\yBrel);
        \coordinate (C) at (rel axis cs:\xCrel,\yCrel);
        
        \draw[#5]   (A)-- node[pos=0.5,anchor=north] {\scriptsize{1}}
        (B)-- 
        (C)-- node[pos=0.,anchor=west] {\scriptsize{#4}} 
        (A);
    }
}

\allowdisplaybreaks

\usepackage[hidelinks]{hyperref}

\title[Numerical method for fractional Fokker-Planck]{On a structure-preserving numerical method \\ for fractional Fokker-Planck equations}

\author{Nathalie Ayi}
\address[N. Ayi]{Sorbonne Universit\'e, Universit\'e de Paris, CNRS, Laboratoire Jacques-Louis Lions, 4 place Jussieu, 75005 Paris, France}
\email{nathalie.ayi@sorbonne-universite.fr}
\author{Maxime Herda}
\address[M. Herda]{Inria, Univ. Lille, CNRS, UMR 8524 - Laboratoire Paul Painlev\'e, F-59000 Lille, France}
\email{maxime.herda@inria.fr}
\author{H\'el\`ene Hivert}
\address[H. Hivert]{Univ. Lyon, \'Ecole centrale de Lyon, CNRS UMR 5208, Institut Camille Jordan, F-69134 \'Ecully, France}
\email{helene.hivert@ec-lyon.fr}
\author{Isabelle Tristani}
\address[I. Tristani]{D\'epartement de math\'ematiques et applications, \'Ecole normale sup\'erieure, CNRS, PSL Research University, 45 rue d'Ulm, 75005 Paris, France }
\email{isabelle.tristani@ens.fr}

\begin{document}
    
    \begin{abstract}
        In this paper, we introduce and analyse numerical schemes for the homogeneous and the kinetic Lévy-Fokker-Planck equation. The discretizations are designed to preserve the main features of the continuous model such as conservation of mass, heavy-tailed equilibrium and (hypo)coercivity properties. We perform a thorough analysis of the numerical scheme and show exponential stability {and convergence of the scheme}. Along the way, we introduce new tools of discrete functional analysis, such as discrete nonlocal Poincaré and interpolation inequalities adapted to fractional diffusion. Our theoretical findings are illustrated and complemented with numerical simulations.\\[1em]
        \textsc{Keywords:} fractional Laplacian, kinetic equations, numerical method, hypocoercivity.\\[.5em]
        \textsc{2020 Mathematics Subject Classification:} 82B40, 
        35R11, 
        65M06, 
        65M12. 
    \end{abstract}
    
    \maketitle

    \section{Introduction}
    
    In this paper, we are interested in the numerical discretization of the kinetic Lévy-Fokker-Planck equation (or fractional Fokker-Planck equation). The continuous model describes the evolution of a distribution function $f\equiv f(t,x,v)$ which depends on time $t\geq0$, position in a periodic domain $x\in\TT^d = \RR^d/\ZZ^d$ and velocity~$v\in\RR^d$, and solves
    \begin{equation}\label{eq:kinfracFP}
        \partial_t f + v\cdot\nabla_x f\ =\ \nabla_v\cdot(vf)-(-\Delta_v)^{\alpha/2}f =: L_\alpha f\,,
    \end{equation}
    supplemented with the initial condition $f(0,\cdot,\cdot) = f^0$. For $\alpha=2$, the model coincides with the so-called kinetic Fokker-Planck equation. Here we are interested in the case $\alpha\in(0,2)$. In this range of the parameter, the fractional Laplacian $-(-\Delta_v)^{\alpha/2}$ is defined in the following way. For any Schwartz function~$g:\RR^d\to\RR$, one has $\cF((-\Delta_v)^{\alpha/2}g)(\xi)\ =\ |\xi|^{\alpha}\cF(g)(\xi)$ where $\cF(\cdot)$ denotes the Fourier transform and $|\cdot|$ is the Euclidean norm on $\mathbb{R}^d$. Another equivalent definition \cite{kwasnicki_2017_fractional} is given by the singular integral
    \begin{equation}\label{eq:fraclap_int1}
        (-\Delta_v)^{\alpha/2}g(v)\ =\ C_{d,\alpha}\,\mathrm{P.V.}\int_{\RR^d}\frac{g(v)-g(w)}{|v-w|^{d+\alpha}}\, \dd w\,,
    \end{equation}
    where $\mathrm{P.V.}$ stands for the principal value, and the multiplicative constant is given by \[C_{d,\alpha}=\frac{2^\alpha\Gamma(\frac{d+\alpha}{2})}{(\pi^{d/2}|\Gamma(-\frac{\alpha}{2})|)}\,,\] where $\Gamma(\cdot)$ is the Gamma function.

    When $\alpha<2$, the velocity density which generates the kernel of $L_\alpha$, called local equilibrium, is heavy tailed. Indeed, by passing to Fourier variables one has $\cF(L_\alpha g)(\xi) = -\xi\cdot\nabla_\xi\cF(g)(\xi)-|\xi|^{\alpha}\cF(g)(\xi)$. From this formula, one has that the only probability distribution satisfying $L_\alpha\mu_\alpha = 0$ is given by
    \begin{equation}\label{eq:defmua}
        \mu_\alpha(v)\ =\  \frac{1}{(2\pi)^d}\int_{\mathbb{R}^d}\exp\left(i\,v\cdot\xi-\frac{|\xi|^\alpha}{\alpha}\right)\,\dd\xi\,.
    \end{equation}
    The density $\mu_\alpha$ is known as a symmetric stable density and is related to the theory of  Levy processes (see~\cite{applebaum_2009_levy}). Away from the origin, the Fourier transform of $\mu_\alpha$ is smooth and rapidly decaying at infinity. In particular $\mu_\alpha$ is smooth. However, at~$\xi=0$ the Fourier transform is as regular as $\xi\mapsto|\xi|^\alpha$, which implies that $\mu_\alpha(v)$ decays as $|v|^{-\alpha-d}$ when~$|v|\to\infty$. This qualitative behavior can be estimated more precisely through pointwise bounds from above and below on $\mu_\alpha$ and its derivatives (see \eqref{eq:mualpha_est1}, \eqref{eq:mualpha_est2} and \eqref{eq:mualpha_est3} in Appendix~\ref{sec:boundsmua}). Non-Maxwellian, algebraically decaying velocity densities arise in the modelling of astrophysical plasmas \cite{summers1991modified, pierrard2010kappa}. More specifically, Vlasov-Lévy-Fokker-Planck models \cite{cesbron2012anomalous, aceves_2019_fractional, cesbron_2020_fractional} such as~\eqref{eq:kinfracFP} as well as other kinetic models with heavy tailed local equilibrium \cite{mellet2010fractional, mellet2011ARMA, benabdallah2011fractional, bouin2020quantitative} have attracted attention in the recent years because of their asymptotic properties. Indeed, because of the slow decay in velocity,  the macroscopic diffusion limits of these kinetic equations are fractional diffusion equations in the space variable. These asymptotics are obtained after an anomalous rescaling of the kinetic equation. The design of appropriate asymptotic preserving numerical schemes in these limits has been investigated in \cite{crouseilles2016numerical1, crouseilles2016numerical2, wang2016asymptotic}.

    Another natural question concerning asymptotic behaviors in these models is the long-time behavior of solutions. Using conservation of mass and space pedriodic boundary conditions, one easily infers that the global equilibrium is given by $(x,v)\mapsto\lla f^0\rra\,\mu_\alpha(v)$, where $\lla f^0\rra:=\int f^0\dd x\dd v$ denotes the initial mass.  The long time behavior can be quantified by the time evolution of well-chosen norms. While the Fourier transform of the Green kernel of \eqref{eq:kinfracFP} can be expressed explicitly, the Fourier inversion is not explicit when $\alpha\neq2$. Therefore, it is not easy to derive estimates from this representation of solutions. Instead of explicit representations, energy method allows to quantify the time evolution of appropriate norms. For dissipative kinetic equations, which are usually degenerate in the sense that diffusion or relaxation happens only in the velocity variable, hypocoercivity methods \cite{herau_2004_isotropic, herau_2006_hypo, villani_2009_hypocoercivity, guo_2010_boltzmann, dolbeault2015hypocoercivity, herau_2018_introduction} are well-suited energy methods which allows one to recover dissipation properties in the whole phase-space. They exploit the interaction between transport and collision operators, respectively $v\cdot\nabla_x$ and $L_\alpha$ in the present case. Recently, the adaptation of hypocoercivity methods has allowed to derive quantitative long-time behavior estimates for \eqref{eq:kinfracFP} and related models \cite{ayi_2019_note, bouin_2019_fractional}. In \cite{bouin_2019_fractional}, the authors generalize the so-called $L^2$ method (which was introduced in \cite{herau_2006_hypo} and developed in~\cite{guo_2010_boltzmann, dolbeault2015hypocoercivity}) to the fractional case. In \cite{ayi_2019_note}, the $H^1$ method of \cite{herau_2004_isotropic, villani_2009_hypocoercivity} is adapted by the authors of the present contribution. Let us briefly recall the strategy and the results of~\cite{ayi_2019_note}. The $H^1$ method relies on the estimation of the functional
    \begin{equation} \label{def:triplenorm}
        \mathcal{H}(h,h)
        \ 
        :=\  \|h\|^2_{L^2_{x,v}(\mu_\alpha^{-1})} \,+\, a\|\nabla_x h \|^2_{L^2_{x,v}(\mu_\alpha^{-1})} \,+\, b\, \|\nabla_v h \|^2_{L^2_{x,v}(\mu_\alpha^{-1})}  + 2\,c\, \langle \nabla_x h , \nabla_v h \rangle_{L^2_{x,v}(\mu_\alpha^{-1})}\,,
    \end{equation}
    where $L^2_{x,v}(\mu_\alpha^{-1})$ is the Hilbert space of functions which are square integrable against the weight $\mu_\alpha^{-1}$, with canonical norm $\|\cdot\|_{L^2_{x,v}(\mu_\alpha^{-1})}$ and scalar product $\lla\cdot,\cdot\rra_{L^2_{x,v}(\mu_\alpha^{-1})}$. The function $h$ denotes the difference between the transient and steady solutions, that is $h(t,x,v) = f(t,x,v)-\lla f^0\rra\,\mu_\alpha(v)$.

    For well-chosen  positive constants $a,b,c>0$, \eqref{def:triplenorm} is equivalent to the weighted Sobolev norm $\|h\|_{H^1_{x,v}(\mu_\alpha^{-1})}^2\,:=\,\|h\|_{L^2_{x,v}(\mu_\alpha^{-1})}^2 + \|\nabla_x h\|_{L^2_{x,v}(\mu_\alpha^{-1})}^2 + \|\nabla_v h\|_{L^2_{x,v}(\mu_\alpha^{-1})}^2$ and is dissipated along the dynamics. As a consequence of this dissipation and well-suited non-local Poincaré inequalities \cite{gentil_2008_levy, wang_2014_simple}, one can prove the following result on the long-time behavior of the solutions.
    \begin{theo}[\cite{ayi_2019_note}]\label{t:maincont}
        Let $f$ be a solution of the kinetic L\'evy-Fokker-Planck equation \eqref{eq:kinfracFP} with initial data~$f^0\in H^1_{x,v}(\mu_\alpha^{-1})$.  Then, for all $t\geq0$ one has
        \[
        \|f(t) - \lla f^0\rra\,\mu_\alpha\|_{H^1_{x,v}(\mu_\alpha^{-1})}\ \leq\ C\,\|f^0 - \lla f^0\rra\,\mu_\alpha\|_{H^1_{x,v}(\mu_\alpha^{-1})}\,e^{-\lambda t}\,
        \]
        for some constants $C\geq1$ and $\lambda>0$ depending only on $d$ and $\alpha$.
    \end{theo}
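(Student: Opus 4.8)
The plan is to follow the $H^1$ hypocoercivity strategy in the weighted space $L^2_{x,v}(\mu_\alpha^{-1})$, adapting each step to the nonlocal nature of $L_\alpha$. Throughout, write $T := v\cdot\nabla_x$ for the transport operator and note that, since $\mu_\alpha$ is stationary and lies in the kernel of $L_\alpha$, the perturbation $h = f - \langle f^0\rangle\,\mu_\alpha$ solves the same equation $\partial_t h + T h = L_\alpha h$ as $f$. In $L^2_{x,v}(\mu_\alpha^{-1})$ the collision operator $L_\alpha$ is self-adjoint and nonpositive, with associated nonlocal Dirichlet form $\mathcal{D}_\alpha(g) := -\langle L_\alpha g, g\rangle_{L^2_{x,v}(\mu_\alpha^{-1})}\ge0$, while $T$ is skew-adjoint because $\mu_\alpha$ depends only on $v$ and $x$ ranges over the torus. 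First I would fix constants $a,b,c>0$ with $c^2<ab$, which makes the quadratic form $\mathcal{H}(h,h)$ in $(\nabla_x h,\nabla_v h)$ positive definite and hence equivalent to $\|h\|^2_{H^1_{x,v}(\mu_\alpha^{-1})}$; exponential decay of $\mathcal{H}$ then transfers to the genuine norm by this equivalence and Grönwall's lemma.

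The core computation is the time derivative of $\mathcal{H}(h,h)$ along the flow. I would record the commutator identities
\[
[\partial_{x_i},T]=0,\qquad [\partial_{v_i},T]=\partial_{x_i},\qquad [\partial_{x_i},L_\alpha]=0,\qquad [\partial_{v_i},L_\alpha]=\partial_{v_i},
\]
the last one because the drift $\nabla_v\cdot(v\,\cdot)$ produces a first-order commutator whereas the Fourier multiplier $(-\Delta_v)^{\alpha/2}$ commutes with all derivatives. Thus $\nabla_x h$ satisfies the same transport-diffusion equation as $h$, while $\nabla_v h$ acquires the source terms $+\nabla_v h-\nabla_x h$. Using self-adjointness of $L_\alpha$ and skew-adjointness of $T$ (so that the transport contributions cancel in each diagonal term), I obtain, with all norms and products understood in $L^2_{x,v}(\mu_\alpha^{-1})$,
\[
\tfrac12\tfrac{\dd}{\dd t}\mathcal{H}(h,h)=-\mathcal{D}_\alpha(h)-a\,\mathcal{D}_\alpha(\nabla_x h)-b\,\mathcal{D}_\alpha(\nabla_v h)+b\,\|\nabla_v h\|^2-c\,\|\nabla_x h\|^2+(\text{cross terms}),
\]
where the crucial coercive term $-c\,\|\nabla_x h\|^2$ comes from the commutator $[\partial_{v_i},T]=\partial_{x_i}$ acting in the evolution of the mixed product $\langle\nabla_x h,\nabla_v h\rangle$, and the cross terms consist of $\langle L_\alpha\nabla_x h,\nabla_v h\rangle$ and inner products $\langle\nabla_x h,\nabla_v h\rangle$.

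The delicate point, and the genuinely fractional part of the argument, is to absorb the positive term $b\,\|\nabla_v h\|^2$ produced by the drift commutator. Here I would invoke the nonlocal Poincaré inequality of Gentil--Imbert and Wang: since $\int_{\mathbb{R}^d}\partial_{v_i}h\,\dd v=0$, each component of $\nabla_v h$ is orthogonal in $L^2_{x,v}(\mu_\alpha^{-1})$ to the kernel $\mathrm{span}(\mu_\alpha)$, so that $\|\nabla_v h\|^2\le C_P\,\mathcal{D}_\alpha(\nabla_v h)$; the sharp constant is in fact seen to be $1$ because the identity $[\partial_{v_i},L_\alpha]=\partial_{v_i}$ gives $L_\alpha(\partial_{v_i}\mu_\alpha)=-\partial_{v_i}\mu_\alpha$, exhibiting $\partial_{v_i}\mu_\alpha$ as an eigenfunction of eigenvalue $-1$. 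This lets me dominate $b\,\|\nabla_v h\|^2$ by the $\nabla_v$-dissipation, and the remaining cross terms are controlled by the Cauchy--Schwarz and Young inequalities, distributed onto $\mathcal{D}_\alpha(h)$, $\mathcal{D}_\alpha(\nabla_x h)$, $\mathcal{D}_\alpha(\nabla_v h)$ and onto $c\,\|\nabla_x h\|^2$ at the cost of constants made small by taking $c\ll\min(a,b)$.

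Finally, to close the estimate into $\tfrac{\dd}{\dd t}\mathcal{H}(h,h)\le-2\lambda\,\mathcal{H}(h,h)$, I would apply the nonlocal Poincaré inequality once more at the level of $h$ to bound the microscopic part $\|h-\Pi h\|^2$ by $\mathcal{D}_\alpha(h)$, where $\Pi h=\rho_h\,\mu_\alpha$ with $\rho_h(x)=\int_{\mathbb{R}^d}h\,\dd v$ is the local equilibrium projection; the macroscopic part $\Pi h$ is then controlled through its $x$-gradient by $c\,\|\nabla_x h\|^2$ together with the Poincaré inequality on $\mathbb{T}^d$, whose zero-mean hypothesis $\int\rho_h\,\dd x=0$ is guaranteed by conservation of mass. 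Balancing the free parameters $a,b,c$ yields a strictly positive rate $\lambda$ depending only on $d$ and $\alpha$, and the norm equivalence transfers the decay to $\|h\|_{H^1_{x,v}(\mu_\alpha^{-1})}$. The \emph{main obstacle} is precisely the control of the drift-commutator term $\|\nabla_v h\|^2$ by a fractional Dirichlet form of order $\alpha/2<1$, which is why the sharp nonlocal Poincaré inequality, rather than a naive $H^1$-in-$v$ bound as in the case $\alpha=2$, is indispensable.
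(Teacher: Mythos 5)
Your overall architecture (the entropy functional $\mathcal{H}$, the commutator identities $[\partial_{v_i},T]=\partial_{x_i}$ and $[\partial_{v_i},L_\alpha]=\partial_{v_i}$, the micro/macro splitting with the torus Poincar\'e inequality and mass conservation) matches the paper's $H^1$ strategy, but two of your load-bearing claims fail precisely in the fractional range $\alpha\in(0,2)$. First, $L_\alpha$ is \emph{not} self-adjoint in $L^2_{x,v}(\mu_\alpha^{-1})$: as emphasized around \eqref{eq:splitLa}, one has $-\lla L_\alpha f,g\rra_{L^2_v(\mu_\alpha^{-1})}=\cS_\alpha^v(f,g)+\cA_\alpha^v(f,g)$ with a skew-symmetric part $\cA_\alpha^v$ that vanishes only at $\alpha=2$. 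Your diagonal terms survive this error (since $\cA_\alpha^v$ vanishes on the diagonal, the Dirichlet form is indeed $\cS_\alpha^v$), but your treatment of the mixed term does not: $\lla L_\alpha\nabla_x h,\nabla_v h\rra$ is not $-\cS_\alpha^v(\nabla_x h,\nabla_v h)$, and the extra contribution $\cA_\alpha^v(\nabla_x h,\nabla_v h)$ is not controlled by Cauchy--Schwarz on the Dirichlet forms; handled naively it produces losses of velocity moments. The proof in the cited reference works around this by arranging the computation so that the skew part only ever appears through commutators, using $\frac{\dd}{\dd t}\lla Ag,Bg\rra = \lla [A,L_\alpha]g,Bg\rra + \lla Ag,[B,L_\alpha]g\rra -2\cS_{\alpha}^v(Ag,Bg)$.

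Second, your absorption of the drift-commutator term $b\,\|\nabla_v h\|^2$ rests on the claim that the nonlocal Poincar\'e inequality holds with sharp constant $1$, deduced from $L_\alpha(\partial_{v_i}\mu_\alpha)=-\partial_{v_i}\mu_\alpha$. That eigenvalue identity is correct, but the inference is not: for a non-self-adjoint operator an eigenvalue $-1$ says nothing about the coercivity constant of the quadratic form $\cS_\alpha^v$ (there is no Rayleigh-quotient characterization), so all the Gentil--Imbert/Wang inequality yields is $\|\nabla_v h\|^2\le C_P\,\cS_\alpha^v(\nabla_v h,\nabla_v h)$ with some constant $C_P(\alpha,d)$ possibly larger than $1$, in which case $-b\,\cS_\alpha^v(\nabla_v h,\nabla_v h)+b\,\|\nabla_v h\|^2$ need not be nonpositive; and even if $C_P=1$ held you would have zero margin left to absorb the cross terms, which themselves require a strict fraction of $\cS_\alpha^v(\nabla_v h,\nabla_v h)$. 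This is exactly the role of the interpolation inequality \eqref{eq:interp}: $\|\nabla_vf\|^2_{L^2_v(\mu_\alpha^{-1})}\lesssim K(\ve)\left(\cS_\alpha^v(f,f)+\|\Pi_v f\|^2_{L^2_v(\mu_\alpha^{-1})}\right)+\ve\,\cS_\alpha^v(\nabla_vf,\nabla_vf)$, which dominates the full derivative by an arbitrarily small multiple of the $\nabla_v$-dissipation plus lower-order quantities, after which $\ve$ and then $b$ are tuned to close the differential inequality. Without this inequality, or an equivalent substitute, your estimate does not close.
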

    While the proof of Theorem~\ref{t:maincont} (see~\cite{ayi_2019_note}) follows the classical $H^1$ hypocoercivity strategy described above (see also \cite{villani_2009_hypocoercivity, herau_2018_introduction} and references therein), there are challenges which are specific to the fractional case. The main difficulties can already be seen when estimating the propagation of weighted Sobolev norms for the space homogeneous version fractional Fokker-Planck equation \cite{gentil_2008_levy, tristani_2015_FFP, mischler2017uniform, lafleche_2020_FFP}, which is
    \begin{equation}\label{eq:fracFP}
        \partial_t f\ =\  L_\alpha f\ = \ \nabla_v\cdot(vf)-(-\Delta_v)^{\alpha/2}f\,.
    \end{equation}
    
    First, the fractional Fokker-Planck operator $L_\alpha$ is not symmetric in its natural Hilbert space. More precisely for (say) Schwartz functions $f,g$, the operator $-L_\alpha$ admits the decomposition
    \begin{equation} \label{eq:splitLa}
        -\lla L_\alpha f,\,g\rra_{L^2_v(\mu_\alpha^{-1})}\ =\ \cS_\alpha^v(f,g)\,+\,\cA_\alpha^v(f,g)
    \end{equation}
    with $\cS_\alpha^v$ symmetric and $\cA_\alpha^v$  skew-symmetric. While $\cA_\alpha^v = 0$ when $\alpha=2$, it is non-trivial in the fractional case. It turns out that this decomposition is crucial in the proof of (hypo)coercivity estimates for the equation~\eqref{eq:kinfracFP}. { More precisely, given any $g =e^{tL_\alpha}g_0$ and operators $A$ and $B$, one has formally that~$\frac{\dd}{\dd t}\lla Ag,Bg\rra = \lla [A,L_\alpha]g,Bg\rra + \lla Ag,[B,L_\alpha]g\rra -2\cS_{\alpha}^v(Ag,Bg)$. Therefore the skew symmetric part of the operator $L_\alpha$ only appears in commutators. This observation enables us to avoid loss of moments in velocities in our estimates which one would face with bad rearrangements of the terms (see \cite{ayi_2019_note} for details). }
    
    { Secondly, in our energy estimates, it is important to quantify the regularization properties of the Lévy-Fokker-Planck operator coming from the dissipation terms thanks to specific functional inequalities in order to bound remainder terms which involve full derivatives in velocity.} One important example of these inequalities is the following (see \cite[Proposition~4.1]{ayi_2019_note} for a proof). For all $\ve>0$, there is a constant $K(\ve)\equiv K(\ve,\alpha,d)>0$ such that 
    \begin{equation}\label{eq:interp}
        \|\nabla_vf\|_{L^2_v(\mu_\alpha^{-1})}^2\ \lesssim\ K(\ve) \left(\cS_\alpha^v(f,f)\,+\,\|\Pi_v f\|_{L^2_v(\mu_\alpha^{-1})}^2\right)+\, \ve\,\cS_\alpha^v(\nabla_vf,\nabla_vf)\, ,
    \end{equation}
    where $(\Pi_v f)(v)\  =\  \left(\int_{\RR^d}f(w)\,\dd w\right)\,\mu_\alpha(v)$ is the orthogonal projection of $f$ onto the kernel of $L_\alpha$ in the space $L^2_v(\mu_\alpha^{-1})$.
    
\addtocontents{toc}{\SkipTocEntry}
\subsection*{Goal of the paper and main results} 
In this paper, we are interested in the numerical discretization of \eqref{eq:fracFP} and \eqref{eq:kinfracFP}. Our main results are the design of a consistent, stable and structure preserving numerical method for these equations in dimension $d=1$ (\emph{i.e.} two-dimensional in the phase-space), as well as its analysis, implementation and simulation. The keystone is the discretization of the  operator~$L_\alpha$. In terms of preservation of the structure, our numerical method satisfies the properties of
    \begin{itemize}
        \item conservation of mass;
        \item preservation of the heavy-tailed local equilibrium $\mu_\alpha$;
        \item preservation of coercivity properties in the homogeneous case;
        \item preservation of the hypocoercivity properties in the inhomogeneous case;
        \item approximation of the fractional Fokker-Planck operator $L_\alpha$ on the whole line with a discretization on a truncated domain;
        \item preservation of the asymptotics $\alpha\to 2^-$.
    \end{itemize}
    Let us mention that while we do not prove it, the preservation of non-negativity of solutions is also observed numerically. 
    
    In the recent years, there has been several works around the numerical analysis of hypocoercivity properties for discretizations of kinetic equations. The first contribution \cite{poretta_2017_numerical} concerns a finite difference discretization of the Kolmogorov equation. For the same model, hypocoercivity properties for other types of schemes were studied in \cite{foster_2017_structure, georgoulis_2021_hypocoercivity}. Concerning the classical kinetic Fokker-Planck equation, there has been to our knowledge two main contributions \cite{dujardin_2020_coercivity, bessemoulin_2020_hypocoercivity} dealing respectively with $H^1$ and $L^2$ hypocoercivity. 
In the present contribution, the range of models is extended to the fractional-Fokker-Planck case. {Notice also that using a perturbative argument, in~\cite{mischler2017uniform}, the exponential stability of a model which in a sense is akin to the discretized fractional Fokker-Planck equation is proven. 
}

    Our discretization of the Lévy-Fokker-Planck operator $L_\alpha$ is based on a conservative finite difference / finite volume approach. The fractional Laplacian is discretized following the Huang and Oberman finite difference-quadrature method \cite{huang_2014_discretization}. This method is based on the integral representation \eqref {eq:fraclap_int1}. While they may seem natural, spectral methods based on the Fourier definition of $(-\Delta)^{\alpha/2}$ are not efficient and lead to aliasing errors because of the slow decay of functions at infinity \cite{huang_2014_discretization}. From the discrete version of the fractional Laplacian, we introduce an equilibrium and mass preserving discretization of the drift term $\nabla_v\cdot(vf)$. 
    
    { From the discretization of $L_\alpha$, the complete scheme for the homogeneous Fokker-Planck equation is readily obtained with a fully implicit Euler scheme in time (see \eqref{eq:def_scheme_hom}).  In the inhomogeneous case, the scheme is also fully implicit in time with a centered discretization of the transport term (see \eqref{eq:def_scheme_kin}-\eqref{eq:def_transport}). A large part of this paper is devoted to the numerical and asymptotic analysis of these Eulerian schemes in unbounded velocity domain. For their practical implementation the velocity domain is truncated and the discrete Lévy-Fokker-Planck operator is adapted accordingly  thanks to consistent and structure preserving numerical boundary conditions (see Section~\ref{sec:truncation}). At the very end of the paper (see Section~\ref{sec:semilag}) we also propose an alternative scheme in the inhomogeneous case. It is based on a semi-Lagrangian version of our scheme with time splitting, which improves computational efficiency, but for which the rigorous numerical and asymptotic analysis is out of the scope of the present paper.}
    
    For the Eulerian schemes in unbounded domains, we rigorously show coercivity and hypocoercivity properties leading to exponential stability of the discrete solution. These results are stated formally in Theorem~\ref{theo:coerc_discrLFP} for the homogeneous case and Theorem~\ref{theo:hypo_discrLFP} and Theorem~\ref{theo:hypo_fulldiscrLFP} for the kinetic case. In order to prove these results we need to adapt many of the continuous properties to the (more challenging) discrete setting. Apart from the exponential stability results, many intermediate results of discrete functional analysis have their own importance. They are gathered in Section~\ref{sec:funcanal} and include a discrete version of nonlocal Poincaré inequalities as well as many new interpolation and embedding inequalities involving discrete fractional operators and norms.
    
    Compared to the continuous setting, there are two main challenges concerning the coercivity and hypocoercivity analysis in the discrete setting. The first one is that Fourier analysis is not tractable anymore for easily proving intermediate functional inequalities (interpolation, embeddings...). The second challenge concerns commutators between discrete operators, which are essential in hypocoercivity estimates.  They contain remainder terms which vanish when the mesh size goes to $0$ but need to be dealt with in order to close estimates.
    
     { Let us mention that the extension of our discretization strategy  to the multi-dimensional case $d>1$ is not trivial and not covered by the present paper. The main difficulty is to have a satisfactory discretization of the fractional Laplace operator (\emph{e.g.} in terms of convergence properties). While there are methods adapted to extended Dirichlet boundary conditions \cite{siwei2019}, its discretization for algebraically decaying densities in the whole space is difficult and up to our knowledge, still largely open \cite{huang_2014_discretization, huang2016finite}.}

    Apart from the theoretical analysis of the schemes, we provide several numerical test cases. These simulations illustrate the theoretical results of global stability, long-time behavior, conservation of mass and preservation of heavy-tails in velocity. Additionally, we illustrate the experimental convergence of the schemes in both the homogeneous and inhomogeneous cases and discuss computational time performances. 
    
    \addtocontents{toc}{\SkipTocEntry}
    \subsection*{Outline} The plan of the paper is as follows. In Section~\ref{sec:pres_num}, the discretization of the Lévy-Fokker-Planck  operator $L_\alpha$ is introduced in an unbounded domain and basic properties are derived. {In particular the consistency, stability and convergence of the scheme are proved.} In Section~\ref{sec:truncation}, the numerical method is adapted to a truncated velocity domain: the discretization which is used in practice is introduced. In Section~\ref{sec:funcanal}, discrete functional analysis results are derived. They include interpolation and embedding inequalities, and non-local Poincaré inequalities. Let us recall that in these results, in the discretized setting, the goal is uniformity of constants with respect to the mesh size. Then, in Section~\ref{sec:stablong}, we show global exponential stability following a (hypo)coercivity strategy. In Section~\ref{sec:numsim}, we perform several numerical simulations which illustrate the structural properties of the numerical method as well as convergence of the schemes. Finally, we gathered in the appendix various necessary but technical results concerning bounds on the equilibrium $\mu_\alpha$ and its discretization.

    \section{Presentation and basic properties of the numerical method in unbounded velocity domain}\label{sec:pres_num}
    In the following, unless explicitly stated otherwise, the velocity space $\mathbb{R}$ is discretized by a regular subdivision $(v_j = jh)_{j\in\mathbb{Z}}$ with $h>0$ a given step size. For a velocity distribution $f:\RR\rightarrow \RR$, $f_j$ denotes an approximation of $f(v_j)$ and with a slight abuse of notation, we write $f = (f_j)_{j\in\ZZ}$. In Section~\ref{sec:discr_FL}, we present the discretization $\Dh:\RR^{\ZZ}\rightarrow\RR^{\ZZ}$ of the fractional Laplacian, such that $(\Dh f)_j$ approaches $-(-\Delta)^{\alpha/2}f(v_j)$. Then, in Section~\ref{sec:discr_FP}, we introduce a discretization of the full fractional Fokker-Planck operator, denoted $\Lah\ :=\ \Gamma_\alpha^h  + \Dh$ where $\Gamma_\alpha^h $ is the discretization of the drift operator $\nabla_v\cdot(v \, \cdot)$. Finally, in Section~\ref{sec:full_num_scheme}, we write the numerical schemes for \eqref{eq:fracFP} and \eqref{eq:kinfracFP}.

    \subsection{Discretization of the fractional Laplacian}\label{sec:discr_FL}
    
    In order to discretize the fractional Laplacian operator in dimension one, we follow the finite difference-quadrature approach of Huang and Oberman~\cite{huang_2014_discretization}. This method preserves the convolution structure of \eqref{eq:fraclap_int1}, and therefore { the properties of monotony} of the operator, and it has a theoretical $O(h^{3-\alpha})$ accuracy.  Let us briefly recall the method here and derive some complementary results.
    
    The integral formulation of the fractional Laplace operator given in~\eqref{eq:fraclap_int1} may be symmetrized and split into the sum of a singular part and a tail part, respectively
    \begin{multline*}
        (-\Delta_v)^{\alpha/2}f(v_j)\ =\ C_{1,\alpha}\,\int_{0}^h\,\frac{f(v_j+w)+f(v_j-w)-2f(v_j)}{w^{1+\alpha}}\,\mathrm{d}w\\ \,+\, C_{1,\alpha}\,\int_{h}^{\infty}\,\frac{f(v_j+w)+f(v_j-w)-2f(v_j)}{w^{1+\alpha}}\,\mathrm{d}w
    \end{multline*}
    where we recall that $C_{1,\alpha} = 2^{\alpha} \Gamma((\alpha+1)/2)/(\pi^{\frac12} |\Gamma(-\alpha/2)|)$. The singular part is approached by using that $f(v_j+w)+f(v_j-w)-2f(v_j)\approx w^2 (f_{j+1}+f_{j-1}-2f_j)/h^2$  for $w\in[0,h]$ and integrating in $w$. The tail part is approached by replacing $f$ with a piecewise quadratic interpolation  of the values $(f_j)_{j\in\ZZ}$, and again integrating (explicitly) in $w$. Altogether (see \cite{huang_2014_discretization} for details), the discrete fractional Laplace operator is given by
    \begin{equation}\label{eq:def_discrLap}
        (\Dh f)_j\ =\  \sum_{k =1}^\infty\beta^h_{k}\,(f_{j+k}+f_{j-k}-2f_j)\,h\ =\ \sum_{k\in\ZZ}\beta^h_{k}\,(f_{j-k}-f_j)\,h\,,
    \end{equation}
    with weights 
    \begin{equation}\label{eq:def_beta}
        \beta^h_{k}:=
        \frac{C_{1,\alpha}}{h^{1+\alpha}}
        \left\{ 
        \begin{array}{l l }
            \ds \frac{1}{2-\alpha}-\varphi_\alpha''(1) - \frac{\varphi_\alpha'(3)+3\varphi_\alpha'(1)}{2}+ \varphi_\alpha(3)-\varphi_\alpha(1) & \mathrm{\;if\;}   k=  1\,, \\[.75em]
            \ds 2\left[ \varphi_\alpha'(k+1)+\varphi_\alpha'(k-1)-\varphi_\alpha(k+1)+\varphi_\alpha(k-1) \right] &\mathrm{\; if\;}  k= 2, 4, 6, \dots\,,\\[.75em]
            \ds -\frac{\varphi_\alpha'(k+2)+6\varphi_\alpha'(k)+\varphi_\alpha'(k-2)}{2}+\varphi_\alpha(k+2)-\varphi_\alpha(k-2) & \mathrm{\; if\;}  k=3, 5, 7, \dots\,,\\
            \ds \beta^h_{-k}& \mathrm{\; if\;}  k<0\,.
        \end{array}
        \right.
    \end{equation}
    where 
    \[
    \varphi_\alpha(t):=\left\{ 
    \begin{array}{l c l}
        \ds\frac{t^{2-\alpha}}{(2-\alpha)(\alpha-1){\alpha}} & \mathrm{\;if\;} & \alpha\neq 1,  \\[.75em]
        \ds t-t\ln(t) &  \mathrm{\;if\;} & \alpha=1.
    \end{array}
    \right.
    \]
    \begin{rema}
        The  approximation of the singular part of the integral appears in the first term in the expression of $\beta_{\pm1}^h$. Observe also that the value assigned to $\beta^h_{0}$ is arbitrary as it does not appear in~\eqref{eq:def_discrLap}.
    \end{rema}
    We have the following estimates on the coefficients $\beta_{k}^h$.
    \begin{lem} \label{lem:bounds_beta}
        There exist positive constants $b_{\alpha}$ and $B_{\alpha}$ depending only on $\alpha\in(0,2)$ such that 
        \begin{equation}\label{eq:bounds_beta}
            \frac{b_{\alpha}}{|hk|^{1+\alpha}} \ \le\  \beta_k^h \ \le\  \frac{B_{\alpha}}{|hk|^{1+\alpha}}\,,\quad\forall k \in \ZZ\setminus\{0\}\,.
        \end{equation}
    \end{lem}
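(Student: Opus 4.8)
The plan is to establish the two-sided bound \eqref{eq:bounds_beta} by analyzing the explicit formula \eqref{eq:def_beta} through the asymptotic behavior of $\varphi_\alpha$ and its derivatives for large arguments, and by treating the boundary case $k=1$ separately. Since $\beta_{-k}^h=\beta_k^h$, it suffices to consider $k\geq 1$. Notice that every coefficient carries the prefactor $C_{1,\alpha}/h^{1+\alpha}$, so the claimed scaling in $h$ is immediate and the entire problem reduces to showing that the bracketed expressions are comparable to $k^{-(1+\alpha)}$ uniformly in $k$, with constants depending only on $\alpha$.

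First I would handle the generic case $k\geq 2$. For $\alpha\neq 1$ one has $\varphi_\alpha(t)=c_\alpha\,t^{2-\alpha}$ with $c_\alpha=1/((2-\alpha)(\alpha-1)\alpha)$, so $\varphi_\alpha'(t)=c_\alpha(2-\alpha)t^{1-\alpha}$. The key observation is that the even-$k$ and odd-$k$ brackets in \eqref{eq:def_beta} are, up to the sign of $c_\alpha$, discrete second-order finite-difference combinations of the smooth function $t\mapsto t^{2-\alpha}$ evaluated at spacing $1$ or $2$ around $t=k$. By Taylor expansion of $t^{2-\alpha}$ (and of $t^{1-\alpha}$) to second order around $t=k$, these combinations produce a leading term proportional to the second derivative $(t^{2-\alpha})''\big|_{t=k}=(2-\alpha)(1-\alpha)k^{-\alpha}$ times the appropriate weights, but the careful bookkeeping must be that after cancellation the net scaling is $k^{-(1+\alpha)}$. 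I would write $\varphi_\alpha'(k\pm 1)$ and $\varphi_\alpha(k\pm1)$ in terms of $k^{-\alpha}$ and $k^{1-\alpha}$ via the expansions $(k\pm1)^{s}=k^s(1\pm s/k + s(s-1)/(2k^2)+\cdots)$, substitute, and verify that the $k^{1-\alpha}$ and $k^{-\alpha}$ terms cancel so that the remaining leading behavior is a nonzero constant (depending on $\alpha$) times $k^{-(1+\alpha)}$. The sign of this constant combined with the sign of $c_\alpha$ must come out positive in all regimes $\alpha\in(0,1)\cup(1,2)$, which is what guarantees $\beta_k^h>0$; I would check the three sub-ranges of $\alpha$ explicitly. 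Uniformity of the comparison constants for all $k\geq 2$ then follows because the remainder in the Taylor expansion is controlled by higher derivatives of $t\mapsto t^{2-\alpha}$, which are $O(k^{-1-\alpha}\cdot k^{-1})$ and hence lower order; taking $k$ large makes the leading term dominate, and finitely many small values of $k$ are handled by direct inspection (positivity and finiteness).

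The main obstacle I expect is twofold. The delicate point is the \emph{even versus odd} split in \eqref{eq:def_beta}: the two brackets have genuinely different structure (spacing $2$ for even $k$, a three-point stencil with weights $1,6,1$ for odd $k$), so the cancellation of low-order terms must be verified independently in each case, and one must confirm the two resulting leading constants have the same sign and are both nonzero. The second obstacle is the \emph{case $\alpha=1$}, where $\varphi_\alpha(t)=t-t\ln t$, $\varphi_\alpha'(t)=-\ln t$; here the expansions involve logarithms rather than pure powers, and one must check by hand that the same $k^{-2}$ scaling (matching $k^{-(1+\alpha)}$ with $\alpha=1$) emerges after the logarithmic terms cancel. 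I would treat $\alpha=1$ as a limiting or separate computation, using $\ln(k\pm1)=\ln k+\ln(1\pm1/k)$ and expanding the latter. Finally, for the \textbf{boundary coefficient} $k=1$ I would simply verify directly from its closed-form expression in \eqref{eq:def_beta} that it is a finite positive quantity (independent of $h$ after removing the prefactor), so that it fits the bound \eqref{eq:bounds_beta} at $k=1$ trivially by adjusting $b_\alpha,B_\alpha$; positivity here reflects that the singular-part approximation $1/(2-\alpha)$ dominates. Collecting the uniform constants over the two asymptotic regimes and the finitely many exceptional indices yields $b_\alpha$ and $B_\alpha$ as claimed.
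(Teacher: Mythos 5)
Your expansion strategy proves the two-sided bound only for $k$ beyond some threshold $K_0(\alpha)$ determined by your (nonexplicit) Taylor remainder estimates, and for the remaining indices $2\le k< K_0(\alpha)$ you appeal to ``direct inspection''. That is the genuine gap: the lower bound there --- positivity of $\beta_k^h$ with the correct $k^{-(1+\alpha)}$ scaling --- is precisely where the lemma is delicate, and it cannot be dismissed as a finite check, both because the list of exceptional indices depends on $\alpha$ in an uncontrolled way and because positivity at small odd $k$ is a nontrivial cancellation, not a computation one can wave at. Concretely, the odd-$k$ bracket in \eqref{eq:def_beta} is an exact quadrature error whose Peano kernel is $t\mapsto (t^2-3|t|+2)/2$ on $[-2,2]$, which is \emph{negative} for $1<|t|<2$; this is the representation \eqref{eq:defbeta_odd} used in the paper, together with \eqref{eq:defbeta_even} (nonnegative kernel $1-t^2$) for even $k$. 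With the identity $\varphi_\alpha^{(3)}(t)=t^{-1-\alpha}$, these settle the upper bounds and the even-$k$ lower bound immediately, but the odd-$k$ lower bound requires a real argument: exploiting monotonicity of $t^{-1-\alpha}$ to get $2(hk)^{1+\alpha}\beta_k^h/C_{1,\alpha}\ \ge\ \tfrac32\,(1+1/k)^{-1-\alpha}-\tfrac16\,(1-2/k)^{-1-\alpha}$, which is positive uniformly in $\alpha$ only for $k\ge5$, while $k=3$ escapes even this bound and needs a separate explicit evaluation (bounded below by $\log 5-8/5>0$ for all $\alpha\in(0,2)$). Your proposal contains no substitute for this step: a leading-order expansion with an $O(k^{-2-\alpha})$ remainder says nothing about the sign of $\beta_3^h$.

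A second point, which would have simplified and partially repaired your plan: $\varphi_\alpha^{(3)}(t)=t^{-1-\alpha}$ holds \emph{exactly} for every $\alpha\in(0,2)$, including $\alpha=1$ (where $\varphi_1'(t)=-\ln t$ gives $\varphi_1'''(t)=t^{-2}$). Hence the case distinction $\alpha\neq1$ versus $\alpha=1$, the sign-checking over sub-ranges of $\alpha$, and the verification that the $k^{1-\alpha}$ and $k^{-\alpha}$ terms cancel are all unnecessary: if you apply Taylor's theorem with \emph{integral} remainder instead of an asymptotic series, the low-order terms cancel identically and you land on the exact representations \eqref{eq:defbeta_even}--\eqref{eq:defbeta_odd}, whose kernels are supported in $[k-2,k+2]$ where $(k+t)^{-1-\alpha}\sim_\alpha k^{-1-\alpha}$ for all $k\ge2$ at once --- no asymptotic regime, no exceptional indices. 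The only remaining work is then the odd-$k$ sign problem described above, together with the direct computation at $k=1$, which your proposal did identify correctly.
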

    \begin{proof}
        We only deal with the case of $k>0$. 
        For $k=1$, a direct computation yields that 
        \[
        \beta_{1}^h=\frac{C_{1,\alpha}}{2{\alpha}(2-\alpha) h^{1+\alpha}}\,\left[{ 8}+(4+\alpha)\,(3^{1-\alpha}-1)(\alpha-1)^{-1}\right]\,,\quad \text{if }\alpha\neq1\,,
        \]
        and 
        \[
        \beta_{1}^h=\frac{C_{1,1}}{2h^{2}}\,\left(8-5\ln(3)\right)\,,\quad \text{if }\alpha=1\,.
        \] 
        Then for $k \ge 2$, we observe that
        \begin{equation} \label{eq:defbeta_even}
            \beta_{k}^h\ =\ \frac{C_{1,\alpha}}{h^{1+\alpha}}\int_{-1}^1\,(1-t^2)\,\varphi_\alpha^{(3)}(k+t)\,\dd t\,,\quad\text{ if }k\text{ is even}\,,
        \end{equation}
        and 
        \begin{equation} \label{eq:defbeta_odd}
            \beta_{k}^h\ =\ \frac{C_{1,\alpha}}{2h^{1+\alpha}}\int_{-2}^2\,(t^2-3|t|+2)\,\varphi_\alpha^{(3)}(k+t)\dd t\,,\quad\text{ if }k\text{ is odd}\,.
        \end{equation}
        From there, since $\varphi_\alpha^{(3)}(t) = t^{-1-\alpha}$, the upper bounds in the even and odd cases and the lower bound in the even case are easily derived from \eqref{eq:defbeta_even} and \eqref{eq:defbeta_odd}. For the last bound we use that for $k>2$ odd 
        \[
        \frac{2(hk)^{1+\alpha}}{C_{1,\alpha}}\beta_{k}^h\geq \frac{1}{(1-2/k)^{1+\alpha}}\int_{-2}^{-1}\,(t^2-3|t|+2)\,\dd t + \frac{1}{(1+1/k)^{1+\alpha}}\int_{-1}^{2}\,(t^2-3|t|+2)\,\dd t = \psi_\alpha(1/k)
        \]
        with $\psi_\alpha(k^{-1}) = \tfrac32(1+k^{-1})^{-1-\alpha}-\tfrac16(1-2k^{-1})^{-1-\alpha}$. Clearly, $\psi_\alpha(k^{-1})\geq\psi_\alpha(1/5)$ when $k\geq5$ and the right-hand side is positive, uniformly in $\alpha\in(0,2)$. In the last case $k=3$, one has $2(hk)^{1+\alpha}\beta_{3}^h/C_{1,\alpha} = (15^{-\alpha} (4 (-10 3^\alpha + 9 5^\alpha + 15^\alpha) - (5 3^\alpha + 18 5^\alpha + 15^\alpha) \alpha))/((\alpha-2) (\alpha-1) \alpha)$. The right-side is bounded from below by $\log(5)-8/5>0$ for $\alpha\in(0,2)$.
    \end{proof}
    
    \begin{rema}[Additional properties]
        Let us state here some additional properties of the operator $\Dh$.  The discrete fractional Laplacian satisfies the conservation of mass, namely
        \begin{equation}\label{eq:mass_discrLap}
            \sum_{j\in\ZZ}(\Dh u)_j\ =\ 0\,.
        \end{equation}
        Moreover, it is self-adjoint in the space of square summable sequences, namely
        \begin{equation}\label{eq:sym_discrLap}
            \sum_{j\in\ZZ}(\Dh u)_jv_j\ =\ \sum_{j\in\ZZ}(\Dh v)_ju_j\,.
        \end{equation}
        The discretization of the fractional Laplace operator is consistent with the usual centered finite difference approximation of the Laplacian, namely, { using that $C_{1,\alpha} \sim_{\alpha \to 2^-} 2-\alpha$, one can prove that}
        \begin{equation}\label{eq:lim_coeff_alpha2}
            \lim_{\alpha\to2^-}h\,\beta_k^h\ =\ \left\{\begin{array}{ll}{h^{-2}}&\text{if  }{|k|}=1\\0&\text{otherwise }\end{array}\right.
        \end{equation} 
        and thus one recovers the usual discrete Laplacian at the limit
        \begin{equation}\label{eq:lim_oper_alpha2}
            \lim_{\alpha\to2^-}(\Dh u)_j\ =\ \frac{u_{j+1}+u_{j-1}-2u_j}{h^2}\,,
        \end{equation}
        for all $j\in\ZZ$. Finally it is consistent at order $3-\alpha$. When $h\to0$, one has for any $u\in\mathcal{C}_b^4(\RR)$ that
        \begin{equation}\label{eq:conv_discrfracLap}
            \sup_{j\in\ZZ}\left|-(-\Delta)^{\alpha/2}u(hj)-(\Dh u)_j\right|\ \leq\ K_\alpha\,\|u\|_{\mathcal{C}_b^4(\RR)}\,h^{3-\alpha}\,,
        \end{equation}
        with $K_\alpha$ a positive constant depending only on $\alpha$. { A careful examination of the consistency analysis in \cite[Section 2]{huang_2014_discretization} shows that a more precise estimate is $O(h^{4-\alpha} + C_{1,\alpha} h^{3-\alpha})$ where the $O(h^{4-\alpha})$ comes from the singular part of the integral and $O(C_{1,\alpha} h^{3-\alpha})$ is the tail part of the integral and one has $C_{1,\alpha}\to 0$ when $\alpha\to 2^-$. This explains how one recovers the second order accuracy for the classical Laplacian at the limit $\alpha\to 2^-$. Let us mention that more recently, methods have been proposed \cite{Duo2018} to improve the convergence to $h^{2}$ uniformly in $\alpha$.}
    \end{rema}

    \subsection{Discretization of the Lévy-Fokker-Planck operator}\label{sec:discr_FP}
    
    We now turn to the discretization of the full non-local fractional Fokker-Planck operator $L_\alpha$. It is discretized as follows
    \begin{equation}\label{def_LFPdiscr}
        \Lah\ =\ \Gamma_\alpha^h + \Dh
    \end{equation}
    where $\Gamma_\alpha^h$ is the discrete equivalent of $\partial_v(v\, \cdot)$ to be defined. The goal is to define a consistent approximation that preserves exactly the discrete equilibrium $(M_j)_{j\in\ZZ}$ defined by
    \begin{equation}\label{eq:def_discr_equilibrium}
        M_j\ :=\ \mu_\alpha(v_j)\,.
    \end{equation}
    The design relies on the identities 
    $
    \partial_v(v\, f)\ =\ \partial_v\left(v\, \mu_\alpha\,f/\mu_\alpha\right)
    $
    and
    \[
    v\mu_\alpha(v)\ :=\ \frac{1}{2}\int_{-v}^v(-\Delta_w)^{\alpha/2}\mu_\alpha(w)\dd w\,,
    \]
    which is easily obtained using that $L_\alpha\mu_\alpha=0$ and that $\mu_\alpha$ is symmetric. { The  non-local reformulation of $v\mu_\alpha(v)$ will be used in the design of the scheme to allow for the exact preservation of equilibrium. In terms of numerical analysis, the drawback of this choice is that (hypo)coercivity estimates (see Section~\ref{sec:stablong}) in Sobolev norms lead to new technical challenges in the discrete setting and do not rely on the mere adaptation of the continuous case (see \cite{ayi_2019_note}).}

    In order to preserve the divergence structure and conserve mass, the operator $\Gamma_\alpha^h$ is discretized in the finite volume fashion
    \begin{equation} \label{def:Gamma}
        (\Gamma_\alpha^h f)_j  \ :=\  \frac{\mathcal{F}_{j+\frac{1}{2}} - \mathcal{F}_{j-\frac{1}{2}}}{h}
    \end{equation}
    with the numerical flux defined by the centered approximation 
    \begin{equation} \label{def:flux}
        \mathcal{F}_{j+\frac{1}{2}} \ :=\ \pare{VM}_{j+\frac{1}{2}}\,\left(\frac{f_j}{2M_j} + \frac{f_{j+1}}{2M_{j+1}}\right)\,,
    \end{equation} 
    and 
    \begin{equation}\label{def:VMhalf_simple}
        \pare{VM}_{j+\frac{1}{2}} = -\pare{VM}_{-j-\frac{1}{2}} := -\frac{1}{2}\sum_{k=-j}^j(\Dh M)_k\,h\,, \quad\text{for }j\geq0\,.
    \end{equation}
    
    \begin{rema}
        From the conservation of mass for the discrete fractional Laplacian \eqref{eq:mass_discrLap} and symmetries, one can derive other formulas for $(VM)$ which will be useful for the analysis. In particular for any odd $m \in \ZZ$,
        \begin{equation} \label{def:VM}
            \pare{VM}_{j+\frac{m}{2}}\ =\ \frac{1}{2} \sum_{k \in \mathbb{Z}}\mathrm{sgn}\left(k-(j+\tfrac{m}{2})\right)\,(\Dh M)_k\,h
        \end{equation}
        where $\mathrm{sgn}(\cdot)$ is the sign function. Using~\eqref{eq:def_discrLap}, it can for example be rewritten as
        \[
        \pare{VM}_{j+\frac{m}{2}}\ =\ \frac{1}{4} \sum_{k \in \mathbb{Z}}\sum_{\ell \in \ZZ} \mathrm{sgn}\left(k-(j+\tfrac{m}{2})\right)\,  {\beta_\ell^h}(M_{k+\ell}+M_{k-\ell}-2M_k)\,h^2 \, ,
        \]
 see Appendix~\ref{subsec:BoundsOnTheDiscreteVM} for more details. 
    \end{rema}
    
    The following properties are direct consequences of the definition of the operators $\Dh$ in \eqref{eq:def_discrLap}-\eqref{eq:def_beta} and  $\Lah$ in~\eqref{def_LFPdiscr}-\eqref{def:VMhalf_simple}.
    \begin{lem}[Basic properties] \label{lem:loceq}
        The operator $\Lah$ satisfies the following properties. 
        \begin{itemize}
            \item[\emph{i)}] Mass conservation: for any suitably summable sequence $u$, one has 
            \begin{equation}\label{eq:mass_conserv}
                \sum_{j\in\ZZ}(\Lah u)_j\,h\ =\ 0\,.
            \end{equation}
            \item[\emph{ii)}] Preservation of local equilibrium:
            \begin{equation}\label{eq:preserv_eq}
                (\Lah M)_j\ =\ 0\,,\quad\forall j\in\ZZ \, .
            \end{equation}
            \item[\emph{iii)}] Consistency: for any $u\in\mathcal{C}_b^4(\RR)$, one has that
            \begin{equation}\label{eq:conv_discrLFP}
                \sup_{j\in\ZZ}|(L_\alpha u)(hj)-(\Lah u)_j|\ \leq\ K_\alpha\,\|u\|_{\mathcal{C}_b^4(\RR)}\, {h^{\min(3-\alpha,2)}}\,,
            \end{equation}
            for some $K_\alpha>0$.
        \end{itemize}
    \end{lem}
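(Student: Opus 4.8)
\emph{Assertions i) and ii) are structural}, relying on the flux-difference form of $\Gamma_\alpha^h$ and on the design of $(VM)$. For i) I would split $\Lah=\Gamma_\alpha^h+\Dh$: the Laplacian part conserves mass by \eqref{eq:mass_discrLap}, while the drift part \eqref{def:Gamma} is a discrete divergence, so $\sum_{j=-N}^{N}(\Gamma_\alpha^h u)_j\,h=\mathcal{F}_{N+\frac12}-\mathcal{F}_{-N-\frac12}$ telescopes and the boundary fluxes vanish as $N\to\infty$ for suitably summable $u$ (using $(VM)_{j+\frac12}\to0$, itself a consequence of \eqref{eq:mass_discrLap} applied to $M$), giving \eqref{eq:mass_conserv}. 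For ii), the point is that taking $f=M$ collapses the centered flux \eqref{def:flux}: since $\tfrac{M_j}{2M_j}+\tfrac{M_{j+1}}{2M_{j+1}}=1$ one gets $\mathcal{F}_{j+\frac12}=(VM)_{j+\frac12}$, hence $(\Gamma_\alpha^h M)_j=\tfrac1h\big((VM)_{j+\frac12}-(VM)_{j-\frac12}\big)$. Inserting the symmetric representation \eqref{def:VM} with $m=\pm1$, the difference of sign functions is supported only at $k=j$ and equals $-2$, so $(VM)_{j+\frac12}-(VM)_{j-\frac12}=-h\,(\Dh M)_j$ and thus $(\Gamma_\alpha^h M)_j=-(\Dh M)_j$, whence $(\Lah M)_j=0$. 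In other words $(VM)$ is by construction a discrete primitive of $-\Dh M$, which is exactly what forces the equilibrium to be preserved.

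\emph{The substance is in iii).} Writing again $\Lah=\Gamma_\alpha^h+\Dh$ and $L_\alpha=\partial_v(v\,\cdot)-(-\Delta)^{\alpha/2}$, the fractional-Laplacian part is handled directly by the consistency estimate \eqref{eq:conv_discrfracLap}, contributing $O(h^{3-\alpha})$. It then remains to prove that the finite-volume drift is second-order consistent, $\sup_j|(\Gamma_\alpha^h u)_j-\partial_v(vu)(v_j)|\lesssim\|u\|_{\mathcal{C}_b^4}\,h^2$, the combination giving the stated rate $h^{\min(3-\alpha,2)}$. The natural route is to compare $(VM)_{j+\frac12}$ with $(v\mu_\alpha)(v_{j+\frac12})$. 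Using the identity $v\mu_\alpha(v)=\tfrac12\int_{-v}^v(-\Delta_w)^{\alpha/2}\mu_\alpha(w)\,\dd w$, the exact value $(v\mu_\alpha)(v_{j+\frac12})$ is the integral of $\psi:=-(-\Delta)^{\alpha/2}\mu_\alpha$ over $[-v_{j+\frac12},v_{j+\frac12}]$, whereas $(VM)_{j+\frac12}=-\tfrac h2\sum_{k=-j}^{j}(\Dh M)_k$ is the associated midpoint quadrature with $\psi(v_k)$ replaced by $(\Dh M)_k$. Splitting into a genuine quadrature error, of midpoint type and therefore $O(h^2\|\psi''\|_{L^1})$ with $\psi$ smooth and decaying, and a discretization error governed by $\sum_k|(\Dh M)_k-\psi(v_k)|$, reduces iii) to controlling these two sums; once $(VM)_{j+\frac12}=(v\mu_\alpha)(v_{j+\frac12})+O(h^2)$ holds uniformly in $j$, a Taylor expansion of the averaged flux and of its difference closes the drift estimate at order $h^2$.

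\emph{The main obstacle} is precisely this uniform-in-$j$ control of $(VM)_{j+\frac12}-(v\mu_\alpha)(v_{j+\frac12})$. The crude bound $|(\Dh M)_k-\psi(v_k)|\lesssim\|\mu_\alpha\|_{\mathcal{C}_b^4}\,h^{3-\alpha}$ from \eqref{eq:conv_discrfracLap} is useless here, since summing it over the $O(j)$ relevant indices (with weight $h$) grows like $v_j\,h^{3-\alpha}$. The remedy is a refined, decay-weighted consistency estimate exploiting that $\mu_\alpha$ and all its derivatives decay algebraically, so that the local error $(\Dh M)_k-\psi(v_k)$ decays fast enough in $k$ for $\sum_k|(\Dh M)_k-\psi(v_k)|$ to converge uniformly in $h$; the same algebraic bounds (see \eqref{eq:mualpha_est1}--\eqref{eq:mualpha_est3} and Appendix~\ref{subsec:BoundsOnTheDiscreteVM}) are also needed in the Taylor step to tame the polynomial growth of $u/\mu_\alpha$ against the decay of the flux prefactor $(VM)_{j+\frac12}$. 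I would therefore isolate these quantitative decay bounds on $(VM)$ as separate lemmas and simply invoke them here.
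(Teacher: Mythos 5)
Your argument matches the paper's: points i) and ii) are exactly the paper's telescoping argument and the primitive identity $(VM)_{j+\frac12}-(VM)_{j-\frac12}=-h\,(\Dh M)_j$, and for iii) the paper likewise combines the $O(h^{3-\alpha})$ estimate \eqref{eq:conv_discrfracLap} for $\Dh$ with the second-order consistency of the centered drift \eqref{def:Gamma}--\eqref{def:VMhalf_simple}. The uniform-in-$j$ difficulty you isolate — the algebraic growth of $u/\mu_\alpha$ playing against the decay of $(VM)_{j+\frac12}$ and of the local consistency errors — is genuine but left implicit in the paper's one-line proof of iii), and the decay-weighted bounds you propose to invoke are precisely those of Appendix~\ref{subsec:BoundsOnTheDiscreteVM} (Lemmas~\ref{lem:VM}--\ref{lem:D2hVM}) together with \eqref{eq:mualpha_est1}--\eqref{eq:mualpha_est3}.
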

    \begin{proof}
        The first property follows from \eqref{eq:mass_discrLap} and \eqref{def:Gamma}. The second property is readily obtained using that $\pare{VM}_{j+\frac{1}{2}} - \pare{VM}_{j-\frac{1}{2}} = -(\Dh M)_jh$. The last property follows from \eqref{eq:conv_discrfracLap} and the centered discretization of the drift \eqref{def:Gamma}-\eqref{def:VMhalf_simple}, yielding $O(h^2)$ part of the estimate.
    \end{proof}

    The discrete Lévy-Fokker-Planck operator can be split into the following symmetric / skew-symmetric decomposition.
    \begin{prop}[Bilinear decomposition]
        Given $(f_j)_{j \in \mathbb{Z}}$ and $(g_j)_{j \in \mathbb{Z}}$, we introduce $F_j := f_j/M_j$ and $G_j := g_j/M_j$ for any $j\in\ZZ$. One has the following decomposition
        \begin{equation} \label{eq:Lalphabilinear}
            - \sum_{j\in\ZZ} (\Lah f)_j\,g_j\,M_j^{-1} h\ =\ \Symh(f,g) \,+\, \Antsymh(f,g)
        \end{equation}
        where $\Symh$ and $\Antsymh$ are respectively symmetric and skew-symmetric bilinear forms defined by 
        \begin{equation}\label{eq:def_sym_discr}
            \Symh (f,g) := \frac{1}{2} \sum_{(j,k) \in \ZZ^2} \beta_k^h {(F_j - F_{j+k})\,(G_j - G_{j+k})}\,M_j\,h^2\,,
        \end{equation}
        and 
        \begin{equation}\label{eq:def_skewsym_discr}
            \Antsymh (f,g) := - \frac{1}{2} \sum_{(j,k) \in \ZZ^2} {\beta_k^h} {(F_j G_{j+k}- G_j  F_{j+k})}\,M_j\,h^2 - {1 \over 2} \sum_{j \in \mathbb{Z}}  {\pare{VM}_{j+\frac{1}{2}}}  (F_{j+1} G_j - F_j G_{j+1}).
        \end{equation}
    \end{prop}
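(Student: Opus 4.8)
The plan is to split $\Lah=\Dh+\Gamma_\alpha^h$ and to isolate, in each of the two resulting bilinear expressions, its symmetric and skew-symmetric parts under the exchange $f\leftrightarrow g$ (equivalently $F\leftrightarrow G$). First I would expand the fractional Laplacian contribution. Using $(\Dh f)_j=\sum_{k\in\ZZ}\beta_k^h(f_{j+k}-f_j)h$ (legitimate since $\beta_k^h=\beta_{-k}^h$) and substituting $f_j=F_jM_j$, $g_jM_j^{-1}=G_j$ yields
\[
-\sum_{j\in\ZZ}(\Dh f)_j\,g_j\,M_j^{-1}h\ =\ -\sum_{(j,k)\in\ZZ^2}\beta_k^h\,(F_{j+k}M_{j+k}-F_jM_j)\,G_j\,h^2.
\]
The change of variables $(j,k)\mapsto(j+k,-k)$, under which $\beta_k^h$ is invariant, converts the $M_{j+k}$-term into one carrying $M_j$, giving the compact form $D:=\sum_{(j,k)}\beta_k^h\,F_jM_j(G_j-G_{j+k})\,h^2$. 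Splitting $D$ into its symmetric and skew parts under $F\leftrightarrow G$ produces, as the skew piece, exactly $-\tfrac12\sum_{(j,k)}\beta_k^h(F_jG_{j+k}-G_jF_{j+k})M_jh^2$, i.e. the first sum defining $\Antsymh$, together with a symmetric remainder $D_{\mathrm{sym}}$.

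Next I would treat the drift. Writing $(\Gamma_\alpha^h f)_j=(\mathcal{F}_{j+\frac12}-\mathcal{F}_{j-\frac12})/h$ and performing a discrete summation by parts, the centered flux \eqref{def:flux} gives
\[
-\sum_{j\in\ZZ}(\Gamma_\alpha^h f)_j\,g_j\,M_j^{-1}h\ =\ \sum_{j\in\ZZ}\pare{VM}_{j+\frac12}\,\frac{F_j+F_{j+1}}{2}\,(G_{j+1}-G_j).
\]
Decomposing the summand under $F\leftrightarrow G$ yields a skew part $\tfrac12\sum_j\pare{VM}_{j+\frac12}(F_jG_{j+1}-F_{j+1}G_j)$ — precisely the second sum defining $\Antsymh$ — and a symmetric part $S_{\mathrm{drift}}:=\tfrac12\sum_j\pare{VM}_{j+\frac12}(F_{j+1}G_{j+1}-F_jG_j)$. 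At this point the two skew contributions assemble into $\Antsymh$, whose skew-symmetry under $f\leftrightarrow g$ is manifest from its closed form.

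It then remains to verify that $D_{\mathrm{sym}}+S_{\mathrm{drift}}=\Symh$, which is the crux of the argument. The key ingredient is the equilibrium identity $\pare{VM}_{j+\frac12}-\pare{VM}_{j-\frac12}=-(\Dh M)_jh$ underlying \eqref{eq:preserv_eq}; combined with a summation by parts it recasts $S_{\mathrm{drift}}=\tfrac12\sum_j F_jG_j(\Dh M)_jh=\tfrac12\sum_{(j,k)}\beta_k^h F_jG_j(M_{j+k}-M_j)h^2$. Adding $D_{\mathrm{sym}}$ and applying once more the reflection $(j,k)\mapsto(j+k,-k)$ to absorb the $M_{j+k}$-terms, everything collapses to $\tfrac12\sum_{(j,k)}\beta_k^h(F_j-F_{j+k})(G_j-G_{j+k})M_jh^2=\Symh$, whose symmetry is again manifest. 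The main obstacle is purely combinatorial bookkeeping: the weight in $\Symh$ is $M_j$ rather than the naively expected symmetric average $\tfrac12(M_j+M_{j+k})$, so matching the $M$-weighted terms forces careful and repeated use of the reflection $(j,k)\mapsto(j+k,-k)$ together with the exact link between $\pare{VM}$ and $\Dh M$. This is exactly the place where the structure-preserving design — the exact preservation of the discrete equilibrium $M$ — is what makes the decomposition close.
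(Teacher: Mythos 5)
Your proposal is correct and uses exactly the same ingredients as the paper's proof --- the symmetry $\beta_k^h=\beta_{-k}^h$ (equivalently the self-adjointness \eqref{eq:sym_discrLap} of $\Dh$, exploited via the reflection $(j,k)\mapsto(j+k,-k)$), a discrete summation by parts for the drift, and the equilibrium identity $\pare{VM}_{j+\frac12}-\pare{VM}_{j-\frac12}=-(\Dh M)_j\,h$. The only difference is direction: you decompose $-\lla \Lah f,g\rra_{\ell^2_h(M^{-1})}$ forwards into its symmetric and skew parts, whereas the paper verifies backwards that $\Symh+\Antsymh$ reproduces the left-hand side; this is the same computation, not a different route.
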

    \begin{proof}
        Observe that using the definition of $\Dh$ in~\eqref{eq:def_discrLap}, \eqref{eq:def_sym_discr} and \eqref{eq:def_skewsym_discr} rewrite 
        \[
        \Symh (f,g)\ =\  \frac12\sum_{j \in \mathbb{Z}} \left[(\Dh (FG))_j M_j  \,-\,   f_j (\Dh G)_j - g_j (\Dh F)_j\right] h\,,
        \]
        and
        \[
        \Antsymh (f,g)\ =\ -\frac12\sum_{j \in \mathbb{Z}} \left(f_j (\Dh G)_j -   g_j (\Dh F)_j\right)h - \frac{1}{2} \sum_{j \in \mathbb{Z}}  (VM)_{j+\frac{1}{2}}  (F_{j+1}G_j - F_j G_{j+1})\,.
        \]
        Therefore, with a change of index in the last term we get
        \begin{multline*}
            \Symh (f,g) + \Antsymh(f,g)\ =\ \frac12\sum_{j \in \mathbb{Z}} (\Dh (FG))_j M_j\,h -  \sum_{j \in \mathbb{Z}}  f_j (\Dh G)_j h\\
            - \frac{1}{2}  \sum_{j \in \mathbb{Z}}  (VM)_{j+\frac{1}{2}}  F_{j+1}G_j + \frac{1}{2}  \sum_{j \in \mathbb{Z}}  (VM)_{j-\frac{1}{2}} F_{j-1} G_{j} \, .
        \end{multline*}
        Thanks to the symmetry property \eqref{eq:sym_discrLap}, the first term of the right-hand side rewrites 
        \[
        \frac12\sum_{j \in \mathbb{Z}} (\Dh M)_j F_j\,G_j\,h\ =\ {-}\frac12\sum_{j \in \mathbb{Z}} \left((VM)_{j+\frac{1}{2}} - (VM)_{j-\frac{1}{2}}\right)\,F_j\,G_j
        \]
        By combining this equality with the previous one, we get the claim.
    \end{proof}
    
    \begin{cor} \label{cor:kerim}
        The discrete Lévy-Fokker-Planck operator $\Lah$, as an operator on $\{(f_j)_j\,|\,\sum_jf_j^2M_j^{-1}<+\infty\}$,  has the following properties:
        \begin{itemize}
            \item[\emph{i)}] $\mathrm{Ker}(\Lah)\ =\ \mathrm{span}\{(M_j)_{j\in\ZZ}\}$\,,
            \item[\emph{ii)}] $\mathrm{Im}(\Lah)\ \subset\ \{(g_j)_j\,|\,\sum_j\,g_j= 0\}$\,.
        \end{itemize}
        
    \end{cor}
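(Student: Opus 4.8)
The plan is to treat the two assertions separately, with ii) being essentially immediate and i) requiring the bilinear decomposition \eqref{eq:Lalphabilinear}. For ii), I would simply invoke the mass conservation identity \eqref{eq:mass_conserv}: for any $u$ in the domain one has $\sum_{j\in\ZZ}(\Lah u)_j\,h = 0$, so that $\Lah u$ lies in the set of zero-sum sequences. This yields the inclusion $\mathrm{Im}(\Lah)\subset\{(g_j)_j\,|\,\sum_j g_j = 0\}$ at once.

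For i), the inclusion $\mathrm{span}\{(M_j)_j\}\subset\mathrm{Ker}(\Lah)$ is exactly the preservation of local equilibrium \eqref{eq:preserv_eq}, which gives $\Lah M = 0$. The reverse inclusion is the heart of the statement, and here I would use the bilinear decomposition with the choice $g = f$: since $\Antsymh$ is skew-symmetric one has $\Antsymh(f,f)=0$, so \eqref{eq:Lalphabilinear} reduces to
\[
-\sum_{j\in\ZZ}(\Lah f)_j\,f_j\,M_j^{-1}\,h\ =\ \Symh(f,f)\,.
\]
Consequently, if $f\in\mathrm{Ker}(\Lah)$, the left-hand side vanishes and one obtains $\Symh(f,f)=0$.

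To conclude, I would exploit the explicit form \eqref{eq:def_sym_discr},
\[
\Symh(f,f)\ =\ \frac12\sum_{(j,k)\in\ZZ^2}\beta_k^h\,(F_j-F_{j+k})^2\,M_j\,h^2\,,\qquad F_j := f_j/M_j\,,
\]
which is a sum of non-negative terms because $\beta_k^h>0$ for all $k\neq0$ by Lemma~\ref{lem:bounds_beta} and because $M_j>0$. Thus $\Symh(f,f)=0$ forces $F_j = F_{j+k}$ for every $j$ and every $k\neq0$; taking already $k=1$ shows that $F$ is constant, say $F_j\equiv c$, i.e. $f_j = c\,M_j$. Hence $f\in\mathrm{span}\{(M_j)_j\}$, which closes the argument.

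The only genuinely delicate point is not the algebra but the functional-analytic bookkeeping: one must verify that for $f$ in the domain $\{\sum_j f_j^2 M_j^{-1}<\infty\}$ the quantities entering the decomposition are absolutely convergent, so that setting $g=f$ in \eqref{eq:Lalphabilinear} and reading off $\Symh(f,f)=0$ is legitimate. This is where the upper bound $\beta_k^h\leq B_\alpha|hk|^{-1-\alpha}$ of Lemma~\ref{lem:bounds_beta}, together with the decay of the equilibrium $M_j$, would be invoked, and it is the main obstacle to a fully rigorous write-up.
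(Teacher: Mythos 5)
Your proof is correct and takes essentially the same route as the paper: for i) the paper likewise observes that $\Symh(f,f)=0$ if and only if $f\in\mathrm{span}\{(M_j)_j\}$, which is exactly your argument that testing $\Lah f=0$ against $f$ kills $\Antsymh(f,f)$ by skew-symmetry and then the strict positivity of the $\beta_k^h$ from Lemma~\ref{lem:bounds_beta} forces $f/M$ to be constant. For ii) you invoke the mass conservation \eqref{eq:mass_conserv} whereas the paper evaluates the decomposition \eqref{eq:Lalphabilinear} at $g=M$ to get $\Symh(f,M)=\Antsymh(f,M)=0$; these are the same one-line computation, and your remark that the absolute convergence justifying these manipulations deserves checking is a fair point that the paper itself leaves implicit under the phrase ``suitably summable''.
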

    \begin{proof}
        For the first property just observe that $\Symh (f,f)=0$ if and only if $f\in\mathrm{span}\{(M_j)_{j\in\ZZ}\}$. The second property follows from the computation $\Symh (f,M) = \Antsymh(f,M) = 0$.
    \end{proof}

    \begin{lem} In the limit $\alpha\to2^-$, one has that
        \[
        \lim_{\alpha\to2^-}(VM)_{j+1/2}\ =\ \frac{M_j - M_{j+1}}{h}\,,
        \]
        and
        \[
        \lim_{\alpha\to2^-}\Lah f\ =\ \frac{\mathcal{G}_{j+1/2}-\mathcal{G}_{j-1/2}}{h}\,,\quad\text{with}\quad \mathcal{G}_{j+1/2}\ =\ \frac{M_j+M_{j+1}}{2\,h}\,\left(\frac{f_{j+1}}{M_{j+1}}-\frac{f_{j}}{M_{j}}\right)\,.
        \]
    \end{lem}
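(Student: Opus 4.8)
The plan is to treat the two limits in turn, the second one resting on the first. The two basic ingredients are the behaviours of the building blocks as $\alpha\to2^-$: on one hand \eqref{eq:lim_coeff_alpha2}--\eqref{eq:lim_oper_alpha2} control the discrete fractional Laplacian $\Dh$, and on the other hand the equilibrium values $M_j=\mu_\alpha(v_j)$ converge to the Gaussian values $\mu_2(v_j)$, which follows from continuity of $\alpha\mapsto\mu_\alpha(v)$ in the Fourier representation \eqref{eq:defmua}. In the limiting expressions below, $M_j$ therefore denotes $\mu_2(v_j)$, which is even in $v$, so $M_{-k}=M_k$. By the antisymmetry $(VM)_{j+\frac12}=-(VM)_{-j-\frac12}$ built into \eqref{def:VMhalf_simple}, it suffices to argue for $j\geq0$.

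For the first limit I would start from \eqref{def:VMhalf_simple}, namely $(VM)_{j+\frac12}=-\tfrac12\sum_{k=-j}^j(\Dh M)_k\,h$, which is a \emph{finite} sum of $2j+1$ terms. Passing to the limit inside each term (as in \eqref{eq:lim_oper_alpha2}, with the extra care discussed below) gives $(\Dh M)_k\to(M_{k+1}+M_{k-1}-2M_k)/h^2$. Writing $a_k:=M_{k+1}-M_k$, one has $M_{k+1}+M_{k-1}-2M_k=a_k-a_{k-1}$, so the sum telescopes to $\sum_{k=-j}^j(a_k-a_{k-1})=a_j-a_{-j-1}$. By evenness, $a_{-j-1}=M_{-j}-M_{-j-1}=M_j-M_{j+1}=-a_j$, hence $\sum_{k=-j}^j(M_{k+1}+M_{k-1}-2M_k)=2(M_{j+1}-M_j)$ and $(VM)_{j+\frac12}\to-\tfrac12\cdot\tfrac1h\cdot2(M_{j+1}-M_j)=(M_j-M_{j+1})/h$, which is the first claim.

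For the second limit I would split $\Lah=\Gamma_\alpha^h+\Dh$ as in \eqref{def_LFPdiscr}. Since $f$ is fixed, \eqref{eq:lim_oper_alpha2} applies directly and the diffusive part tends to $(f_{j+1}+f_{j-1}-2f_j)/h^2=\tfrac1h(\mathcal{D}_{j+\frac12}-\mathcal{D}_{j-\frac12})$ with flux $\mathcal{D}_{j+\frac12}=(f_{j+1}-f_j)/h$. Inserting the first limit and $M_j=\mu_\alpha(v_j)\to\mu_2(v_j)$ into the flux \eqref{def:flux} gives $\mathcal{F}_{j+\frac12}\to\tfrac1h(M_j-M_{j+1})\big(\tfrac{f_j}{2M_j}+\tfrac{f_{j+1}}{2M_{j+1}}\big)$, so $(\Gamma_\alpha^h f)_j$ converges to the corresponding discrete divergence. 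It then remains to verify the purely algebraic identity that the sum of this limiting drift flux and $\mathcal{D}_{j+\frac12}$ equals $\mathcal{G}_{j+\frac12}$: grouping the coefficients of $f_j$ and $f_{j+1}$ in $\tfrac1h(M_j-M_{j+1})\big(\tfrac{f_j}{2M_j}+\tfrac{f_{j+1}}{2M_{j+1}}\big)+(f_{j+1}-f_j)/h$ produces $-\tfrac{f_j}{2h}\tfrac{M_j+M_{j+1}}{M_j}+\tfrac{f_{j+1}}{2h}\tfrac{M_j+M_{j+1}}{M_{j+1}}=\tfrac{M_j+M_{j+1}}{2h}\big(\tfrac{f_{j+1}}{M_{j+1}}-\tfrac{f_j}{M_j}\big)=\mathcal{G}_{j+\frac12}$. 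Since the fluxes converge termwise, $(\Lah f)_j\to\tfrac1h(\mathcal{G}_{j+\frac12}-\mathcal{G}_{j-\frac12})$.

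I expect the only genuine obstacle to be the passage to the limit inside the infinite sum defining $(\Dh M)_k$, since there both the weights $\beta_\ell^h$ and the sequence $M$ depend on $\alpha$, whereas \eqref{eq:lim_oper_alpha2} is stated for a \emph{fixed} sequence. I would handle this by dominated convergence: the tail bound $\beta_\ell^h\leq B_\alpha/|h\ell|^{1+\alpha}$ from Lemma~\ref{lem:bounds_beta}, combined with the uniform-in-$\alpha$ pointwise decay of $M_j=\mu_\alpha(v_j)$ from Appendix~\ref{sec:boundsmua}, furnishes a summable dominating sequence, while $\beta_\ell^h\,h\to h^{-2}$ for $|\ell|=1$ and $0$ otherwise (from \eqref{eq:lim_coeff_alpha2}) together with $M_j\to\mu_2(v_j)$ gives the pointwise limit of the summand. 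Everything else is a finite telescoping sum and an elementary rearrangement, so no further difficulty is anticipated.
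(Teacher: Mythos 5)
Your proof is correct; the paper states this lemma without proof, and your argument --- passing to the limit in the finite sum \eqref{def:VMhalf_simple}, telescoping $\sum_{k=-j}^j(M_{k+1}+M_{k-1}-2M_k)$ via the evenness of $\mu_2$, and then recombining the limiting drift flux with the diffusive flux $(f_{j+1}-f_j)/h$ to produce $\mathcal{G}_{j+1/2}$ --- is precisely the straightforward computation the authors leave implicit. Your closing dominated-convergence remark correctly isolates the one delicate point (the simultaneous $\alpha$-dependence of the weights $\beta_\ell^h$ and of $M_j=\mu_\alpha(v_j)$ inside $(\Dh M)_k$, which \eqref{eq:lim_oper_alpha2} alone does not cover), and the domination is indeed uniform on $[\alpha_0,2)$ since $C_{1,\alpha}\to0$ keeps the constants of Lemma~\ref{lem:bounds_beta} bounded there and \eqref{eq:defmua} gives a uniform bound on $\mu_\alpha$ together with the pointwise convergence $\mu_\alpha(v_j)\to\mu_2(v_j)$.
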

    \begin{rema}
        We recall that in the case $\alpha=2$, the equilibrium $\mu_2$ is a standard Gaussian. It satisfies the relation $v\mu_2(v) = -\mu_2'(v)$ whose discrete counterpart is given by the first limit in the Lemma. When $\alpha\to2^-$, the scheme degenerates in a conservative finite difference / finite volume scheme which is clearly consistent with the equivalent reformulation
        \[
        L_2f\ =\ \partial_v\left(\mu_2\,\partial_v\left(\frac{f}{\mu_2}\right)\right)\,.
        \]
All the aforementioned good properties stated above (conservation of mass, local equilibrium, ...) still hold at the limit. A particular property is that unlike the fractional case, $L_{2}^h$ is symmetric in its natural Hilbert space. This limit discretization is in the same fashion as the famous Chang-Cooper \cite{ChangCooper:1970, BuetDellacherie:2010}, Il'In~\cite{IlIn1969} and Scharfetter-Gummel discretizations \cite{Scharfetter1969}. It is also close to the discretization adopted in \cite{bessemoulin_2020_hypocoercivity} and where the counterpart of this paper's results were proved for classical Fokker-Planck equations. 
    \end{rema}

    \subsection{Numerical schemes in unbounded velocity domain}\label{sec:full_num_scheme}
    The discrete Lévy-Fokker-Planck operator $L_\alpha^{h}$, acting only on the velocity index $j\in\ZZ$, is defined by \eqref{def_LFPdiscr}-\eqref{def:VMhalf_simple} and \eqref{eq:def_discrLap}-\eqref{eq:def_beta} in the previous section. From there, we can now define the numerical approximation of the homogeneous (in space) Lévy-Fokker-Planck equation $\partial_t f = L_\alpha f$. For a time discretization $t_n = n\Delta t$ with time step $\Delta t>0$ an approximation $f^n_j$ of $f(t_n,v_j)$ is computed by solving the implicit in time scheme 
    \begin{equation}\label{eq:def_scheme_hom}
        \frac{f^{n+1}_{j}-f^{n}_{j}}{\Delta t}\ =\ (L_\alpha^{h} f)_{j}^{n+1}\,,\quad\forall(n,j)\in\NN\times\ZZ\,,
    \end{equation}
    and starts at some given initial data $(f_j^0)_j$.

    In the inhomogeneous case, namely for the kinetic Lévy-Fokker-Planck equation $\partial_t f + v\partial_xf = L_\alpha f$, set in the phase space $\TT\times\RR$, we need some additional discretization parameters. The space and velocity step are $\Delta x = N_x^{-1}$ with $N_x$ an odd positive integer, and $\Delta v$ respectively (instead of $h$). From there we write $t_n = n\Delta t$, $x_i = i\Delta x$ and $v_j = j\Delta v$ for any $(n,i,j)\in\NN\times\ZZ/N_x\ZZ\times\ZZ$. The scheme computes the approximation of $f(t_n,x_i,v_j)$ is denoted $f^n_{i,j}$. It is implicit in time and writes
    \begin{equation}\label{eq:def_scheme_kin}
        \frac{f^{n+1}_{i,j}-f^{n}_{i,j}}{\Delta t}\,+\,(T^{\Delta x}f)_{i,j}^{n+1}\ =\ (L_\alpha^{\Delta v} f)_{i,j}^{n+1}\,,\quad\forall(n,i,j)\in\NN\times\ZZ/N_x\ZZ\times\ZZ\,,
    \end{equation}
    with given initial data $(f_{i,j}^0)_{i,j}$. The discrete transport operator writes
    \begin{equation}\label{eq:def_transport}
        \ds (T^{\Delta x}f)_{i,j}^{n}\ =\  {v_j}\frac{f^{n}_{i+1,j} - f^{n}_{i-1,j}}{2\Delta x}\,.
    \end{equation}
    
    Let us end by stating some properties of the scheme.
    
    \begin{prop}\label{prop:elementary}
        The scheme \eqref{eq:def_scheme_kin} satisfies the following properties.
        \begin{itemize}
            \item[\emph{i)}] A solution $(f^{n}_{i,j})_{i,j,n}$ is a stationary solution, \emph{i.e.} $f^{n+1}_{i,j} = f^{n}_{i,j}$ for all $n\geq0$, $i\in\ZZ/N_x\ZZ$ and $j\in\ZZ$, if and only if for some constant $C\in\RR$,
            \[
            f^{n}_{i,j}\ =\ CM_j\ =\ C\mu_\alpha(v_j)\,,\quad\forall(n,i,j)\in\NN\times\ZZ/N_x\ZZ\times\ZZ\,.
            \]
            \item[\emph{ii)}] The total mass is preserved, namely for any suitably summable initial data $(f^{0}_{i,j})_{i,j}$
            \[
            \sum_{i\in\ZZ/N_x\ZZ}\sum_{j\in\ZZ} f_{i,j}^{n}\,\Delta v\,\Delta x \ =\  \sum_{i\in\ZZ/N_x\ZZ}\sum_{j\in\ZZ} f_{i,j}^{0}\,\Delta v\,\Delta x\,,\quad \forall n\in\NN.
            \]
            \item[\emph{iii)}] The solution satisfies the following global stability estimate
            \[
            \sum_{i\in\ZZ/N_x\ZZ}\sum_{j\in\ZZ} (f_{i,j}^{n})^2M_j^{-1}\,\Delta v\,\Delta x\ \leq\ \sum_{i\in\ZZ/N_x\ZZ}\sum_{j\in\ZZ} (f_{i,j}^{0})^2M_{j}^{-1}\,\Delta v\,\Delta x\,,\quad \forall n\in\NN.
            \]
            {\item[\emph{iv)}]  Let $\mathfrak{h}=(\Delta t,\dx,\dv)$, and $f_{\mathfrak{h}}(t,x,v)= f^n_{i,j}$, if $t\in [t_n,t_{n+1})$, $x\in[x_i-\dx/2,x_i+\dx/2)$, and $v\in [v_j-\dv/2,v_j+\dv/2)$. Then, for all $T\ge 0$, $(f_{\mathfrak{h}})_{\mathfrak{h}}$ converges weakly in $L^2(0,T; L^2(\mu_\alpha^{-1}\mathrm{d}x\mathrm{d}v))$ when $\mathfrak{h}\to 0$, and its limit is the weak solution of \eqref{eq:kinfracFP} in $L^2(0,T; L^2(\mu_\alpha^{-1}\mathrm{d}x\mathrm{d}v))$. }
        \end{itemize}
    \end{prop}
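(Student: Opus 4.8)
The plan is to obtain the algebraic properties (i)--(iii) by the discrete energy method built on the bilinear decomposition \eqref{eq:Lalphabilinear}, and then to derive the convergence statement (iv) by a weak-compactness argument fed by the uniform bound of (iii). Write $\jp{u,w}:=\sum_{i,j}u_{i,j}w_{i,j}M_j^{-1}\dv\dx$ and $\|\cdot\|$ for the associated norm. Two structural facts would be used repeatedly. First, the centered transport operator $T^{\dx}$ is skew-symmetric for $\jp{\cdot,\cdot}$: since the weight $M_j^{-1}$ depends only on $j$ and the difference in \eqref{eq:def_transport} telescopes in $i$ over the periodic grid, one has $\jp{T^{\dx}u,u}=0$. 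Second, for each fixed $i$ the decomposition \eqref{eq:Lalphabilinear} (with $h=\dv$) gives $-\sum_j(L_\alpha^{\dv}u)_{i,j}u_{i,j}M_j^{-1}\dv=\mathcal{S}_\alpha^{\dv}(u_i,u_i)\ge 0$, because the skew part vanishes on the diagonal and $\mathcal{S}_\alpha^{\dv}$ is a nonnegative quadratic form by \eqref{eq:def_sym_discr} together with the positivity of the $\beta_k^h$ in Lemma~\ref{lem:bounds_beta}.

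For (i), the profile $f_{i,j}=CM_j$ is stationary at once: it is $i$-independent, so the transport term \eqref{eq:def_transport} vanishes, and $L_\alpha^{\dv}M=0$ by \eqref{eq:preserv_eq}. Conversely, a stationary solution satisfies $T^{\dx}f=L_\alpha^{\dv}f$; pairing with $f$ and using $\jp{T^{\dx}f,f}=0$ yields $\sum_i\mathcal{S}_\alpha^{\dv}(f_i,f_i)=0$, hence $\mathcal{S}_\alpha^{\dv}(f_i,f_i)=0$ for each $i$, and by Corollary~\ref{cor:kerim} this forces $f_{i,j}=C_iM_j$. Reinserting this into the stationary equation gives $v_jM_j(C_{i+1}-C_{i-1})=0$ for all $i,j$, i.e. $C_{i+2}=C_i$, so $C_i$ is $2$-periodic in $i$; since $N_x$ is odd this forces $C_i$ to be constant. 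Property (ii) follows by summing \eqref{eq:def_scheme_kin} over $(i,j)$ with weight $\dv\dx$: the time differences telescope, the transport sum vanishes by periodicity in $i$, and $\sum_j(L_\alpha^{\dv}f)_{i,j}\dv=0$ by the mass conservation \eqref{eq:mass_conserv}. For (iii) I pair \eqref{eq:def_scheme_kin} at the implicit level with $f^{n+1}$: the transport contribution vanishes, the right-hand side equals $-\dx\sum_i\mathcal{S}_\alpha^{\dv}(f_i^{n+1},f_i^{n+1})\le 0$, and the elementary inequality $a(a-b)\ge\tfrac12(a^2-b^2)$ applied termwise to the discrete time derivative gives $\|f^{n+1}\|^2\le\|f^n\|^2$, whence the claim by induction.

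The convergence statement (iv) is the substantial part. The bound of (iii) controls $(f_{\mathfrak h})$ in $L^\infty(0,T;L^2(\mu_\alpha^{-1}))$ uniformly in $\mathfrak h$, so up to extraction $f_{\mathfrak h}\rightharpoonup f$ weakly in $L^2(0,T;L^2(\mu_\alpha^{-1}))$. I would then fix a test function $\varphi$, smooth and compactly supported in $[0,T)\times\TT\times\RR$, multiply \eqref{eq:def_scheme_kin} by $\varphi(t_n,x_i,v_j)$, sum against $\dt\dx\dv$, and pass to the limit termwise. A discrete summation by parts in $n$ turns the time term into $-\iiint f\,\partial_t\varphi$ plus the initial contribution $-\iint f^0\varphi(0,\cdot,\cdot)$. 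In the transport and collision terms the operators act on the only weakly convergent factor, so they must be transferred onto $\varphi$: skew-symmetry of $T^{\dx}$ moves the transport onto $\varphi$ with $(T^{\dx}\varphi)_{i,j}\to v\,\partial_x\varphi$, while the self-adjointness \eqref{eq:sym_discrLap} of the discrete fractional Laplacian together with a summation by parts on the flux form \eqref{def:Gamma}--\eqref{def:flux} of $\Gamma_\alpha^{\dv}$ produces a discrete adjoint $(L_\alpha^{\dv})^\star\varphi$ consistent, on smooth data, with $L_\alpha^\star\varphi=-v\,\partial_v\varphi-(-\Delta_v)^{\alpha/2}\varphi$ through \eqref{eq:conv_discrLFP}. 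Matching the surviving terms identifies $f$ as a weak solution of \eqref{eq:kinfracFP}, and uniqueness of such weak solutions upgrades the subsequential limit to convergence of the whole family.

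The main obstacle is the rigorous passage to the limit in the nonlocal collision term, and it is a weighted-tail issue rather than an interior-consistency one. Because $(-\Delta_v)^{\alpha/2}$ is nonlocal, $(L_\alpha^{\dv})^\star\varphi$ is not compactly supported even when $\varphi$ is: its velocity tail decays only like $|v|^{-1-\alpha}$, exactly the rate of $\mu_\alpha$ itself. The discrete pairing equals $\jp{f^{n+1},(L_\alpha^{\dv})^\star\varphi\,M}$, so to pass to the limit against $f_{\mathfrak h}$ in $L^2(\mu_\alpha^{-1})$ I must show that $(L_\alpha^{\dv})^\star\varphi$ converges to $L_\alpha^\star\varphi$ \emph{strongly} in the dual space $L^2(\mu_\alpha)$, uniformly in $\mathfrak h$. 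This requires controlling the infinite tail of the $\beta_k^h$-sum beyond the support of $\varphi$ against the weight $\mu_\alpha$, using Lemma~\ref{lem:bounds_beta} and the pointwise bounds on $\mu_\alpha$ from the appendix, and not merely the interior estimate \eqref{eq:conv_discrLFP}. A secondary technical point is the well-posedness of the implicit step, i.e. solvability of \eqref{eq:def_scheme_kin} for $f^{n+1}$, which follows from the coercivity of $I+\dt\,T^{\dx}-\dt\,L_\alpha^{\dv}$ for $\jp{\cdot,\cdot}$ guaranteed by the same two structural facts used above.
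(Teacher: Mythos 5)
Your proofs of i)--iii) are correct and essentially the paper's own: the energy identity obtained by pairing the scheme with $f^{n+1}$ (skew-symmetry of $T^{\dx}$ in the $M_j^{-1}$-weighted pairing plus the decomposition \eqref{eq:Lalphabilinear}), the vanishing of $\mathcal{S}_\alpha^{\dv}(f_i,f_i)$ forcing $f_{i,j}=C_iM_j$, and the parity argument $C_{i+1}=C_{i-1}$ combined with $N_x$ odd are exactly the steps in the paper, and your inequality $a(a-b)\ge\tfrac12(a^2-b^2)$ is a clean way to state what the paper's telescoping identity gives.

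For iv) your overall route (weak compactness from iii), testing with smooth compactly supported $\varphi$, transferring the operators onto $\varphi$, identifying the limit, and invoking uniqueness via linearity and Theorem~\ref{t:maincont}) is again the paper's, but you stop precisely where the paper does the actual work, and your diagnosis of the obstacle is off. The consistency estimate \eqref{eq:conv_discrfracLap} is not an ``interior'' estimate: the supremum runs over all of $\ZZ$ and the $\mathcal{C}^4_b$ norm is global, so the strong dual convergence you ask for in the fractional-Laplacian term follows in one line, since $\sum_j |f_j|\,|\mathrm{err}_j|\,h\le \|f\|_{\ell^2_h(M^{-1})}\,\sup_j|\mathrm{err}_j|\,\big(\sum_j M_j\,h\big)^{1/2}$ and the measure $\mu_\alpha\,\mathrm{d}v$ is finite (the paper phrases this as Cauchy--Schwarz with $\|\mu_\alpha\|_{L^2(\mu_\alpha^{-1})}=1$); no separate control of the $\beta_k^h$-tails beyond the support of $\varphi$ is needed there. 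The genuinely delicate terms, which your proposal does not address, are elsewhere: first, the drift coefficients are the nonlocal quantities \eqref{def:VMhalf_simple}, and one must compare $(VM)_{j\pm1/2}$ with $v_{j\pm1/2}\mu_\alpha(v_{j\pm1/2})$, incurring an error $O(|v_j|(h^{3-\alpha}+h^{2-\alpha}))$ that grows in $v$ and is absorbed only by a weighted Cauchy--Schwarz in which $\sum_j |v_j|^2\big((\varphi_{j+1}-\varphi_j)/h\big)^2 M_j^{-1}\,h$ is finite because $\nabla_v\varphi$ has compact support; second, the discrete weak formulation \eqref{eq:DiscrWeakFormulation} requires a piecewise-linear interpolant $\varphi_{\mathfrak{h}}$ of the test function, and the error $\Lambda^h_\alpha(\varphi_{\mathfrak{h}}-\varphi)=O(h^{2-\alpha})$ is where \eqref{eq:bounds_beta} actually enters, through $\sum_k\beta^h_k\,h\lesssim h^{-\alpha}$. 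Without these two estimates your summation-by-parts sketch does not close. Two minor further points: the paper carries out iv) in the homogeneous case and only asserts the inhomogeneous extension, so your direct inhomogeneous plan is fine in principle; and your Lax--Milgram remark on solvability of the implicit step is too quick, since $T^{\dx}$ and $\Gamma^{\dv}_\alpha$ have coefficients unbounded in $j$ on $\ell^2(M^{-1})$, so boundedness of the bilinear form fails and coercivity alone does not settle the question (which the paper leaves untouched).
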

    \begin{proof}
        If one multiplies the scheme by $f^{n+1}_{i,j}M_j^{-1}$ and sums over all indices, one obtains
        \[
        \sum_{i\in\ZZ/N_x\ZZ}\sum_{j\in\ZZ} (f_{i,j}^{n})^2M_j^{-1}\,\Delta v\,\Delta x + \sum_{k=1}^n \sum_{i\in\ZZ/N_x\ZZ}\mathcal{S}_\alpha^{\Delta v} (f^k_i,f^k_i)\,\Delta t\ =\ \sum_{i\in\ZZ/N_x\ZZ}\sum_{j\in\ZZ} (f_{i,j}^{0})^2M_{j}^{-1}\,\Delta v\,\Delta x\,.
        \]
        From there, the stability estimate in iii) readily follows. Concerning the first item, observe that if $f^{n}_{i,j}$ is stationary then by the previous estimate $\mathcal{S}_\alpha^{\Delta v} (f^k_i,f^k_i)=0$ for all $k\in\NN$ and $i\in\ZZ/N_x\ZZ$. As a consequence of \eqref{eq:def_sym_discr}, the solution is of the form $f^n_{i,j} = \rho_iM_j$. Plugging this back in the steady version of \eqref{eq:def_scheme_kin} yields $\rho_{i+1} - \rho_{i-1} = 0$ for all $i$, therefore $\rho_i = C$ for all $i$, as $N_x$ is odd. It proves one implication in i) and the converse is trivial. Finally the conservation of mass in ii) is obtained by summing \eqref{eq:def_scheme_kin}  over $i$ and $j$.
        
        {For the sake of simplicity, the point iv) is only proved in the homogeneous case, we thus denote $\mathfrak{h}=(\dt, h)$. The extension to the inhomogeneous case is straightforward.
        Thanks to the global stability estimate in iii), $(f_{\mathfrak{h}})_{\mathfrak{h}}$ is bounded in $L^2(0,T; L^2(\mu_\alpha^{-1}\mathrm{d}v))$. Hence, it admits a subsequence (that we still denote $(f_{\mathfrak{h}})_{\mathfrak{h}}$) which converges weakly to some $f_*\in L^2(0,T; L^2(\mu_\alpha^{-1}\mathrm{d}v))$. We show in what follows that $f_*$ is a weak solution of \eqref{eq:fracFP} in $L^2(0,T; L^2(\mu_\alpha^{-1}\mathrm{d}v))$. The uniqueness of such a weak solution is a consequence of the linearity of \eqref{eq:fracFP} and of Theorem \ref{t:maincont}. 
        \\ Let $\varphi\in\mathcal{C}^\infty_0([0,+\infty)\times\mathbb{R})$ be a compactly supported test function, with support in $[0,T)\times[-V,V]$. A piecewise linear function $\varphi_{\mathfrak{h}}$ is introduced, such that for all $t\in[t_n,t_{n+1})$ and all $v\in[v_j-h/2,v_j+h/2)$,
        \[
        \varphi_{\mathfrak{h}}(t,v) = \varphi(t_n,v_j)+ \frac{\varphi(t_n,v_{j+1})-\varphi(t_n,v_j)}{h}(v-v_j)\,.
        \]
        Remark that $\varphi_{\mathfrak{h}}$ depends only on $v$ and not on $t$ on $[t_n,t_{n+1})\times[v_j-h/2,v_j+h/2)$, that it coincides with $\varphi(t_n,v_j)$ at $(t_n,v_j)$, and that it is linear in $v$ on $[v_j-h/2,v_j+h/2)$.
        Thanks to this construction, one has for all $n\in\mathbb{N}$, $j\in\mathbb{Z}$,
        \[
        \frac{1}{\dt\, h} \int_{t_n}^{t_{n+1}}\int_{v_j-h/2}^{v_j+h/2} \varphi_{\mathfrak{h}}(t,v)\mathrm{d}v\mathrm{d}t = \varphi(t_n,v_j)\,,
        \]
        and $|\varphi-\varphi_{\mathfrak{h}}|\lesssim (h^2+\dt) \mathds{1}_{[0,T]\times[-V,V]}$, where the constant in the inequality depends only on $\|\partial_v^2 \varphi\|_\infty$, and $\|\partial_t \varphi\|_\infty$. Note also that, 
        \begin{equation} 
        \label{eq:LocalErrApprox}
        \forall n\in\mathbb{N},\quad|\varphi(t_n,\cdot)-\varphi_{\mathfrak{h}}(t_n,\cdot)|\lesssim h^2 \mathds{1}_{[0,T]\times[-V,V]}(t_n,\cdot)\,,
        \end{equation}
        where the constant depends only on $\|\partial_v^2 \varphi(t_n,\cdot)\|_\infty$. The discrete weak formulation of \eqref{eq:def_scheme_hom} is obtained by multiplying \eqref{eq:def_scheme_hom} by $\varphi_{\mathfrak{h}}(t_{n+1},v_j)$, and summing in $n\in\mathbb{N}$ and $j\in\mathbb{Z}$. Denoting $\varphi^n_j=\varphi(t_n,v_j)=\varphi_{\mathfrak{h}}(t_n,v_j)$, it yields
        \begin{multline}
        \sum\limits_{n\in\mathbb{N}} \sum\limits_{j\in\mathbb{Z}} f^{n+1}_j \left( 
        \frac{\varphi^{n+2}_j - \varphi^{n+1}_j}{\dt} + \left(\Lambda_\alpha^h \varphi \right)^{n+1}_j - \frac{(VM)_{j+1/2}}{2M_j} \frac{\varphi^{n+1}_{j+1}-\varphi^{n+1}_j}{h}
        - \frac{(VM)_{j-1/2}}{2M_j} \frac{\varphi^{n+1}_{j}-\varphi^{n+1}_{j-1}}{h}
        \right)
        \\
        + \frac{1}{\dt} \sum\limits_{j\in\mathbb{Z}} f^0_j \varphi^1_j = 0\,, \label{eq:DiscrWeakFormulation}
        \end{multline}
        where
        \begin{align*}
        \dt \,h &\sum\limits_{n\in\mathbb{N}}\sum\limits_{j\in\mathbb{Z}} f^{n+1}_j \frac{\varphi^{n+2}_j - \varphi^{n+1}_j}{\dt} + h \sum\limits_{j\in\mathbb{Z}} f^0_j \varphi^1_j \\&\underset{\mathfrak{h}\to 0}\longrightarrow \int_0^{+\infty} \int_{\mathbb{R}} f_*(t,v)\; \partial_t \varphi(t,v) \mathrm{d}v \mathrm{d}t + \int_{\mathbb{R}} f_*(0,v) \varphi(0,v) \mathrm{d} v\,,
        \end{align*}
        using dominated convergence theorem, the fact that $f_{\mathfrak{h}}$ converges weakly to $f_*$ in $L^2(0,T;L^2(\mu_\alpha^{-1}\mathrm{d}v))$, and that $\varphi$ is smooth.
        Remark now that 
        \begin{align*}
        \dt \,h &\sum\limits_{n\in\mathbb{N}} \sum\limits_{j\in\mathbb{Z}} f^{n+1}_j \left( \Lambda_\alpha^h\varphi\right)^{n+1}_j 
        = -\int_{\dt}^{+\infty}\int_{\mathbb{R}} f_{\mathfrak{h}}(t,v) \left(-\Delta_v\right)^{\alpha/2}\varphi(t,v) \mathrm{d}v\mathrm{d}t  
        \\&\qquad 
        + \int_\dt^{+\infty}\int_{\mathbb{R}} f_{\mathfrak{h}}(t,v) \left[ \left( -\Delta_v\right)^{\alpha/2}\varphi(t,v) + \left(\Lambda_\alpha^h \varphi(t,\cdot)\right)(v) \right]\mathrm{d}v\mathrm{d}t 
        \\&\qquad + \int_{\mathbb{R}} \sum\limits_{n\in\mathbb{N}}f_{\mathfrak{h}}(t_{n+1},v) \left( 
        \Delta t  \left( \Lambda^h_\alpha \varphi(t_{n+1},\cdot) \right)(v)
        -\int_{t_n}^{t_{n+1}} \left( \Lambda^h_\alpha \varphi(t+\Delta t,\cdot) \right)(v) \mathrm{d}t
        \right) \mathrm{d}v
        \\&\qquad
        + \dt\sum\limits_{n\in\mathbb{N}} \int_{\mathbb{R}} f_{\mathfrak{h}}(t_{n+1},v) \left[ \left( \Lambda_\alpha^h \left(\varphi_{\mathfrak{h}}-\varphi\right) \left(t_{n+1},\cdot\right) \right)(v)  \right] \mathrm{d}v\,,
        \end{align*}
        where, for any function $\phi$ decreasing enough at infinity, and for all $v\in\mathbb{R}$
        \begin{equation}
        \label{eq:DiscrLap_continuous}
        \left(\Lambda_\alpha^h\phi\right)(v):=\sum\limits_{k\in\mathbb{Z}} \beta^h_k   \left( \phi(v-kh)-\phi(v)\right)h.
        \end{equation}
        Then, use Cauchy-Schwarz inequality and \eqref{eq:conv_discrfracLap} to get
        \begin{align*}
        &\left| \int_h^{+\infty}\int_{\mathbb{R}} f_{\mathfrak{h}}(t,v) \left[ \left( -\Delta_v\right)^{\alpha/2}\varphi(t,v) + \left(\Lambda_\alpha^h \varphi(t,\cdot)\right)(v) \right]\mathrm{d}v\mathrm{d}t \right| 
        \\
        & \le h^{3-\alpha} K_\alpha \|\varphi\|_{\mathcal{C}_b^4(\mathbb{R}_+\times\mathbb{R})} \|f_{\mathfrak{h}} \|_{L^2(0,T; L^2(\mu_\alpha^{-1}\mathrm{d}v))} \| \mu_\alpha \|_{L^2(0,T; L^2(\mu_\alpha^{-1}\mathrm{d}v))}
        \underset{\mathfrak{h}\to 0}\longrightarrow 0\,.
        \end{align*}
        In the third term, the difference between each integral in time and its evaluation at its right bound is the local quadrature error for a smooth function, hence for all $n\in\mathbb{N}$
        \begin{align*}
        \left|\Delta t  \left( \Lambda^h_\alpha \varphi(t_{n+1},\cdot) \right)(v)
        -\int_{t_n}^{t_{n+1}} \left( \Lambda^h_\alpha \varphi(t+\Delta t,\cdot) \right)(v) \mathrm{d}t \right| \lesssim \Delta t^2 \mathds{1}_{[0,T]}(t_{n+1})\,,
        \end{align*}
        since $\varphi$ is compactly supported in time. Cauchy-Schwarz inequality then yields 
        \begin{align*}
        &\left|  \int_{\mathbb{R}} \sum\limits_{n\in\mathbb{N}}f_{\mathfrak{h}}(t_{n+1},v) \left( 
        \Delta t  \left( \Lambda^h_\alpha \varphi(t_{n+1},\cdot) \right)(v)
        -\int_{t_n}^{t_{n+1}} \left( \Lambda^h_\alpha \varphi(t+\Delta t,\cdot) \right)(v) \mathrm{d}t
        \right) \mathrm{d}v   \right| 
        \\&\quad \le \Delta t \left\|f_{\mathfrak{h}}\right\|_{L^2(0,T;L^2(\mu_\alpha^{-1}\mathrm{d}v))} \left\|\mu_\alpha\right\|_{L^2(0,T,L^2(\mu_\alpha^{-1}\mathrm{d}v))} \underset{\mathfrak{h}\to 0}\longrightarrow 0\,.
        \end{align*}
        The fourth term is estimated using \eqref{eq:DiscrLap_continuous}, \eqref{eq:bounds_beta} and \eqref{eq:LocalErrApprox}. Let $n\ge 1$, one has
        \begin{align*}
        \left| \left( \Lambda_\alpha^h \left(\varphi_{\mathfrak{h}}-\varphi\right) \left(t_{n+1},\cdot\right) \right)(v)  \right| \lesssim h^2 \mathds{1}_{[0,T]}(t_{n+1}) \sum\limits_{k\in\mathbb{Z}} \beta^h_k h \lesssim h^{2-\alpha}\mathds{1}_{[0,T]}(t_{n+1})\,,
        \end{align*}
        so that
        \begin{align*}
        \dt\sum\limits_{n\in\mathbb{N}} \int_{\mathbb{R}} f_{\mathfrak{h}}(t_{n+1},v) \left[ \left( \Lambda_\alpha^h \left(\varphi_{\mathfrak{h}}-\varphi\right) \left(t_{n+1},\cdot\right) \right)(v)  \right] \mathrm{d}v\underset{\mathfrak{h}\to 0}\longrightarrow 0\,,
        \end{align*}
        and hence dominated convergence theorem yields
        \begin{align*}
        \dt \, h &\sum\limits_{n\in\mathbb{N}} \sum\limits_{j\in\mathbb{Z}} f^{n+1}_j \left( \Lambda_\alpha^h\varphi\right)^{n+1}_j   \underset{\mathfrak{h}\to 0}\longrightarrow -\int_0^{+\infty}\int_{\mathbb{R}} f_*(t,v) \left(-\Delta_v\right)^{\alpha/2}\varphi(t,v) \mathrm{d}v\mathrm{d}t\,.
        \end{align*}
        Coming back to \eqref{eq:DiscrWeakFormulation}, note that the two terms with $(VM)_{j\pm1/2}$ can be dealt with similarly. It can be reformulated as
        \begin{align*}
        &\dt \, h\sum\limits_{n\in\mathbb{N}} \sum\limits_{j\in\mathbb{Z}} f^{n+1}_j \frac{(VM)_{j+1/2}}{2M_j} \frac{\varphi^{n+1}_{j+1}-\varphi^{n+1}_j}{h}  
        =\dt \, h\sum\limits_{n\in\mathbb{N}} \sum\limits_{j\in\mathbb{Z}} f^{n+1}_j \frac{v_{j+1/2}\mu_\alpha(v_{j+1/2})}{2\mu_\alpha(v_j)} \frac{\varphi^{n+1}_{j+1}-\varphi^{n+1}_j}{h} 
        \\&\qquad \qquad
        + \dt \, h\sum\limits_{n\in\mathbb{N}} \sum\limits_{j\in\mathbb{Z}} f^{n+1}_j \frac{(VM)_{j+1/2}-v_{j+1/2}\mu_\alpha(v_{j+1/2})}{2\mu_\alpha(v_j)} \frac{\varphi^{n+1}_{j+1}-\varphi^{n+1}_j}{h}\,,
        \end{align*}
        where 
        \[
        \dt \, h\sum\limits_{n\in\mathbb{N}} \sum\limits_{j\in\mathbb{Z}} f^{n+1}_j \frac{v_{j+1/2}\mu_\alpha(v_{j+1/2})}{2\mu_\alpha(v_j)} \frac{\varphi^{n+1}_{j+1}-\varphi^{n+1}_j}{h} \underset{\mathfrak{h}\to 0}\longrightarrow \frac{1}{2} \int_0^{+\infty} \int_{\mathbb{R}} f_*(t,v) v \partial_v \varphi(t,v) \mathrm{d}v\mathrm{d}t,
        \]
        and the last term vanishes when $\mathfrak{h}\to 0$. Indeed, let us define $M_{\mathfrak{h}}$ by
        \[
        \forall v\in[v_j-h/2,v_j+h/2), \quad M_{\mathfrak{h}}(v):=M_j+ \frac{M_{j+1}-M_j}{h}(v-v_j)\,,
        \]
        so that the definition of $(VM)_{j+1/2}$ in \eqref{def:VMhalf_simple} yields
        \begin{align*}
        &2\left| (VM)_{j+1/2} - v_{j+1/2}\mu_\alpha(v_{j+1/2})\right|
        \\&\quad \le 
        \left| \int_{-v_{j+1/2}}^{v_{j+1/2}} \left( (-\Delta_w)^{\alpha/2} \mu_\alpha(w)+ \left(\Lambda^h_\alpha \mu_\alpha\right)(w) \right)\mathrm{d}w \right|
        + \left|
        \int_{-v_{j+1/2}}^{v_{j+1/2}}\left( \Lambda^h_\alpha (M_{\mathfrak{h}} - \mu_\alpha) \right)(w) \mathrm{d}{w}
        \right|
        \\&\quad \lesssim |v_j|( h^{3-\alpha}+ h^{2-\alpha})\,,
        \end{align*}
        where the last estimate was obtained thanks to \eqref{eq:conv_discrfracLap} and \eqref{eq:LocalErrApprox}. We conclude using Cauchy-Schwarz inequality 
        \begin{align*}
        &\left| \dt \, h\sum\limits_{n\in\mathbb{N}} \sum\limits_{j\in\mathbb{Z}} f^{n+1}_j \frac{(VM)_{j+1/2}-v_{j+1/2}\mu_\alpha(v_{j+1/2})}{2\mu_\alpha(v_j)} \frac{\varphi^{n+1}_{j+1}-\varphi^{n+1}_j}{h}\right|
        \\&\quad \lesssim h^{2-\alpha}\left( \Delta t \, h \sum\limits_{n\in\mathbb{N}} \sum\limits_{j\in\mathbb{Z}} \frac{|f^{n+1}_j|^2}{4M_j} \right)^{1/2}
        \left( \dt\,  h \sum\limits_{n\in\mathbb{N}} \sum\limits_{j\in\mathbb{Z}} |v_j|^2 \left(\frac{\varphi^{n+1}_{j+1}-\varphi^{n+1}_j}{h}\right)^2 \frac{1}{M_j} \right)^{1/2}
        \\&\quad \lesssim h^{2-\alpha} \underset{\mathfrak{h}\to 0}\longrightarrow 0\,,
        \end{align*}
        and $f_*$ is a weak solution of \eqref{eq:fracFP} in $L^2(0,T; L^2(\mu_\alpha^{-1}\mathrm{d}v))$.
        }
    \end{proof}
    
    \begin{rema}Observe that as a corollary of the results of Proposition~\ref{prop:elementary}, one can derive equivalent properties in the homogeneous case, that is for the scheme \eqref{eq:def_scheme_hom}.
    \end{rema}

    \section{Truncation of the velocity domain}\label{sec:truncation}
    
    For practical computations, the discrete velocity domain has to be reduced to a finite number of points. Despite the truncation, the discretization  still needs to approximate the Fokker-Planck operator on the whole real line. Moreover, it is desirable to preserve the structural properties of the discretization, such as preservation of mass and equilibrium.
    
    In this section, the domain in velocity is now reduced to a symmetric interval $v\in[-L,L]$ with~$L = Jh$ and $J$ a positive integer. The discrete velocities are
    \[
    v_j = jh\,,\quad j\in\{-J, \dots, J\}\,.
    \]
    Let us introduce a truncated version $\Laht$ of the operator $\Lah$ which satisfies the aforementioned requirements.
    
    \subsection{Truncated discrete fractional Laplacian}
    
    For the truncated version of the fractional Laplacian, we follow the method of Huang and Oberman \cite[Section~5]{huang_2014_discretization}. The singular integral version of $-(-\Delta_v f)^{\alpha/2}(v_j)$ may be decomposed into the sum of three contributions 
    \begin{multline*}
        -(-\Delta_v f)^{\alpha/2}(v_j)\ =\ \underbrace{C_{1,\alpha}\int_{-L_W}^{L_W}(f(v_j-w)-f(v_j))\,\frac{\dd w}{w^{1+\alpha}}}_{\text{(I)}}\\*\,-\,\underbrace{C_{1,\alpha}\int_{|w|>L_W}f(v_j)\,\frac{\dd w}{w^{1+\alpha}}}_{\text{(II)}}
        \,+\,\underbrace{C_{1,\alpha}\int_{|w|>L_W}f(v_j-w)\,\frac{\dd w}{w^{1+\alpha}}}_{\text{(III)}}\,,
    \end{multline*}
    where $L_W = Kh$ is a second truncation threshold which we assume to be such that
    \[
    L_W\geq 2L\,.
    \]
    For technical reason, we also  {assume that $K$ is an odd integer}. From there, each term is discretized in a specific way. For the first term, we just truncate the discretization of Section~\ref{sec:discr_FL}, namely
    \[
    \text{(I)}\,\approx\, \sum_{k=-K}^K(f_{j-k}-f_j)\,\beta_k^h\,h\,,
    \]
    where the coefficients $\beta_k^h$ are given by \eqref{eq:def_beta} for $k = -K+1, \dots, K-1$ and 
    \begin{equation}\label{eq:def_beta_last}
        \beta_{\pm K}\ =\ \frac{C_{1,\alpha}}{2h^{1+\alpha}}\left(2\varphi_\alpha''(K)+2\varphi_\alpha(K)-2\varphi_\alpha(K-2)-\varphi_\alpha'(K-2)-3\varphi_\alpha'(K)\right)\,.
    \end{equation}
    The integral in the second term can be computed exactly and this term is thus approximated by 
    \[
    \text{(II)}\ \approx\ \frac{2C_{1,\alpha}}{L_W^\alpha\,\alpha}\,f_j\,.
    \]
    For the third term, the function $f$ is evaluated outside of the truncation domain $[-L,L]$, since $L_W\geq2L$. As solutions of the fractional Fokker-Planck equation develop algebraic tails with known exponent, we approximate the function $f$ outside the domain {by an algebraically decaying function $f(\pm v) = f_{\pm J}(L/v)^{\gamma}$}, for $v\geq L$. It yields the approximation $\text{(III)} = \text{(IIIa)} + \text{(IIIb)}$ where {
    \[
    \text{(IIIa)}\ \approx\ f_J\,\frac{L^{\gamma}C_{1,\alpha}}{L_W^{\gamma+\alpha}(\gamma+\alpha)}\,{}_2F_1\left(\gamma,\gamma+\alpha;1+\gamma+\alpha;-\frac{j}{K}\right)\,,
    \]
    and
    \[
    \text{(IIIb)}\ \approx\ f_{-J}\,\frac{L^{\gamma}C_{1,\alpha}}{L_W^{\gamma+\alpha}(\gamma+\alpha)}\,{}_2F_1\left(\gamma,\gamma+\alpha;1+\gamma+\alpha;\frac{j}{K}\right)\,,
    \]}
    with ${}_2F_1$ the Gauss hypergeometric function. Observe that in the approximation  of (I), one needs $f_k$ outside of the domain $\{-J,\dots,J\}$. As for the approximation of the third term, one uses an algebraically decaying extension, that is { $f_{\pm k} = f_{\pm J}(J/k)^{\gamma}$}, for $k\geq J$. Altogether, the quantity $-(-\Delta_v f)^{\alpha/2}(v_j)$ is approximated by $(\Dht f)_j$ where the matrix $\Lambda_\alpha^{h,J,K}$, approximating the fractional Laplace operator on the domain $[-Jh,Jh]$ with integral representation truncated on $[-Kh, Kh]$ (with $K\geq2J)$, is given by

    \begin{equation}\label{eq:def_discrLap_trunc1}
        \Dht\ =\ -\left(\frac{2 C_{1,\alpha}}{\alpha(Kh)^\alpha} + \sum_{k = -K}^K\beta_k^h\,h\right)I + P_\alpha^{h,J,K}+ Q_\alpha^{h,J,K}\,,
    \end{equation}
    where $I$ is the identity matrix and for $j,k\in\{-J,\dots,J\}$
    \begin{equation}\label{eq:def_discrLap_trunc2}
        (P_\alpha^{h,J,K})_{jk}\ =\ \left\{
        \begin{array}{ll}
            \beta_{j-k}^h&\text{if }|k|<J\\[.5em]
            \ds\sum_{l = J}^{j+K}\beta_{j-l}^h    {\left(\frac{J}{l}\right)^{\gamma}} &\text{if }k=J\\[.5em]
            \ds\sum_{l = j-K}^{-J}\beta_{j-l}^h    {\left(\frac{J}{l}\right)^{\gamma}} &\text{if }k=-J
        \end{array}
        \right.
    \end{equation}
    \begin{equation}\label{eq:def_discrLap_trunc3}
        (Q_\alpha^{h,J,K})_{jk}\ =\ \left\{
        \begin{array}{ll}
            0&\text{if }|k|<J\\[.5em]
            \ds    {\frac{C_{1,\alpha}(Jh)^{\gamma}}{(Kh)^{\alpha+\gamma}(\alpha+\gamma)}\,{}_2F_1\left(\gamma,\alpha+\gamma,1+\alpha+\gamma,-\frac{j}{K}\right)}&\text{if }k=J\\[.5em]
            \ds    {\frac{C_{1,\alpha}(Jh)^{\gamma}}{(Kh)^{\alpha+\gamma}(\alpha+\gamma)}\,{}_2F_1\left(\gamma,\alpha+\gamma,1+\alpha+\gamma,\frac{j}{K}\right)}&\text{if }k=-J
        \end{array}
        \right.
    \end{equation}
    
    {
    \begin{rema}[Choice of $\gamma$] At the continuous level, as soon as $f^\text{in}/\mu_\alpha$ is bounded, the solution $f\equiv f(t,v)$ to the L\'evy-Fokker-Planck equation $\partial_tf = L_\alpha f$ decays exactly like $O(|v|^{-1-\alpha})$ at infinity for any positive time $t>0$. It is seen by expressing the solution with the fundamental solution of the equation (which is essentially $\mu_\alpha$ rescaled in time). In this case,
    \[
     \gamma = 1+\alpha\,.
    \]
    If the decay of the initial data is slower, say if $f^\text{in}$ behaves like $O(|v|^{-\beta})$ at infinity, with $\beta<1+\alpha$, then the same decay will hold for the solution at any positive time. In this case the parameter should be taken as  $\gamma = \beta$.
    \end{rema}}
    \subsection{Discretization of the drift and boundary fluxes}
    Now we turn to the discretization of the drift term $\partial_v(vf)$. The goal here is to propose a consistent discretization $\Ght$ which, despite the truncation of the domain, preserves the two important features that are preservation of mass and equilibrium for the full truncated discrete Fokker-Planck operator (matrix)
    \begin{equation}\label{q:def_discrLap_trunc2}
        \Laht\ =\ \Ght \,+\, \Dht\,.
    \end{equation}
    Concerning conservation of mass, let us remark that at the continuous level the mass is not preserved on the truncated domain $[-L,L]$. For the classical Fokker-Planck equation, when $\alpha = 2$, it makes sense to impose conservation of mass on the truncated domain at the discrete level because densities decay typically like Gaussian, so that up to a choice of a large truncation parameter $L>0$, the loss of information outside of the domain is comparable  to machine precision error. In the fractional case~$\alpha<2$, the mass outside of the truncation domain is non negligible and should be taken into account (see Remark~\ref{rem:Ialph} for a quantitative illustration). We shall do this by imposing well-chosen artificial boundary conditions.
    
    The truncated discretization of the drift term still writes
    \begin{equation}\label{def:Gamma_trunc1}
        (\Ght f)_j\ =\  \frac{1}{2h}\left(\pare{VM}_{j+\frac{1}{2}}^{J,K}\left(\frac{f_{j+1}}{M_{j+1}}+\frac{f_j}{M_j}\right) - \pare{VM}_{j-\frac{1}{2}}^{J,K}\left(\frac{f_{j}}{M_{j}}+\frac{f_{j-1}}{M_{j-1}}\right)\right)\,,\quad\text{for }|j|< J
    \end{equation}
    where the approximations $\pare{VM}_{j+\frac{1}{2}}^{J,K}$ of $v_{j+1/2}\mu_\alpha(v_{j+1/2})$ are given, as in the untruncated case, in terms of the discrete fractional Laplacian of the equilibrium
    \begin{equation}\label{def:VMhalf_trunc}
        \pare{VM}_{j+\frac{1}{2}}^{J,K} = -\pare{VM}_{-j-\frac{1}{2}}^{J,K} := -\frac{1}{2}\sum_{k=-j}^j(\Dht M)_k\,h\,, \quad\text{for }j\geq0\,.
    \end{equation}
    It remains to define the endrows of $\Ght$. We set
    \begin{equation}\label{def:Gamma_trunc2}
        (\Ght f)_J\ =\  \frac{1}{h}\left(\mathcal{F}_{J+1/2}(f) - \frac12\pare{VM}_{J-\frac{1}{2}}^{J,K}\left(\frac{f_{J}}{M_{J}}+\frac{f_{J-1}}{M_{J-1}}\right)\right)\,,
    \end{equation}
    and 
    \begin{equation}\label{def:Gamma_trunc3}
        (\Ght f)_{-J}\ =\  \frac{1}{h}\left(\frac12\pare{VM}_{-J+\frac{1}{2}}^{J,K}\left(\frac{f_{-J+1}}{M_{-J+1}}+\frac{f_{-J}}{M_{-J}}\right) - \mathcal{F}_{-J-1/2}(f)\right)\,,
    \end{equation}
    where the boundary fluxes are defined by 
    \begin{multline}\label{def:Gamma_trunc4}
        \mathcal{F}_{J+1/2}(f)\ =\ -h\,(\Dht f)_J-\frac{h}{2(h+I_\alpha^L)}\sum_{k = -J+1}^{J-1}(\Dht f)_k\,h\\\,+\,\frac{I_\alpha^L}{2(h+I_\alpha^L)}\pare{VM}_{J-\frac{1}{2}}^{J,K}\left(\frac{f_{J}}{M_{J}}+\frac{f_{J-1}}{M_{J-1}}\right)\,,
    \end{multline}
    and
    \begin{multline}\label{def:Gamma_trunc5}
        \mathcal{F}_{-J-1/2}(f)\ =\ h\,(\Dht f)_{-J}+\frac{h}{2(h+I_\alpha^L)}\sum_{k = -J+1}^{J-1}(\Dht f)_k\,h\\\,+\,\frac{I_\alpha^L}{2(h+I_\alpha^L)}\pare{VM}_{-J+\frac{1}{2}}^{J,K}\left(\frac{f_{-J+1}}{M_{-J+1}}+\frac{f_{-J}}{M_{-J}}\right)\,.
    \end{multline}
    
    The quantity $f_JI_\alpha^L$ (\emph{resp.} $f_{-J}I_\alpha^L$) is the mass of the extension of $f$ at the right (\emph{resp.} the left) of the truncation domain. In order to determine the parameter $I_\alpha^L$, we assume that at the right (\emph{resp.} left) of the domain  $f$ is well approximated by a $f_J\mu_\alpha(v)/ \mu_\alpha(L)$ (\emph{resp} $f_{-J}\mu_\alpha(v)/ \mu_\alpha(-L)$). It may be defined in several ways and we choose it to be such that $
    I_\alpha^L\ \approx\ \mu_\alpha(L)^{-1}\int_{L}^\infty\mu_\alpha(v)\dd v
    $. Observe that $2\int_{L}^\infty\mu_\alpha(v)\dd v = 1-\int_{-L}^L\mu_\alpha(v)\dd v$ so that a natural definition of $ I_\alpha^L$ is  
    \begin{equation}\label{eq:def_Ialph}
        I_\alpha^L = \frac{1}{2\,M_J}\left(1-\sum_{j = -J}^JM_j\,h\right)\,. 
    \end{equation}
    
    We now need to justify the expression of the boundary fluxes. Observe that the first two terms in both \eqref{def:Gamma_trunc4} and \eqref{def:Gamma_trunc5} are consistent with $0$ so these discrete fluxes are consistent with the continuous ones (if, say, $h\to0$ and $Jh$ remains constant). The purpose of these correction terms is illustrated by the following proposition.
    
    \begin{prop}
        The  truncated discrete fractional Fokker-Planck operator $\Laht$ defined in \eqref{eq:def_discrLap_trunc1}-\eqref{def:Gamma_trunc5} satisfies the following properties.
        \begin{itemize}
            \item[\emph{i)}] It preserves the equilibrium, namely
            \[
            (\Laht M)_j = 0 \,.
            \]
            \item[\emph{ii)}] It preserves the total mass: if
            \[
            \frac{f^{n+1} - f^{n}}{\Delta t}\ = \Laht f^{n+1}
            \]
            then 
            \[
            \sum_{j=-J}^J f_j^{n+1}\,h + I_\alpha^L(f_J^{n+1}+f_{-J}^{n+1})\ =\  \sum_{j=-J}^J f_j^{n}\,h + I_\alpha^L(f_J^n+f_{-J}^n)\,.
            \]
        \end{itemize}
    \end{prop}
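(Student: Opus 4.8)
The plan is to reduce each property to a single algebraic identity, exploiting that both the drift discretization $\Ght$ and the modified mass functional are organized around the same flux structure. For property i) I would establish the pointwise identity $(\Ght M)_j = -(\Dht M)_j$ for every $j\in\{-J,\dots,J\}$, which is exactly $(\Laht M)_j = 0$. For property ii) I would show that the functional $m(f) := \sum_{j=-J}^J f_j\,h + I_\alpha^L(f_J + f_{-J})$ annihilates the range of $\Laht$, i.e. $m(\Laht f)=0$ for all $f$; the conclusion then follows at once by applying $m$ to the scheme $f^{n+1}-f^n = \Delta t\,\Laht f^{n+1}$.

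For the interior points $|j|<J$, property i) is proved as in the untruncated Lemma~\ref{lem:loceq}: with $f=M$ all ratios $f_k/M_k$ equal $1$, so \eqref{def:Gamma_trunc1} collapses to $(\Ght M)_j = h^{-1}\bigl(\pare{VM}_{j+\frac12}^{J,K} - \pare{VM}_{j-\frac12}^{J,K}\bigr)$. The partial sums \eqref{def:VMhalf_trunc} telescope to $\pare{VM}_{j+\frac12}^{J,K} - \pare{VM}_{j-\frac12}^{J,K} = -\tfrac12\bigl((\Dht M)_j + (\Dht M)_{-j}\bigr)\,h$, and since $\mu_\alpha$ is even and $\Dht$ is symmetric under the reflection $j\mapsto -j$ (the matrices $P_\alpha^{h,J,K}$, $Q_\alpha^{h,J,K}$ are built symmetrically and $\beta^h$ is even), one has $(\Dht M)_{-j}=(\Dht M)_j$, whence $(\Ght M)_j = -(\Dht M)_j$. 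For the boundary point $j=J$, I would substitute $f=M$ into \eqref{def:Gamma_trunc2} and \eqref{def:Gamma_trunc4}; writing $\Sigma_M := \sum_{|k|<J}(\Dht M)_k\,h$, a short computation gives $(\Ght M)_J = -(\Dht M)_J - \tfrac{1}{h+I_\alpha^L}\bigl(\tfrac12\Sigma_M + \pare{VM}_{J-\frac12}^{J,K}\bigr)$. The bracketed term vanishes, because \eqref{def:VMhalf_trunc} gives exactly $\pare{VM}_{J-\frac12}^{J,K} = -\tfrac12\Sigma_M$; hence $(\Ght M)_J = -(\Dht M)_J$, and $j=-J$ is identical by symmetry.

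For property ii) I would first put $\Ght$ in conservative form: \eqref{def:Gamma_trunc1} reads $(\Ght f)_j = h^{-1}(\mathcal{F}_{j+\frac12} - \mathcal{F}_{j-\frac12})$ with the centered flux $\mathcal{F}_{j+\frac12} = \tfrac12\pare{VM}_{j+\frac12}^{J,K}(f_{j+1}/M_{j+1}+f_j/M_j)$, while \eqref{def:Gamma_trunc2}--\eqref{def:Gamma_trunc3} give $(\Ght f)_J = h^{-1}(\mathcal{F}_{J+1/2}(f)-\mathcal{F}_{J-\frac12})$ and $(\Ght f)_{-J} = h^{-1}(\mathcal{F}_{-J+\frac12}-\mathcal{F}_{-J-1/2}(f))$. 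Summing $h\,(\Ght f)_j$ then telescopes to the net boundary flux $\mathcal{F}_{J+1/2}(f)-\mathcal{F}_{-J-1/2}(f)$. Setting $\Sigma := \sum_{|k|<J}(\Dht f)_k\,h$, $A := h/(h+I_\alpha^L)$, $B := I_\alpha^L/(h+I_\alpha^L)$ (so $A+B=1$) and inserting \eqref{def:Gamma_trunc4}--\eqref{def:Gamma_trunc5}, whose last terms rewrite as $B\,\mathcal{F}_{J-\frac12}$ and $B\,\mathcal{F}_{-J+\frac12}$, a direct computation yields $\sum_{j=-J}^J (\Laht f)_j\,h = B\bigl(\Sigma + \mathcal{F}_{J-\frac12} - \mathcal{F}_{-J+\frac12}\bigr)$. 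The same substitutions give $(\Laht f)_J = -\tfrac{A}{2h}\Sigma - \tfrac{A}{h}\mathcal{F}_{J-\frac12}$ and $(\Laht f)_{-J} = \tfrac{A}{h}\mathcal{F}_{-J+\frac12} - \tfrac{A}{2h}\Sigma$, so that $I_\alpha^L\bigl((\Laht f)_J + (\Laht f)_{-J}\bigr) = -\tfrac{I_\alpha^L A}{h}\bigl(\Sigma + \mathcal{F}_{J-\frac12} - \mathcal{F}_{-J+\frac12}\bigr)$. Because the weight is calibrated so that $I_\alpha^L A/h = B$, the two contributions cancel and $m(\Laht f)=0$.

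The main obstacle is the boundary bookkeeping in property ii): one must carefully identify, in each endpoint cell, which flux is the interior centered flux and which is the corrected boundary flux, and rewrite the last terms of \eqref{def:Gamma_trunc4}--\eqref{def:Gamma_trunc5} in terms of $\mathcal{F}_{\pm(J-1/2)}$ so that the factor $B$ surfaces consistently. The entire cancellation rests on two calibrations built into the scheme — the weights $\tfrac{h}{2(h+I_\alpha^L)}$ and $\tfrac{I_\alpha^L}{2(h+I_\alpha^L)}$ in the boundary fluxes, and the choice of the \emph{same} $I_\alpha^L$ as the weight of $f_{\pm J}$ in $m$ — which together force $I_\alpha^L A/h = B$. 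A secondary, more technical point is the reflection symmetry $(\Dht M)_{-j}=(\Dht M)_j$ used in the interior case of property i), which I would justify from the evenness of $\mu_\alpha$ together with the symmetric construction of $P_\alpha^{h,J,K}$ and $Q_\alpha^{h,J,K}$.
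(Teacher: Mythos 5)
Your proof is correct and takes essentially the same approach as the paper, whose own proof merely records that i) follows from \eqref{def:Gamma_trunc1} and \eqref{def:VMhalf_trunc} and that ii) amounts to moving all terms to one side, substituting $f^{n+1}-f^n = \Delta t\,(\Laht f^{n+1})$ and performing a ``lengthy but straightforward computation'' --- precisely the computation you carry out, organized via the telescoping of the fluxes, the identity $(VM)_{J-\frac12}^{J,K}=-\tfrac12\Sigma_M$, and the calibration $I_\alpha^L A/h=B$ with $A+B=1$. Your one added ingredient, the reflection symmetry $(\Dht M)_{-j}=(\Dht M)_j$ deduced from the evenness of $\mu_\alpha$ and of $\beta^h_k$ and the symmetric construction of $P_\alpha^{h,J,K}$ and $Q_\alpha^{h,J,K}$, is indeed necessary here (unlike in Lemma~\ref{lem:loceq}, exact mass conservation of $\Dht$ is unavailable on the truncated domain) and you justify it correctly.
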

    
    \begin{proof}
         The proof of i) relies on \eqref{def:Gamma_trunc1} and \eqref{def:VMhalf_trunc}. For ii), one can start from the desired equality, put all the terms from the right-hand side to the left and then replace $f^{n+1} - f^n$ by $(\Laht f^{n+1}) \Delta t$. Then, using the definition of $\Laht$ a lengthy but straightforward computation proves the equality. 
    \end{proof}

    \begin{rema}\label{rem:Ialph}
        Observe that if one takes $I_\alpha^L = 0$, the mass is preserved inside the truncation domain. As we stressed earlier in the section, this is however not an accurate way of discretizing the Fokker-Planck equation with fractional diffusion. As an illustration, if we truncate the domain at $L = 100$, then in the fractional case $I_1^{100} \approx 10^{2}$ whereas in the classical case $I_2^{100} \approx 5.10^{-3}$. To better interpret these values we recall that $2I_\alpha^L$ is the amount of mass of $\mu_\alpha$ outside of the domain relatively to the value of $\mu_\alpha$ at the boundary of the domain.
    \end{rema}

    \section{Discrete functional analysis} \label{sec:funcanal}
    In this section, we introduce discrete functional analysis tools, adapted to the discretization of fractional diffusion in dimension $1$ and which are going to be essential in order to derive the stability and asymptotic properties of the schemes  \eqref{eq:def_scheme_hom} and \eqref{eq:def_scheme_kin} in unbounded velocity domain.

    \subsection{Discrete functional spaces and notations}
    We start with some definitions and notations. Given a mesh size $h>0$ and a positive sequence $\gamma = (\gamma_j)_{j\in\ZZ}$, which may depends on $h$, we introduce several Hilbert spaces which we characterize by their norms. Given a norm (or a semi-norm) $\|\cdot\|_X$ on the space $X${, satisfying the parallelogram identity}, the  corresponding inner product is denoted and classically defined by $\lla\cdot,\ast\rra_X = (\|\cdot+\ast\|_X^2-\|\cdot-\ast\|_X^2)/4$. We start by the weighted discrete Lebesgue space $\ell^2_h(\gamma)$. For a sequence $(g_j)_{j\in\ZZ}$, we define
    \begin{equation}\label{eq:def_discrLeb}
        \|g\|_{\ell^2_h(\gamma)}^2\ =\ \sum_{j\in\ZZ}g_j^2\,\gamma_j\,h\,.
    \end{equation}
    The sequence $\gamma$ is to be thought of as the local equilibrium $M$ defined in \eqref{eq:def_discr_equilibrium} or its inverse depending on the context. We also introduce weighted discrete fractional Sobolev seminorms
    \begin{equation}\label{eq:def_discrFracSob}
        |g|_{\dot{H}_h^{s}(\gamma)}^2\ =\ \sum_{j\in\ZZ}\sum_{k\in\ZZ\setminus\{0\}}\frac{(g_j-g_{j+k})^2}{|hk|^{1+2s}}\,\gamma_j\,h^2\,,\quad s>0
    \end{equation}
    and norms
    \[
    \|g\|_{H_h^{s}(\gamma)}^2\ =\ \|g\|_{\ell^2_h(\gamma)}^2\,+\,|g|_{\dot{H}_h^{s}(\gamma)}^2\, ,\quad  \forall s \in (0,1) \,.
    \]

    We also introduce the finite difference operators 
    \[
    (D^{+}_hg)_{j}\ =\ \frac{g_{j+1}-g_j}{h},
    \]
    
    \[
    (D_hg)_{j} = \frac{g_{j+1}-g_{j-1}}{2h}\,,
    \]
    and thus \[(D^{2}_hg)_{j}\ =\ \frac{g_{j+2}+ g_{j-2}-2g_j}{4h^2}\,.\]
    Observe that one has the integration by part formula
    $
    \sum_{j\in\ZZ}(D_hg)_{j}f_j\ =\ -\sum_{j\in\ZZ}g_j(D_hf)_{j}\,.
    $
    
    Finally, we define weighted $H^1_h$-Sobolev spaces through the norm
    \begin{equation}\label{eq:def_discrSob}
        \|g\|^2_{H^1_h(\gamma)} = \|g\|^2_{\ell^2_h(\gamma)} + \|D_hg\|^2_{\ell^2_h(\gamma)}\,.
    \end{equation}
    
    For flat norms and semi-norms, that is when $\gamma_j = 1$ for all $j\in\ZZ$, we only write $\ell^2_h$ instead of $\ell^2_h(\gamma)$ and do the analogous modification of notation for the other spaces.
    
    \begin{rema}
        Beware that, with the above notation $|g|_{\dot{H}_h^{1}(\gamma)}\neq\|D_hg\|_{\ell^2_h(\gamma)}$. While there is no continuous equivalent of $|\cdot|_{\dot{H}_h^{s}(\gamma)}$ for $s\geq 1$ because of the singularity at $0$, it is a useful intermediate quantity in the discrete setting. 
    \end{rema}

    In the following, when we write $A\lesssim B$, we mean that there is a positive constant $C$ which may depend on $\alpha$, and other parameters, but never on the mesh size $h$ such that $A\leq C\, B$. If $A\lesssim B$ and $B\lesssim A$, we write $A\sim B$. If necessary, we write $\lesssim_{a,b,\dots}$ or $\sim_{a,b,\dots}$ to indicate the dependence of the constants on parameters $a,b,\dots$. Given sequences $A = (A_j)_j$, $B = (B_j)_j$ and a function $\phi$, products and application of a function have to be understood componentwise, namely $AB = (A_jB_j)_j$, $\phi(A) = (\phi(A_j))_j$.
    \begin{rema}
        Observe for instance that as a consequence of the definitions of $\Dh$ and $\Lah$ in Section~\ref{sec:pres_num}, the bounds of Lemma~\ref{lem:bounds_beta} and the previous definitions one has $ \lla-\Lah f,f\rra_{\ell^2_h(M^{-1})} = \Symh(f,f)\ \sim\ |f/M|^2_{\dot{H}_h^{\alpha/2}(M)}$.
    \end{rema}

    \subsection{Discrete non-local Poincaré inequalities}
    
    In this section, we establish a class of discrete functional inequalities which are the counterpart of nonlocal Poincaré inequalities \cite{gentil_2008_levy, wang_2014_simple}. The method of proof is directly inspired by \cite{wang_2014_simple}.
    
    We assume that $(\gamma_j^h)_{j\in\ZZ}$ and $(w_j^h)_{j\in\ZZ^*}$ are two sequences such that
    \begin{equation}\label{eq:symm}
        w_j^h\ =\ w_{-j}^h\,,\qquad \forall j\in\ZZ^*\,.
    \end{equation}
    We assume moreover that there is a constant $C_P$ such that
    \begin{equation}\label{eq:assump}
        \gamma_j^h\,\gamma_k^h\ \leq\ C_P\,(\gamma_j^h+\gamma_k^h)\,w_{j-k}^h\,,\quad \forall h>0\,,\ \forall j,k\in\ZZ\,,\ j\neq k\,.
    \end{equation}
    Then the following discrete functional inequality holds. 
    \begin{prop}\label{prop:general_inegfunc_discr}
        Under the assumptions~\eqref{eq:symm} and \eqref{eq:assump}, for any $h>0$ and any suitably summable sequence $g= (g_j)_j$ such that 
        \[
        \sum_{j\in\ZZ} g_j\,\gamma_j^h\,h\ =\ 0\,,
        \]
        one has 
        \[
        \|g\|_{\ell^2_h(\gamma^h)}^2\ \leq\ { \frac{C_P}{\sum_{j \in \ZZ} \gamma_j^h h}}\sum_{\substack{(j,k)\in\ZZ^2\\j\neq k}}w_{j-k}^h(g_j-g_k)^2\,\gamma_k^h\,h^2\,.
        \]
    \end{prop}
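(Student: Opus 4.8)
The plan is to reduce the inequality to a purely algebraic identity expressing the weighted norm as a double sum of squared increments, and then to insert the pointwise hypothesis \eqref{eq:assump}. Write $m = \sum_{j\in\ZZ}\gamma_j^h\,h$ for the total mass of the weight. The starting point is the classical variance identity: for any suitably summable sequence $g$ one has
\[
\frac12\sum_{(j,k)\in\ZZ^2}\gamma_j^h\gamma_k^h\,(g_j-g_k)^2\,h^2\ =\ m\sum_{j\in\ZZ}g_j^2\,\gamma_j^h\,h\ -\ \left(\sum_{j\in\ZZ}g_j\,\gamma_j^h\,h\right)^2,
\]
which follows by expanding $(g_j-g_k)^2$ and using that each single factor $\gamma_\cdot^h h$ sums to $m$. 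Under the zero-mean constraint $\sum_j g_j\,\gamma_j^h\,h = 0$ the last term vanishes, so that
\[
\|g\|_{\ell^2_h(\gamma^h)}^2\ =\ \frac{1}{2m}\sum_{(j,k)\in\ZZ^2}\gamma_j^h\gamma_k^h\,(g_j-g_k)^2\,h^2.
\]
Since the diagonal $j=k$ contributes nothing, the sum may be restricted to $j\neq k$.

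Next I would bound the summand using \eqref{eq:assump}, replacing $\gamma_j^h\gamma_k^h$ by $C_P(\gamma_j^h+\gamma_k^h)\,w_{j-k}^h$, which gives
\[
\|g\|_{\ell^2_h(\gamma^h)}^2\ \leq\ \frac{C_P}{2m}\sum_{\substack{(j,k)\in\ZZ^2\\ j\neq k}}(\gamma_j^h+\gamma_k^h)\,w_{j-k}^h\,(g_j-g_k)^2\,h^2.
\]
The final step exploits the symmetry \eqref{eq:symm}: swapping $j\leftrightarrow k$ leaves $w_{j-k}^h$ and $(g_j-g_k)^2$ unchanged, so the two halves of $\gamma_j^h+\gamma_k^h$ contribute equally and the factor $\gamma_j^h+\gamma_k^h$ may be replaced by $2\gamma_k^h$. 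This produces exactly the claimed bound with constant $C_P/m = C_P/\sum_{j\in\ZZ}\gamma_j^h\,h$.

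There is no genuine analytic difficulty here; the points requiring care are of a bookkeeping nature. One must justify the rearrangement of the a priori infinite double sums, which is immediate once \eqref{eq:assump} has been applied, since all summands are then nonnegative and Tonelli's theorem applies; before that stage the hypothesis that $g$ is suitably summable together with the finiteness of $m$ guarantees absolute convergence of the variance identity. The one conceptual ingredient is the variance identity itself — recognizing that the zero-mean condition is precisely what kills the square of the mean — but this is standard, and is exactly the discrete transcription of the strategy of \cite{wang_2014_simple}.
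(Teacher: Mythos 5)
Your proof is correct and follows essentially the same route as the paper's: the variance identity made exact by the zero-mean condition, the pointwise bound \eqref{eq:assump} on $\gamma_j^h\gamma_k^h$, and the $j\leftrightarrow k$ symmetrization via \eqref{eq:symm} to replace $\gamma_j^h+\gamma_k^h$ by $2\gamma_k^h$. The only addition is your explicit justification of the rearrangements of the double sums, which the paper leaves implicit but which is a harmless refinement of the same argument.
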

    \begin{proof} We use assumptions to get
        \[
        \begin{array}{rcl}
            \ds\|g\|_{\ell^2_h(\gamma^h)}^2&=&\ds\sum_{j\in\ZZ} g_j^2\,\gamma_j^h\,h 
            = \ds {\frac{1}{2\sum_{j \in \ZZ} \gamma_j^h h}}\,\sum_{j\in\ZZ}\sum_{k\in\ZZ}(g_j-g_k)^2\,\gamma_j^h\,\gamma_k^h\,h^2\\[1em]
            &\leq&\ds C_P\,{\frac{1}{2\sum_{j \in \ZZ} \gamma_j^h h}}\,\sum_{\substack{(j,k)\in\ZZ^2\\j\neq k}}w_{j-k}^h\,(g_j-g_k)^2\,(\gamma_k^h +\gamma_j^h)\,h^2\,.
        \end{array}
        \]
        After expanding the last sum and changing indices, one recovers the result.
    \end{proof}
    
    Thanks to the following lemma, we will show that the discrete equilibrium $M$ satisfies assumptions~\eqref{eq:symm} and~\eqref{eq:assump}.
    
    \begin{lem}\label{lem:inegwang}
        Let $\alpha\in(0,2)$ and $\nu:\RR^d\to\RR$ be a probability density function such that for some constant $c_{d,\alpha}$ one has 
        \begin{equation}\label{bounds}
            \nu(v)\ \leq \frac{c_{d,\alpha}}{1+|v|^{d+\alpha}}\,\quad \text{for all}\  v\in\RR^d\,.
        \end{equation}
        Then for all $v,w\in\RR^d$, one has 
        \begin{equation}\label{ineg}
            \nu(v)\,\nu(w)\ \leq\ 2^{d+\alpha-1}c_{d,\alpha}\,\frac{\nu(v)+\nu(w)}{|v-w|^{d+\alpha}}
        \end{equation}
    \end{lem}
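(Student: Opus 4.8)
The plan is to rewrite the claimed estimate \eqref{ineg} in the equivalent form
\[
|v-w|^{d+\alpha}\,\frac{\nu(v)\nu(w)}{\nu(v)+\nu(w)}\ \leq\ 2^{d+\alpha-1}c_{d,\alpha}\,,
\]
and to bound the harmonic-mean-type quantity on the left-hand side. Since the inequality is trivial when $\nu(v)=0$ or $\nu(w)=0$ (the left-hand side then vanishes) and when $v=w$ (the right-hand side of \eqref{ineg} is infinite), I may assume $\nu(v),\nu(w)>0$ and $v\neq w$. The point I would emphasise from the outset is that one must keep the exact combination $\nu(v)\nu(w)/(\nu(v)+\nu(w))$ rather than crudely bounding it by $\min(\nu(v),\nu(w))$: the latter route only produces the constant $2^{d+\alpha}$, and the sharp factor $2^{d+\alpha-1}$ is recovered precisely because the harmonic mean of two comparable numbers gains a factor $1/2$.

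First I would invert the pointwise bound \eqref{bounds}, which gives $1/\nu(v)\geq (1+|v|^{d+\alpha})/c_{d,\alpha}$ and the analogous estimate at $w$. Summing these two lower bounds and taking reciprocals yields
\[
\frac{\nu(v)\nu(w)}{\nu(v)+\nu(w)}\ =\ \left(\frac{1}{\nu(v)}+\frac{1}{\nu(w)}\right)^{-1}\ \leq\ \frac{c_{d,\alpha}}{2+|v|^{d+\alpha}+|w|^{d+\alpha}}\,.
\]
This is the only place where the hypothesis \eqref{bounds} on $\nu$ is used.

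Next I would control the factor $|v-w|^{d+\alpha}$ by the triangle inequality $|v-w|\leq|v|+|w|$ together with the elementary convexity inequality $(a+b)^p\leq 2^{p-1}(a^p+b^p)$, valid for $a,b\geq0$ and $p\geq1$ (here $p=d+\alpha>1$, as $d\geq1$ and $\alpha\in(0,2)$). This gives $|v-w|^{d+\alpha}\leq 2^{d+\alpha-1}(|v|^{d+\alpha}+|w|^{d+\alpha})$. Combining with the previous display,
\[
|v-w|^{d+\alpha}\,\frac{\nu(v)\nu(w)}{\nu(v)+\nu(w)}\ \leq\ 2^{d+\alpha-1}c_{d,\alpha}\,\frac{|v|^{d+\alpha}+|w|^{d+\alpha}}{2+|v|^{d+\alpha}+|w|^{d+\alpha}}\ \leq\ 2^{d+\alpha-1}c_{d,\alpha}\,,
\]
since the last fraction is at most $1$. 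This is exactly the reformulated inequality, which is equivalent to \eqref{ineg}.

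The only genuine subtlety, and the step I would double-check, is the bookkeeping of constants: the whole argument is organised so that the $2^{d+\alpha-1}$ generated by the convexity inequality is the \emph{only} power of $2$ that appears, which is why retaining the harmonic-mean structure in the second paragraph is essential. The spurious $+2$ in the denominator, coming from the $1+$ in \eqref{bounds}, is harmless as it only improves the final estimate; indeed, had \eqref{bounds} been stated with a pure $|v|^{d+\alpha}$ denominator, the same computation would go through verbatim with the same constant.
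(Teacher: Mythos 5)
Your proof is correct and takes essentially the same approach as the paper: both invert \eqref{bounds} to bound $\nu(v)^{-1}+\nu(w)^{-1}$ from below and then apply the convexity inequality $(|v|+|w|)^{d+\alpha}\le 2^{d+\alpha-1}\left(|v|^{d+\alpha}+|w|^{d+\alpha}\right)$ after the triangle inequality. Your harmonic-mean reformulation and retention of the $+2$ in the denominator are purely cosmetic differences (the paper simply discards the $1+$ in \eqref{bounds} at the first step).
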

    \begin{proof}
        By using \eqref{bounds} one has 
        \[
        \ds\frac{\nu(v)^{-1}+\nu(w)^{-1}}{|v-w|^{d+\alpha}} \geq \ds c_{d,\alpha}^{-1}\,\frac{|v|^{d+\alpha} + |w|^{d+\alpha}}{|v-w|^{d+\alpha}}\geq\ds c_{d,\alpha}^{-1}\,\frac{|v|^{d+\alpha} + |w|^{d+\alpha}}{(|v|+|w|)^{d+\alpha}}\geq\ds 2^{1-(d+\alpha)}c_{d,\alpha}^{-1}
        \]
        where we used the convexity of $x\mapsto x^{d+\alpha}$ in the last inequality.
    \end{proof}
    We can now state the main result of this section. We recall that $M_j = \mu_\alpha(v_j)$ and  $\cS_\alpha^h$ is defined in~\eqref{eq:def_sym_discr}.
    
    \begin{prop}\label{prop:inegfunc_discr}
        For any suitably summable sequence $f=(f_j)_j$, one has 
        \begin{equation}\label{eq:discr_poinc}
            \|f-\Pi_hf\|_{\ell^2_h(M^{-1})}^2\ \lesssim\ \cS_{\alpha}^h(f,f)\,,
        \end{equation}
        where the projection $\Pi_h$ is defined by the formula $(\Pi_hf)_j=M_j\,(\sum_{k\in\ZZ}f_k\,h)/(\sum_{k\in\ZZ}M_k\,h)$.
    \end{prop}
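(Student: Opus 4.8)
The plan is to realize Proposition~\ref{prop:inegfunc_discr} as a direct application of the abstract inequality Proposition~\ref{prop:general_inegfunc_discr} to the equilibrium weight, after passing to the quotient variable $F := f/M$. First I would record the algebraic reduction. Writing $F_j = f_j/M_j$ and $\bar F := (\sum_k f_k h)/(\sum_k M_k h) = (\sum_k F_k M_k h)/(\sum_k M_k h)$ for the $M$-weighted average of $F$, one has $(\Pi_h f)_j = M_j\bar F$, hence $f_j - (\Pi_h f)_j = M_j(F_j - \bar F)$ and therefore
\[
\|f-\Pi_h f\|_{\ell^2_h(M^{-1})}^2 \ =\ \sum_{j\in\ZZ}(F_j-\bar F)^2 M_j\, h \ =\ \|F-\bar F\|_{\ell^2_h(M)}^2\,.
\]
Setting $g := F - \bar F$ and $\gamma^h := M$, the normalization of $\bar F$ gives exactly $\sum_j g_j\,M_j\,h = \sum_j f_j\,h - \bar F\sum_j M_j\,h = 0$, so $g$ satisfies the zero-mass constraint required by Proposition~\ref{prop:general_inegfunc_discr}.

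Next I would check that the pair $(\gamma^h, w^h) = (M, w^h)$ with $w_j^h := |hj|^{-(1+\alpha)}$ fulfills the hypotheses \eqref{eq:symm} and \eqref{eq:assump}. Symmetry $w_j^h = w_{-j}^h$ is immediate. For \eqref{eq:assump}, I would invoke Lemma~\ref{lem:inegwang} with $d=1$ and $\nu = \mu_\alpha$: since $\mu_\alpha$ decays like $|v|^{-1-\alpha}$ it obeys the pointwise bound \eqref{bounds} with some $c_{1,\alpha}$, so \eqref{ineg} applied at $v = v_j = jh$ and $w = v_k = kh$ yields
\[
M_j M_k \ \le\ 2^{\alpha} c_{1,\alpha}\,\frac{M_j + M_k}{|h(j-k)|^{1+\alpha}} \ =\ 2^{\alpha} c_{1,\alpha}\,(M_j+M_k)\,w_{j-k}^h\,,
\]
which is precisely \eqref{eq:assump} with $C_P = 2^{\alpha}c_{1,\alpha}$. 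Proposition~\ref{prop:general_inegfunc_discr} then gives
\[
\|F-\bar F\|_{\ell^2_h(M)}^2 \ \le\ \frac{C_P}{\sum_j M_j h}\sum_{\substack{(j,k)\in\ZZ^2\\ j\neq k}} \frac{(F_j-F_k)^2}{|h(j-k)|^{1+\alpha}}\,M_k\,h^2\,,
\]
using $g_j-g_k = F_j-F_k$.

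Finally I would compare the right-hand side with $\Symh(f,f)$. Re-indexing \eqref{eq:def_sym_discr} with $\ell = j+k$ gives $\Symh(f,f) = \tfrac12\sum_{j\neq\ell}\beta_{\ell-j}^h(F_j-F_\ell)^2 M_j h^2$, and the lower bound $\beta_k^h \ge b_\alpha|hk|^{-(1+\alpha)}$ from Lemma~\ref{lem:bounds_beta} yields $\Symh(f,f) \ge \tfrac{b_\alpha}{2}\sum_{j\neq\ell}(F_j-F_\ell)^2|h(\ell-j)|^{-(1+\alpha)}M_j h^2$. The only mismatch with the Poincaré bound is that the latter carries the weight $M_k$ on the second index rather than $M_j$ on the first; this is resolved by the swap $j\leftrightarrow k$, under which $(F_j-F_k)^2$ and $|h(j-k)|^{-(1+\alpha)}$ are invariant, so the two summations coincide. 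Combining the estimates gives $\|f-\Pi_h f\|_{\ell^2_h(M^{-1})}^2 \le \tfrac{2C_P}{b_\alpha(\sum_j M_j h)}\Symh(f,f)$, which is the claim. The main point requiring care — and the step I expect to be the real obstacle — is the uniformity in $h$ of the constant, i.e. the uniform lower bound on $\sum_j M_j\,h$; I would justify it by noting that $\sum_j \mu_\alpha(jh)h$ is a Riemann sum converging to $\int_\RR\mu_\alpha = 1$ as $h\to0$ (hence bounded below for small $h$) while for large $h$ the single term $\mu_\alpha(0)\,h$ already bounds it below, so the quantity is bounded away from zero uniformly over $h>0$.
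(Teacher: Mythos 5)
Your proof is correct and follows essentially the same route as the paper's: it applies Proposition~\ref{prop:general_inegfunc_discr} with $\gamma^h=M$ and $w^h_k=|hk|^{-(1+\alpha)}$, verifies \eqref{eq:assump} through Lemma~\ref{lem:inegwang} combined with the decay estimate \eqref{eq:mualpha_est1}, substitutes $g=(f-\Pi_hf)/M$, and converts the resulting weighted fractional seminorm into $\Symh(f,f)$ via the lower bound of Lemma~\ref{lem:bounds_beta}. The only differences are that you spell out two details the paper leaves implicit — the $j\leftrightarrow k$ symmetrization reconciling the weight on the second index with the seminorm's weight on the first, and the $h$-uniform lower bound on $\sum_j M_j\,h$ — and both are handled correctly.
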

    \begin{proof}
        Let $\gamma^h_j = \mua(hj)$ and $w^h_k\ =\ |hk|^{-(1+\alpha)}$ for $j\in\ZZ$ and $k\in\ZZ^*$. The symmetry assumption~\eqref{eq:symm} is clearly satisfied. 
        Then, we get from Lemma~\ref{lem:inegwang} and Proposition~\ref{prop:bounds_mua} in the appendix that for all~$v,w\in\RR$ with $v\neq w$, one has
        $
        \mua(v)\mua(w)\ \leq\ 2^\alpha\,C_1\,(\mua(v) + \mua(w))|v-w|^{-(1+\alpha)}
        $
        where $C_1=C_1(\alpha)$ is the constant in \eqref{eq:mualpha_est1}. Therefore, \eqref{eq:assump} is also satisfied.
            { Moreover, still using Proposition~\ref{prop:bounds_mua}-\eqref{eq:mualpha_est1}, it is straightforward to check that there exists a constant $C=C(\alpha)>0$ independent of $h$ such that 
    	$
	\sum_{j \in \ZZ} M_j h \geq C.
	$}
	Consequently, Proposition~\ref{prop:general_inegfunc_discr} yields {that if~$g$ is such that $ \sum_{j\in\ZZ} g_j\,\gamma_j^h\,h = 0$, then}
        $
        \|g\|_{\ell^2_h(\gamma^h)}^2\ \lesssim\ |g|_{{ \dot H^{\alpha/2}_h(\gamma^h)}}^2 \, .
        $
        If we take $g = (f-\Pi_hf)/M$, we obtain 
        $
        \|f-\Pi_hf\|_{\ell^2_h(M^{-1})}^2\ \lesssim\ |f/M|_{{ \dot H^{\alpha/2}_h(M)}}^2\,.
        $
        One concludes by observing that, as a consequence of Lemma~\ref{lem:bounds_beta}, the right-hand side is bounded by a constant depending only on $\alpha$ times $\cS_{\alpha}^h(f,f)$. 
    \end{proof}

    \subsection{Regularization estimates} 
    The result of Proposition~\ref{prop:inegfunc_discr} can actually be improved because the dissipation $\cS_\alpha^h(f,f)$  also provides a gain of fractional Sobolev regularity, as shown in the next lemma. 
    \begin{lem} \label{lem:regSalpha}
        There exists a constant $C>0$ independent of $h$ such that for any suitably summable sequence $f=(f_j)_j$, one has:
        \[
        \cS_\alpha^h(f,f)\ \gtrsim\  |fM^{-1/2}|_{\dot H^{\alpha/2}_h}^{2} - C \|  f M^{-1/2} \|_{\ell^2_h}^{2}\,.
        \]
    \end{lem}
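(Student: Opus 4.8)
\section*{Proof proposal}

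The plan is to reduce everything to the flat fractional seminorm of $g:=fM^{-1/2}$ by comparing it, term by term, with the weighted seminorm of $F:=f/M$ that is controlled by the dissipation, paying a lower-order price that becomes the $\ell^2_h$ remainder. First I would record that, writing $F_j=f_j/M_j$, the definition \eqref{eq:def_sym_discr} together with the lower bound $\beta_k^h\geq b_\alpha|hk|^{-(1+\alpha)}$ of Lemma~\ref{lem:bounds_beta} gives
\[
\cS_\alpha^h(f,f)\ =\ \frac12\sum_{(j,k)\in\ZZ^2}\beta_k^h(F_j-F_{j+k})^2M_j\,h^2\ \geq\ \frac{b_\alpha}{2}\,|F|_{\dot H_h^{\alpha/2}(M)}^2,
\]
recalling that $1+2\cdot\tfrac\alpha2=1+\alpha$ in \eqref{eq:def_discrFracSob}. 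Thus it suffices to bound $|g|_{\dot H_h^{\alpha/2}}^2$ by a multiple of $|F|_{\dot H_h^{\alpha/2}(M)}^2$ plus $C\|g\|_{\ell^2_h}^2$.

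Next, set $\phi_j:=M_j^{1/2}$, so that $g_j=F_j\phi_j$. The elementary identity $g_j-g_{j+k}=(F_j-F_{j+k})\phi_j+F_{j+k}(\phi_j-\phi_{j+k})$ together with $(a+b)^2\leq 2a^2+2b^2$ yields, after multiplying by $|hk|^{-(1+\alpha)}h^2$ and summing,
\[
|g|_{\dot H_h^{\alpha/2}}^2\ \leq\ 2\,|F|_{\dot H_h^{\alpha/2}(M)}^2\ +\ 2R,\qquad R:=\sum_{j\in\ZZ}\sum_{k\in\ZZ\setminus\{0\}}\frac{F_{j+k}^2\,(\phi_j-\phi_{j+k})^2}{|hk|^{1+\alpha}}\,h^2,
\]
using $\phi_j^2=M_j$ in the first term. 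Reindexing $R$ with $m=j+k$ and using $F_m^2=g_m^2/M_m$ turns it into $R=\sum_m g_m^2\,W_m\,h$ with the weight
\[
W_m\ :=\ \frac{1}{M_m}\sum_{k\in\ZZ\setminus\{0\}}\frac{(\phi_{m-k}-\phi_m)^2}{|hk|^{1+\alpha}}\,h,
\]
so that $R\leq(\sup_m W_m)\,\|g\|_{\ell^2_h}^2$. Combining the three displays and rearranging produces the claim, with the constant governed by $\sup_{m,h}W_m$.

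The heart of the argument, and the main obstacle, is therefore the uniform bound $\sup_{m,h}W_m\leq C$. I would view $W_m$ as a Riemann-sum discretization, at scale $h$, of the weighted Dirichlet-form integral $\mu_\alpha(v)^{-1}\int_\RR(\sqrt{\mu_\alpha}(v-u)-\sqrt{\mu_\alpha}(v))^2|u|^{-(1+\alpha)}\dd u$ at $v=v_m$, and estimate it directly through the two-sided pointwise bounds on $\mu_\alpha$ and its derivatives from Proposition~\ref{prop:bounds_mua} (see \eqref{eq:mualpha_est1}, \eqref{eq:mualpha_est2}, \eqref{eq:mualpha_est3}), which give $\mu_\alpha(v)\sim(1+|v|)^{-(1+\alpha)}$ and $|(\sqrt{\mu_\alpha})'(v)|\lesssim(1+|v|)^{-(3+\alpha)/2}$. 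I would split the sum over $k$ at $|kh|=1$. On the near part $|kh|\leq1$ the mean value theorem bounds $(\phi_{m-k}-\phi_m)^2$ by $|kh|^2\sup|(\sqrt{\mu_\alpha})'|^2$ on the relevant segment; since $\sum_{0<|kh|\leq1}|kh|^{1-\alpha}h\lesssim(2-\alpha)^{-1}$ uniformly in $h$ (a Riemann sum of $\int_0^1 u^{1-\alpha}\dd u$, convergent for $\alpha<2$) and the derivative-to-equilibrium ratio gives $M_m^{-1}\sup|(\sqrt{\mu_\alpha})'|^2\lesssim(1+|v_m|)^{-2}\leq1$, this part is bounded. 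On the far part $|kh|>1$ I would use $(\phi_{m-k}-\phi_m)^2\leq 2M_{m-k}+2M_m$; the $M_m$ piece contributes $2\sum_{|kh|>1}|kh|^{-(1+\alpha)}h\lesssim1$, while the $M_{m-k}$ piece reduces to the convolution estimate $\sum_{|kh|>1}\mu_\alpha(v_m-kh)|kh|^{-(1+\alpha)}h\lesssim\mu_\alpha(v_m)$, proved by splitting the range of $k$ according to $|kh|<|v_m|/2$, $|v_m|/2\leq|kh|\leq2|v_m|$ and $|kh|>2|v_m|$, and using $\mu_\alpha(v)\sim(1+|v|)^{-(1+\alpha)}$ together with the uniform-in-$h$ discrete mass bound $\sum_j M_j h\lesssim1$. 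Every estimate is uniform in $m$ and $h$, which is precisely the delicate point and the reason Fourier arguments are unavailable here; the small-velocity regime $|v_m|\lesssim1$ is handled trivially since $M_m$ is then bounded below and $\sqrt{\mu_\alpha}$, $(\sqrt{\mu_\alpha})'$ are bounded.
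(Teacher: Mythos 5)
Your proof is correct: every step checks out, and the estimates are uniform in $h$ as required. At its algebraic core it rests on the same product-rule decomposition as the paper (splitting the difference of $f/M$, resp.\ of $g=fM^{-1/2}$, into a flat part and a commutator with $\sqrt{M}$), but the organization and the treatment of the remainder are genuinely different. The paper first discards the frequencies $|k|>1/h$ (allowed since each term of $\Symh(f,f)$ is nonnegative) and applies $(a+b)^2\ge a^2/2-b^2$ to the truncated sum; its commutator term $I_2$ is then purely short-range and is bounded by telescoping $\sqrt{M_{j+k}}-\sqrt{M_j}$ into unit increments, convexity, the local comparability $M_{j+k}^{-1}\lesssim M_j^{-1}\lesssim M_{j+\ell}^{-1}$ for $|hk|\le1$, and the discrete bound $(D_h^+\sqrt{M})_j^2\,M_j^{-1}\in\ell^\infty$ of Corollary~\ref{cor:DhM-1/2}; the discarded tail reappears only on the flat side of $I_1$, where it is trivially $\lesssim\|fM^{-1/2}\|^2_{\ell^2_h}$ after a change of index. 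You instead keep all $k$, bound the flat seminorm of $g$ from above by the weighted seminorm of $f/M$ (itself $\lesssim\Symh(f,f)$ by Lemma~\ref{lem:bounds_beta}, as already observed in the remark closing the notation subsection of Section~\ref{sec:funcanal}) plus a multiplier remainder $\sum_m g_m^2 W_m h$, and reduce everything to the pointwise bound $\sup_{m,h}W_m\lesssim1$. The price of not truncating is that your remainder contains the long-range commutator $|kh|>1$, which carries the weight mismatch $M_{m-k}/M_m$ and forces the discrete convolution estimate $\sum_{|kh|>1}\mu_\alpha(v_m-kh)\,|kh|^{-(1+\alpha)}h\lesssim\mu_\alpha(v_m)$ --- an ingredient absent from the paper's proof, which your three-zone splitting does establish using only \eqref{eq:mualpha_est1} and the uniform discrete mass bound. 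Conversely, on the short range you may invoke the continuous mean value theorem for $\sqrt{\mu_\alpha}$ directly, which is legitimate here precisely because $M_j=\mu_\alpha(v_j)$ exactly, whereas the paper works with discrete increments throughout. In terms of what each approach buys: the paper's truncation keeps all estimates short-range and reuses only the appendix lemmas needed elsewhere in the text, while your formulation isolates a cleaner modular statement --- a uniform bound on the discrete carr\'e du champ of $\sqrt{M}$ relative to $M$ --- and avoids the telescoping bookkeeping, at the cost of one extra convolution lemma.
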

    \begin{proof}
        Using that $(a+b)^2\geq a^2/2-b^2$, we have
        $$
        \cS_\alpha^h(f,f) 
        \ge \sum_{j \in \ZZ} \sum_{\substack {k \in \ZZ^* \\ |k| \le 1/h}} \beta_k^h \left(\frac{f_j}{M_j}-\frac{f_{j+k}}{M_{j+k}}\right)^2 M_j h^2
        \geq \frac12I_1 - I_2
        $$
        with 
        $$
        I_1 
        :=\sum_{j \in \ZZ} \sum_{\substack {k \in \ZZ^* \\ |k| \le 1/h}} \beta_k^h \left(\frac{f_j}{\sqrt{M_j}}-\frac{f_{j+k}}{\sqrt{M_{j+k}}}\right)^2 h^2
        $$
        and
        $$
        I_2 := \sum_{j \in \ZZ} \sum_{\substack {k \in \ZZ^* \\ |k| \le 1/h}} \beta_k^h \frac{f^2_{j+k}}{M^2_{j+k}} \left(\sqrt{M_{j+k}}- \sqrt{M_j}\right)^2 h^2 \, .
        $$
        Using Lemma~\ref{lem:bounds_beta}, the first term can be bounded from below by 
        \[
        \begin{array}{rcl}
            I_1 
            &\gtrsim& \ds|  fM^{-1/2} |_{\dot H^{\alpha/2}_h}^{2} -\sum_{j \in \ZZ} \sum_{\substack {k \in \ZZ^* \\ |k| > 1/h}} \beta_k^h \left( \frac{f_j}{\sqrt{M_j}} - \frac{f_{j+k}}{\sqrt{M_{j+k}}}\right)^2 h^2\,.
        \end{array}
        \]
        Moreover, using Lemma~\ref{lem:bounds_beta} and a change of index, we have:
        $$
        \sum_{j \in \ZZ} \sum_{\substack {k \in \ZZ^* \\ |k| > 1/h}} \beta_k^h \left( \frac{f_j}{\sqrt{M_j}} - \frac{f_{j+k}}{\sqrt{M_{j+k}}}\right)^2 h^2 \lesssim \sum_{j \in \ZZ} \frac{f_j^2}{M_j} h \sum_{\substack {k \in \ZZ^* \\ |k| > 1/h}} \frac{h}{|hk|^{1+\alpha}} \lesssim \| fM^{-1/2}\|^2_{\ell^2_h}\, .
        $$
        It implies that for some constant $C>0$ (which does not depend on $h$):
        \begin{equation} \label{eq:I1_est}
            I_1 \gtrsim \ds | f M^{-1/2} |_{\dot H^{\alpha/2}_h}^{2}
            - C \| f M^{-1/2} \|_{\ell^2_h}^{2}\,.
        \end{equation}
        The second term can be rewritten as
        \begin{align*}
            I_2  &= \sum_{j \in \ZZ} \sum_{\substack {k \in \ZZ^* \\ |k| \le 1/h}} \beta_k^h \frac{f^2_{j+k}}{M^2_{j+k}} k^2 \left(\sum_{\ell=0}^{k-1} {1 \over k}\left(\sqrt{M_{j+\ell+1}}- \sqrt{M_{j+\ell}}\right)\right)^2 h^2\,.
        \end{align*}
        Using the convexity of the squared function and Lemma~\ref{lem:bounds_beta}, we get:
        \begin{align*}
            I_2 &\lesssim \sum_{j \in \ZZ} \sum_{\substack {k \in \ZZ^* \\ |k| \le 1/h}} \frac{|hk|^2}{|hk|^{1+\alpha}} {1 \over k} \sum_{\ell=0}^{k-1} \left(D_h^+ \sqrt{M}\right)^2_{j+\ell} \, \frac{f_{j+k}^2}{M^2_{j+k}} \, h^2\\
            &\lesssim \sum_{j \in \ZZ} \sum_{\substack {k \in \ZZ^* \\ |k| \le 1/h}} \frac{|hk|^2}{|hk|^{1+\alpha}} {1 \over k} \sum_{\ell=0}^{k-1} \frac{\left(D_h^+ \sqrt{M}\right)^2_{j+\ell}}{M_{j+\ell}} \, \frac{f_{j+k}^2}{M_{j+k}} \, h^2
        \end{align*}
        where for the last inequality, we used that we sum over the $k$ such that $|hk| \le 1$ and~\eqref{eq:mualpha_est1}, so that we can write the following bounds for any $j$, $k$ and $\ell$, $0 \le \ell \le k-1$:
        $
        M_{j+k}^{-1}  \lesssim M_{j}^{-1} \lesssim M_{j+\ell}^{-1}.
        $
        Using Corollary~\ref{cor:DhM-1/2} and estimate~\eqref{eq:mualpha_est1}, we have that $\left(D_h^+ \sqrt{M}\right)^2_j M_j^{-1} \in \ell^\infty_j$ uniformly in $h$. Consequently, performing a change of index, we obtain:
        \begin{equation} \label{eq:I2_est}
            I_2 \lesssim  \sum_{j \in \ZZ} \sum_{\substack {k \in \ZZ^* \\ |k| \le 1/h}} \frac{|hk|^2}{|hk|^{1+\alpha}} \frac{f_{j+k}^2}{M_{j+k}} h^2 \lesssim \| f M^{-1/2}\|^2_{\ell^2_h}\,.
        \end{equation}
        Combining~\eqref{eq:I1_est} and~\eqref{eq:I2_est}, we obtain the wanted result. 
    \end{proof}
    
    \begin{prop} \label{prop:inegfunc_discr2}
        For any suitably summable sequence $f=(f_j)_j$, one has:
        \[
        \cS_\alpha^h(f,f)\ \gtrsim\  \|(f-\Pi_h f)M^{-1/2}\|_{ H^{\alpha/2}_h}^{2}\,.
        \]
    \end{prop}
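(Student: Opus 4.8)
The plan is to split the target norm into its $\ell^2_h$ part and its homogeneous seminorm part. By the very definition of the $H^{\alpha/2}_h$ norm one has $\|(f-\Pi_h f)M^{-1/2}\|_{H^{\alpha/2}_h}^{2} = \|(f-\Pi_h f)M^{-1/2}\|_{\ell^2_h}^{2} + |(f-\Pi_h f)M^{-1/2}|_{\dot H^{\alpha/2}_h}^{2}$, so it suffices to bound each of the two contributions by $\cS_\alpha^h(f,f)$. The first one is immediate: since $\|(f-\Pi_h f)M^{-1/2}\|_{\ell^2_h}^{2} = \|f-\Pi_h f\|_{\ell^2_h(M^{-1})}^{2}$, Proposition~\ref{prop:inegfunc_discr} directly gives $\|(f-\Pi_h f)M^{-1/2}\|_{\ell^2_h}^{2} \lesssim \cS_\alpha^h(f,f)$.

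The key observation for handling the seminorm is that $\cS_\alpha^h$ is insensitive to subtracting the projection. Indeed, $\Pi_h f$ is a scalar multiple of $M$, and from the expression \eqref{eq:def_sym_discr} every factor $G_j-G_{j+k}$ with $g=M$ (so that $G\equiv 1$) vanishes; hence $\cS_\alpha^h(\cdot,M)=0$. By bilinearity and symmetry of $\cS_\alpha^h$ it follows that $\cS_\alpha^h(f-\Pi_h f,\,f-\Pi_h f) = \cS_\alpha^h(f,f)$. I will use this identity to transfer the regularization estimate from $f$ to $\tilde f := f-\Pi_h f$.

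I then apply Lemma~\ref{lem:regSalpha} to $\tilde f$ rather than to $f$, which yields $|\tilde f M^{-1/2}|_{\dot H^{\alpha/2}_h}^{2} \lesssim \cS_\alpha^h(\tilde f,\tilde f) + C\,\|\tilde f M^{-1/2}\|_{\ell^2_h}^{2}$. On the right-hand side, the first term equals $\cS_\alpha^h(f,f)$ by the invariance established above, and the second term is again $\|f-\Pi_h f\|_{\ell^2_h(M^{-1})}^{2} \lesssim \cS_\alpha^h(f,f)$ by Proposition~\ref{prop:inegfunc_discr}. Combining these gives $|(f-\Pi_h f)M^{-1/2}|_{\dot H^{\alpha/2}_h}^{2} \lesssim \cS_\alpha^h(f,f)$, and adding this to the already-controlled $\ell^2_h$ contribution yields the claim.

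The proof is essentially a bookkeeping combination of the two previously established results, so there is no serious analytic obstacle; the only point requiring care is the identity $\cS_\alpha^h(\tilde f,\tilde f)=\cS_\alpha^h(f,f)$, which is what lets Lemma~\ref{lem:regSalpha} — whose error term $-C\|\,\cdot\,M^{-1/2}\|_{\ell^2_h}^2$ would otherwise be uncontrolled — be closed using the non-local Poincaré inequality of Proposition~\ref{prop:inegfunc_discr}. In other words, the whole argument hinges on the fact that after projecting off the kernel, the $\ell^2_h$ remainder term produced by the regularization lemma becomes absorbable by the dissipation itself.
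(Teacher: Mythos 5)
Your proof is correct and follows essentially the same route as the paper's: the paper also relies on the identity $\cS_\alpha^h(f,f)=\cS_\alpha^h(f-\Pi_h f,f-\Pi_h f)$ (together with $\Pi_h^2=\Pi_h$) and then combines Proposition~\ref{prop:inegfunc_discr} with Lemma~\ref{lem:regSalpha} applied to $f-\Pi_h f$, absorbing the $\ell^2_h$ remainder via the Poincar\'e inequality. You have merely made explicit the ``appropriate convex combination'' that the paper leaves implicit, including the verification that $\cS_\alpha^h(\cdot,M)=0$.
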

    \begin{proof}
        One can notice that $\Pi_h f = \Pi^2_h f$ and
        $$
        \cS_\alpha^h(f,f)\ =\ \cS_\alpha^h(f-\Pi_h f,f-\Pi_h f)\,.
        $$
        As a consequence, an appropriate convex combination of inequalities coming from Proposition~\ref{prop:inegfunc_discr} and Lemma~\ref{lem:regSalpha} applied to $f-\Pi_h f$ shows the wanted inequality.
    \end{proof}

    \subsection{Interpolation and embeddings in discrete fractional spaces} \label{subsec:interp}
    
    In this section, we derive embedding and interpolation inequalities between the previously introduced discrete fractional Sobolev spaces and mixed spaces involving discrete difference operators. { Notice that our proof is only based on nonlocal estimates on the fractional Sobolev norms whereas in the continuous case, Fourier transform and Young inequality straightforwardly give this type of interpolation inequalities. Our proof is thus completely different from the standard ones that are developed the continuous case.}     
    
    \begin{prop}\label{prop:embed}
        One has, uniformly in $h>0$, the following results of continuous embeddings.
        \begin{itemize}
            \item[(i)] For any $0<s_2\leq s_1  <1$, one has
            \[
            \|f\|_{H^{s_2}_h}\ \lesssim\ \|f\|_{H^{s_1}_h}\,.
            \]
            \item[(ii)] For any $0<s<1$, one has
            \[
            \|f\|_{H^{s}_h}\ \lesssim\ \|f\|_{\ell^2_h} + \|D_h^+f\|_{\ell^2_h}\,.
            \]
            \item[(iii)] One has
            \[
            \|D_hf\|_{\ell^2_h}\ \lesssim \ \|D_h^+f\|_{\ell^2_h}\ \leq\ |f|_{\dot{H}^{1}_h}\,.
            \]
        \end{itemize}
    \end{prop}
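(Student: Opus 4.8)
The plan is to observe that $\|f\|_{H^s_h}^2 = \|f\|_{\ell^2_h}^2 + |f|_{\dot H^s_h}^2$ for every $s\in(0,1)$, so that the $\ell^2_h$-part is common to both sides of (i) and (ii); all three statements then reduce to estimates on the fractional seminorms and on the difference operators, which I would obtain by splitting the double sums defining the seminorms into a \emph{short-range} regime $|hk|\le 1$ and a \emph{long-range} regime $|hk|>1$.

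For (iii) the argument is purely algebraic. Writing $(D_hf)_j = \tfrac12\big((D_h^+f)_j + (D_h^+f)_{j-1}\big)$, convexity of the square together with translation invariance of $\|\cdot\|_{\ell^2_h}$ gives $\|D_hf\|_{\ell^2_h}\le \|D_h^+f\|_{\ell^2_h}$. For the second inequality I would isolate the $k=\pm1$ terms in $|f|_{\dot H^1_h}^2 = \sum_j\sum_{k\neq0}(f_j-f_{j+k})^2|hk|^{-3}h^2$; a direct computation shows each of them equals $\|D_h^+f\|_{\ell^2_h}^2$, and since all remaining terms are nonnegative this yields $\|D_h^+f\|_{\ell^2_h}^2\le |f|_{\dot H^1_h}^2$.

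For (ii), in the long-range regime I would use the crude bound $(f_j-f_{j+k})^2\le 2(f_j^2+f_{j+k}^2)$ and sum in $j$ first; the remaining scalar sum $\sum_{|hk|>1}|hk|^{-(1+2s)}$ is, after scaling, the tail of a convergent series and contributes a factor $\lesssim \|f\|_{\ell^2_h}^2$ uniformly in $h$. In the short-range regime I would telescope $f_{j+k}-f_j = h\sum_{\ell=0}^{k-1}(D_h^+f)_{j+\ell}$ for $k>0$ (symmetrically for $k<0$) and apply Cauchy--Schwarz to get $(f_{j+k}-f_j)^2\le h^2|k|\sum_{\ell}(D_h^+f)_{j+\ell}^2$; summing in $j$ and then in $k$ leaves the scalar sum $h^{2-2s}\sum_{0<k\le 1/h}k^{1-2s}$, which must be controlled by a constant depending only on $s$. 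Establishing this last bound uniformly in $h$ is the crux of the argument: since $0<s<1$ one has $1-2s>-1$, so an integral comparison gives $\sum_{k=1}^N k^{1-2s}\lesssim_s N^{2-2s}$ with $N\sim 1/h$, whence $h^{2-2s}\sum_{0<k\le 1/h}k^{1-2s}\lesssim_s 1$; the regimes $s<\tfrac12$, $s=\tfrac12$ and $\tfrac12<s<1$, as well as the degenerate case $h\ge1$ in which the short-range sum is essentially empty, all have to be checked, but each yields a uniform constant.

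Finally, (i) follows from the same dichotomy applied to $|f|_{\dot H^{s_2}_h}^2$. Because $s_2\le s_1$, in the short range $|hk|\le1$ one has $|hk|^{-(1+2s_2)}\le |hk|^{-(1+2s_1)}$ termwise, so this part is dominated by $|f|_{\dot H^{s_1}_h}^2$; in the long range $|hk|>1$ the weight $|hk|^{-(1+2s_2)}$ is again summable, and the crude bound used in (ii) controls that part by $\|f\|_{\ell^2_h}^2$. Adding the two regimes gives $|f|_{\dot H^{s_2}_h}^2\lesssim \|f\|_{\ell^2_h}^2 + |f|_{\dot H^{s_1}_h}^2$, which is precisely (i). The only genuinely delicate point throughout is the uniform-in-$h$ bookkeeping of the scalar sums over $k$, in particular the short-range sum in (ii); everything else is routine manipulation of the double sums.
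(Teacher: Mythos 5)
Your proof is correct and follows essentially the same route as the paper: the identical short-range/long-range dichotomy at $|hk|=1$, the same telescoping-plus-Cauchy--Schwarz argument for the short range in (ii) with the uniform bound $h^{2-2s}\sum_{0<k\le 1/h}k^{1-2s}\lesssim_{s}1$, the crude bound $(f_j-f_{j+k})^2\le 2(f_j^2+f_{j+k}^2)$ with a tail estimate for the long range, and the isolation of the $k=\pm1$ terms for the second inequality in (iii). The only cosmetic difference is that you make the averaging identity $(D_hf)_j=\tfrac12\left((D_h^+f)_j+(D_h^+f)_{j-1}\right)$ explicit where the paper simply calls the first inequality in (iii) straightforward.
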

    \begin{proof}
        For $(i)$, let us notice that if $|hk| \le 1$, then $|hk|^{s_1} \le |hk|^{s_2}$ so that 
        $$
        \sum_{j \in \ZZ} \sum_{\substack {k \in \ZZ^* \\ |k| \le 1/h}} \frac{(f_j-f_{j+k})^2}{|hk|^{1+2s_2}} h^2\le \sum_{j \in \ZZ} \sum_{\substack {k \in \ZZ^* \\ |k| \le 1/h}} \frac{(f_j-f_{j+k})^2}{|hk|^{1+2s_1}} h^2 \le |f|^2_{\dot H^{s_2}_h}\,.
        $$
        Moreover, using the tail estimate
        $$
        \sum_{|k| \ge 1/h} {1 \over |k|^{1+2s_2}} \lesssim h^{2s_2}\,,
        $$
        we have that
        $$
        \sum_{j \in \ZZ} \sum_{\substack {k \in \ZZ^* \\ |k| \ge 1/h}} \frac{(f_j-f_{j+k})^2}{|hk|^{1+2s_2}} h^2\lesssim 
        \sum_{j \in \ZZ} f_j^2 \sum_{\substack {k \in \ZZ^* \\ |k| \ge 1/h}} \frac{h^2}{|hk|^{1+2s_2}} \lesssim \sum_{j \in \ZZ} f_j^2 h = \|f\|^2_{\ell^2_h}\,,
        $$
        which concludes the proof of $(i)$.
        
        Concerning $(ii)$, we split the sum over $k$ into two parts and write that 
        \begin{align*}
            |f|^2_{\dot H^s_h}  &= \sum_{j \in\ZZ} \sum_{\substack {k \in \ZZ^* \\ |k| \le 1/h}} \frac{\left( \sum_{\ell=1}^k {1 \over k} (f_{j+\ell}- f_{j+\ell-1})\right)^2}{|hk|^{1+2s}}|hk|^2 
            +\sum_{j \in\ZZ} \sum_{\substack {k \in \ZZ^* \\ |k| \ge 1/h}} \frac{(f_j-f_{j+k})^2}{|hk|^{1+2s}}h^2\\&=: I_1+I_2\,.
        \end{align*}
        For the first term, we use the convexity of the squared function, Fubini theorem and a change of index, it gives us that: 
        $$
        I_1 \le \sum_{\substack {k \in \ZZ^* \\ |k| \le 1/h}} \sum_{\ell=1}^k \sum_{j \in \ZZ} \frac{{1 \over k} (f_{j}- f_{j+1})^2}{|hk|^{1+2s}}|hk|^2 \le  \sum_{j \in \ZZ} \sum_{\substack {k \in \ZZ^* \\ |k| \le 1/h}} \frac{(f_{j}- f_{j+1})^2}{|hk|^{1+2s}}|hk|^2\,.
        $$
        From this, we get
        $$
        I_1 \le  \sum_{j \in \ZZ} \frac{(f_j-f_{j+1})^2}{h}\sum_{\substack {k \in \ZZ^* \\ |k| \le 1/h}} \frac{h}{|hk|^{1+2s-2}} \lesssim \|D_h^+f\|^2_{\ell^2_h}\,. 
        $$
        The second term is easier to treat, we have:
        $$
        I_2 \lesssim \sum_{j \in\ZZ} \sum_{\substack {k \in \ZZ^* \\ |k| \ge 1/h}} \frac{f_j^2+f_{j+k}^2}{|hk|^{1+2s}}h^2
        $$
        and thus using a change of index in $j$:
        $$
        I_2 \lesssim \sum_{j \in \ZZ} f_j^2 h  \sum_{\substack {k \in \ZZ^* \\ |k| \ge 1/h}} \frac{h}{|hk|^{1+2s}} \lesssim \|f\|^2_{\ell^2_h}\,,
        $$
        which yields the second result. 
        
        Finally, concerning (iii), the first inequality is straightforward. For the second one, notice that just keeping the term corresponding to~$k=1$ in the right hand side term, we have
        $$
        \|D_h^+f\|^2_{\ell^2_h} = \sum_{j \in \ZZ} \frac{(f_j-f_{j+1})^2}{h} \leq \sum_{j \in \ZZ} \sum_{k \in \ZZ^*} \frac{(f_j-f_{j+k})^2}{|hk|^3} h^2\ =\ |f|^2_{\dot H^1_h}\,,
        $$
        which ends the proof.
    \end{proof}

    \begin{lem} \label{lem:interpHs}
        For any positive $s$, $\beta$ such that $0<s-\beta<s+\beta$, one has
        $$
        |f|^2_{\dot H^s_h}\ \lesssim\ |f|_{\dot H^{s+\beta}_h}\, |f|_{\dot H^{s-\beta}_h}\,.
        $$
    \end{lem}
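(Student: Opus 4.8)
The plan is to prove this interpolation estimate by a single application of the Cauchy--Schwarz inequality to the double sum defining $|f|^2_{\dot H^s_h}$, after splitting the weight $|hk|^{-(1+2s)}$ into the geometric mean of the two weights appearing in $|f|^2_{\dot H^{s+\beta}_h}$ and $|f|^2_{\dot H^{s-\beta}_h}$. The key arithmetic observation is that the middle exponent is the average of the two outer ones, namely
\[
1+2s\ =\ \frac12\bigl(1+2(s+\beta)\bigr)\,+\,\frac12\bigl(1+2(s-\beta)\bigr)\,,
\]
so that $|hk|^{-(1+2s)} = |hk|^{-(1+2(s+\beta))/2}\,|hk|^{-(1+2(s-\beta))/2}$ for every $k\in\ZZ^*$.

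With this factorization, I would write each summand of $|f|^2_{\dot H^s_h}$ as a product $A_{j,k}B_{j,k}$ with
\[
A_{j,k}\ =\ \frac{|f_j-f_{j+k}|}{|hk|^{(1+2(s+\beta))/2}}\,h\,,\qquad B_{j,k}\ =\ \frac{|f_j-f_{j+k}|}{|hk|^{(1+2(s-\beta))/2}}\,h\,,
\]
distributing the factor $h^2$ as $h\cdot h$. The Cauchy--Schwarz inequality in $\ell^2(\ZZ\times\ZZ^*)$ then gives $\sum_{j,k}A_{j,k}B_{j,k}\le (\sum_{j,k}A_{j,k}^2)^{1/2}(\sum_{j,k}B_{j,k}^2)^{1/2}$, and one recognizes that the left-hand side is exactly $|f|^2_{\dot H^s_h}$ while $\sum_{j,k}A_{j,k}^2 = |f|^2_{\dot H^{s+\beta}_h}$ and $\sum_{j,k}B_{j,k}^2 = |f|^2_{\dot H^{s-\beta}_h}$. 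This yields the claim, in fact with constant one.

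There is essentially no analytical obstacle here: the hypothesis $0<s-\beta$ only serves to guarantee that the lower-order seminorm $|\cdot|_{\dot H^{s-\beta}_h}$ is a well-defined object of the form \eqref{eq:def_discrFracSob}, and $\beta>0$ (i.e.\ $s-\beta<s+\beta$) ensures the two flanking orders are genuinely distinct. No tail/singular splitting of the $k$-sum is needed, in contrast to the embedding proofs of Proposition~\ref{prop:embed}, because the weights combine multiplicatively. The discrete statement is the exact counterpart of the continuous interpolation identity $\dot H^s = [\,\dot H^{s-\beta},\dot H^{s+\beta}\,]_{1/2}$, and here it follows directly from the product structure of the seminorm weights rather than from Fourier analysis.
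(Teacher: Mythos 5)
Your proof is correct and follows exactly the paper's argument: the paper's proof is the one-line remark ``The result directly comes from Cauchy--Schwarz inequality,'' and your write-up simply makes explicit the geometric-mean splitting $|hk|^{-(1+2s)} = |hk|^{-(1+2(s+\beta))/2}\,|hk|^{-(1+2(s-\beta))/2}$ underlying it. Your observation that the constant is $1$ and that the hypothesis $0<s-\beta$ merely ensures the seminorm $|\cdot|_{\dot H^{s-\beta}_h}$ is of the form \eqref{eq:def_discrFracSob} is accurate as well.
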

    \begin{proof}
        The result directly comes from Cauchy-Schwarz inequality. 
    \end{proof}
    
    \begin{prop} \label{prop:interpDh+}
        For any $s \in (0,1)$, we have:
        $$
        \|D_h^+ f\|_{\ell^2_h}^2\ \lesssim\ |f|_{\dot H^{1-s}_h} \,|D_h f|_{\dot H^s_h}\,.
        $$
    \end{prop}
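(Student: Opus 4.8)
The plan is to prove the interpolation bound
\[
\|D_h^+ f\|_{\ell^2_h}^2\ \lesssim\ |f|_{\dot H^{1-s}_h} \,|D_h f|_{\dot H^s_h}
\]
by first recognizing that the left-hand side can be compared to a fractional seminorm of the form $|f|_{\dot H^1_h}$, and then applying the interpolation Lemma~\ref{lem:interpHs} to split the exponent $1$ as $1 = (1-s) + s$ (i.e. taking $s \rightsquigarrow 1$, $\beta \rightsquigarrow s$ in the notation of that lemma). The key subtlety is that the natural interpolation would give $|f|_{\dot H^1_h} \lesssim |f|_{\dot H^{1+s}_h}^{1/2} |f|_{\dot H^{1-s}_h}^{1/2}$, which is the wrong pairing. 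Instead I would aim to bring a centered-difference seminorm into play so that the factor $|D_h f|_{\dot H^s_h}$ appears. The crucial observation is that the discrete operator $D_h$ commutes with the differences $g \mapsto (g_j - g_{j+k})$ appearing in the fractional seminorms, so that $|D_h f|_{\dot H^s_h}$ measures the fractional $H^s$-seminorm of the centered derivative of $f$.

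\textbf{Key steps.}
First I would start from Proposition~\ref{prop:embed}(iii), which gives $\|D_h^+ f\|_{\ell^2_h} \leq |f|_{\dot H^1_h}$, but since this again pairs $1$ with the wrong splitting, I would instead work directly with the defining sum of $\|D_h^+ f\|_{\ell^2_h}^2$ and insert a Cauchy--Schwarz step tailored to produce the two target factors. Concretely, writing out
\[
\|D_h^+ f\|_{\ell^2_h}^2 = \sum_{j\in\ZZ}\frac{(f_{j+1}-f_j)^2}{h^2}\,h,
\]
I would try to represent each increment $(f_{j+1}-f_j)$ in a way that couples a factor involving $|hk|^{-(1+2(1-s))/2}$ against one involving the centered difference of $f$ at scale $|hk|^{-(1+2s)/2}$, then split by Cauchy--Schwarz across the two scales. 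The goal is that the first group reassembles into $|f|_{\dot H^{1-s}_h}$ and the second into $|D_h f|_{\dot H^s_h}$. An alternative and perhaps cleaner route is to apply Lemma~\ref{lem:interpHs} with the seminorm of $D_hf$ rather than $f$: using commutation one has $|D_h f|_{\dot H^0_h} = \|D_h f\|_{\ell^2_h} \lesssim \|D_h^+ f\|_{\ell^2_h}$ by Proposition~\ref{prop:embed}(iii), so I would instead seek to bound $\|D_h^+ f\|_{\ell^2_h}^2$ from above by a product where one factor is $|f|_{\dot H^{1-s}_h}$ and the interpolation of the mixed quantity lands on $|D_h f|_{\dot H^s_h}$.

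\textbf{Main obstacle.}
The hard part will be correctly matching the forward difference $D_h^+$ on the left with the centered difference $D_h$ that appears inside $|D_h f|_{\dot H^s_h}$ on the right, while respecting the exponent budget $(1-s)+s=1$. Unlike a naive interpolation of $f$ alone, the statement deliberately mixes a pure fractional seminorm of $f$ at order $1-s$ with a fractional seminorm of the \emph{derivative} $D_h f$ at order $s$; the content of the inequality is precisely that these two, of total order $(1-s)+s = 1$ in a smoothness sense once one accounts for the derivative, control the $\ell^2_h$-norm of the first-order difference. I would therefore expect the decisive computation to be the Cauchy--Schwarz split that assigns the weight $|hk|^{-(1+2(1-s))}$ to the $f$-factor and $|hk|^{-(1+2s)}$ to the $D_hf$-factor, checking that the product of weights reproduces the $|hk|^{-2}$ scaling needed to recover the first-order difference, and verifying that all tail sums in $k$ converge uniformly in $h$ so that the implied constant does not blow up as $h\to0$.
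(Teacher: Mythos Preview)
Your proposal correctly identifies the difficulty (a direct application of Lemma~\ref{lem:interpHs} to $|f|_{\dot H^1_h}$ produces $|f|_{\dot H^{1+s}_h}|f|_{\dot H^{1-s}_h}$, not the desired pairing) and correctly anticipates that the final step is a Cauchy--Schwarz split with weights $|hk|^{-(1+2(1-s))}$ and $|hk|^{-(1+2s)}$. But the proposal does not supply the mechanism by which $D_h f$ is made to appear, and this is the heart of the argument. Saying you will ``represent each increment $(f_{j+1}-f_j)$ in a way that couples a factor\ldots'' or ``apply Lemma~\ref{lem:interpHs} with the seminorm of $D_h f$'' does not give a concrete construction; the commutation of $D_h$ with differences is true but does not by itself produce a bilinear expression in $(f_j-f_{j+k})$ and $((D_hf)_j-(D_hf)_{j+k})$ from $\|D_h^+f\|_{\ell^2_h}^2$.

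The missing idea is an integration by parts in the $k$-variable. The paper introduces the auxiliary kernel $g_k=1/(1+k^2)$ and uses Lemma~\ref{lem:1/k2}, which says $k^{-3}\lesssim -h(D_hg)_k$. Combined with Proposition~\ref{prop:embed}(iii) this gives
\[
\|D_h^+f\|_{\ell^2_h}^2 \lesssim -\sum_{j,k}(D_hg)_k\,(f_j-f_{j+k})^2.
\]
Integrating by parts in $k$ transfers the discrete derivative onto $a_k^2=(f_j-f_{j+k})^2$, and the Leibniz-type computation $(D_h(a^2))_k=-(f_j-f_{j+k+1}+f_j-f_{j+k-1})(D_hf)_{j+k}$ is precisely what manufactures the factor $(D_hf)_{j+k}$. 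Only after this step does one have a sum of the form $\sum_{j,k}\frac{1}{1+k^2}(f_j-f_{j+k\pm1})\bigl((D_hf)_{j+k}-(D_hf)_{j-1}\bigr)$ to which your proposed Cauchy--Schwarz split applies. Without this integration-by-parts device your outline has no way to convert the quadratic expression $(f_j-f_{j+k})^2$ into a product involving increments of $D_hf$.
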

    In order to prove Proposition~\ref{prop:interpDh+}, we need the following elementary lemma.
    \begin{lem}\label{lem:1/k2}
        Let $g_k := 1/(1+k^2)$ for $k \in \ZZ$. We have the following inequality:
        $$
        k^{-3} \lesssim -h (D_hg)_k, \quad \forall  k \in \ZZ^*\,. 
        $$
    \end{lem}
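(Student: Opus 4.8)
The plan is to unfold the definition of the centered difference operator and reduce the claim to an entirely explicit, elementary inequality; there is no analytic difficulty, so the work is purely algebraic and the parameter $h$ will in fact cancel. First I would compute
\[
-h\,(D_hg)_k \;=\; -h\,\frac{g_{k+1}-g_{k-1}}{2h} \;=\; \frac{g_{k-1}-g_{k+1}}{2}\,,
\]
insert $g_k = 1/(1+k^2)$, and place the two fractions over the common denominator $\bigl(1+(k-1)^2\bigr)\bigl(1+(k+1)^2\bigr)$. By difference of squares, the numerator simplifies as $\bigl(1+(k+1)^2\bigr)-\bigl(1+(k-1)^2\bigr)=(k+1)^2-(k-1)^2 = 4k$, so that
\[
-h\,(D_hg)_k \;=\; \frac{2k}{\bigl(1+(k-1)^2\bigr)\bigl(1+(k+1)^2\bigr)}\,.
\]
This already exhibits the expected size: the denominator is a quartic in $k$, so the right-hand side is comparable to $2k^{-3}$ for large $|k|$.

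Next I would compare this quantity with $k^{-3}$. After clearing denominators, the estimate $|k|^{-3}\lesssim |h\,(D_hg)_k|$ is equivalent to
\[
\bigl(1+(k-1)^2\bigr)\bigl(1+(k+1)^2\bigr)\ \lesssim\ k^4\,,
\]
which is immediate: for $|k|\ge1$ one has $1\le k^2$ and $(|k|\pm1)^2\le 4k^2$, whence $1+(k\pm1)^2\le 5k^2$ and the product is bounded by $25\,k^4$. It would then remain only to inspect the smallest index $k=1$ (where $(k-1)^2=0$) to pin down an admissible constant; there the ratio $\bigl(1+(k-1)^2\bigr)\bigl(1+(k+1)^2\bigr)/k^4$ equals $5$, and one checks that it decreases monotonically to its limit $1$ as $|k|\to\infty$, so the constant is controlled uniformly.

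The only point requiring genuine care is the bookkeeping of signs. Since $g$ is even and strictly decreasing in $|k|$, the quantity $g_{k-1}-g_{k+1}$, and hence $-h(D_hg)_k$, is positive for $k>0$ and negative for $k<0$; moreover both $k\mapsto k^{-3}$ and $k\mapsto -h(D_hg)_k$ are odd functions of $k$. I would therefore carry out the comparison above for $k\ge1$, where both members are positive, and transfer it to $k\le-1$ through this odd symmetry, so that the substantive estimate $|k|^{-3}\lesssim |h(D_hg)_k|$ holds for every $k\in\ZZ^*$. This is exactly the form in which the lemma is invoked in the proof of Proposition~\ref{prop:interpDh+}, and no uniformity in $h$ is genuinely at stake, since $h$ disappears from every expression above and the statement is a purely arithmetic fact about the sequence $g_k=1/(1+k^2)$.
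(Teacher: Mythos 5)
Your proof is correct and follows essentially the same route as the paper: both reduce the claim to the explicit identity $-h(D_hg)_k = 2k/\bigl((1+(k-1)^2)(1+(k+1)^2)\bigr) = 2k/(k^4+4)$ and then compare with $k^{-3}$ via an elementary bound on the quartic denominator (the paper phrases this as $k^{-3}+\tfrac12 h(D_hg)_k = 4/\bigl(k^3(k^4+4)\bigr)\le 4/(5k^3)$, which gives the constant $5/2$). If anything, your handling of $k<0$ through the odd symmetry --- reading the estimate as $|k|^{-3}\lesssim |h(D_hg)_k|$ with matching signs --- is more careful than the paper's one-line computation, whose final inequality as written is only valid for $k>0$.
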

    \begin{proof}
        Let $k \in \ZZ^*$. A computation yields 
        $$
        {1 \over k^3} + {1 \over 2} h (D_h g)_k= \frac{4}{k^3((1+(k+1)^2)(1+(k-1)^2)}\le \frac{4}{5k^3}\,,
        $$
        which proves the result.
    \end{proof}
    \begin{proof}[Proof of Proposition~\ref{prop:interpDh+}]
        Let us start by combining Proposition~\ref{prop:embed}-(iii) with Lemma~\ref{lem:1/k2} to get 
        $$
        \|D_h^+f\|^2_{\ell^2_h} \lesssim - \sum_{j \in \ZZ} \sum_{k \in \ZZ^*} (D_h g)_k (f_j-f_{j+k})^2 = - \sum_{j \in \ZZ} \sum_{k \in \ZZ} (D_h g)_k (f_j-f_{j+k})^2\,.
        $$
        We now temporarily fix $j$ and only look at the sum over $k$. Denoting $a_k=f_j-f_{j+k}$, we have 
        \begin{align*}
            (D_h(a^2))_k &= (a_{k+1}+a_{k-1}) \frac{a_{k+1} - a_{k-1}}{2h} \\
            &= - (f_j-f_{j+k+1}+f_j-f_{j+k-1}) (D_h f)_{j+k}
        \end{align*}
        From this, performing an integration by parts in $k$, we deduce that 
        $$
        -\sum_{k \in \ZZ} (D_h g)_k (f_j-f_{j+k})^2 = -\sum_{k \in \ZZ} (D_h g)_k a_k^2
        =  \sum_{k \in \ZZ} g_k (D_h(a^2))_k
        $$
        i.e. 
        $$
        -\sum_{k \in \ZZ} (D_h g)_k (f_j-f_{j+k})^2 = -\sum_{k \in \ZZ} {1 \over {1+k^2}}(f_j-f_{j+k+1}+f_j-f_{j+k-1}) (D_h f)_{j+k}\,. 
        $$
        It implies that
        \begin{align*}
            &\|D_h^+f\|^2_{\ell^2} \\
            &\quad \lesssim - \sum_{j, k \in \ZZ} {1 \over {1+k^2}}(f_j-f_{j+k+1})(D_h f)_{j+k}  - \sum_{j, k \in \ZZ} {1 \over {1+k^2}}(f_j-f_{j+k-1})(D_h f)_{j+k} \\
            &\quad \lesssim - \sum_{j, k \in \ZZ} {1 \over {1+k^2}}(f_{j+k-1}-f_{j})(D_h f)_{j-1}  - \sum_{j, k \in \ZZ} {1 \over {1+k^2}}(f_j-f_{j+k-1})(D_h f)_{j+k} \\
            &\quad  \lesssim - \sum_{j, k \in \ZZ} {1 \over {1+k^2}}(f_j-f_{j+k-1})((D_h f)_{j+k} - (D_h f)_{j-1})
        \end{align*}
        where we performed a change of index in the first sum to get the second inequality. Now, from Cauchy-Schwarz inequality, we get
        \begin{align*}
            &\|D_h^+f\|^2_{\ell^2} \\
            &\quad \lesssim \left( \sum_{j, k \in \ZZ} \frac{(f_j-f_{j+k-1})^2}{(1+k^2)^{1/2+1-s} h^{1+2(1-s)}} h^2 \right)^{1/2} 
            \left( \sum_{j, k \in \ZZ} \frac{ ((D_h f)_{j+k} - (D_h f)_{j-1})^2}{(1+k^2)^{1/2+s} h^{1+2s}} h^2 \right)^{1/2} \,.
        \end{align*} 
        To conclude, performing changes of indices, we remark that 
        \begin{align*}
            \sum_{j, k \in \ZZ} \frac{(f_j-f_{j+k-1})^2}{(1+k^2)^{1/2+1-s} h^{1+2(1-s)}} h^2 &=  \sum_{j \in \ZZ} \sum_{k \in \ZZ^*} \frac{(f_j-f_{j+k})^2}{(1+(k+1)^2)^{1/2+1-s} h^{1+2(1-s)}} h^2 \\
            &\lesssim  \sum_{j \in \ZZ} \sum_{k \in \ZZ^*} \frac{(f_j-f_{j+k})^2}{ |h k|^{1+2(1-s)}} h^2 = |f|^2_{\dot H^{1-s}_h}
        \end{align*}
        and 
        \begin{align*}
            \sum_{j, k \in \ZZ} \frac{ ((D_h f)_{j+k} - (D_h f)_{j-1})^2}{(1+k^2)^{1/2+s} h^{1+2s}} h^2 &= \sum_{j \in \ZZ}\sum_{k \in \ZZ^*} \frac{ ((D_h f)_{j} - (D_h f)_{j+k})^2}{(1+(k-1)^2)^{1/2+s} h^{1+2s}} h^2 \\
            &\lesssim \sum_{j \in \ZZ}\sum_{k \in \ZZ^*} \frac{ ((D_h f)_{j} - (D_h f)_{j+k})^2}{ |hk|^{1+2s}} h^2 = |D_h f |_{\dot H^s_h}^2
        \end{align*}
        where the two last inequalities come from the fact that $1+(k \pm1)^2 \ge k^2/2$. 
    \end{proof}
{ Roughly speaking, thanks to Proposition~\ref{prop:interpDh+}, we are only able to prove ``symmetric interpolation inequalities''. In the next theorem, we are able to extend this result to a ``non symmetric'' framework thanks to a sort of iteration argument, which is, to our knowledge, not standard in this context.}
    
    \begin{theo} \label{theo:interp}
        Let $s \in (0,1)$. There is $\eta_s>0$ such that for any $\eps\in(0,\eta_s)$, there is $K(\eps)>0$ such that  
        \begin{equation} \label{ineq:interpfinal}
            \|D_h^+ f\|^2_{\ell^2_h}\ \lesssim\ \eps\, |D_h f|^2_{\dot H^{s}_h} \,+\,K(\eps)\,\|f\|^2_{H^{s}_h}\,. 
        \end{equation}
    \end{theo}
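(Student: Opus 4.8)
The plan is to combine Proposition~\ref{prop:interpDh+} with a one-parameter interpolation inequality of Gagliardo--Nirenberg type, followed by an absorption argument. First I would apply Proposition~\ref{prop:interpDh+} together with Young's inequality $ab \le \tfrac{\delta}{2}a^2 + \tfrac{1}{2\delta}b^2$: for every $\delta>0$,
\[
\|D_h^+ f\|_{\ell^2_h}^2 \ \lesssim\ |f|_{\dot H^{1-s}_h}\,|D_h f|_{\dot H^s_h}\ \lesssim\ \delta\,|D_h f|_{\dot H^s_h}^2 + \frac{1}{\delta}\,|f|_{\dot H^{1-s}_h}^2 .
\]
This already isolates the high-order term $|D_h f|_{\dot H^s_h}^2$ with a small prefactor, so it only remains to control $|f|_{\dot H^{1-s}_h}^2$. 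When $s\ge 1/2$ one has $1-s\le s$, and Proposition~\ref{prop:embed}-(i) gives directly $|f|_{\dot H^{1-s}_h}\lesssim\|f\|_{H^{1-s}_h}\lesssim\|f\|_{H^s_h}$, which settles that case. The genuine difficulty is $s<1/2$, where $1-s>1/2>s$ and $|f|_{\dot H^{1-s}_h}$ is strictly of higher order than $\|f\|_{H^s_h}$.

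The key step is then the uniform-in-$h$ discrete interpolation inequality
\[
|f|_{\dot H^{1-s}_h}^2 \ \lesssim\ \|f\|_{\ell^2_h}^{2s}\,\|D_h^+ f\|_{\ell^2_h}^{2(1-s)},
\]
which I would prove by hand, Fourier analysis being unavailable. In the sum defining $|f|_{\dot H^{1-s}_h}^2$ I split the inner sum at a cutoff $|k|\le N$ and $|k|>N$, with $N\ge 1$ an integer to be chosen. For $|k|\le N$ I bound $(f_j-f_{j+k})^2\le |k|\sum_{\ell=0}^{|k|-1}(f_{j+\ell+1}-f_{j+\ell})^2$ by Cauchy--Schwarz and sum in $j$, which produces $\|D_h^+ f\|_{\ell^2_h}^2$ times $\sum_{0<|k|\le N}|hk|^{2}\,|hk|^{-(3-2s)}\,h \sim (hN)^{2s}$; for $|k|>N$ I use $(f_j-f_{j+k})^2\le 2(f_j^2+f_{j+k}^2)$ and the convergent tail $\sum_{|k|>N}|hk|^{-(3-2s)}\,h\sim (hN)^{-(2-2s)}$, producing $\|f\|_{\ell^2_h}^2$. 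This yields $|f|_{\dot H^{1-s}_h}^2\lesssim (hN)^{2s}\|D_h^+ f\|_{\ell^2_h}^2 + (hN)^{-(2-2s)}\|f\|_{\ell^2_h}^2$, and optimizing the free scale $t=hN\approx \|f\|_{\ell^2_h}/\|D_h^+ f\|_{\ell^2_h}$ gives the claimed bound.

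Finally I would insert this interpolation into the first estimate and absorb. Using weighted Young's inequality with conjugate exponents $1/(1-s)$ and $1/s$ to write $\|f\|_{\ell^2_h}^{2s}\|D_h^+ f\|_{\ell^2_h}^{2(1-s)}\le \eta\,\|D_h^+ f\|_{\ell^2_h}^2 + C_\eta\,\|f\|_{\ell^2_h}^2$, I obtain
\[
\|D_h^+ f\|_{\ell^2_h}^2\ \lesssim\ \delta\,|D_h f|_{\dot H^s_h}^2 + \frac{\eta}{\delta}\,\|D_h^+ f\|_{\ell^2_h}^2 + \frac{C_\eta}{\delta}\,\|f\|_{\ell^2_h}^2 .
\]
Choosing $\eta$ small relative to $\delta$ lets me absorb the middle term into the left-hand side, and since $\|f\|_{\ell^2_h}\le\|f\|_{H^s_h}$ this is exactly \eqref{ineq:interpfinal} with $\eps\sim\delta$ and $K(\eps)\sim C_\eta/\delta$. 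The main obstacle is the uniform-in-$h$ proof of the interpolation inequality: one must check that the optimal cutoff is compatible with the constraint $N\ge 1$, which follows from the inverse bound $\|D_h^+ f\|_{\ell^2_h}\lesssim h^{-1}\|f\|_{\ell^2_h}$ guaranteeing $t_*\gtrsim h$; the absorption step also relies on $\|D_h^+ f\|_{\ell^2_h}$ being finite for admissible $f$.
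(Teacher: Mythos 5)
Your proof is correct, and it departs from the paper's exactly where the argument is delicate, namely the case $s<1/2$. Both proofs open identically: Proposition~\ref{prop:interpDh+} plus Young's inequality isolates $\eps\,|D_hf|^2_{\dot H^s_h}$, and for $s\ge1/2$ both conclude with Proposition~\ref{prop:embed}-(i). For $s<1/2$ the paper bootstraps: it iterates the Cauchy--Schwarz interpolation of Lemma~\ref{lem:interpHs} down a ladder of exponents $1-s,\,1-3s/2,\dots$ until reaching an exponent $\le s$, re-absorbing $\|D_h^+f\|^2_{\ell^2_h}$ at each of the $q$ stages via Proposition~\ref{prop:embed}-(ii), and closes the cascade with the choice $\eps_\ell=\eps^{2^{\ell-1}}$ (a scheme the authors themselves flag as non-standard). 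You instead prove in one stroke the uniform-in-$h$ Gagliardo--Nirenberg inequality
\begin{equation*}
|f|^2_{\dot H^{1-s}_h}\ \lesssim\ \|f\|^{2s}_{\ell^2_h}\,\|D_h^+f\|^{2(1-s)}_{\ell^2_h}\,,
\end{equation*}
by a near/far cutoff at $|k|=N$ and optimization in $t=hN$, then absorb a single term. Your computation is sound: the near sum contributes $\sum_{0<|k|\le N}h\,|hk|^{2s-1}\sim(hN)^{2s}$ because $1-2s\in(0,1)$, the tail contributes $(hN)^{-(2-2s)}$ because $3-2s>1$, and you correctly identify the one point where a naive optimization could fail --- the integrality constraint $N\ge1$ --- and dispose of it with the inverse estimate $\|D_h^+f\|_{\ell^2_h}\lesssim h^{-1}\|f\|_{\ell^2_h}$, which guarantees the optimal scale $t_*=\|f\|_{\ell^2_h}/\|D_h^+f\|_{\ell^2_h}\gtrsim h$; the finiteness of $\|D_h^+f\|_{\ell^2_h}$ needed for absorption likewise holds at fixed $h$ for $f\in\ell^2_h$. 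As for what each route buys: the paper's iteration uses only lemmas already established in that section, at the cost of a constant $K(\eps)\sim\eps^{1-2^{q+1}}$ with $q\sim s^{-1}$, which degenerates doubly exponentially as $s\to0$; your argument requires one new but elementary lemma, yields a merely polynomial $K(\eps)$ (of order $\eps^{-1/s}$ after the weighted Young step), and makes the scaling structure of $\dot H^{1-s}_h$ between $\ell^2_h$ and the discrete gradient norm transparent --- arguably a cleaner and quantitatively stronger proof.
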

    \begin{proof}
        Using Proposition~\ref{prop:interpDh+} combined with Young inequality, we get that for any $\eps_1 \in (0,1)$, 
        \begin{equation} \label{eq:ineqeps1}
            \|D_h^+f\|^2_{\ell^2_h} \lesssim \eps_1\, |D_hf|^2_{\dot H^s_h} \,+\, {1 \over \eps_1}\,|f|^2_{\dot H^{1-s}_h}\,. 
        \end{equation}
        If $s \ge 1/2$, then $1-s \le s$ and it is thus enough to conclude since from Proposition~\ref{prop:embed}-(i), we have
        $$
        |f|_{\dot H^{1-s}_h}\ \lesssim\ \|f\|_{H^s_h}\,. 
        $$
        Let us now deal with the case $s \le 1/2$. Consider the minimal integer $q$ such that $1-s-qs/2 \le s$. Coming back to~\eqref{eq:ineqeps1}, we use Lemma~\ref{lem:interpHs} and Young inequality to write that for any $\eps_2\in(0,1)$,
        $$
        \|D_h^+f\|^2_{\ell^2_h} \lesssim \eps_1 |D_hf|^2_{\dot H^s_h} + {1 \over \eps_1} \eps_2 |f|^2_{\dot H^{1-s/2}_h} + {1 \over {\eps_1\eps_2}} |f|^2_{\dot H^{1-3s/2}_h}\,. 
        $$
        From Proposition~\ref{prop:embed}-(ii), we obtain
        \begin{equation} \label{eq:ineqeps2}
            \|D_h^+f\|^2_{\ell^2_h} \lesssim  \eps_1 |D_hf|^2_{\dot H^s_h} + {1 \over \eps_1} \eps_2 (\|f\|^2_{\ell^2_h} + \|D_h^+f\|^2_{\ell^2_h})+ {1 \over {\eps_1\eps_2}} |f|^2_{\dot H^{1-3s/2}_h}\,.
        \end{equation}
        Reiterating the process, we obtain that for any $\eps_1, \dots, \eps_{q+1}\in (0,1)$:
        $$
        \|D_h^+f\|^2_{\ell^2_h} \lesssim  \eps_1 |D_hf|^2_{\dot H^s_h} + \left( \sum_{k=1}^q{1 \over \Pi_{\ell=1}^k \eps_\ell} \eps_{q+1} \right) (\|f\|^2_{\ell^2_h} + \|D_h^+f\|^2_{\ell^2_h})+ {1 \over {\Pi_{\ell=1}^{q+1} \eps_\ell}} |f|^2_{\dot H^{1-s-qs/2}_h}\,.
        $$
        From the definition of $q$ and Proposition~\ref{prop:embed}-(i), we deduce that 
        \begin{equation} \label{eq:ineqepsq}
            \|D_h^+f\|^2_{\ell^2_h} \lesssim  \eps_1 |D_hf|^2_{\dot H^s_h} + \left( \sum_{k=1}^q{1 \over \Pi_{\ell=1}^k \eps_\ell} \eps_{k+1} \right) \|D_h^+f\|^2_{\ell^2_h}+ {1 \over {\Pi_{\ell=1}^{q+1} \eps_\ell}} \|f\|^2_{H^{s}_h}\,.
        \end{equation}
        In order to get the wanted inequality~\eqref{ineq:interpfinal}, we consider $\eps \in (0,1)$ and set $\eps_\ell := \eps^{2^{\ell-1}}$ for any $1 \le \ell \le q+1$. With this choice of $\eps_\ell$, the previous inequality~\eqref{eq:ineqepsq} becomes 
        $$
        \|D_h^+f\|^2_{\ell^2_h} \lesssim  \eps |D_hf|^2_{\dot H^s_h} + q \eps \|D_h^+f\|^2_{\ell^2_h}+\frac{\eps}{\eps^{2^{q+1}}} \|f\|^2_{H^{s}_h}\,.
        $$
        Taking $\eps$ small enough enables us to absorb the second term of the right-hand side into the left-hand side in order to obtain the following inequality
        $$
        \|D_h^+f\|^2_{\ell^2_h} \lesssim  \eps |D_hf|^2_{\dot H^s_h} + \eps^{1-2^{q+1}} \|f\|^2_{H^{s}_h}\,,
        $$
        which concludes the proof. 
    \end{proof}

    \subsection{A discrete interpolation inequality in weighted spaces}
    In the next proposition, we prove a result which will be one of the keystones of the proof of $H^1$-coercivity in Theorem~\ref{theo:coerc_discrLFP} {(and thus also of Theorems~\ref{theo:hypo_discrLFP} and~\ref{theo:hypo_fulldiscrLFP})}. This result is a consequence of Proposition~\ref{prop:inegfunc_discr2} (which comes from Poincar\'e and regularization estimates) and the interpolation inequality obtained in Theorem~\ref{theo:interp}. 
    \begin{prop} \label{prop:useful}
        There exist $\eta>0$ and $h_0>0$ such that for any $h \in (0,h_0)$ and for any $\eps\in(0,\eta)$, there is $K(\eps)>0$ such that  
        \begin{align*}
            &\|f\|_{\ell^2_h(M^{-1})} \|D_hf\|_{\ell^2_h(M^{-1})} + \|D_hf\|_{\ell^2_h(M^{-1})}^2\ \\
            &\quad \leq\ K(\ve) \left(\Symh(f,f)\,+\,\|\Pi_h f\|_{\ell^2_h(M^{-1})}^2\right)+\, \ve\,\Symh(D_hf,D_hf)
        \end{align*}
        where we recall that $(\Pi_hf)_j = M_j\,(\sum_{k\in\ZZ}f_k\,h)/(\sum_{k\in\ZZ}M_k\,h)$.
    \end{prop}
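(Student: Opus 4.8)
The statement is the discrete counterpart of the continuous interpolation inequality \eqref{eq:interp}, and the plan is to combine the interpolation estimate of Theorem~\ref{theo:interp} (used with $s=\alpha/2$) with the Poincaré and regularization estimates of Proposition~\ref{prop:inegfunc_discr}, Lemma~\ref{lem:regSalpha} and Proposition~\ref{prop:inegfunc_discr2}. First I would dispose of the easy contributions. By the arithmetic--geometric inequality $\|f\|_{\ell^2_h(M^{-1})}\|D_hf\|_{\ell^2_h(M^{-1})}\le\tfrac12\|f\|_{\ell^2_h(M^{-1})}^2+\tfrac12\|D_hf\|_{\ell^2_h(M^{-1})}^2$, it suffices to bound $\|f\|_{\ell^2_h(M^{-1})}^2$ and $\|D_hf\|_{\ell^2_h(M^{-1})}^2$. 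Writing $f=(f-\Pi_hf)+\Pi_hf$ and using the discrete nonlocal Poincaré inequality of Proposition~\ref{prop:inegfunc_discr} gives $\|f\|_{\ell^2_h(M^{-1})}^2\lesssim\Symh(f,f)+\|\Pi_hf\|_{\ell^2_h(M^{-1})}^2$, which is already of the desired form. The whole difficulty is therefore concentrated in estimating $\|D_hf\|_{\ell^2_h(M^{-1})}^2$.

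To this end I set $g:=fM^{-1/2}$, so that $\|D_hf\|_{\ell^2_h(M^{-1})}^2=\|(D_hf)M^{-1/2}\|_{\ell^2_h}^2$. A discrete Leibniz rule for $D_h$ together with the uniform bounds $M_{j+1}/M_j\sim1$ and $(D_h^+\sqrt M)_jM_j^{-1/2}\in\ell^\infty$ (Corollary~\ref{cor:DhM-1/2} and \eqref{eq:mualpha_est1}) yields $(D_hf)_jM_j^{-1/2}=\theta_j(D_hg)_j+\sigma_j\,\bar g_j$, where $\theta_j=\tfrac{\sqrt{M_{j+1}}+\sqrt{M_{j-1}}}{2\sqrt{M_j}}$ is bounded from above and below, $\sigma_j=(D_h\sqrt M)_j/\sqrt{M_j}$ is bounded, and $\bar g_j=(g_{j+1}+g_{j-1})/2$. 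Consequently $\|D_hf\|_{\ell^2_h(M^{-1})}^2\lesssim\|D_hg\|_{\ell^2_h}^2+\|g\|_{\ell^2_h}^2\lesssim\|D_h^+g\|_{\ell^2_h}^2+\|g\|_{\ell^2_h}^2$, the last step by Proposition~\ref{prop:embed}-(iii). Applying Theorem~\ref{theo:interp} with $s=\alpha/2$ to $g$ then gives, for $\eps$ small, $\|D_hf\|_{\ell^2_h(M^{-1})}^2\lesssim\eps\,|D_hg|_{\dot H^{\alpha/2}_h}^2+K(\eps)\,\|g\|_{H^{\alpha/2}_h}^2$. The term $\|g\|_{H^{\alpha/2}_h}^2$ is controlled by the dissipation: decomposing $g=(f-\Pi_hf)M^{-1/2}+(\Pi_hf)M^{-1/2}$, Proposition~\ref{prop:inegfunc_discr2} bounds the first piece by $\Symh(f,f)$, while the second equals $c\sqrt M$ with $c=(\sum_kf_kh)/(\sum_kM_kh)$, whose $H^{\alpha/2}_h$-norm is bounded uniformly in $h$ (from the appendix bounds on $\mu_\alpha$) and whose prefactor satisfies $c^2\sim\|\Pi_hf\|_{\ell^2_h(M^{-1})}^2$; hence $\|g\|_{H^{\alpha/2}_h}^2\lesssim\Symh(f,f)+\|\Pi_hf\|_{\ell^2_h(M^{-1})}^2$.

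The main obstacle is the remaining term $\eps\,|D_hg|_{\dot H^{\alpha/2}_h}^2$, which must be converted into $\eps\,\Symh(D_hf,D_hf)$. I would first invoke Lemma~\ref{lem:regSalpha} with $u=D_hf$ to obtain $|(D_hf)M^{-1/2}|_{\dot H^{\alpha/2}_h}^2\lesssim\Symh(D_hf,D_hf)+\|D_hf\|_{\ell^2_h(M^{-1})}^2$, and then compare $|D_hg|_{\dot H^{\alpha/2}_h}^2$ with $|(D_hf)M^{-1/2}|_{\dot H^{\alpha/2}_h}^2$ through the Leibniz identity $D_hg=\theta^{-1}\big((D_hf)M^{-1/2}-\sigma\bar g\big)$. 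The delicate point is that this comparison relies on a discrete product estimate in fractional seminorms, namely $|ab|_{\dot H^{\alpha/2}_h}^2\lesssim\|a\|_{\ell^\infty}^2\,|b|_{\dot H^{\alpha/2}_h}^2+\bigl(\sup_j\sum_{k\neq0}\tfrac{(a_j-a_{j+k})^2}{|hk|^{1+\alpha}}h\bigr)\|b\|_{\ell^2_h}^2$, applied with the smooth, constant-at-infinity multipliers $a=\theta^{-1}$ and $a=\theta^{-1}\sigma$ built from $\sqrt M$. The key technical step is to show that the multiplier quantity $\sup_j\sum_{k\neq0}(a_j-a_{j+k})^2|hk|^{-1-\alpha}h$ is bounded uniformly in $h$, which I would establish from the pointwise bounds on $\mu_\alpha$ and its derivatives in the appendix. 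Since $|\bar g|_{\dot H^{\alpha/2}_h}\lesssim|g|_{\dot H^{\alpha/2}_h}$ and $\|\bar g\|_{\ell^2_h}\lesssim\|g\|_{\ell^2_h}$, this yields $|D_hg|_{\dot H^{\alpha/2}_h}^2\lesssim\Symh(D_hf,D_hf)+\|D_hf\|_{\ell^2_h(M^{-1})}^2+\|g\|_{H^{\alpha/2}_h}^2$.

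Feeding this bound back into the interpolation estimate, the contribution $\eps\,\|D_hf\|_{\ell^2_h(M^{-1})}^2$ is absorbed into the left-hand side for $\eps$ small and $h\le h_0$ (the threshold $h_0$ guaranteeing the uniform-in-$h$ validity of the multiplier and weight bounds), while $\|g\|_{H^{\alpha/2}_h}^2$ is already controlled as above. Collecting everything and rescaling the small parameter (so that the constant coefficient produced in front of $\Symh(D_hf,D_hf)$ becomes exactly $\ve$) yields $\|D_hf\|_{\ell^2_h(M^{-1})}^2\lesssim\ve\,\Symh(D_hf,D_hf)+K(\ve)\bigl(\Symh(f,f)+\|\Pi_hf\|_{\ell^2_h(M^{-1})}^2\bigr)$; combined with the Poincaré bound for $\|f\|_{\ell^2_h(M^{-1})}^2$ and the arithmetic--geometric reduction of the cross term, this is exactly the claimed inequality.
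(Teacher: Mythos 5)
Your proposal is correct, and most of it runs along the same track as the paper's proof: the reduction of the cross term (your AM--GM step is equivalent to the paper's Cauchy--Schwarz treatment), the Leibniz rewriting of $(D_hf)M^{-1/2}$ with the weight bounds of Corollary~\ref{cor:DhM-1/2} and \eqref{eq:mualpha_est1}, the application of Theorem~\ref{theo:interp} with $s=\alpha/2$ to $g=fM^{-1/2}$, and the control of $\|fM^{-1/2}\|^2_{H^{\alpha/2}_h}$ by $\Symh(f,f)+\|\Pi_hf\|^2_{\ell^2_h(M^{-1})}$ via Proposition~\ref{prop:inegfunc_discr2} and the uniform bound $\|M^{1/2}\|_{H^{\alpha/2}_h}\lesssim1$. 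The genuine divergence is in how the $\eps$-term is converted into $\eps\,\Symh(D_hf,D_hf)$. The paper simply observes that $\Pi_h D_hf=0$ (the sum $\sum_j(D_hf)_j$ telescopes), so that Proposition~\ref{prop:inegfunc_discr2} applied to $D_hf$ yields $\|(D_hf)M^{-1/2}\|^2_{H^{\alpha/2}_h}\lesssim\Symh(D_hf,D_hf)$ outright, with no remainder and no absorption. You instead pass through Lemma~\ref{lem:regSalpha}, which produces the extra term $\|D_hf\|^2_{\ell^2_h(M^{-1})}$ that must then be absorbed into the left-hand side for $\eps$ small; this is legitimate since your constants are uniform in $h$, but it forces you to state and verify an explicit multiplier estimate in the $\dot H^{\alpha/2}_h$ seminorm in order to compare $|D_hg|_{\dot H^{\alpha/2}_h}$ with $|(D_hf)M^{-1/2}|_{\dot H^{\alpha/2}_h}$. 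That multiplier bound is indeed uniformly valid: for $|hk|\le1$ one uses the Lipschitz-type decay of your multipliers coming from Proposition~\ref{prop:bounds_mua} and Corollary~\ref{cor:DhM-1/2} together with the summability of $|hk|^{1-\alpha}h$, and for $|hk|>1$ boundedness plus the tail $\sum|hk|^{-1-\alpha}h\lesssim1$; note that the paper's own Leibniz step ("up to changing $K(\eps)$", handling $f_{j-1}(D_hM^{-1/2})_j$) silently relies on a comparable product estimate, so your version is in fact more explicit on that point. The trade-off: the paper's route is shorter and cleaner because it exploits the mean-zero structure of discrete derivatives through the already-established Proposition~\ref{prop:inegfunc_discr2}, whereas your regularization-plus-absorption argument never uses $\Pi_hD_hf=0$ and would therefore survive in situations where the differentiated quantity is not mean-free.
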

    \begin{proof}
        It is actually enough to prove that 
        \begin{equation} \label{ineq:Dhf}
            \|D_hf\|_{\ell^2_h(M^{-1})}^2\ 
            \leq\ K(\ve) \left(\Symh(f,f)\,+\,\|\Pi_h f\|_{\ell^2_h(M^{-1})}^2\right)+\, \ve\,\Symh(D_hf,D_hf)\,.
        \end{equation}
        Indeed, let us assume that the latter inequality is proved. Then, we can write the following bound:
        $$
        \|f\|_{\ell^2_h(M^{-1})} \|D_hf\|_{\ell^2_h(M^{-1})} 
        \le \left(\|f-\Pi_hf\|_{\ell^2_h(M^{-1})} + \,\|\Pi_h f\|_{\ell^2_h(M^{-1})}  \right) \|D_hf\|_{\ell^2_h(M^{-1})} 
        $$
        so that using~\eqref{eq:discr_poinc} and~\eqref{ineq:Dhf},
        \begin{align*}
            & \|f\|_{\ell^2_h(M^{-1})} \|D_hf\|_{\ell^2_h(M^{-1})} 
            \le \left(\Symh (f,f)^{1/2} + \, \|\Pi_h f\|_{\ell^2_h(M^{-1})}\right) \|D_hf\|_{\ell^2_h(M^{-1})} \\
            &\quad \le K(\ve)^{1/2}\left(\Symh (f,f)^{1/2} + \, \|\Pi_h f\|_{\ell^2_h(M^{-1})}\right)  \left(\Symh(f,f)\,+\,\|\Pi_h f\|_{\ell^2_h(M^{-1})}^2\right)^{1/2}\\
            &\qquad + \ve^{1/2}\left(\Symh (f,f)^{1/2} + \, \|\Pi_h f\|_{\ell^2_h(M^{-1})}\right)\Symh(D_hf,D_hf)\big)^{1/2} \\
            &\quad \lesssim (K(\ve)^{1/2}+1) \left(\Symh(f,f)\,+\,\|\Pi_h f\|_{\ell^2_h(M^{-1})}^2\right) + \, \ve \, \Symh(D_hf,D_hf)\,,
        \end{align*}
        which is exactly the wanted result up to changing the value of $\eps$. 
        
        Let us now come to the proof of~\eqref{ineq:Dhf}. We first notice that:
        \begin{align*}
            &\|D_hf\|_{\ell^2_h(M^{-1})}^2 
            = \sum_{j \in \ZZ} |(D_h f)_j M_{j}^{-1/2}|^2  h \\     & \qquad \lesssim \sum_{j \in \ZZ} \left(|(D_h (fM^{-1/2}))_j|^2  + |f_{j+1}(D_h^+ M^{-1/2})_j|^2 +|f_{j-1}(D_h^+ M^{-1/2})_{j-1}|^2\right) h\,.
        \end{align*}
        Then using Corollary~\ref{cor:DhM-1/2}, we deduce that 
        \[
        \|D_hf\|_{\ell^2_h(M^{-1})}^2 \lesssim \|D_h(fM^{-1/2})\|^2_{\ell^2_h} + \|fM^{-1/2} \|^2_{\ell^2_h}\,. 
        \]
        From Theorem~\ref{theo:interp}, we deduce that there exists $\eta>0$ such that for any $\eps \in (0,\eta)$, there is $K(\eps)>0$ such that  
        \[
        \|D_hf\|_{\ell^2_h(M^{-1})}^2 \lesssim \eps \|D_h(fM^{-1/2})\|^2_{H^{\alpha/2}_h} + K(\eps) \|f M^{-1/2}\|^2_{H^{\alpha/2}_h}\,. 
        \]
        Then, using that $(D_h (fM^{-1/2}))_j = (D_h f)_j M^{-1/2}_{j+1} + f_{j-1} (D_h M^{-1/2})_j$, up to changing $K(\eps)$ and using Corollary~\ref{cor:DhM-1/2}, we get: 
        \[
        \|D_hf\|_{\ell^2_h(M^{-1})}^2 \lesssim \eps \|(D_hf)M^{-1/2}\|^2_{H^{\alpha/2}_h} + K(\eps) \|f M^{-1/2}\|^2_{H^{\alpha/2}_h}\,,
        \]
        where we used that $M_{j+1}^{-1/2} \lesssim M_j^{-1/2} \lesssim M_{j-1}^{-1/2}$ holds for $h$ small enough. Now observe that 
        \[
        \|f M^{-1/2}\|^2_{H^{\alpha/2}_h} \lesssim \|(f-\Pi_h f) M^{-1/2}\|^2_{H^{\alpha/2}_h} + \|(\Pi_h f)M^{-1/2}\|^2_{H^{\alpha/2}_h} \,.
        \]
        Moreover, we have:
        \[
        \|(\Pi_h f)M^{-1/2}\|^2_{H^{\alpha/2}_h} = \|\Pi_h f\|^2_{\ell^2_h(M^{-1})} \|M^{1/2}\|_{H^{\alpha/2}_h}^2\,.
        \]
        We also have from Proposition~\ref{prop:embed}-(ii),~\eqref{eq:mualpha_est1} in Proposition~\ref{prop:bounds_mua} and Corollary~\ref{cor:DhM-1/2},
        \[
        \|M^{1/2}\|_{H^{\alpha/2}_h} \lesssim \|M^{1/2}\|_{\ell^2_h} + \|D_h^+(M^{1/2})\|_{\ell^2_h} \lesssim 1
        \]
        so that 
        \[
        \|(\Pi_h f)M^{-1/2}\|^2_{H^{\alpha/2}_h} \lesssim \|\Pi_h f\|^2_{\ell^2_h(M^{-1})}\,.
        \]
        Finally, to conclude, we remark that $D_h f = D_h f - \Pi_h D_hf$ and we use twice Proposition~\ref{prop:inegfunc_discr2}.
    \end{proof}

    \section{Stability and long-time behavior}\label{sec:stablong}
    \subsection{Main results}
    
    In the first theorem, we look at a semi-discretized version of~\eqref{eq:def_scheme_hom}. Since only the variable $v$ is discretized, to simplify the notations, we note $h=\Delta v$ and we thus study the equation
    \begin{equation} \label{eq:semihom}
        \partial_t f_j \ =\ (L_\alpha^h f)_{j}\,,\quad\forall j \in\ZZ\,,
    \end{equation}
    with some given initial data $(f_j^0)_j$. \\
    
    { \begin{theo}\label{theo:coerc_discrLFP}
            { There exists $h_0>0$ such that} if $f$ is a solution of the semi-discrete L\'evy-Fokker-Planck equation \eqref{eq:semihom} with initial data~$(f_j^0)_j \in H^1_h(M^{-1})$ then, for all $t\geq0$ { and $h \in (0,h_0)$} one has
            \[
            \|f(t) - f^\infty\|_{H^1_h(M^{-1})}\ \leq\ C\,\|f^0 - f^\infty\|_{H^1_h(M^{-1})}\,e^{-\lambda t}\,
            \]
            where \[f^\infty  := \frac{\lla f^0 \rra_h}{\lla M \rra_h} \, M\,\quad\text{with}\quad
            \lla f \rra_h := \sum_{j \in \ZZ} f_j \, h 
            \]
            for some constants $C\geq1$ and $\lambda>0$ depending only on $\alpha$. 
        \end{theo}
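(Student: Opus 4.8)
The plan is to mimic the continuous $H^1$ coercivity argument, replacing the Fourier-analytic estimates by the discrete functional inequalities of Section~\ref{sec:funcanal} and carefully treating the commutator remainders that are specific to the discrete setting. First I would reduce to the mean-zero part: since \eqref{eq:semihom} conserves mass (Lemma~\ref{lem:loceq}) and $\Lah M=0$ by \eqref{eq:preserv_eq}, the function $g:=f-f^\infty=f-\Pi_h f$ solves the same equation \eqref{eq:semihom} and satisfies $\Pi_h g=0$ for all $t\ge0$; moreover $\Pi_h(D_h g)=0$ by telescoping. It then suffices to prove exponential decay of $g$ in $H^1_h(M^{-1})$.

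As Lyapunov functional I would take
\[
\mathcal{H}_h(g):=\|g\|^2_{\ell^2_h(M^{-1})}+b\,\|D_h g\|^2_{\ell^2_h(M^{-1})}
\]
for a small constant $b\in(0,1]$ depending only on $\alpha$, so that $\mathcal{H}_h(g)$ is equivalent to $\|g\|^2_{H^1_h(M^{-1})}$ uniformly in $h$. Differentiating along the flow and using the bilinear decomposition \eqref{eq:Lalphabilinear}, whose skew-symmetric part $\Antsymh$ vanishes on the diagonal, the $\ell^2$ part contributes $-\Symh(g,g)$. For the derivative part I would exploit that $\Dh$ is a convolution operator, hence commutes exactly with $D_h$, so that $D_h\Lah=\Lah D_h+[D_h,\Gamma_\alpha^h]$. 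This gives
\[
\tfrac12\tfrac{\dd}{\dd t}\mathcal{H}_h(g)=-\Symh(g,g)-b\,\Symh(D_h g,D_h g)+b\,\langle[D_h,\Gamma_\alpha^h]g,\,D_h g\rangle_{\ell^2_h(M^{-1})}.
\]

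The crux is controlling the commutator term. Expanding $[D_h,\Gamma_\alpha^h]$ from \eqref{def:Gamma}--\eqref{def:VMhalf_simple} produces terms carrying discrete differences of the $v$-dependent coefficients $(VM)_{j+1/2}/M_j$; using the pointwise bounds on $\mu_\alpha$ and on $(VM)$ from the appendix, these coefficients and their discrete derivatives are bounded uniformly in $h$ for $h<h_0$, which yields a bound of the form
\[
\big|\langle[D_h,\Gamma_\alpha^h]g,D_h g\rangle_{\ell^2_h(M^{-1})}\big|\ \lesssim\ \|g\|_{\ell^2_h(M^{-1})}\|D_h g\|_{\ell^2_h(M^{-1})}+\|D_h g\|^2_{\ell^2_h(M^{-1})}.
\]
This is exactly the quantity governed by the weighted interpolation inequality of Proposition~\ref{prop:useful}; since $\Pi_h g=0$ the projection term there drops, so the commutator is bounded by $K(\eps)\,\Symh(g,g)+\eps\,\Symh(D_h g,D_h g)$. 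Inserting this estimate gives
\[
\tfrac12\tfrac{\dd}{\dd t}\mathcal{H}_h(g)\ \le\ -(1-bC K(\eps))\,\Symh(g,g)-b\,(1-C\eps)\,\Symh(D_h g,D_h g),
\]
and I would fix the constants in order: first $\eps$ small so that $C\eps<\tfrac12$, which fixes $K(\eps)$, then $b$ small so that $bC K(\eps)<\tfrac12$.

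With these choices the dissipation dominates, and I would finish using the discrete nonlocal Poincaré inequality (Proposition~\ref{prop:inegfunc_discr}) applied once to $g$ and once to $D_h g$, both of vanishing $\Pi_h$, to obtain $\|g\|^2_{\ell^2_h(M^{-1})}\lesssim\Symh(g,g)$ and $\|D_h g\|^2_{\ell^2_h(M^{-1})}\lesssim\Symh(D_h g,D_h g)$. This yields $\tfrac{\dd}{\dd t}\mathcal{H}_h(g)\le-\lambda\,\mathcal{H}_h(g)$ for some $\lambda>0$ depending only on $\alpha$, and Gr\"onwall's lemma with the norm equivalence concludes, with $C=b^{-1/2}$. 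The main obstacle is the commutator estimate: unlike the continuous case, where $[\partial_v,L_\alpha]$ collapses cleanly to $\partial_v$, the discrete commutator carries $h$-dependent remainders generated by the non-local, $v$-dependent coefficients $(VM)_{j+1/2}$, and one must establish their uniform boundedness (which is what forces the restriction $h<h_0$) before they can be absorbed through Proposition~\ref{prop:useful}.
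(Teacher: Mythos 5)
Your proposal matches the paper's proof essentially step for step: the same Lyapunov functional $\|g\|^2_{\ell^2_h(M^{-1})}+b\,\|D_hg\|^2_{\ell^2_h(M^{-1})}$, the same reduction via $[D_h,\Lambda_\alpha^h]=0$ to the drift commutator $[D_h,\Gamma_\alpha^h]$ estimated exactly as in Lemma~\ref{lem:estimate_commutator}, absorption of that term through Proposition~\ref{prop:useful} with the same ordering of the constants ($\varepsilon$ first, then the small weight on the derivative term), and the conclusion via the discrete Poincar\'e inequality of Proposition~\ref{prop:inegfunc_discr} applied to $g$ and $D_hg$ followed by Gr\"onwall. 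The only cosmetic deviation is that you attribute the restriction $h<h_0$ to the uniform commutator bounds, whereas in the paper it enters through Proposition~\ref{prop:useful}; this does not affect correctness.
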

        
        \begin{rema} \label{rem:semihom}
            Recall that the projection $\Pi_h$ is defined through $(\Pi_hf)_j=M_j\,\lla f \rra_h / \lla M \rra_h$ so that $f^\infty =\Pi_hf^0$. Remark that as in Proposition~\ref{prop:elementary}, one can prove that the global mass is conserved by the equation: if $f$ is a  solution to~\eqref{eq:semihom}, then for all $t \geq 0$, one has 
            $
            \lla f(t) \rra_{h} = \lla f^0 \rra_{h}
            $.
        \end{rema}}

    In the second theorem, we consider a semi-discretized version of \eqref{eq:def_scheme_kin}. Now, both $x$ and $v$ variables appear in the equation, we thus study the equation 
    \begin{equation} 
        \label{eq:semikin} 
        \partial_t f_{i,j} + \left(T^\dx f\right)_{i,j} = \left(L^\dv_\alpha f\right)_{i,j}, \;\forall (i,j)\in\ZZ/N_x\ZZ \times \ZZ\,,
    \end{equation}
    where $L^\dv_\alpha$ and $T^\dx$ are respectively defined in \eqref{def_LFPdiscr} and \eqref{eq:def_transport},
    with some given initial data $(f^0_{i,j})_{i,j}$. In order to prove hypocoercivity estimates for \eqref{eq:semikin}, the scalar product 
    \[
    \lla f,g \rra_\lm = \sum\limits_{i\in\ZZ/N_x \ZZ} \sum\limits_{j\in\ZZ} \frac{f_{i,j} g_{i,j}}{M_j}  \dx \dv \, ,
    \]
    for $f=(f_{i,j})_{(i,j)\in\ZZ/N_x\ZZ \times \ZZ}$ and $g=(g_{i,j})_{(i,j)\in\ZZ/N_x\ZZ \times \ZZ}$ is introduced as well as the associated norm which is denoted by $\|\cdot\|_\lm$. In what follows, we prove that the solution $(f(t))_{t\ge 0}$ of \eqref{eq:semikin} has exponential decay to equilibrium for a discrete $H^1$ weighted norm defined by 
    \[
    \| f\|^2_\hm = \|f\|^2_\lm + \| D_\dx f\|^2_\lm + \|D_\dv f \|^2_\lm\,,
    \]
    where $D_\dx$ and $D_\dv$ stand for centered finite differences in space and velocity
    \[
    \forall (i,j)\in\ZZ/N_x\ZZ\times \ZZ, \; (D_\dx f)_{i,j} = \frac{f_{i+1,j}-f_{i-1,j}}{2\dx}, \;\;  (D_\dv f)_{i,j}=\frac{f_{i,j+1}-f_{i,j-1}}{2\dv}\,.
    \]

    \begin{theo}\label{theo:hypo_discrLFP}
        Suppose that $N_x$ is odd. There exists $\Delta v_0>0$ such that  if $f$ is  solution of the semi-discrete kinetic Lévy-Fokker-Planck equation \eqref{eq:semikin} with initial data $(f^0_{i,j})_{i,j}\in\hm$ then, for all $\dv<\dv_0$ and for all $t\ge 0$, one has 
        \[
        \|f(t) - f^\infty\|_\hm 
        \le C \|f^0-f^\infty\|_\hm\;e^{-\lambda t},
        \]
        where  
        \[
        f^\infty  := \frac{\lla f^0 \rra_{\dx,\dv}}{\lla M \rra_{\dx,\dv}} \, M\,\quad\text{with}\quad
        \lla f \rra_{\dx,\dv} := \sum_{(i,j)\in\ZZ/N_x\ZZ \times \ZZ}  f_{i,j} \, \dx \dv
        \]
        for some constant $C\ge 1$  and $\lambda>0$ depending only on $\alpha$.
    \end{theo}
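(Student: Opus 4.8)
The plan is to transpose the $H^1$ hypocoercivity method of \cite{ayi_2019_note} to the discrete setting, the discrete functional toolbox of Section~\ref{sec:funcanal} replacing the Fourier-based inequalities used there. By linearity, and since $f^\infty=\Pi_hf^0$ is $x$-independent and lies in $\mathrm{Ker}(L_\alpha^{\dv})$ (so that $T^{\dx}f^\infty=0$), the difference $f-f^\infty$ again solves \eqref{eq:semikin}, now with $\lla f-f^\infty\rra_{\dx,\dv}=0$. I may therefore assume $\lla f\rra_{\dx,\dv}=0$ and prove decay to $0$. The central object is the modified functional
\[
\mathcal{H}[f]\ :=\ \|f\|_\lm^2+a\,\|D_\dx f\|_\lm^2+b\,\|D_\dv f\|_\lm^2+2c\,\lla D_\dx f,D_\dv f\rra_\lm,
\]
with $a,b,c>0$ chosen so that $c^2<ab$ (and $c$ small), which makes $\mathcal{H}[f]\sim\|f\|_\hm^2$ uniformly in $\dv$. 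The goal is a differential inequality $\tfrac{\dd}{\dd t}\mathcal{H}[f(t)]\le-2\lambda\,\mathcal{H}[f(t)]$, after which Grönwall and the norm equivalence give the statement. Writing $\mathcal{D}[g]:=\sum_i\mathcal{S}_\alpha^{\dv}(g_{i,\cdot},g_{i,\cdot})\,\dx\ge0$, the bilinear decomposition \eqref{eq:Lalphabilinear} gives $\lla L_\alpha^{\dv}g,g\rra_\lm=-\mathcal{D}[g]$, the skew form $\mathcal{A}_\alpha^{\dv}$ vanishing on the diagonal; crucially, I never move the weight $M^{-1/2}$ through $D_\dv$, always keeping $L_\alpha^{\dv}$ in place and invoking \eqref{eq:Lalphabilinear}.

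Next I would differentiate $\mathcal{H}$ along $\partial_t f=L_\alpha^{\dv}f-T^{\dx}f$ using three structural facts. First, $T^{\dx}$ is skew-symmetric for $\lla\cdot,\cdot\rra_\lm$ (summation by parts in $x$; the weight $M_j^{-1}$ is inert), so transport produces no diagonal contribution, and in the cross term the two transport contributions cancel exactly. Second, $D_\dx$ commutes with both $L_\alpha^{\dv}$ and $T^{\dx}$, whence $\tfrac{\dd}{\dd t}\|f\|_\lm^2=-2\mathcal{D}[f]$ and $\tfrac{\dd}{\dd t}\|D_\dx f\|_\lm^2=-2\mathcal{D}[D_\dx f]$. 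Third — the hypocoercive mechanism — a direct computation gives
\[
\left([D_\dv,T^{\dx}]f\right)_{i,j}\ =\ \tfrac12\left((D_\dx f)_{i,j+1}+(D_\dx f)_{i,j-1}\right)\ =\ (D_\dx f)_{i,j}+\left((A_\dv-I)D_\dx f\right)_{i,j},
\]
where $A_\dv$ is the velocity-averaging operator and $A_\dv-I$ is an $O(\dv^2)$ remainder. Consequently the cross term contributes the coercive piece $-2c\,\|D_\dx f\|_\lm^2$ plus a controllable remainder, which is exactly what forces decay in the $x$-direction.

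The remaining contributions are commutators with $L_\alpha^{\dv}=\Gamma_\alpha^{\dv}+\Lambda_\alpha^{\dv}$. Since $\Lambda_\alpha^{\dv}$ is a convolution in $j$ with constant weights $\beta_k^{\dv}$, it commutes \emph{exactly} with $D_\dv$, so $[D_\dv,L_\alpha^{\dv}]=[D_\dv,\Gamma_\alpha^{\dv}]$; computing this as in the continuous identity $[\partial_v,\partial_v(v\,\cdot)]=\partial_v$ shows it equals $D_\dv$ up to a remainder encoding the finite differences of $(VM)_{j\pm\frac12}/M_j$, controlled via the bounds on $(VM)$ and on $M$, $D_\dv^+\sqrt{M}$ from the appendix. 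The resulting full-velocity-derivative terms $\lla[D_\dv,\Gamma_\alpha^{\dv}]f,D_\dv f\rra_\lm$ and the analogous cross terms are the critical ones, and they are absorbed using the weighted interpolation inequality of Proposition~\ref{prop:useful}: for small $\eps$, both $\|D_\dv f\|_\lm^2$ and $\|f\|_\lm\|D_\dv f\|_\lm$ are bounded by $\eps\,\mathcal{D}[D_\dv f]+K(\eps)\,\mathcal{D}[f]$, the projection term vanishing by zero mass. Collecting everything and choosing the constants in the order $\eps$ fixed, then $b$ small (so that $K(\eps)$ times the $b$-error stays below the $-2\mathcal{D}[f]$ budget), then $c$ small (so that $c^2<ab$ and the $O(c)$ cross errors are dominated by $-2c\|D_\dx f\|_\lm^2$ via Young's inequality), and finally $\dv<\dv_0$ small enough to render all mesh remainders negligible, I would obtain $\tfrac{\dd}{\dd t}\mathcal{H}[f]\le-\kappa\big(\mathcal{D}[f]+\mathcal{D}[D_\dx f]+\mathcal{D}[D_\dv f]\big)-\kappa'\|D_\dx f\|_\lm^2$. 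To convert this into $-2\lambda\,\mathcal{H}[f]$ I would use the discrete nonlocal Poincaré inequality (Proposition~\ref{prop:inegfunc_discr}/\ref{prop:inegfunc_discr2}) slice by slice to control the microscopic parts $f-\Pi_h f$, $D_\dx f-\Pi_h D_\dx f$ and $D_\dv f$; the macroscopic (fluid) part $\Pi_h f=M\rho$ is then controlled by combining the coercivity $-\kappa'\|D_\dx f\|_\lm^2\gtrsim-\kappa'\|D_\dx\rho\|^2$ with a discrete Poincaré inequality in the periodic space variable, where $\sum_i\rho_i=0$ and the oddness of $N_x$ guarantee that the only zero-mean $x$-constant state is trivial. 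The pure velocity-dissipation part of this argument mirrors the homogeneous estimate of Theorem~\ref{theo:coerc_discrLFP}.

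I expect the main obstacle to be the rigorous, mesh-uniform control of the commutator remainders — both the $O(\dv^2)$ error in $[D_\dv,T^{\dx}]-D_\dx$ and, more seriously, the discrete drift commutator $[D_\dv,\Gamma_\alpha^{\dv}]$, whose remainder involves finite differences of the nonlocal coefficients $(VM)_{j\pm\frac12}/M_j$ and thus requires the fine pointwise bounds of the appendix. Keeping these remainders absorbable by $\eps\,\mathcal{D}[D_\dv f]$, together with the velocity-derivative terms, is precisely what dictates the threshold $\dv_0$, and it is the step with no clean continuous analogue.
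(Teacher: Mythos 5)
Your skeleton is the paper's own: same modified functional $\mathcal{H}$, same commutator identity $D_\dv T^\dx=T^\dx D_\dv+D_\dx\tfrac{S_++S_-}{2}$, same use of Proposition~\ref{prop:useful} and of discrete Poincar\'e inequalities in both variables, with the same order of choice of constants. But two steps, as you state them, would fail. First, the remainder $(\tfrac{S_++S_-}{2}-I)D_\dx f$ is \emph{not} an $O(\dv^2)$ term: discrete sequences admit no Taylor expansion, so ``$\dv<\dv_0$ small'' cannot render it negligible. The paper instead splits it as $(I-\tfrac{S_++S_-}{2})\dpi D_\dx f+(I-\tfrac{S_++S_-}{2})(\dpi-I)D_\dx f$; only the macroscopic piece is small in $\dv$ (estimate~\eqref{eq:lem_hypo_estimates_SPi}, via the factor $\dv\,(D^+_\dv M)_j/M_j$ and Lemma~\ref{lem:DhM}), while the microscopic piece is of order one and must be dominated by $\mathcal{S}_\alpha^{\dx,\dv}(D_\dx f,D_\dx f)^{1/2}$ through the Poincar\'e inequality~\eqref{eq:discr_poinc} (estimate~\eqref{eq:lem_hypo_estimates_SPi-I}). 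This is dissipation absorption, not mesh smallness.

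Second, your claim that in Proposition~\ref{prop:useful} ``the projection term vanishes by zero mass'' is wrong: zero global mass only gives $\sum_i\rho_i=0$, not $\dpi f=0$, so the term $\|\dpi f\|^2_\lm$ survives and must be beaten by the spatial Poincar\'e inequality $\|\dpi f\|_\lm\lesssim\|D_\dx f\|_\lm$ (Lemma~\ref{lem:hypo_estimates_Poincare}, where the oddness of $N_x$ enters). Making that bookkeeping close requires a refinement you omit: since $[D_\dv,L^\dv_\alpha]f$ has zero velocity marginal, one writes $\lla [D_\dv,L^\dv_\alpha]f,D_\dx f\rra_\lm=\lla [D_\dv,L^\dv_\alpha]f,(I-\dpi)D_\dx f\rra_\lm$ (see~\eqref{eq:lem_hypo_estimates_DvLDx}), so the cross-commutator error is measured by $\|(\dpi-I)D_\dx f\|_\lm\lesssim\mathcal{S}_\alpha^{\dx,\dv}(D_\dx f,D_\dx f)^{1/2}$, whose dissipation budget carries the coefficient $a$, chosen large \emph{last}; Young's inequality can then be used with a large parameter, shrinking the resulting $\|\dpi f\|^2_\lm$ coefficient below the available $O(c)$ margin. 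With your cruder bound $|\lla[D_\dv,L^\dv_\alpha]f,D_\dx f\rra_\lm|\lesssim(\|f\|_\lm+\|D_\dv f\|_\lm)\|D_\dx f\|_\lm$, the error lands on $\|D_\dx f\|^2_\lm$, whose only negative budget is the $O(c)$ term produced by the cross derivative; since both sides scale linearly in $c$, the fixed multiplicative constants cannot be absorbed and the differential inequality does not close.
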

    
    {\begin{rema} \label{rem:semikin}
            In the inhomogeneous setting one can define the macroscopic density $\rho_i := \sum_{j \in \ZZ} f_{i,j} \, \dv$. It relates to the projection $\Pi_{\Delta v}$ by the relation $\rho_i\,M_j\ =\  \lla M \rra_{\Delta v}(\Pi_{\Delta v}f_{i,\cdot})_j$. Remark also that as in Proposition~\ref{prop:elementary}, one can prove that the global mass is conserved by the equation: if $f$ is a  solution to~\eqref{eq:semikin}, then for all $t \geq 0$, one has $\lla f(t) \rra_{\dx,\dv} = \lla f^0 \rra_{\dx,\dv}$.
        \end{rema}}
    {Notice that Theorem~\ref{theo:coerc_discrLFP} can be seen as a special case of Theorem~\ref{theo:hypo_discrLFP}. However, we choose to present Theorem~\ref{theo:coerc_discrLFP} as well as its proof in order to highlight the main arguments that allow us to treat the collision operator $\Lah$ in our energy estimates. Indeed, the said arguments may be somewhat hidden in the proof of the kinetic case in which we face additionnal difficulties coming from the transport operator and the $H^1$-{\em hypo}coercivity method.}

    Finally, in the last theorem, we consider the fully discrete implicit in time discretization of \eqref{eq:kinfracFP}, that we recall here,
    \begin{equation} 
        \label{eq:fulldiscretekin} 
        \frac{f_{i,j}^{n+1} - f_{i,j}^{n}}{\Delta t} + \left(T^\dx f\right)_{i,j}^{n+1} = \left(L^\dv_\alpha f\right)_{i,j}^{n+1}, \;\forall (i,j)\in\ZZ/N_x\ZZ \times \ZZ, n \in \mathbb{N}.
    \end{equation}
    Then, we obtain the following result.
    \begin{theo}\label{theo:hypo_fulldiscrLFP}
        Suppose that $N_x$ is odd. There exists $\Delta v_0>0$ such that if $f$ is a  solution of the  discrete kinetic Lévy-Fokker-Planck equation \eqref{eq:fulldiscretekin} with initial data $(f^0_{i,j})_{i,j}\in\hm$, then for all $\dv<\dv_0$ and for all $n \in \mathbb{N}$, one has 
        \[
        \|f^n - f^\infty\|_\hm 
        \le C \|f^0-f^\infty\|_\hm\;(1+2  \lambda \Delta t)^{- \frac {n}{2}},
        \]
        for some constants $C\ge 1$ and  $\lambda>0$  depending only on $\alpha$. The global equilibrium $f^\infty$ is the same as in Theorem~\ref{theo:hypo_discrLFP}.
    \end{theo}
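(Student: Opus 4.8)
The plan is to deduce the fully discrete estimate from the semi-discrete one (Theorem~\ref{theo:hypo_discrLFP}) by replacing the differential Gr\"onwall argument with its implicit-in-time discrete counterpart. First I would reduce to the mean-zero perturbation $g^n := f^n - f^\infty$. Since $f^\infty$ depends only on $v$, one has $(T^\dx f^\infty)_{i,j} = 0$, and since $f^\infty\in\mathrm{span}\{M\}$ the preservation of equilibrium gives $(L^\dv_\alpha f^\infty) = 0$; hence $g^n$ solves the same homogeneous scheme \eqref{eq:fulldiscretekin} and, by conservation of mass (Remark~\ref{rem:semikin}), satisfies $\lla g^n\rra_{\dx,\dv}=0$ for all $n$. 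It therefore suffices to establish the decay for $g^n$ in $\hm$.

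The key object is the modified $H^1$ Lyapunov functional $\cH$ built in the proof of Theorem~\ref{theo:hypo_discrLFP}: for suitable constants and $\dv$ small enough it is a quadratic form $\cH(g,g) = \lla \mathcal{M} g, g\rra_\lm$ with $\mathcal{M}$ symmetric and positive, equivalent to $\|g\|_\hm^2$ uniformly in $\dv$, and it obeys the instantaneous dissipation inequality $\lla \mathcal{M}(L^\dv_\alpha - T^\dx)g, g\rra_\lm \le -\lambda\,\cH(g,g)$ for every mean-zero $g$. This last inequality is purely \emph{algebraic} in $g$ --- it does not use the time-continuity of the trajectory --- which is precisely what allows it to be transported to the implicit scheme.

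Then I would exploit the polarization identity. Writing $g^{n+1} - g^n = \dt\,(L^\dv_\alpha - T^\dx)g^{n+1}$ and using the identity $\lla \mathcal{M} a,a\rra - \lla\mathcal{M} b,b\rra = 2\lla\mathcal{M}(a-b),a\rra - \lla\mathcal{M}(a-b),a-b\rra$ (valid for symmetric $\mathcal{M}$), the positivity of $\mathcal{M}$ lets me discard the favorable quadratic remainder to obtain
\[
\cH(g^{n+1},g^{n+1}) - \cH(g^n,g^n)\ \le\ 2\dt\,\lla \mathcal{M}(L^\dv_\alpha - T^\dx)g^{n+1}, g^{n+1}\rra_\lm\,.
\]
Applying the dissipation inequality above to $g^{n+1}$ (legitimate since $g^{n+1}$ is mean-zero) yields $\cH(g^{n+1},g^{n+1}) - \cH(g^n,g^n) \le -2\lambda\dt\,\cH(g^{n+1},g^{n+1})$, that is $(1+2\lambda\dt)\,\cH(g^{n+1},g^{n+1}) \le \cH(g^n,g^n)$. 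Iterating gives $\cH(g^n,g^n) \le (1+2\lambda\dt)^{-n}\cH(g^0,g^0)$, and the equivalence $\cH \sim \|\cdot\|_\hm^2$ (taking square roots, whence the exponent $-n/2$) produces the claimed bound with some $C\ge1$.

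The main obstacle --- and the one point requiring care --- is checking that the dissipation estimate from Theorem~\ref{theo:hypo_discrLFP} is genuinely a pointwise inequality valid for the implicit iterate $g^{n+1}$, rather than merely an inequality on $\tfrac{\dd}{\dd t}\cH$ along solutions; concretely, all commutator and remainder terms of the hypocoercivity computation, in particular those generated by the transport operator $T^\dx$ and by the discrete drift fluxes $(VM)$, must be controlled by $\cH(g^{n+1},g^{n+1})$ alone. Granting the semi-discrete analysis, this is automatic. A secondary, benign point is that implicit Euler only \emph{improves} stability: the discarded term $-\lla\mathcal{M}(g^{n+1}-g^n),g^{n+1}-g^n\rra_\lm$ is nonpositive by positivity of $\mathcal{M}$, so no CFL-type restriction on $\dt$ arises, consistent with the unconditional rate $(1+2\lambda\dt)^{-n/2}$.
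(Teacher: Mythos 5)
Your proposal is correct and is essentially the paper's proof: the paper likewise writes $\mathcal{H}(f^{n+1},f^{n+1})$ as a cross term $\varphi(f^{n+1},f^n)$ plus $\Delta t$ times the very same dissipation functionals evaluated at $f^{n+1}$, then applies Cauchy--Schwarz and Young for the scalar product $\varphi$ (whose norm is $\mathcal{H}^{1/2}$), which is algebraically identical to your polarization identity with the nonnegative term $\lla\mathcal{M}(g^{n+1}-g^n),g^{n+1}-g^n\rra_\lm$ discarded. Your key observation --- that the semi-discrete hypocoercivity estimate $\lla\mathcal{M}(L^\dv_\alpha - T^\dx)g,g\rra_\lm \le -\lambda\,\mathcal{H}(g,g)$ is a pointwise inequality for mean-zero $g$, hence applicable to the implicit iterate --- is exactly how the paper concludes, reusing ``the exact same inequalities and the same choice of constants'' from Theorem~\ref{theo:hypo_discrLFP}.
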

    
    \begin{rema}
     It can be noticed, that the constant $\lambda>0$ in Theorem \ref{theo:hypo_fulldiscrLFP} is the same as the one in Theorem \ref{theo:hypo_discrLFP}.
    \end{rema}

    \subsection{A technical lemma}
    Before starting the proof of our results, we prove an estimate for the commutator $[D_h, L^h_\alpha]$ which naturally arises in our computations. It will be useful in the proof of Theorem~\ref{theo:coerc_discrLFP} and it is worth remarking that the proof is similar in the non-homogeneous case, so that it will also be used in the proof of Theorem \ref{theo:hypo_discrLFP}. In the continuous setting, the corresponding commutator, $[\partial_v, L_\alpha] = \partial_v$, is easily computed. While the discrete case is more intricate, we are still able to obtain nice estimates on the scalar product $\lla [D_h,L^h_\alpha] f, g\rra_{\ell^2_h(M^{-1})}$.

    \begin{lem} 
        \label{lem:estimate_commutator}
        For any $f,g\in\ell^2_h(M^{-1})$, 
        \begin{equation}
            \label{eq:estimate_commutator} 
            \left| \lla [D_h,L^h_\alpha] f, g\rra_{\ell^2_h(M^{-1})}\right| \lesssim \|f\|_{\ell^2_h(M^{-1})}\|g\|_{\ell^2_h(M^{-1})} + \|D_h f\|_{\ell^2_h(M^{-1})} \| g\|_{\ell^2_h(M^{-1})}\,.
        \end{equation}
    \end{lem}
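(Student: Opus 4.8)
The plan is to split the commutator according to the decomposition $L^h_\alpha=\Lambda^h_\alpha+\Gamma^h_\alpha$ and to show first that the nonlocal part drops out entirely. Indeed, both $D_h$ and $\Lambda^h_\alpha$ are \emph{discrete convolution} (translation-invariant) operators: $(D_hg)_j$ depends on $g_{j\pm1}$ through a fixed stencil, while in \eqref{eq:def_discrLap} the weights $\beta^h_k$ do not depend on $j$. Discrete convolutions on $\ZZ$ commute, so $[D_h,\Lambda^h_\alpha]=0$ and therefore $[D_h,L^h_\alpha]=[D_h,\Gamma^h_\alpha]$. This is the exact discrete counterpart of the fact that in the continuous setting $[\partial_v,-(-\Delta_v)^{\alpha/2}]=0$, and it reduces the whole estimate to the drift term, whose continuous analog $[\partial_v,\partial_v(v\,\cdot)]=\partial_v$ motivates the right-hand side of \eqref{eq:estimate_commutator}.

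\textbf{Expansion and structure of the drift commutator.} Next I would expand $[D_h,\Gamma^h_\alpha]f$ explicitly. Writing $F_j:=f_j/M_j$ and $c_j:=\pare{VM}_{j+\frac12}$, the operator \eqref{def:Gamma}--\eqref{def:VMhalf_simple} reads $(\Gamma^h_\alpha f)_j=\tfrac1{2h}\bigl(c_j(F_j+F_{j+1})-c_{j-1}(F_{j-1}+F_j)\bigr)$, a variable-coefficient discrete drift. The guiding principle is the continuous identity $[\partial_v,\partial_v(v\,\cdot)]=\partial_v$: since $c_j/M_j\approx v_j$ grows linearly, the commutator with the constant-coefficient $D_h$ must exhibit a cancellation analogous to $[\partial_v,v]=1$, so that the linearly growing contribution $\sim v_j(D_hf)_j$ disappears and only bounded-coefficient terms survive. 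Carrying out the discrete computation and reorganizing with discrete summation by parts, I would group the result into a \emph{principal} part, comparable to $(D_hf)_j$ (the discrete image of the surviving $\partial_v$), and \emph{remainder} parts in which a discrete difference falls on the coefficients $c_j/M_j$ or on the equilibrium ratios $M_{j\pm1}/M_j$.

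\textbf{Estimation and the main obstacle.} Finally I would pair with $g$ in $\ell^2_h(M^{-1})$ and apply Cauchy--Schwarz: the principal term produces $\|D_hf\|_{\ell^2_h(M^{-1})}\|g\|_{\ell^2_h(M^{-1})}$ and the remainder terms produce $\|f\|_{\ell^2_h(M^{-1})}\|g\|_{\ell^2_h(M^{-1})}$, which is exactly \eqref{eq:estimate_commutator}. What makes this work, and what I expect to be the main obstacle, is the uniform-in-$h$ control of all the coefficients that appear: one must show that the discrete derivatives of $c_j/M_j$ are $O(1)$ and that the ratios $M_{j\pm1}/M_j$ stay bounded with bounded discrete differences, uniformly in $j$ and $h$. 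This is precisely the information encoded in the bounds on $\beta^h_k$ (Lemma~\ref{lem:bounds_beta}), in the pointwise and derivative estimates on $\mu_\alpha$ used throughout (such as \eqref{eq:mualpha_est1}), and in the bounds on $\pare{VM}$ and its discrete differences from the appendix. The delicate point is that these estimates must be sharp enough to witness the cancellation of the growing coefficient, so that the surviving principal coefficient is genuinely $O(1)$ rather than $O(v_j)$; correctly handling the nonconstant weight $M^{-1}$ during the summations by parts — where it generates the extra lower-order terms accounting for the $\|f\|_{\ell^2_h(M^{-1})}$-contribution — is the technical heart of the argument.
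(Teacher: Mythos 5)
Your proposal follows essentially the same route as the paper's proof: the observation that $[D_h,\Lambda^h_\alpha]=0$ by translation invariance, the reduction to the variable-coefficient drift commutator $[D_h,\Gamma^h_\alpha]$, the reorganization into a principal part bounded by $\|D_hf\|_{\ell^2_h(M^{-1})}\|g\|_{\ell^2_h(M^{-1})}$ plus remainder terms where discrete differences fall on $(VM)$ and on $M^{-1}$, and the uniform-in-$h$ control of these coefficients via the appendix bounds (Lemmas~\ref{lem:VM}--\ref{lem:D2hVM} and Corollaries~\ref{cor:DhM-1/2}, \ref{cor:D2h1/M}). You have also correctly identified the technical heart of the argument, namely that the first and second discrete differences of $(VM)$ combined with those of $M^{-1}$ must decay fast enough to cancel the linear growth and leave $O(1)$ coefficients, which is exactly what the paper's computation verifies term by term.
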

    
    \begin{proof}
        Let $f$, $g\in\ell^2_h(M^{-1})$. The proof of Lemma \ref{lem:estimate_commutator} relies on an appropriate rewriting of $\lla [D_h, L^h_\alpha] f, g\rra$, and on the bounds on the discrete equilibrium given in Section \ref{subsec:BoundsOnTheDiscreteEquilibrium}. First of all, let us notice that we clearly have   $[D_h, \Lambda^h_\alpha]=0$. Therefore
        \[
        \lla [D_h, L^h_\alpha] f, g\rra_{\ell^2_h(M^{-1})} 
        = \lla [D_h, \Gamma^h_\alpha] f, g\rra_{\ell^2_h(M^{-1})}
        = \lla D_h \Gamma^h_\alpha f,g\rra_{\ell^2_h(M^{-1})} - \lla \Gamma^h_\alpha D_h f, g\rra_{\ell^2_h(M^{-1})}\,.
        \]
        Let us study each term of the right-hand side. For the first term, one has 
        \[
        \lla D_h \Gamma^h_\alpha f,g\rra_{\ell^2_h(M^{-1})}
        = \sum\limits_{j\in\ZZ} \frac{h}{M_j} \frac{(\Gamma^h_\alpha f)_{j+1}-(\Gamma^h_\alpha f)_{j-1}}{2h} g_j\,,
        \]
        which yields, according to the definition of $\Gamma^h_\alpha$ in \eqref{def:Gamma}-\eqref{def:flux},
        \begin{align}
            \label{eq:AnotherUsefulEstimate_A}
            \lla D_h  \Gamma^h_\alpha f,g \rra_{\ell^2_h(M^{-1})} &= \sum\limits_{j\in\ZZ} \frac{h}{M_j} (VM)_{j+3/2} \frac{f_{j+1}/M_{j+1} + f_{j+2}/M_{j+2}}{(2h)^2} g_j
            \\&\quad
            - \sum\limits_{j\in\ZZ} \frac{h}{M_j} (VM)_{j+1/2} \frac{f_j/M_j + f_{j+1}/M_{j+1}}{(2h)^2} g_j \nonumber 
            \\&\quad
            - \sum\limits_{j\in\ZZ} \frac{h}{M_j} (VM)_{j-1/2} \frac{f_{j-1} /M_{j-1}+ f_j/M_j}{(2h)^2} g_j \nonumber
            \\&\quad
            + \sum\limits_{j\in\ZZ} \frac{h}{M_j} (VM)_{j-3/2}  \frac{f_{j-2}/M_{j-2} + f_{j-1}/M_{j-1}}{(2h)^2} g_j\,. \nonumber
        \end{align}
        For the second term, we have
        \begin{align}
            \label{eq:AnotherUsefulEstimate_B}
            \lla \Gamma^h_\alpha D_h f, g\rra_{\ell^2_h(M^{-1})} &=
            \sum\limits_{j\in\ZZ} \frac{h}{M_j} (VM)_{j+1/2} \left( \frac{1}{M_{j+1}} \frac{f_{j+2} -f_j}{(2h)^2} + \frac{1}{M_j} \frac{f_{j+1}-f_{j-1}}{(2h)^2} \right) g_j
            \\&\quad
            - \sum\limits_{j\in\ZZ} \frac{h}{M_j}(VM)_{j-1/2} \left( \frac{1}{M_j} \frac{f_{j+1}-f_{j-1}}{(2h)^2} +\frac{1}{M_{j-1}} \frac{f_j-f_{j-2}}{(2h)^2} \right) g_j\,. \nonumber
        \end{align}
        Now, we sum  \eqref{eq:AnotherUsefulEstimate_A} and \eqref{eq:AnotherUsefulEstimate_B}, and we reorganize it to gather the terms in $f_j, f_{j-1}, f_{j+1}$ and $f_{j+2}$ or~$f_{j-2}$. Hence
        \[
        \lla [D_h,\Gamma^h_\alpha] f,g\rra_{\ell^2_h(M^{-1})} = C_0 + C_{-1} + C_1 + C_{-2,2}\,,
        \]
        where 
        \begin{align}
            \label{eq:AnotherUsefulEstimate_C0} 
            C_0&= \sum\limits_{j\in\ZZ} \frac{h}{M_j} \frac{f_j g_j}{(2h)^2} \left( 
            \frac{(VM)_{j-1/2}}{M_{j-1}} - \frac{(VM)_{j-1/2}}{M_j} + \frac{(VM)_{j+1/2}}{M_{j+1}} -\frac{(VM)_{j+1/2}}{M_j}
            \right) 
            \\
            \label{eq:AnotherUsefulEstimate_Cmoins1} 
            C_{-1} &= \sum\limits_{j\in\ZZ} \frac{h}{M_j} \frac{f_{j-1} g_j}{(2h)^2} \left( 
            \frac{(VM)_{j+1/2}}{M_j}- \frac{(VM)_{j-1/2}}{M_j}
            -\frac{(VM)_{j-1/2}}{M_{j-1}}+ \frac{(VM)_{j-3/2}}{M_{j-1}} 
            \right)
            \\
            \label{eq:AnotherUsefulEstimate_C1}
            C_1&= \sum\limits_{j\in\ZZ} \frac{h}{M_j} \frac{f_{j+1}g_j}{(2h)^2} \left( 
            \frac{(VM)_{j-1/2}}{M_j} - \frac{(VM)_{j+1/2}}{M_j}- \frac{(VM)_{j+1/2}}{M_{j+1}} + \frac{(VM)_{j+3/2}}{M_{j+1}}
            \right)
            \\
            \label{eq:AnotherUsefulEstimate_C2}
            C_{-2,2} &=  \sum\limits_{j\in\ZZ} \frac{h}{M_j} \frac{f_{j+2} g_j}{(2h)^2} \left(  
            \frac{(VM)_{j+3/2}}{M_{j+2}} - \frac{(VM)_{j+1/2}}{M_{j+1}}
            \right) 
            \\ \nonumber &\quad
            + \sum\limits_{j\in\ZZ} \frac{h}{M_j} \frac{f_{j-2} g_j}{(2h)^2} \left( 
            \frac{(VM)_{j-3/2}}{M_{j-2}} - \frac{(VM)_{j-1/2}}{M_{j-1}}
            \right)\,,
        \end{align}
        and we estimate all the terms separately. Let us start with $C_0$, and rewrite it as 
        \begin{align*}
            C_0&= \sum\limits_{j\in\ZZ} \frac{h}{4M_j}\; f_j\; g_j \;\frac{(VM)_{j+1/2}-(VM)_{j-1/2}}{h}\; \frac{1}{h}\left( \frac{1}{M_{j+1}}-\frac{1}{M_j}\right)
            \\&\quad
            + \sum\limits_{j\in\ZZ} \frac{h}{4M_j}\; f_j \; g_j\; (VM)_{j-1/2}\; \frac{1}{h^2} \left( \frac{1}{M_{j+1}}-\frac{2}{M_j}+ \frac{1}{M_{j-1}}\right)\,.
        \end{align*}
       Thanks to Corollaries \ref{cor:DhM-1/2} and \ref{cor:D2h1/M}, and to Lemmas~\ref{lem:VM} and \ref{lem:DhVM}, the following inequality holds
        \[
        |C_0|\lesssim \sum\limits_{j\in\ZZ} \frac{h}{M_j} \; |f_j|\;|g_j|\,,
        \]
        which yields $|C_0|\lesssim \|f\|_{\ell^2_h(M^{-1})} \|g\|_{\ell^2_h(M^{-1})}$ with Cauchy-Schwarz inequality.
        Going on with $C_1$ and $C_{-1}$, we rewrite them as 
        \begin{align*}
            C_1&=  \sum\limits_{j\in\ZZ} \frac{h}{4M_j}\; f_{j+1} g_j \; \frac{(VM)_{j+3/2}-(VM)_{j+1/2}}{h} \;\frac{1}{h}\left( \frac{1}{M_{j+1}}- \frac{1}{M_j} \right)
            \\&\quad
            + \sum\limits_{j\in\ZZ} \frac{h}{4M_j} \; f_{j+1}\; g_j \; \frac{1}{M_j} \;\frac{ (VM)_{j+3/2} - 2 (VM)_{j+1/2}+ (VM)_{j-1/2} }{h^2} =: C_{1}^a + C_{1}^b\,,
        \end{align*}
        { and similarly
            \begin{align*}
                C_{-1}&=  \sum\limits_{j\in\ZZ} \frac{h}{4M_j}\; f_{j-1} g_j \; \frac{(VM)_{j-3/2}-(VM)_{j-1/2}}{h} \;\frac{1}{h}\left( \frac{1}{M_{j-1}}- \frac{1}{M_j} \right)
                \\&\quad
                + \sum\limits_{j\in\ZZ} \frac{h}{4M_j} \; f_{j-1}\; g_j \; \frac{1}{M_j} \;\frac{ (VM)_{j+1/2} - 2 (VM)_{j-1/2}+ (VM)_{j-3/2} }{h^2} =: C_{-1}^a + C_{-1}^b \, .
            \end{align*}
            Thanks to Corollary \ref{cor:DhM-1/2} and Lemma \ref{lem:DhVM}, one has
            \[
            |C_{1}^a|\lesssim \sum\limits_{j\in\ZZ} \frac{h}{M_j}\; |f_{j+1}|\; |g_j| \lesssim \left( \sum\limits_{j\in\ZZ} \frac{h}{M_{j+1}} \;\frac{M_{j+1}}{M_j} f^2_{j+1}\right)^{1/2} \;\|g\|_{\ell^2_h(M^{-1})}\,.
            \]
            The estimate 
            \[
            |C_{1}^a|\lesssim \|f\|_{\ell^2_h(M^{-1})} \;\|g\|_{\ell^2_h(M^{-1})}
            \]
            comes thanks to Lemma \ref{lem:DhM} and the identity $M_{j+1}/M_j= 1+ h (D_h^+ M)_j/M_j$. Similarly, one has 
            \[
            |C_{-1}^a|\lesssim \|f\|_{\ell^2_h(M^{-1})} \;\|g\|_{\ell^2_h(M^{-1})}\,.
            \]
            Then, to handle $C_{1}^b+C_{-1}^b$, we first remark that 
            \begin{align*}
                &(VM)_{j+3/2} - 2 (VM)_{j+1/2}+ (VM)_{j-1/2} \\
                &\quad = (VM)_{j+3/2} - (VM)_{j+1/2} - (VM)_{j-1/2} + (VM)_{j-3/2} \\
                &\qquad - \left((VM)_{j+1/2} - 2 (VM)_{j-1/2}+ (VM)_{j-3/2}\right)\, .
            \end{align*}
            From this, we deduce that 
            \begin{align*}
                C_{1}^b &= \sum\limits_{j\in\ZZ} \frac{h}{4M_j} \; f_{j+1}\; g_j \; \frac{1}{M_j} \;\frac{ (VM)_{j+3/2} -  (VM)_{j+1/2} -(VM)_{j-1/2} +(VM)_{j-3/2}}{h^2} \\
                &\quad - \sum\limits_{j\in\ZZ} \frac{h}{4M_j} \; f_{j+1}\; g_j \; \frac{1}{M_j} \;\frac{ (VM)_{j+1/2} - 2 (VM)_{j-1/2}+ (VM)_{j-3/2}}{h^2} =: C_{1}^c + C_{1}^d\,.
            \end{align*}
            As previously, Lemma~\ref{lem:D2hVM} implies that 
            \[
            |C_{1}^c| \lesssim \|f\|_{\ell^2_h(M^{-1})} \;\|g\|_{\ell^2_h(M^{-1})}\,. 
            \]
            Finally, we write that 
            \[
            C_{1}^d + C_{-1}^b =  \sum\limits_{j\in\ZZ} \frac{h}{2M_j} \; \frac{f_{j-1}-f_{j+1}}{2h}\; g_j  \; \frac{1}{M_j} \;\frac{ (VM)_{j+1/2} - 2 (VM)_{j-1/2}+ (VM)_{j-3/2} }{h} 
            \]
            and remark that 
            \begin{align*}
            &(VM)_{j+1/2} - 2 (VM)_{j-1/2}+ (VM)_{j-3/2} \\
            &\quad= (VM)_{j+1/2} -  (VM)_{j-1/2}-\left((VM)_{j-1/2} - (VM)_{j-3/2}\right) \, . 
            \end{align*}
            Then, Lemma~\ref{lem:DhVM} allows us to conclude that 
            \[
            \left|C_{1}^d + C_{-1}^b \right| \lesssim\|D_h f\|_{\ell^2_h(M^{-1})} \| g\|_{\ell^2_h(M^{-1})}\, . 
            \]
        }
        
        Eventually, $C_{-2,2}$ is once again decomposed as 
        \[
        C_{-2,2}=C_{-2,2}^a+ C_{-2,2}^b\,,
        \]
        where 
        \begin{align}
            \label{eq:AnotherUsefulEstimate_C2a} 
            C_{-2,2}^a&=\sum\limits_{j\in\ZZ} \frac{h}{M_j}\;\frac{g_j}{(2h)^2}  
            \frac{f_{j+2}}{M_{j+2}} \left( (VM)_{j+3/2} - (VM)_{j+1/2} \right)
            \\ \nonumber &\quad - \sum\limits_{j\in\ZZ} \frac{h}{M_j}\;\frac{g_j}{(2h)^2} \frac{f_{j-2}}{M_{j-2}} \left( (VM)_{j-1/2} - (VM)_{j-3/2}
            \right) 
            \end{align}
and
            \begin{align}
            \label{eq:AnotherUsefulEstimate_C2b}
            C_{-2,2}^b &= \sum\limits_{j\in\ZZ} \frac{h}{M_j} \;\frac{f_{j+2} g_j}{(2h)^2} \; (VM)_{j+1/2} \left( \frac{1}{M_{j+2}} - \frac{1}{M_{j+1}}\right) 
            \\ \nonumber &\quad + \sum\limits_{j\in\ZZ} \frac{h}{M_j} \frac{f_{j-2} g_j}{(2h)^2} \; (VM)_{j-1/2} \left( \frac{1}{M_{j-2}}- \frac{1}{M_{j-1}}\right)\,,
        \end{align}
        and each term is considered independently. Let us remark that 
        \begin{align*}
            C_{-2,2}^a&= \sum\limits_{j\in\ZZ} \frac{h}{4M_j}\; g_{j} \; \frac{1}{M_{j+2}} \;\frac{(VM)_{j+3/2}-(VM)_{j+1/2}}{h}\;  \frac{f_{j+2}-f_{j-2}}{h} 
            \\&\quad + \sum\limits_{j\in\ZZ} \frac{h}{4M_j} \; g_j \;f_{j-2}\; \frac{(VM)_{j+3/2}- (VM)_{j+1/2}}{h} \;\frac{1}{h}\left( \frac{1}{M_{j+2}}- \frac{1}{M_{j-2}}\right)
            \\&\quad + \sum\limits_{j\in\ZZ} \frac{h}{4M_j}\; g_j \; f_{j-2} \; \frac{1}{M_{j-2}} \frac{(VM)_{j+3/2} - (VM)_{j+1/2} - (VM)_{j-1/2} + (VM)_{j-3/2}}{h^2}\,,
        \end{align*}
        and since $(f_{j+2}-f_{j-2})/h = (D_h f)_{j+1} + (D_h f)_{j-1}$ and $(M_{j+2}^{-1} - M_{j-2}^{-1})/h= \sum_{k=-2}^1 (D_h^+ M^{-1})_{j+k}$, Corollary \ref{cor:DhM-1/2} and Lemmas \ref{lem:DhVM} and \ref{lem:D2hVM} yield 
        \begin{align*}
            |C_{-2,2}^a|\lesssim \sum_{j\in\ZZ} \frac{h}{M_j} \; |g_j| \; \left(|(S_+ D_hf)_j | + |(S_- D_h f)_j|+ |(S_-S_- f)_j|\right)\,,
        \end{align*}
        where $(S_+ f)_j = f_{j+1}$ and $(S_- f)_j = f_{j-1}$. The conclusion comes from
        \[
        |C_{-2,2}^a| \lesssim \|g\|_{\ell^2_h(M^{-1})} \left( \|S_+D_h f\|_{\ell^2_h(M^{-1})} + \| S_- D_h f\|_{\ell^2_h(M^{-1})} + \|S_- S_- f\|_{\ell^2_h(M^{-1})}  \right)\,,
        \]
        and 
        \[
        \|S_+ f \|^2_{\ell^2_h(M^{-1})} - \|f\|^2_{\ell^2_h(M^{-1})} = \sum\limits_{j\in\ZZ} \frac{h}{M_j } f^2_j  \; h\frac{(D^+_hM)_{j-1}}{M_{j-1}}\,.
        \]
        Indeed, thanks to Lemma \ref{lem:DhM}, the previous identity yields
        \[
        \left|\|S_+ f \|^2_{\ell^2_h(M^{-1})} - \|f\|^2_{\ell^2_h(M^{-1})}\right|\lesssim h \|f\|^2_{\ell^2_h(M^{-1})}\,,
        \]
        so that $\|S_+ f\|_{\ell^2_h(M^{-1})} \lesssim \|f\|_{\ell^2_h(M^{-1})}$. The same result holds for $S_-$, and we eventually have
        \[
        |C_{-2,2}^a|\lesssim \|g\|_{\ell^2_h(M^{-1})} \|f\|_{\ell^2_h(M^{-1})} + \|g\|_{\ell^2_h(M^{-1})} \|D_h f\|_{\ell^2_h(M^{-1})}\,.
        \]
        To conclude this proof, $C_{-2,2}^b$ is rewritten as 
        \begin{align*}
            C_{-2,2}^b &= \sum\limits_{j\in\ZZ} \frac{h}{4M_j}\; g_j \; (VM)_{j+1/2} \;\frac{f_{j+2}-f_{j-2}}{h}\;\frac{1}{h}\left(\frac{1}{M_{j+2}} - \frac{1}{M_{j+1}} \right) 
            \\ &\quad + \sum\limits_{j\in\ZZ} \frac{h}{4M_j}\;g_j\; f_{j-2}\; \frac{(VM)_{j+1/2}-(VM)_{j-1/2}}{h} \;\frac{1}{h} \left( \frac{1}{M_{j+2}}- \frac{1}{M_{j+1}} \right)
            \\&\quad +\sum\limits_{j\in\ZZ} \frac{h}{4M_j}\; g_j\; f_{j-2}\; (VM)_{j-1/2}\;  \frac{1}{h^2}\left( \frac{1}{M_{j+2}} - \frac{1}{M_{j+1}} - \frac{1}{M_{j-1}} + \frac{1}{M_{j-2}} \right)\,,
        \end{align*}
        and since 
        \begin{align*}
            \frac{1}{M_{j+2}} - \frac{1}{M_{j+1}} - \frac{1}{M_{j-1}} + \frac{1}{M_{j-2}}= \sum\limits_{k=-1}^1 \left(\frac{1}{M_{j+k+1}}-\frac{2}{M_{j+k}}+ \frac{1}{M_{j+k-1}}\right)\,,
        \end{align*}
        we obtain using Lemmas \ref{lem:VM}-\ref{lem:DhVM} and  Corollaries \ref{cor:DhM-1/2}-\ref{cor:D2h1/M}
        that
        \[
        |C_{-2,2}^b|\lesssim \sum\limits_{j\in\ZZ} \frac{h}{M_j} |g_j| \left( |(S_+ D_h f)_j | + |(S_- D_h f)_j| + |(S_-S_- f)_j|   \right)\,.
        \]
        Eventually, Cauchy-Schwarz inequality and the above estimate for $\|S_\pm f\|_{\ell^2_h(M^{-1})}$ yield
        \[
        |C_{-2,2}^b |\lesssim \|g\|_{\ell^2_h(M^{-1})} \|f\|_{\ell^2_h(M^{-1})}+\|g\|_{\ell^2_h(M^{-1})}\|D_h f\|_{\ell^2_h(M^{-1})}\, .
        \]
    \end{proof}
    
    \subsection{$H^1$-coercivity in the homogenous case: proof of Theorem~\ref{theo:coerc_discrLFP}.} \label{subsec:coercivity}
    
    In the next proposition, we give an estimate on the evolution of the norm of the solution to equation~\eqref{eq:semihom} and on the evolution of its derivative. Notice that in the continuous case, this type of estimates comes from commutator equalities that are quite simple. Here, the commutators are much more complicated but some simplifications occur when we compute the evolution of the derivative of the solution and we are still able to get nice bounds. 
    
    \begin{prop} \label{prop:f&Dhf}
        Let $f$ be a solution to~\eqref{eq:semihom} with $L_\alpha$ defined in~\eqref{def_LFPdiscr}-\eqref{def:VM}. Then we have:
        $$
        {\frac12} \frac{\dd}{\dd t} \|f\|^2_{\ell^2_h(M^{-1})} = - \Symh (f,f)
        $$
        and there exists $C>0$ (depending only on $\alpha$) such that
        \begin{align*}
            {\frac 12} \frac{\dd}{\dd t} \|D_h f\|^2_{\ell^2_h(M^{-1})} \le &- \Symh (D_h f,D_h f) +C \|D_h f\|^2_{\ell^2_h(M^{-1})} 
             + C \|f\|_{\ell^2_h(M^{-1})} \|D_hf\|_{\ell^2_h(M^{-1})} \, .
        \end{align*}
    \end{prop}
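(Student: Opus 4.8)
The plan is to differentiate the two weighted norms in time and reduce everything to the bilinear decomposition \eqref{eq:Lalphabilinear} together with the commutator estimate of Lemma~\ref{lem:estimate_commutator}. First I would establish the identity for $\|f\|_{\ell^2_h(M^{-1})}$. Differentiating under the sum (which is legitimate since $f(t)$ remains in $\ell^2_h(M^{-1})$ by the global stability estimate) and using the equation \eqref{eq:semihom} gives
\[
\frac12\frac{\dd}{\dd t}\|f\|^2_{\ell^2_h(M^{-1})}\ =\ \sum_{j\in\ZZ}(\Lah f)_j\,f_j\,M_j^{-1}\,h\ =\ \langle \Lah f, f\rangle_{\ell^2_h(M^{-1})}.
\]
By the bilinear decomposition \eqref{eq:Lalphabilinear} this equals $-\Symh(f,f)-\Antsymh(f,f)$, and since $\Antsymh$ is skew-symmetric one has $\Antsymh(f,f)=0$, which yields the first identity.

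For the derivative estimate, the key structural observation is that $D_h$ acts on the index $j$ only, hence commutes with $\partial_t$; using the equation one gets $\partial_t(D_hf)=D_h\Lah f$, so that
\[
\frac12\frac{\dd}{\dd t}\|D_hf\|^2_{\ell^2_h(M^{-1})}\ =\ \langle D_h\Lah f, D_hf\rangle_{\ell^2_h(M^{-1})}.
\]
I would then insert the commutator by writing $D_h\Lah f=\Lah D_hf+[D_h,\Lah]f$, which splits the right-hand side into a ``main'' term $\langle \Lah D_hf,D_hf\rangle_{\ell^2_h(M^{-1})}$ and a ``commutator'' term $\langle [D_h,\Lah]f,D_hf\rangle_{\ell^2_h(M^{-1})}$.

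The main term is handled exactly as in the first identity: applying \eqref{eq:Lalphabilinear} to the pair $(D_hf,D_hf)$ and using $\Antsymh(D_hf,D_hf)=0$ gives $\langle \Lah D_hf,D_hf\rangle_{\ell^2_h(M^{-1})}=-\Symh(D_hf,D_hf)$, producing the dissipative term. For the commutator term I would invoke Lemma~\ref{lem:estimate_commutator} with the test function $g=D_hf$, which bounds it by $\|f\|_{\ell^2_h(M^{-1})}\|D_hf\|_{\ell^2_h(M^{-1})}+\|D_hf\|^2_{\ell^2_h(M^{-1})}$, up to a constant depending only on $\alpha$. Collecting the two contributions gives the claimed inequality.

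The only genuine difficulty lies in the commutator term, but this has already been absorbed into Lemma~\ref{lem:estimate_commutator}; once that lemma is available, the present proposition is a short and essentially algebraic consequence. The one subtle feature worth emphasising is the cancellation $\Antsymh(D_hf,D_hf)=0$: as in the continuous setting, the skew-symmetric part of $\Lah$ never contributes to the coercivity budget and only appears inside the commutator $[D_h,\Lah]$, which is precisely what allows one to avoid any loss of velocity moments when closing the estimate.
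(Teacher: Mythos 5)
Your proposal is correct and follows essentially the same route as the paper: the first identity via the bilinear decomposition \eqref{eq:Lalphabilinear} with $\Antsymh(f,f)=0$, and the second estimate by splitting $D_h\Lah f = \Lah D_h f + [D_h,\Lah]f$ and applying Lemma~\ref{lem:estimate_commutator} with $g = D_h f$. No gaps.
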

    
    \begin{proof}
        The first equality comes directly from the decomposition given in~\eqref{eq:Lalphabilinear} since 
        $$
        {\frac12} \frac{\dd}{\dd t} \|f\|^2_{\ell^2_h(M^{-1})} = \lla\Lah f,f \rra_{\ell^2_h(M^{-1})}\,.
        $$ 
        
        Concerning the second part of the proposition, we start by writing that 
        \begin{align*}
            &{\frac12} \frac{\dd}{\dd t} \|D_hf\|^2_{\ell^2_h(M^{-1})} \\
            \quad &= \lla D_h\Lah f,D_h f \rra_{\ell^2_h(M^{-1})} = \lla [D_h,\Lah] f,D_hf \rra_{\ell^2_h(M^{-1})} + \lla \Lah D_h f, D_h f \rra_{\ell^2_h(M^{-1})}\,.
        \end{align*}
        The second term is computed exactly as previously: 
        $$
        \lla \Lah D_h f, D_h f \rra = - \Symh(D_h f ,D_h f)\,,
        $$
        while the first one is estimated with Lemma \ref{lem:estimate_commutator} to obtain the wanted result.
    \end{proof}
    
    We are now able to conclude the proof of the main result of this part. 
    
    \smallskip
    \noindent {\it Proof of Theorem~\ref{theo:coerc_discrLFP}.}
    As in the continuous case, we introduce a functional which is going to be an entropy for our equation:
    \[
    \mathcal{F}(f) := \|f\|^2_{\ell^2_h(M^{-1})} + a \|D_h f\|^2_{\ell^2_h(M^{-1})}
    \]
    for some positive constant $a$ which will be chosen later on. Notice first that we clearly have the equivalence $\mathcal{F}(f) \sim \|f\|^2_{H^1_h(M^{-1})}$. 
    
    Without loss of generality, since~\eqref{eq:semihom} is linear, we consider an initial data $(f^0_j)_j$ with vanishing mass and $f(t)$ the associated solution of~\eqref{eq:semihom} is such that for $t \ge 0$, $\Pi_h f(t) =0$ { since~\eqref{eq:semihom} preserves mass (see Remark~\ref{rem:semihom})}. Then, from Propositions~\ref{prop:f&Dhf} and~\ref{prop:useful}, there exist constants $C>0$ and $\eta>0$ such that for any $\eps \in (0,\eta)$, there is $K(\eps)>0$ such that:
    \begin{align*}
        {1 \over 2} {\dd \over \dd t} \mathcal{F}(f(t)) &\le - \Symh (f,f) - a \, \Symh (D_h f,D_h f) \\
        &\quad + a\, C \, \|D_h f\|^2_{\ell^2_h(M^{-1})} + a\, C\,  \|f\|_{\ell^2_h(M^{-1})} \|D_hf\|_{\ell^2_h(M^{-1})}\\
        &\le - \Symh (f,f) - a \, \Symh (D_h f, D_h f)  \\
        &\quad + a \, C \, K(\ve) \Symh(f,f) +a \, C \, \ve\,\Symh(D_hf,D_hf)\,.
    \end{align*}
    Choosing first $\eps$ small enough so that $1-C \, \eps \ge 1/2$ and then $a$ small enough so that $1-a\, C \, K(\eps) \ge 1/2$, we obtain that 
    \[
    {\dd \over \dd t} \mathcal{F}(f(t)) \le- \Symh (f,f) - a \, \Symh (D_h f, D_h f) \,.
    \]
    Proposition~\ref{prop:inegfunc_discr} implies that 
    \[
    {\dd \over \dd t} \mathcal{F}(f(t)) \lesssim -\mathcal{F}(f(t))
    \]
    and we can thus conclude thanks to a Gronwall type argument. \qed
    
    \medskip
    \subsection{Hypocoercivity in the inhomogeneous case: proofs of Theorems~\ref{theo:hypo_discrLFP} and~\ref{theo:hypo_fulldiscrLFP}.}
    
    To show the long-time behavior stated in Theorem \ref{theo:hypo_discrLFP}, we introduce 
    \begin{align}
        \label{eq:calH}
        \mathcal{H}(f,f) &= \|f\|^2_\lm + a \|D_\dx f\|^2_\lm + b\|D_\dv f\|^2_\lm \\
        &\quad + 2c\lla D_\dx f, D_\dv f\rra_\lm. \nonumber
    \end{align}
    The positive constants $a$, $b$, and $c$ will be determined in the sequel such that $c^2<ab$. With such hypothesis,  $\mathcal{H}^{1/2}$ is a norm equivalent to $\|\cdot\|_\hm$.  In addition, since \eqref{eq:semikin} is linear, there is no loss of generality in supposing that $\lla f^0\rra_{\dx,\dv}=0$ {so that for any $t \geq 0$, $\lla f(t) \rra_{\dx,\dv}=0$ since \eqref{eq:semikin} preserves the global mass (see Remark~\ref{rem:semikin})}. 
    
    In what follows, shifts in velocity will be denoted $S_+$ and $S_-$
    \[
    \forall (i,j)\in\ZZ/N_x\ZZ\times\ZZ, \; (S_-f)_{i,j} = f_{i,j-1}, \; (S_+f)_{i,j}=f_{i,j+1}\,. 
    \]
    One can notice that $D_\dx$ commutes with $T^\dx$ and $S_\pm$, however 
    \begin{equation}
        \label{eq:hypo_dvT}
        D_\dv T^\dx = T^\dx D_\dv + D_\dx\frac{S_++S_-}{2}\,. 
    \end{equation}
    Eventually, it is worth noticing that $T^\dx$ is skew-symmetric for $\lla \cdot,\cdot\rra_\lm$. We denote $\mathcal{S}^{\dx,\dv}_\alpha$ the following operator $$ \mathcal{S}^{\dx,\dv}_\alpha(f,g)  := \sum_{i \in\ZZ/N_x\ZZ} (\mathcal{S}^{\dv}_\alpha (f,g)))_i \dx.$$ The proof of Theorem \ref{theo:hypo_discrLFP} relies on the three following lemmas.
    
    \begin{lem}
        \label{lem:hypo_equalities}
        Under the assumptions of Theorem \ref{theo:hypo_discrLFP}, the following equalities hold
        \begin{align}
            \label{eq:lem_hypo_equalities_f} \frac{1}{2}\frac{\mathrm{d}}{\mathrm{d}t}&\| f\|^2_\lm = -\mathcal{S}_\alpha^{\dx,\dv}(f,f) \; =: \mathcal{E}_1(f)\\
            \label{eq:lem_hypo_equalities_dxf} \frac{1}{2}\frac{\mathrm{d}}{\mathrm{d}t}&\| D_\dx f\|^2_\lm = 
            -\mathcal{S}_\alpha^{\dx,\dv} (D_\dx f,D_\dx f) \;  =: \mathcal{E}_2(f)
            \\
            \label{eq:lem_hypo_equalities_dvf}\frac{1}{2}\frac{\mathrm{d}}{\mathrm{d}t}&\| D_\dv f\|^2_\lm = 
            \lla \left( I-\frac{S_++S_-}{2}\right) \Pi_\dv D_\dx f, D_\dv f \rra_\lm  \\
            \nonumber & - \lla \left(I-\frac{S_++S_-}{2}\right) (\Pi_\dv -I)D_\dx f, D_\dv f\rra_\lm 
            \\ 
            \nonumber & +\lla (\Pi_\dv -I) D_\dx f,D_\dv f\rra_\lm  
            \\ 
            \nonumber & + \lla [D_\dv,L^\dv_\alpha] f, D_\dv f\rra_\lm  -\mathcal{S}^{\dx,\dv}_\alpha(D_\dv f,D_\dv f)  \;  =: \mathcal{E}_3(f)
            \\
            \label{eq:lem_hypo_equalities_dxdvf} \frac{\mathrm{d}}{\mathrm{d}t}& \lla D_\dx f,D_\dv f\rra_\lm = - \| D_\dx f\|^2_\lm 
            -2 \mathcal{S}^{\dx,\dv}_\alpha(D_\dx f,D_\dv f) 
            \\ \nonumber &- \lla \left(I-\frac{S_++S_-}{2}\right) (\Pi_\dv -I) D_\dx f, D_\dx f \rra_\lm  
            \\ \nonumber & + \lla \left (I-\frac{S_++S_-}{2}\right) \Pi_\dv D_\dx f, D_\dx f\rra_\lm 
            \\ \nonumber & +  \lla [ D_\dv, L^\dv_\alpha] f, D_\dx f\rra_\lm \; =: \mathcal{E}_4(f)\,.
        \end{align}
    \end{lem}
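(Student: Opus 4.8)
The plan is to differentiate each quantity in time, substitute the evolution equation \eqref{eq:semikin} written as $\partial_t f = -T^\dx f + L^\dv_\alpha f$, and then repeatedly exploit three structural facts: the skew-symmetry of $T^\dx$ for $\lla\cdot,\cdot\rra_\lm$, the bilinear decomposition \eqref{eq:Lalphabilinear} of $-L^\dv_\alpha$ into its symmetric part $\mathcal{S}_\alpha^{\dx,\dv}$ and a skew-symmetric part $\mathcal{A}_\alpha^{\dx,\dv}$ (applied in velocity for each fixed $i$ and summed with weight $\dx$), and the commutation relation \eqref{eq:hypo_dvT} between $D_\dv$ and $T^\dx$. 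Throughout I use that $D_\dx$ commutes with $T^\dx$, $S_\pm$ and $L^\dv_\alpha$ (the latter acting only on the velocity index), whereas $D_\dv$ commutes with $L^\dv_\alpha$ only up to the commutator $[D_\dv,L^\dv_\alpha]$ controlled in Lemma~\ref{lem:estimate_commutator}.

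For \eqref{eq:lem_hypo_equalities_f}, I write $\tfrac12\tfrac{\dd}{\dd t}\|f\|_\lm^2 = \lla -T^\dx f + L^\dv_\alpha f, f\rra_\lm$: the transport contribution vanishes by skew-symmetry, and the collision contribution equals $-\mathcal{S}_\alpha^{\dx,\dv}(f,f)$ by \eqref{eq:Lalphabilinear}, the skew part dropping on the diagonal. Identity \eqref{eq:lem_hypo_equalities_dxf} follows identically: since $D_\dx$ commutes with both $T^\dx$ and $L^\dv_\alpha$, the sequence $D_\dx f$ solves the same equation, and the same two-line computation gives $\mathcal{E}_2(f)$.

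The remaining identities require tracking the commutation defect. Applying $D_\dv$ to \eqref{eq:semikin} and using \eqref{eq:hypo_dvT} gives $\partial_t D_\dv f = -T^\dx D_\dv f - D_\dx\tfrac{S_++S_-}{2}f + L^\dv_\alpha D_\dv f + [D_\dv,L^\dv_\alpha]f$. For \eqref{eq:lem_hypo_equalities_dvf}, pairing with $D_\dv f$ kills the transport term by skew-symmetry, turns the collision term into $-\mathcal{S}_\alpha^{\dx,\dv}(D_\dv f,D_\dv f)$, and leaves the commutator term as stated. The defect $-\lla D_\dx\tfrac{S_++S_-}{2}f, D_\dv f\rra_\lm$ is then reorganized: since $D_\dx$ commutes with $S_\pm$, I write $\tfrac{S_++S_-}{2}D_\dx f = D_\dx f - (I-\tfrac{S_++S_-}{2})D_\dx f$ and split $D_\dx f = \dpi D_\dx f - (\dpi - I)D_\dx f$. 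The crucial simplification is the orthogonality $\lla \dpi g, D_\dv f\rra_\lm = 0$, valid because $\dpi g$ is proportional to $M_j$ in velocity so the weight $M_j^{-1}$ cancels and $\sum_j (D_\dv f)_{i,j}\,\dv = 0$ by telescoping; this removes the macroscopic term $\lla\dpi D_\dx f, D_\dv f\rra_\lm$ and produces exactly the three projection terms of $\mathcal{E}_3(f)$.

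For the cross term \eqref{eq:lem_hypo_equalities_dxdvf}, I expand $\tfrac{\dd}{\dd t}\lla D_\dx f, D_\dv f\rra_\lm$ using the evolution equations for $D_\dx f$ and $D_\dv f$. The two transport contributions cancel exactly by skew-symmetry of $T^\dx$; the two collision contributions combine, via \eqref{eq:Lalphabilinear} and symmetry of the inner product, into $-2\mathcal{S}_\alpha^{\dx,\dv}(D_\dx f, D_\dv f)$, the skew-symmetric parts cancelling; and the commutator term $\lla [D_\dv,L^\dv_\alpha]f, D_\dx f\rra_\lm$ is carried along. The coercive term $-\|D_\dx f\|_\lm^2$ comes from the commutation defect $-\lla D_\dx f, D_\dx\tfrac{S_++S_-}{2}f\rra_\lm$: writing $\tfrac{S_++S_-}{2}D_\dx f = D_\dx f - (I-\tfrac{S_++S_-}{2})D_\dx f$ yields $-\|D_\dx f\|_\lm^2$, and splitting the leftover factor through $I = \dpi - (\dpi - I)$ produces the two remaining projection terms of $\mathcal{E}_4(f)$. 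The \emph{main obstacle} is precisely this bookkeeping in \eqref{eq:lem_hypo_equalities_dvf} and \eqref{eq:lem_hypo_equalities_dxdvf}: one must verify that the transport and skew-symmetric collision contributions cancel pairwise rather than leaving residual terms, and combine the defect $D_\dx\tfrac{S_++S_-}{2}f$ with the projection splitting so that the macroscopic orthogonality is invoked in the correct slot (against $D_\dv f$ in $\mathcal{E}_3$, but only algebraically against $D_\dx f$ in $\mathcal{E}_4$).
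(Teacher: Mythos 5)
Your proof is correct and takes essentially the same route as the paper's: inject \eqref{eq:semikin}, use the skew-symmetry of $T^\dx$ and the decomposition \eqref{eq:Lalphabilinear} (the skew part vanishing on the diagonal and cancelling in the symmetrized cross term), apply \eqref{eq:hypo_dvT} for the $D_\dv$ identities, and invoke $\lla \dpi D_\dx f, D_\dv f\rra_\lm = 0$ exactly where the paper does. Your write-up simply makes explicit the splittings $\tfrac{S_++S_-}{2}D_\dx f = D_\dx f - (I-\tfrac{S_++S_-}{2})D_\dx f$ and $I = \dpi + (I-\dpi)$ that the paper leaves to the reader.
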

    
    \begin{proof}
        The expression \eqref{eq:lem_hypo_equalities_f} is obtained by injecting \eqref{eq:semikin} in $\mathrm{d}/\mathrm{d}t \|f\|^2_\lm = 2\lla \partial_t f, f\rra_\lm$. Since $T^\dx$ is skew-symmetric, the result is straightforward using \eqref{eq:Lalphabilinear}. Moreover, since $D_\dx f$ satisfies the relation \eqref{eq:semikin}, the equality \eqref{eq:lem_hypo_equalities_dxf} holds true. The following equalities are obtained in the same way, using \eqref{eq:Lalphabilinear}, \eqref{eq:hypo_dvT} {and the fact that $\lla \Pi_\dv D_\dx f, D_\dv f \rra_\lm =0$ for~\eqref{eq:lem_hypo_equalities_dvf}}.
    \end{proof}
    
    \begin{lem}
        \label{lem:hypo_estimates_Poincare}
        Let $N_x\in\NN$ be odd, and $f\in\lm$ { such that $\lla f \rra_{\dx,\dv} = 0$}. The following estimate holds
        \begin{equation}
            \label{eq:lem_hypo_estimates_Poincare}  \|\Pi_\dv f\|_\lm \lesssim \|D_\dx f\|_\lm\,.
        \end{equation}
    \end{lem}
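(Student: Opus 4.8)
The plan is to reduce the inequality to a one–dimensional discrete Poincaré inequality for the macroscopic density on the periodic grid $\ZZ/N_x\ZZ$, the oddness of $N_x$ being precisely what makes the centered difference operator $D_\dx$ injective on zero–mean sequences. First I would pass to the macroscopic density $\rho_i := \sum_{j\in\ZZ} f_{i,j}\,\dv$. By Remark~\ref{rem:semikin} one has $(\Pi_\dv f)_{i,j} = \rho_i\,M_j/\lla M\rra_\dv$ with $\lla M\rra_\dv := \sum_{j\in\ZZ} M_j\,\dv$, so a direct computation using $\sum_j M_j\dv = \lla M\rra_\dv$ gives
\[
\|\Pi_\dv f\|_\lm^2 \ =\ \frac{1}{\lla M\rra_\dv}\sum_{i}\rho_i^2\,\dx ,
\]
while the hypothesis $\lla f\rra_{\dx,\dv}=0$ becomes exactly the zero–mean condition $\sum_i \rho_i\,\dx = 0$.

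The second step exploits that $\Pi_\dv$ is, fiberwise in $x$, the $\lm$–orthogonal projection onto multiples of $M$, hence an $\lm$–contraction, and that it commutes with $D_\dx$ (which only acts on the $x$–index). Therefore $\|D_\dx \Pi_\dv f\|_\lm = \|\Pi_\dv D_\dx f\|_\lm \le \|D_\dx f\|_\lm$. Computing the left–hand side as above, with $(D_\dx\rho)_i = (\rho_{i+1}-\rho_{i-1})/(2\dx)$, yields $\|D_\dx \Pi_\dv f\|_\lm^2 = \lla M\rra_\dv^{-1}\sum_i ((D_\dx\rho)_i)^2\,\dx$. Combining the two displays, it suffices to prove the mesh–uniform discrete Poincaré inequality
\[
\sum_i \rho_i^2\,\dx \ \lesssim\ \sum_i ((D_\dx\rho)_i)^2\,\dx
\]
for every sequence $\rho$ on $\ZZ/N_x\ZZ$ with $\sum_i\rho_i\dx=0$, the implicit constant being independent of $N_x$.

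This last inequality I would establish by discrete Fourier analysis on $\ZZ/N_x\ZZ$. Expanding $\rho$ in the basis of discrete exponentials $\ell\mapsto e^{2\pi\mathbf{i}m\ell\dx}$ indexed by $m\in\{-(N_x-1)/2,\dots,(N_x-1)/2\}$ (using $N_x$ odd), Parseval's identity shows that the centered difference acts as multiplication by the symbol $\mathbf{i}\,\sin(2\pi m\dx)/\dx$, the zero–mean constraint removing the mode $m=0$. Hence the inequality reduces to the uniform lower bound $\min_{m\neq 0}\sin^2(2\pi m\dx)/\dx^2 \ge c>0$. Here the oddness of $N_x$ is essential: for $0<|m|\le (N_x-1)/2$ one has $2m\not\equiv 0 \pmod{N_x}$ (since $\gcd(2,N_x)=1$), so $2\pi m\dx = 2\pi m/N_x$ stays at distance at least $\pi\dx$ from $\pi\ZZ$; using $|\sin\theta|\ge (2/\pi)\,\mathrm{dist}(\theta,\pi\ZZ)$ gives $|\sin(2\pi m\dx)|\ge 2\dx$ and thus $\sin^2(2\pi m\dx)/\dx^2\ge 4$.

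The main obstacle is exactly this uniform control of the symbol: the centered difference operator possesses a spurious high–frequency kernel mode (the checkerboard mode) on an even grid, so that the desired inequality simply fails for $N_x$ even; the hypothesis that $N_x$ is odd is what prevents the symbol from vanishing off the constant mode and, more importantly, what keeps it bounded below \emph{uniformly in the mesh size}. Everything else — the passage to $\rho$, the contraction/commutation argument, and Parseval — is routine bookkeeping.
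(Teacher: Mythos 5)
Your proof is correct, and its skeleton coincides with the paper's: reduce to the macroscopic density $\rho_i=\sum_{j}f_{i,j}\,\dv$ via $\|\Pi_\dv f\|_\lm^2=\lla M\rra_\dv^{-1}\sum_i\rho_i^2\,\dx$, invoke a mesh-uniform discrete Poincar\'e inequality for the centered difference on the odd periodic grid, and control $\sum_i\big((D_\dx\rho)_i\big)^2\dx$ by $\|D_\dx f\|_\lm^2$. Two differences are worth recording. First, where the paper bounds $(D_\dx\rho)_i=\sum_j(D_\dx f)_{i,j}\,\dv$ by Cauchy--Schwarz against the weight $M_j\,\dv/\lla M\rra_\dv$, you observe that $\Pi_\dv$ is fiberwise the $\lm$-orthogonal projection onto $\mathrm{span}\{M\}$ and commutes with $D_\dx$, so $\|D_\dx\Pi_\dv f\|_\lm=\|\Pi_\dv D_\dx f\|_\lm\le\|D_\dx f\|_\lm$; this is the same estimate in different clothing, since the contraction property of the projection is precisely that Cauchy--Schwarz step. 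Second, and more substantively, the paper simply imports the discrete Poincar\'e inequality \eqref{eq:lem_hypo_DiscrPoincare} from \cite{bessemoulin_2020_hypocoercivity}, whereas you prove it by discrete Fourier analysis: the symbol of $D_\dx$ on the $m$-th mode is $\mathbf{i}\,\sin(2\pi m\dx)/\dx$, and for $N_x$ odd and $0<|m|\le(N_x-1)/2$ one has $N_x\nmid 2m$, hence $\mathrm{dist}(2\pi m/N_x,\pi\ZZ)\ge\pi\dx$ and $|\sin(2\pi m\dx)|\ge 2\dx$, giving the inequality with the explicit mesh-uniform constant $1/4$. Your computation is correct, makes the lemma self-contained, and correctly identifies the checkerboard mode $(-1)^i$ in the kernel of $D_\dx$ on even grids as the reason the oddness hypothesis cannot be dropped --- a point the paper leaves implicit in the citation.
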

    
    \begin{proof} This Lemma is a consequence of the following discrete Poincaré inequality proven in \cite{bessemoulin_2020_hypocoercivity}: for $N_x \in \NN$ odd, $\dx=1/N_x$,
        \begin{equation}
            \label{eq:lem_hypo_DiscrPoincare}
            \sum\limits_{i\in\mathbb{Z}/N_x\mathbb{Z}}  \beta_i^2 \lesssim \sum\limits_{i\in\mathbb{Z}/N_x\mathbb{Z}}  [D_\dx \beta]_i^2\, , \quad \forall (\beta_i)_{i\in\mathbb{Z}/N_x\mathbb{Z}} \quad \text{s.t.} \,\,\, \sum_{i\in\mathbb{Z}/N_x\mathbb{Z}}  \beta_i=0\,.
        \end{equation}
         Recalling that $(\dpi f)_{i,j}= \rho_i M_j \, \lla M \rra_{\dv}^{-1}$ (see Remark~\ref{rem:semikin}),  we have
        \[
        \|\dpi f\|^2_\lm= \sum\limits_{i\in\mathbb{Z}/N_x \mathbb{Z}} \dx \rho_i^2  \, \frac{1}{ \lla M\rra_\dv}\,, 
        \]
        since $\sum_{j\in\mathbb{Z}} \dv M_j= \lla M \rra_\dv$.  Moreover, we have that $\sum_{i\in\mathbb{Z}/N_x\mathbb{Z}} \rho_i = 0$ since $\sum_{i\in\mathbb{Z}/N_x\mathbb{Z}} \rho_i \dx = \lla f \rra_{\dx,\dv} = 0$ (see Remark~\ref{rem:semikin}). Thus, thanks 
        to 
        \eqref{eq:lem_hypo_DiscrPoincare},
        we can obtain the estimate \eqref{eq:lem_hypo_estimates_Poincare} with Cauchy-Schwarz inequality. Indeed, 
        \begin{align*}
            \sum\limits_{i\in\mathbb{Z}/N_x \mathbb{Z}} \dx (D_\dx \rho)^2_i   \, \frac{1}{ \lla M\rra_\dv}&= 
            \sum\limits_{i\in\mathbb{Z}/N_x\mathbb{Z}} \dx \left( \sum\limits_{j\in\mathbb{Z}} (D_\dx f)_{i,j}  \frac{1}{M_j^{1/2}} \dv \frac{M_j^{1/2}}{\lla M \rra_\dv^{1/2}}\right)^2
            \\&
            \le \sum\limits_{i\in\mathbb{Z}/N_x\mathbb{Z}} \dx \sum\limits_{j\in\mathbb{Z}} (D_\dx f)_{i,j}^2 \frac{1}{M_j}\dv\,,
        \end{align*}
        and the last term is equal to $\|D_\dx f\|^2_\lm$.
    \end{proof}
    \begin{lem}
        \label{lem:hypo_estimates} Let $g\in\lm$. Under the assumptions of Theorem \ref{theo:hypo_discrLFP}, the following estimates hold
        \begin{align}
            \label{eq:lem_hypo_estimates_SPi} &\left|\lla \left( I-\frac{S_++S_-}{2}\right) \Pi_\dv D_\dx f, g \rra_\lm\right| \lesssim \dv \|g\|_\lm \|\Pi_\dv D_\dx f\|_\lm 
            \\ 
            \label{eq:lem_hypo_estimates_SPi-I} & \left| \lla \left(I-\frac{S_++S_-}{2}\right) (\Pi_\dv -I)D_\dx f, g\rra_\lm\right|\lesssim \|g\|_\lm \mathcal{S}^{\dx,\dv}_\alpha(D_\dx f, D_\dx f)^{ 1/2}
            \\ \label{eq:lem_hypo_estimates_DvLDv} & \left| \lla [D_\dv,L^\dv_\alpha] f, D_\dv f\rra_\lm \right| \lesssim \|D_\dv f\|^2_\lm + \|f\|_\lm \|D_\dv f\|_\lm
            \\ \label{eq:lem_hypo_estimates_DvLDx} & \left| \lla [ D_\dv, L^\dv_\alpha] f, D_\dx f\rra_\lm\right| \lesssim  
            \|(\Pi_\dv - I) D_\dx f\|_\lm \; \|f\|_\lm 
            \\ \nonumber & \hspace{20pt} +\|(\Pi_\dv - I) D_\dx f\|_\lm \; \| D_\dv f \|_\lm \,.
        \end{align}
    \end{lem}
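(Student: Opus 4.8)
The plan is to establish the four estimates \eqref{eq:lem_hypo_estimates_SPi}--\eqref{eq:lem_hypo_estimates_DvLDx} one at a time, exploiting throughout that all the operators involved ($S_\pm$, $\Pi_\dv$, $D_\dv$, $L^\dv_\alpha$) act only in the velocity variable, so that each scalar product $\lla\cdot,\cdot\rra_\lm$ splits as $\sum_{i}\lla\cdot,\cdot\rra_{\ell^2_\dv(M^{-1})}\dx$ over the fibers $i\in\ZZ/N_x\ZZ$. For the two estimates involving $I-\tfrac{S_++S_-}{2}$, I first apply Cauchy--Schwarz in $\lla\cdot,\cdot\rra_\lm$ to factor out $\|g\|_\lm$, and I am then left to bound $\|(I-\tfrac{S_++S_-}{2})\Pi_\dv D_\dx f\|_\lm$ and $\|(I-\tfrac{S_++S_-}{2})(\Pi_\dv-I)D_\dx f\|_\lm$ respectively.

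For \eqref{eq:lem_hypo_estimates_SPi}, the point is that $(\Pi_\dv D_\dx f)_{i,j}=c_i\,M_j$ with $c_i=(\sum_k(D_\dx f)_{i,k}\dv)/\lla M\rra_\dv$, so that $I-\tfrac{S_++S_-}{2}$ acts only on the factor $M_j$ and produces the second difference $M_j-\tfrac{M_{j+1}+M_{j-1}}{2}=-\tfrac12(M_{j+1}+M_{j-1}-2M_j)$. A Taylor expansion together with the pointwise bound $|\mu_\alpha''|\lesssim\mu_\alpha$ (Proposition~\ref{prop:bounds_mua}) gives $|M_{j+1}+M_{j-1}-2M_j|\lesssim\dv^2 M_j$, whence
\[
\sum_{j\in\ZZ}\frac{\big(M_j-\tfrac{M_{j+1}+M_{j-1}}{2}\big)^2}{M_j}\,\dv\ \lesssim\ \dv^4\sum_{j\in\ZZ}M_j\,\dv\ \lesssim\ \dv^4\,,
\]
using $\lla M\rra_\dv\sim1$. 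Since $\|\Pi_\dv D_\dx f\|_\lm^2=\lla M\rra_\dv\sum_i c_i^2\dx$, this yields $\|(I-\tfrac{S_++S_-}{2})\Pi_\dv D_\dx f\|_\lm\lesssim\dv\,\|\Pi_\dv D_\dx f\|_\lm$, which is the claim. For \eqref{eq:lem_hypo_estimates_SPi-I}, I instead use that the shifts $S_\pm$ are bounded on $\ell^2_\dv(M^{-1})$ uniformly in $\dv$ (as already established in the proof of Lemma~\ref{lem:estimate_commutator} through the identity controlling $\|S_\pm f\|_{\ell^2_h(M^{-1})}$), so that $I-\tfrac{S_++S_-}{2}$ is a uniformly bounded operator; this reduces the bound to $\|(\Pi_\dv-I)D_\dx f\|_\lm$, which is in turn controlled by $\mathcal{S}^{\dx,\dv}_\alpha(D_\dx f,D_\dx f)^{1/2}$ by applying the discrete nonlocal Poincaré inequality of Proposition~\ref{prop:inegfunc_discr} on each fiber and summing in $i$.

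The last two estimates concern the commutator. Since $D_\dv$ and $L^\dv_\alpha$ act only in velocity and $D_\dv$ commutes with the discrete fractional Laplacian $\Lambda^\dv_\alpha$, one has $[D_\dv,L^\dv_\alpha]=[D_\dv,\Gamma^\dv_\alpha]$ and, fiberwise, $[D_\dv,L^\dv_\alpha]$ is exactly the operator $[D_h,L^h_\alpha]$ of Lemma~\ref{lem:estimate_commutator} with $h=\dv$. Estimate \eqref{eq:lem_hypo_estimates_DvLDv} then follows immediately: I apply Lemma~\ref{lem:estimate_commutator} on each fiber with $g=D_\dv f$ and sum in $i$ using the discrete Cauchy--Schwarz inequality, reproducing $\|D_\dv f\|_\lm^2+\|f\|_\lm\|D_\dv f\|_\lm$.

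The delicate point, and the main obstacle, is \eqref{eq:lem_hypo_estimates_DvLDx}: applying the commutator bound naively with $g=D_\dx f$ would only give the right-hand side with $\|D_\dx f\|_\lm$ in place of the weaker $\|(\Pi_\dv-I)D_\dx f\|_\lm$, which is not enough to close the hypocoercivity estimates. To gain the projection, I split $D_\dx f=\Pi_\dv D_\dx f+(I-\Pi_\dv)D_\dx f$ and show that the equilibrium part drops out. Indeed, on each fiber $(\Pi_\dv D_\dx f)_{i,\cdot}=c_i M$ and $\lla w,M\rra_{\ell^2_\dv(M^{-1})}=\sum_j w_j\,\dv$, so that
\[
\lla [D_\dv,L^\dv_\alpha]f_{i,\cdot},(\Pi_\dv D_\dx f)_{i,\cdot}\rra_{\ell^2_\dv(M^{-1})}\ =\ c_i\sum_{j\in\ZZ}\big([D_\dv,\Gamma^\dv_\alpha]f\big)_{i,j}\,\dv\ =\ 0\,,
\]
because both $D_\dv$ and the discrete-divergence operator $\Gamma^\dv_\alpha$ have vanishing velocity sum (telescoping). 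It then remains to apply Lemma~\ref{lem:estimate_commutator} fiberwise with $g=(I-\Pi_\dv)D_\dx f$ and to sum in $i$, which produces exactly the announced right-hand side.
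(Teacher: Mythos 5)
Your proof is correct and follows essentially the same route as the paper: Cauchy--Schwarz combined with decay bounds on discrete differences of $M$ for \eqref{eq:lem_hypo_estimates_SPi}, uniform boundedness of the shifts $S_\pm$ on $\lm$ together with the discrete Poincar\'e inequality of Proposition~\ref{prop:inegfunc_discr} applied fiberwise for \eqref{eq:lem_hypo_estimates_SPi-I}, the fiberwise application of Lemma~\ref{lem:estimate_commutator} for \eqref{eq:lem_hypo_estimates_DvLDv}, and --- the key point --- the vanishing of $\lla [D_\dv,L^\dv_\alpha]f,\Pi_\dv D_\dx f\rra_\lm$ via telescoping/mass conservation so as to replace $D_\dx f$ by $(I-\Pi_\dv)D_\dx f$ in \eqref{eq:lem_hypo_estimates_DvLDx}, exactly as in the paper. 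The only cosmetic deviation is in \eqref{eq:lem_hypo_estimates_SPi}, where you exploit the second-difference structure of $I-\tfrac{S_++S_-}{2}$ acting on $M$ (in the spirit of Lemma~\ref{lem:D2hM}) and thereby gain a factor $\dv^2$, whereas the paper treats $I-S_+$ and $I-S_-$ separately through the first-difference bound $\dv\,|(D^+_\dv M)_j|/M_j\lesssim \dv$, which already yields the stated factor $\dv$; both suffice.
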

    \begin{proof}
        As a preliminary result, let us remark that thanks to Lemma \ref{lem:DhM}, 
        \[
        \forall j\in\ZZ, 
        \; \left| \frac{(D^+_\dv M)_{j}}{M_j}\right|\lesssim \lla j\dv \rra^{-1}\,. 
        \]
        {Noticing that $\Pi_\dv D_\dx f = D_\dx \rho M  \lla M \rra_\dv^{-1}$ (where we recall that $\rho$ is the local mass of $f$)}, one has
        \[
        \lla (I-S_+) { \Pi_\dv} D_\dx f, g \rra_\lm = {\dx\dv\sum\limits_{i\in\ZZ/N_x\ZZ}\sum\limits_{j\in\ZZ}  ( D_\dx \rho)_i  \frac{ M_j}{\lla M \rra_\dv}  g_{i,j} \left( -\dv \frac{(D^+_\dv M)_j}{M_j}\right) \frac{1}{M_{j}}}\,,
        \]
        which yields \eqref{eq:lem_hypo_estimates_SPi}. Similarly, since 
        \[
        \|S_+ f \|^2_\lm - \|f\|^2_\lm = \dx\dv\sum\limits_{i\in\ZZ/N_x\ZZ}\sum\limits_{j\in\ZZ} f^2_{i,j} \dv\frac{(D_\dv^+ M)_{j-1}}{M_{j-1}} \frac{1}{M_j}\,,
        \]
        we obtain that $\|S_+f\|_\lm\lesssim \|f\|_\lm$. The same estimate holds for $S_-$, and it gives \eqref{eq:lem_hypo_estimates_SPi-I} with \eqref{eq:discr_poinc} { and Cauchy-Schwarz inequality}.
        
        Eventually, the last two estimates are obtained using Lemma \ref{lem:estimate_commutator}. Indeed, as the proof relies only on computations on the $v$ variable, it is still true with $\lla\cdot,\cdot\rra_\lm$ instead of $\lla\cdot, \cdot\rra_{\ell^2_h(M^{-1})}$. The estimate~\eqref{eq:lem_hypo_estimates_DvLDv} is straightforward, while \eqref{eq:lem_hypo_estimates_DvLDx} comes from the equality 
        \[
        \lla [D_\dv, L_\alpha^\dv ] f, D_\dx f \rra_\lm = \lla [D_\dv, L_\alpha^\dv] f, (I-\Pi_\dv) D_\dx f\rra_\lm\,. 
        \]
        Indeed, still denoting $\Pi_\dv f= \rho M \lla M \rra_\dv^{-1}$, one has
        \begin{align*}
            \lla [D_\dv,L^\dv_\alpha] f, \Pi_\dv D_\dx f \rra_\lm 
            &= \dx\dv \sum\limits_{i\in\ZZ/N_x\ZZ} \sum\limits_{j\in\ZZ} \left([D_\dv, L^\dv_\alpha ] f\right)_{i,j} (D_\dx \rho)_i  \frac{1}{\lla M \rra_\dv}\,,
        \end{align*}
        but for all $i\in\ZZ/N_x\ZZ$,
        \begin{align*}
            &\sum\limits_{j\in\ZZ} \left([D_\dv,L^\dv_\alpha] f\right)_{i,j} = \sum\limits_{j\in\ZZ} \left(D_\dv (L^\dv_\alpha f)\right)_{i,j}- \sum\limits_{j\in\ZZ} \left( \Lambda_\alpha^\dv (D_\dv f)\right)_{i,j} - \sum\limits_{j\in\ZZ} \left(\Gamma_\alpha^\dv (D_\dv f) \right)_{i,j}\,,
        \end{align*}
        and the three terms vanish, respectively because of the definition of $D_\dv$, and thanks to \eqref{eq:mass_discrLap}  and~\eqref{def:Gamma}.
    \end{proof}
    
    The proof of Theorem \ref{theo:hypo_discrLFP} relies on these lemmas, and ad-hoc use of Young's inequality. 
    
    \begin{proof}[Proof of Theorem \ref{theo:hypo_discrLFP}]
        Thanks to  Lemmas \ref{lem:hypo_equalities}-\ref{lem:hypo_estimates} and to \eqref{eq:discr_poinc}, there exists a constant $K>0$ such that 
        \begin{align}
            \nonumber \frac{1}{2}\frac{\mathrm{d}}{\mathrm{d}t}\mathcal{H}(f,f)
            & +\mathcal{S}_\alpha^{\dx,\dv} (f,f) + a \mathcal{S}_\alpha^{\dx,\dv}(D_\dx f, D_\dx f) + b \mathcal{S}_\alpha^{\dx,\dv} (D_\dv f,D_\dv f)
            \\ \nonumber &+ \frac{c}{2} \|\Pi_\dv D_\dx f\|^2_\lm + \frac{c}{2}  \|(\Pi_\dv-I) D_\dx f \|^2_\lm 
            \\ \nonumber &+\frac{c}{4}  \|D_\dx f\|^2_\lm + \frac{c}{16}  \| \Pi_\dv f\|^2_\lm.     
            \\ \nonumber  \le & \;
            K b \dv \|\Pi_\dv D_\dx f\|_\lm \|D_\dv f\|_\lm 
            \\& +\nonumber 2Kb \mathcal{S}_\alpha^{\dx,\dv} (D_\dx f,D_\dx f)^{1/2}  \|D_\dv f\|_\lm
            \\ \label{eq:VUE_1} &+Kb \left(\|f\|_\lm \|D_\dv f\|_\lm + \|D_\dv f\|^2_\lm  \right)
            \\ \label{eq:CS} &+2K\left(c^2 \mathcal{S}_\alpha^{\dx,\dv}(D_\dx f,D_\dx f)\right)^{1/2} \left( \mathcal{S}_\alpha^{\dx,\dv}(D_\dv f,D_\dv f)\right)^{1/2} 
            \\ \nonumber &+cK \mathcal{S}_\alpha^{\dx,\dv} (D_\dx f,D_\dx f)^{1/2} \|D_\dx f\|_\lm 
            \\ \nonumber & + cK \dv \|\Pi_\dv D_\dx f\|_\lm \|D_\dx f\|_\lm 
            \\ \label{eq:VUE_2} & + cK \mathcal{S}_\alpha^{\dx,\dv}(D_\dx f,D_\dx f)^{1/2}\left( \|f\|_\lm +  \|D_\dv f\|_\lm\right)\,.
        \end{align}
        Up to a transformation with Cauchy-Schwarz inequality in \eqref{eq:CS}, all lines of the previous inequality come from Lemma \ref{lem:hypo_estimates}. To conclude, Young's inequality is applied, except for \eqref{eq:VUE_1} which is bounded with Proposition \ref{prop:useful}. This proposition is also employed to bound all the $\|D_\dv f\|^2_\lm$ that appear after the use of Young's inequality. Hence, there exists $\eta>0$ and $\Delta v_0>0$ such that for any $\varepsilon \in (0,\eta)$, $\Delta v \in (0, \Delta v_0)$ and any positive constants $\mu$, $\delta$, $\nu$  and $\gamma$, there is $K(\eps) >0$   such that 
        \begin{equation*}
            \frac{1}{2}\frac{\mathrm{d}}{\mathrm{d}t}\mathcal{H}(f,f) + \mathcal{D}(f,f)  \le 0
        \end{equation*}
        where 
        \begin{align*}
            \mathcal{D}(f,f) := &A\;\mathcal{S}_\alpha^{\dx,\dv} (f,f)+  B\;\mathcal{S}_\alpha^{\dx,\dv}(D_\dx f, D_\dx f) + C\;\mathcal{S}_\alpha^{\dx,\dv} (D_\dv f,D_\dv f)
            \\& 
            + D\;\|\Pi_\dv D_\dx f\|^2_\lm+  \frac{c}{2} \|(\Pi_\dv-I) D_\dx f \|^2_\lm 
            \\ &
            +E\;  \|D_\dx f\|^2_\lm +   F\;
            \| \Pi_\dv f\|^2_\lm \le 0\,,
        \end{align*}
        with
        \[
        \begin{array}{l}
            \ds A= 1- KbK(\eps) \left(2+\frac{\dv}{4}\right) - \frac{cK}{2}\left( \frac{K(\eps)}{\mu} + \frac{1}{\delta } \right) 
            \\ 
            \ds B = a -Kb - \frac{Kc}{2}\left( \frac{2c}{\nu}+\frac{1}{\gamma} + \delta + \mu \right) 
            \\ 
            \ds C= b -Kb\eps \left( \frac{\dv}{4}+2 \right) - K\nu - \frac{cK\eps}{2\mu} 
            \\ \ds D=  \frac{c}{2}- Kb\dv-\frac{cK\dv}{2} 
            \\ \ds E =  \frac{c}{4} \left( 1 - 2 K\left( \gamma+{\dv}\right)\right) 
            \\ \ds F =  \frac{c}{16}-
            KbK(\eps) \left(2+\frac{\dv}{4}  \right)- \frac{cK}{2}\left( \frac{1}{\delta}+ \frac{K(\eps)}{\mu} \right)\,.
        \end{array}
        \]
        Let us take \begin{align*}
            &b=\frac{c}{128KK(\eps)}, \;\;  \nu = \frac{b}{2K},\;\;    \gamma=\frac{1}{4K},  \;\; \delta=32K, \;\;   \mu=32KK(\eps)\,.
        \end{align*}
        Thus, we can rewrite $C$, $E$ and $F$ as follows:
        \[
        \begin{array}{l}
            \ds C= \frac{b}{2} -Kb\eps \left( \frac{\dv}{4}+2 \right)  - \frac{cK\eps}{2\mu}  \\
            \ds E =  \frac{c}{4} \left( \frac{1}{2} - 2 K\dv\right)
            \\ \ds F =  \frac{c}{64}-
            \frac{c}{128} \frac{\dv}{4}\,.
        \end{array}
        \] 
        We consecutively choose $\dv$ and $\varepsilon$ small enough so that $D,E,F$ and $C$ are strictly non-negative. Finally,  we take $c$ small enough so that $A>0$ and $a$ large enough so that $B>0$ and $c^2 < ab$. It yields that the dissipation is non-negative.
        Eventually, thanks to Proposition~\ref{prop:inegfunc_discr} and the fact that $\Pi_\dv D_\dv f = 0$, there exists $\lambda >0$ such that for all $f\in\hm$, 
        \begin{align*}
            \lambda \mathcal{H}(f,f)  \leq \mathcal{D}(f,f)\,.
        \end{align*}
        As a consequence, the following inequality holds
        \[
        \frac{1}{2}\frac{\mathrm{d}}{\mathrm{d}t} \mathcal{H}(f,f) + {\lambda} \mathcal{H}(f,f) \le 0\,.
        \]
        Theorem \ref{theo:hypo_discrLFP} follows immediately by a Gronwall type argument and the fact that $\mathcal{H} \sim \|\cdot\|_\hm^2$.
    \end{proof}
    
    We finally can prove the result in the fully-discrete case.
    \begin{proof}[Proof of Theorem \ref{theo:hypo_fulldiscrLFP}]
        We deduce from \eqref{eq:fulldiscretekin} that
        \begin{equation}
            \label{eq:fnp}
            {f_{i,j}^{n+1} =f_{i,j}^{n}} - \Delta t  \left(T^\dx f\right)_{i,j}^{n+1}  +\Delta t   \left(L^\dv_\alpha f\right)_{i,j}^{n+1}, \;\forall (i,j)\in\ZZ/N_x\ZZ \times \ZZ, \, n \in \mathbb{N}\,.
        \end{equation} 
        Thus, we obtain
        \begin{align*}
            \|f^{n+1}\|^2_\lm  & = \lla f^{n+1}, f^{n+1} \rra_\lm
            \\& 
            = \lla f^{n+1}, f^{n} \rra_\lm - \Delta t \lla f^{n+1}, \left(T^\dx f\right)^{n+1}   \rra_\lm
            \\ &  \quad + \Delta t \lla f^{n+1}, \left(L^\dv_\alpha f\right)^{n+1} \rra_\lm
            \\& 
            = \lla f^{n+1}, f^{n} \rra_\lm  - \Delta t \, \mathcal{S}_\alpha^{\dx,\dv} (f^{n+1},f^{n+1})
            \\& 
            = \lla f^{n+1}, f^{n} \rra_\lm  + \Delta t  \; \mathcal{E}_1(f^{n+1})
        \end{align*}
        and similarly,
        \begin{align*}
            \|D_\dx  f^{n+1}\|^2_\lm  & 
            = \lla D_\dx  f^{n+1}, D_\dx  f^{n} \rra_\lm  + \Delta t  \; \mathcal{E}_2(f^{n+1})
        \end{align*}
        where the quantities $\mathcal{E}_i(\cdot)$, $i=1,\dots,4$ are the ones introduced in Lemma \ref{lem:hypo_equalities}. Using again \eqref{eq:fnp}, we deduce that 
        \begin{align*}
            \|D_\dv  f^{n+1}\|^2_\lm  & 
            = \lla D_\dv  f^{n+1}, D_\dv  f^{n} \rra_\lm  + \Delta t  \; \mathcal{E}_3(f^{n+1})
        \end{align*} 
        and 
        \begin{align*}
            &2 \lla D_\dv  f^{n+1}, D_\dx f^{n+1} \rra_\lm \\
            &\quad=  \lla D_\dv  f^{n+1}, D_\dx f^{n}\rra_\lm   +  \lla D_\dx  f^{n+1}, D_\dv f^{n}  \rra_\lm  + \Delta t  \; \mathcal{E}_4(f^{n+1})\,.
        \end{align*} 
        Thus, 
        \begin{equation*}
            \mathcal{H}(f^{n+1},f^{n+1}) = \varphi(f^{n+1},f^n) + \Delta t \left( \mathcal{E}_1(f^{n+1}) + { a} \, \mathcal{E}_2(f^{n+1})+ { b} \,\mathcal{E}_3(f^{n+1})+{ c} \,\mathcal{E}_4(f^{n+1})\right) 
        \end{equation*}
        where \begin{align*}
            \varphi(f^{n+1},f^n)  =&   \lla f^{n+1}, f^{n} \rra_\lm  \\
            &+   a \lla D_\dx  f^{n+1}, D_\dx  f^{n} \rra_\lm  +b  \lla D_\dv  f^{n+1}, D_\dv  f^{n} \rra_\lm  \\
            &+  c \lla D_\dv  f^{n+1}, D_\dx f^{n}\rra_\lm   +  c \lla D_\dx  f^{n+1}, D_\dv f^{n}  \rra_\lm\,.
        \end{align*} 
        We notice that  $\varphi$ is a scalar product on $\hm$ and the associated norm is $\left(\mathcal{H}(\cdot)\right)^{1/2}$. In particular, we have
        \begin{align*}
            |\varphi(f^{n+1},f^n) | & \leq \left(\mathcal{H}(f^{n+1},f^{n+1})\right)^{1/2} \left(\mathcal{H}(f^{n},f^n)\right)^{1/2}\\
            & \leq \frac{1}{2}\mathcal{H}(f^{n+1},f^{n+1}) + \frac{1}{2}\mathcal{H}(f^{n},f^n)\, .
        \end{align*}
        Thus, we obtain
        \begin{equation*}
            \frac{1}{2} \mathcal{H}(f^{n+1},f^{n+1}) \leq \frac{1}{2} \mathcal{H}(f^{n},f^n)  + \Delta t \left( \mathcal{E}_1(f^{n+1}) +{a} \,\mathcal{E}_2(f^{n+1})+ {b} \, \mathcal{E}_3(f^{n+1})+ {c} \, \mathcal{E}_4(f^{n+1})\right)\,.
        \end{equation*}
        The second term of the right-hand side of this last inequality is exactly the same one has in the previous semi-discrete case. Thus, using the exact same inequalities {and making the same choice of constants}, we obtain similarly as before  that there exists  $\lambda >0$ such that 
        \begin{align*}
            \frac{1}{2} \mathcal{H}(f^{n+1},f^{n+1}) & \leq \frac{1}{2} \mathcal{H}(f^{n},f^n)  - \Delta t \, \mathcal{D}(f^{n+1},f^{n+1}) \\
            &  \leq \frac{1}{2} \mathcal{H}(f^{n},f^n)  - \Delta t \, \lambda \,\mathcal{H}(f^{n+1},f^{n+1})\,. 
        \end{align*}
        Finally, we deduce that 
        \begin{align*}
            \mathcal{H}(f^{n+1},f^{n+1}) & \leq (1+ 2  \lambda \Delta t)^{-1}\mathcal{H}(f^{n},f^n)  
        \end{align*}
        which implies 
        \begin{align*}
            \mathcal{H}(f^{n+1},f^{n+1}) & \leq (1+ 2  \lambda \Delta t)^{-n}\mathcal{H}(f^{0},f^0) \, .
        \end{align*}
    \end{proof}

    \section{Numerical simulations}\label{sec:numsim}
    \subsection{Implementation}
    
    The implemention of the schemes have been done in Matlab and the code is available at \url{gitlab.inria.fr/herda/fpfrac}. Here, we give some details on the main implementation task which is the assembling of the matrix $\Laht = \Dht + \Ght$. 
    
    First, one assembles the matrix $\Dht$ of the fractional Laplacian. In practice, we choose the integral truncation parameter such that $K = 10J+1$. One first computes the coefficients $\beta^h_k$ for $k = -K,\dots, K$ using formula \eqref{eq:def_beta} for $k=-K+1,\dots, K-1$ and \eqref{eq:def_beta_last} for $\beta_{\pm K}^h$. From there, the coefficients of the matrix are given by \eqref{eq:def_discrLap_trunc1}-\eqref{eq:def_discrLap_trunc2}.
    
    Then one needs to assemble the matrix $\Ght$ of the drift term following \eqref{def:Gamma_trunc1}-\eqref{def:Gamma_trunc5}. This requires~$\Dht$ as well as the discrete local equilibrium $M_j \approx \mu_\alpha(jh)$ for $j=-J,\dots, J$. The evaluation of $M_j$ is not trivial when $\alpha\neq 1$, because $\mu_\alpha$ is defined by an oscillatory integral. The problem enters the larger framework of the numerical calculation of stable densities which has interested many authors (see \cite{ament_2018_accurate} and references therein). Here we use the efficient method of Ament and O'Neil \cite{ament_2018_accurate} which relies on several different representation formulas (integrals and series) and asymptotics for stable densities. We use their Matlab code available at \url{gitlab.com/s_ament/qastable}.

    \subsection{Test case 1: convergence of the scheme}
    
    We solve the homogeneous fractional Fokker-Planck equation 
    $
    \partial_tf  = L_\alpha f
    $.
    First, we focus on convergence properties of the scheme for various values of $\alpha>0$. Our reference solution is 
    \[
    f(t,v)\ =\ \sum_{i=1}^2  \frac{\theta_i}{(1-e^{-(t+1)\alpha})^{\frac1\alpha}}\mu_\alpha\left(\frac{v-w_ie^{-(t+1)}}{(1-e^{-(t+1)\alpha})^{\frac1\alpha}}\right)\,,
    \]
    with $\theta_1 = 3/4$, $\theta_2 = 1/4$, $w_1 = 2$ and $w_2 = -6$. The truncated velocity domain is $[-L,L]$ with $L=16$ and the time domain of simulation is $[0,T]$ with $T=0.5$. As we focus on the convergence of the scheme in the velocity variable, the time step is appropriately refined at each refinement of the velocity step.  
    
    On Figure~\ref{fig:cv_homogeneous}, we report the error of approximation with respect to the mesh size in $L^\infty_tL^2_v$ and $L^\infty_{t,v}$ norms. We observe that the experimental rate of convergence in both norms is equal to $2$. This is the expected rate when $\alpha \leq 1$. In theory, the rate could be worse ($3-\alpha$) when $\alpha>1$. However, it is common in practice that the experimental order of convergence for the Huang and Oberman discretization of the fractional Laplacian is better than the theoretical $3-\alpha$ (see \cite[Fig2. and Fig 3.]{huang_2014_discretization}), which may explain the improved rate here. On other test cases concerning the fractional heat equation with the same implementation and that we do not report here, we observed a slightly worsened rate of convergence for values of $\alpha$ greater than $1$.

    \begin{figure}
%
%
\begin{tikzpicture}

\begin{axis}[
width=.4\textwidth,
at={(0,0)},
scale only axis,
xmode=log,
xmin=0.01,
xmax=1,
xminorticks=true,
xlabel style={font=\color{white!15!black}},
xlabel={Meshsize},
ymode=log,
ymin=0.00001,
ymax=1,
yminorticks=true,
ylabel style={font=\color{white!15!black}},
title={Error in $L^\infty_tL^2(\mu_\alpha^{-1}\dd v)$ norm},
axis background/.style={fill=white}
]
\addplot [color=red, mark=+, mark options={solid, red}, forget plot]
  table[row sep=crcr]{%
1	0.1308892320513\\
0.5	0.0893854869978074\\
0.25	0.0317621402055704\\
0.125	0.00957849730597995\\
0.0625	0.00257709819744217\\
0.03125	0.000660503532892379\\
};
\addplot [color=green, mark=o, mark options={solid, green}, forget plot]
  table[row sep=crcr]{%
1	0.0858632105113834\\
0.5	0.0403015619739594\\
0.25	0.0123815047792652\\
0.125	0.0033912378572473\\
0.0625	0.000872161814163415\\
0.03125	0.000237928501014633\\
};
\addplot [color=blue, mark=asterisk, mark options={solid, blue}, forget plot]
  table[row sep=crcr]{%
1	0.0899748047880546\\
0.5	0.0285048330821137\\
0.25	0.0077183775776165\\
0.125	0.00197569917853812\\
0.0625	0.000497017182788124\\
0.03125	0.0001244523679925\\
};

\logLogSlopeTriangle{.8}{.4}{.1}{2}{black};

\end{axis}

\begin{axis}[%
width=.4\textwidth,
at={(.5\textwidth,0)},
scale only axis,
xmode=log,
xmin=0.01,
xmax=1,
xminorticks=true,
xlabel style={font=\color{white!15!black}},
xlabel={Meshsize},
ymode=log,
ymin=0.00001,
ymax=1,
yminorticks=true,
ylabel style={font=\color{white!15!black}},
title={Error in $L^\infty_{t,v}$ norm},
axis background/.style={fill=white},
legend style={at={(0.97,0.03)}, anchor=south east, legend cell align=left, align=left, draw=white!15!black}
]
\addplot [color=red, mark=+, mark options={solid, red}]
  table[row sep=crcr]{%
1	0.121996923201134\\
0.5	0.124635683910726\\
0.25	0.0366016900130719\\
0.125	0.0121939309864063\\
0.0625	0.0034549228968605\\
0.03125	0.000899711554205798\\
};
\addlegendentry{$\alpha = 0.8$}

\addplot [color=green, mark=o, mark options={solid, green}]
  table[row sep=crcr]{%
1	0.0681276557438044\\
0.5	0.0421722050006725\\
0.25	0.0137223940635329\\
0.125	0.00412831504386369\\
0.0625	0.00110562522373399\\
0.03125	0.000281351386434087\\
};
\addlegendentry{$\alpha = 1$}

\addplot [color=blue, mark=asterisk, mark options={solid, blue}]
  table[row sep=crcr]{%
1	0.0709922605511245\\
0.5	0.0221148733377137\\
0.25	0.00603464578557304\\
0.125	0.00160122611556378\\
0.0625	0.000413326122330688\\
0.03125	0.000105399772253787\\
};
\addlegendentry{$\alpha = 1.5$}

\end{axis}
\end{tikzpicture}%
        \caption{\textbf{Test case 1.}  Error in $L^\infty_tL^2(\mu_\alpha^{-1}\dd v)$ (left) and $L^\infty_{t,v}$ (right) norm between approximate and analytical solution.} \label{fig:cv_homogeneous}
    \end{figure}
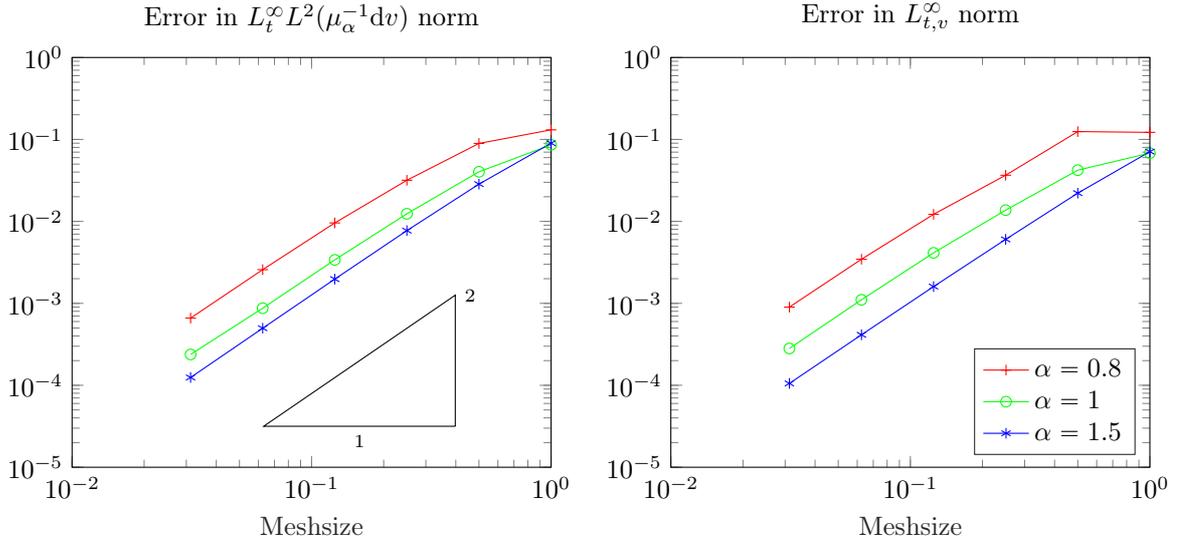
    
    \subsection{Test case 2: heavy tails}
    For this second test case, we illustrate the preservation by the scheme of the heavy-tails of solutions to the homogeneous fractional Fokker-Planck equation 
    $
    \partial_tf  = L_\alpha f
    $. The truncated velocity domain is $[-L,L]$ with $L=20$ and $1025$ mesh points, the time step is $\Delta t = 10^{-2}$ and the initial data is given by 
    \[
    f(0,v)\ =\ \frac{1}{2}\chi_{[-3,-1]}(v) + \frac{1}{4}\chi_{[0,4]}(v)\,,
    \]
    where $\chi_I$ is the indicator function of the set $I$.
    
    On Figure~\ref{fig:dens_homogeneous}, we plot the computed densities at different times in regular and logarithmic scales. We observe that the computed densities develop heavy tails with the algebraic decay $O(|v|^{-1-\alpha})$ as expected, even if the domain is truncated and the initial data is compactly supported.

    \begin{figure}
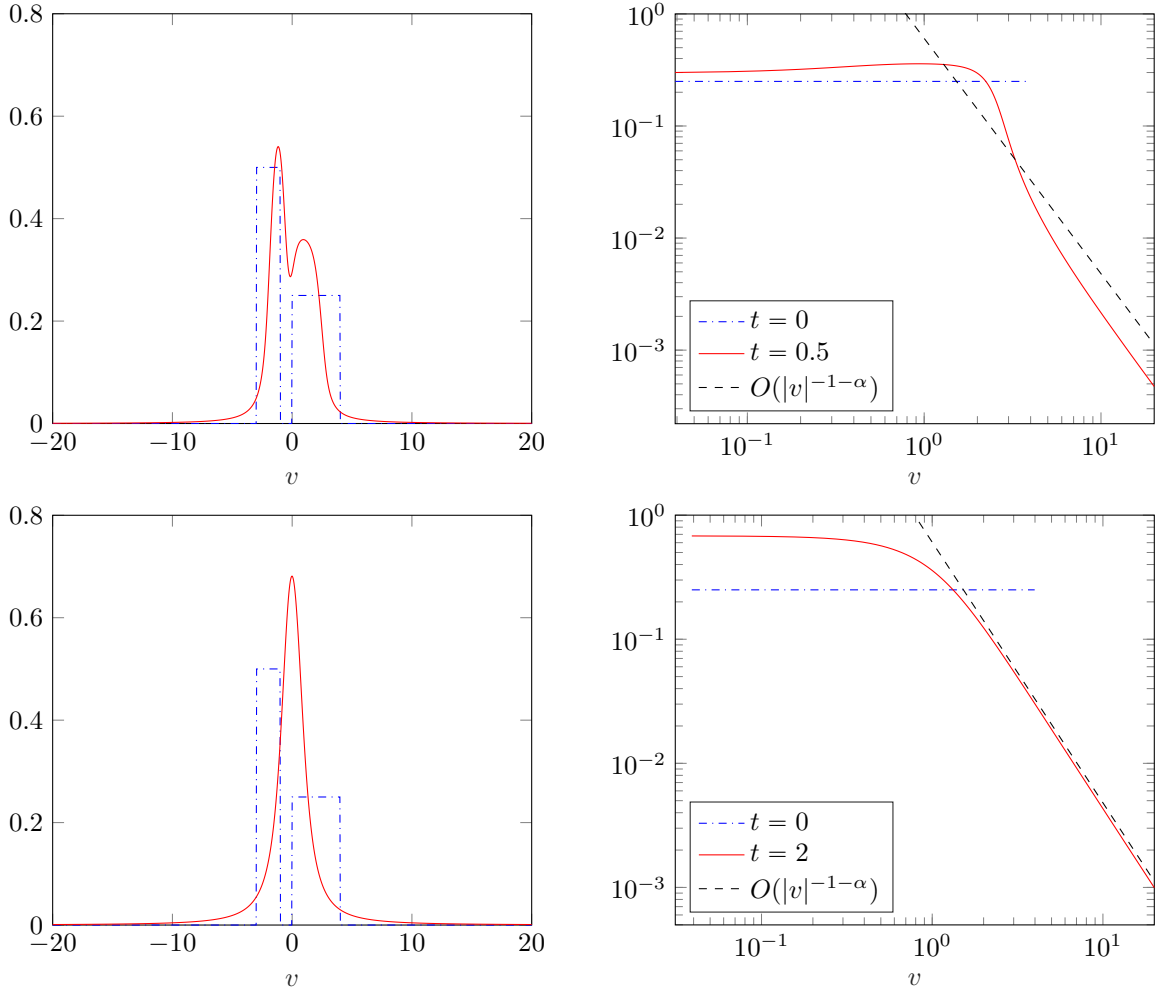

        \input{densities1.tex}
        \input{densities2.tex}
        \caption{\textbf{Test case 2.} Approximate densities at $t=0.5$ (top) and $t=2$ (bottom). On the right the logarithmic scale allows to see the heavy-tail decay. Here $\alpha =1.1$.} \label{fig:dens_homogeneous}
    \end{figure}

    \subsection{Test case 3: numerical hypocoercivity (long time behavior)}
    
    In this section, we illustrate the exponential time stability of our scheme for the kinetic fractional Fokker-Planck equation on a numerical example. In order to certify the results, we first seek a non trivial  analytical solutions of the  equation.
    
    We consider 
    $
    \partial_tf + v\cdot\nabla_xf = L_1f                       
    $
    with $x\in\RR/(2\pi\ZZ)$ and $v\in\RR$. Analytical solutions can be computed by solving the equation in Fourier variable. One family of solutions is parametrized by  $t_0>0$, $v_0\in\RR$ and $x_0\in\RR\in\RR/(2\pi\ZZ)$ and given by
    \begin{equation}\label{eq:ref_solution}
        f(t-t_0,x-x_0,v) = \frac{\tau(t)}{\pi(\tau(t)^2+w(v,v_0,t)^2)} + \frac{1}{\pi}\int_\RR e^{-g(t,\xi)}\cos(\xi w(v,v_0,t) + y(x,v_0,t))\dd \xi\,,
    \end{equation}
    where $\tau(t)\ =\ 1-e^{-t}$, $\eta(t) = t-\tau(t)$, $w(v,v_0,t) = v-v_0e^{-t}$, $y(x,v_0,t) = x-v_0\tau(t)$ and 
    $
    g(t,\xi)\ =\ \int_0^t|\xi e^{-s} + (1-e^{-s})|\dd s\,.
    $
    Using that 
    \[
    g(t,\xi)\ =\ 
    \left\{
    \begin{array}{ll}
        \xi\tau(t)+\eta(t)&\text{if}\ \xi\geq 0\\[.5em]
        \xi(2-\tau(t))+\eta(t)-2\ln(1-\xi)&\text{if}\ \xi \in[-\frac{\tau(t)}{1-\tau(t)},0]\\[.5em]
        -\xi\tau(t)-\eta(t)&\text{if}\ \xi \leq -\frac{\tau(t)}{1-\tau(t)}
    \end{array}
    \right.
    \]
    the integral in the expression of $f$ can be computed explicitly. As the resulting expression is quite lengthy, we do not report it here. 
    
    We run the scheme of Section~\ref{sec:full_num_scheme}, with the truncated operator of Section~\ref{sec:truncation}. The reference solution is given by \eqref{eq:ref_solution} with the parameters $t_0 = 0.5$, $x_0 = 0$ and $v_0 = 1$. The velocity domain is truncated at $L=16$ and discretized $65$ points ($J = 32$). The space domain, of size $2\pi$, is discretized with $128$ points. The time step is $\Delta t = 10^{-2}$ and the final time is $T=35$. With these parameters we report an error of $4.5\cdot10^{-2}$ in $L^\infty_{t,x,v}$ norm between the computed solution and the reference solution. On Figure~\ref{fig:longtime_inhomogeneous}, we plot the distance between solutions and the equilibrium in the natural $L^2_{x,v}(\mu_\alpha^{-1}\dd v\dd x)$ norm. We do observe exponential decay as predicted and the rate matches that of the reference solution. On coarser meshes, the experimental rate tends to be smaller than the exact rate.

    \begin{figure}
        \input{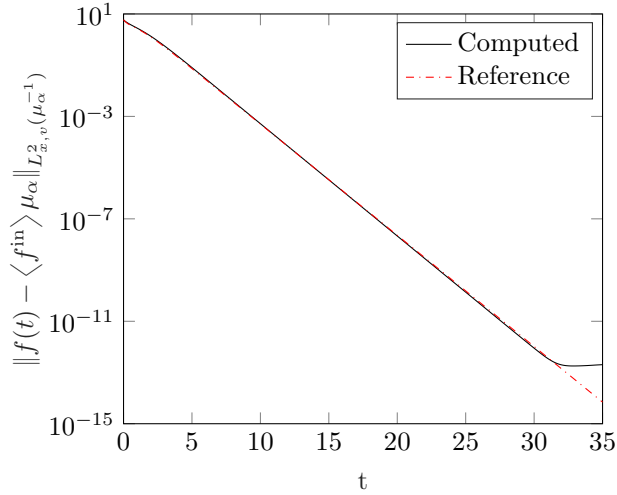}
        \caption{\textbf{Test case 3.} Time evolution of the distance between the steady state and the approximate and reference densities in $L^2_{x,v}(\mu_\alpha^{-1}\dd v\dd x)$ norm.} \label{fig:longtime_inhomogeneous}
    \end{figure}

    \subsection{Semi-Lagrangian version of the scheme}\label{sec:semilag}
    
    In this last part, we propose a modification of the scheme for the kinetic fractional Fokker-Planck equation. It experimentally conserves the same structure-preserving properties, but with a considerably lowered computational cost and increased accuracy. It is based on a Strang splitting approach for solving transport and collisions. The transport step is done with a backward semi-Lagrangian scheme. These methods are standard for kinetic equations and we refer to \cite{DimarcoPareschi15} and references therein for details. We choose piecewise Hermite polynomial function for the reconstruction. More precisely, for any sequence $u = (u_i)_{i\in\ZZ/N_x\ZZ}$, the reconstruction $\Pi_x(u)$ is a $C^1$ function such that if $x\in[x_i,x_{i+1}]$, $\Pi_x(u)(x)$ is the polynomial interpolating $u$ and centered finite difference approximation of its derivatives and  at $x_i$ and $x_{i+1}$. Collisions are solved using our discrete fractional Fokker-Planck operator on truncated domain. In order to improve the order of accuracy in time, we use a Crank-Nicolson approach for the collision step. The scheme reads as follows
    \begin{itemize}
        \item Start from $(f^n_{ij})_{i\in\ZZ/N_x\ZZ, j\in\{-J,\dots,J\}}$.
        \item Compute the transport over a time step $\Delta t/2$, 
        \[
        f^\dagger_{i,j} = \Pi_x((f^n_{k,j})_k)(x_i-v_j\Delta t/2)\,.
        \]
        \item Compute the collisions over a time step $\Delta t$, 
        \[
        (f^{\ddagger}_{i,j})_j = (I-\Delta t/2\Laht)^{-1}(I+\Delta t/2\Laht)(f^\dagger_{i,j})_j\,,
        \]
        where $I$ is the identity matrix.
        \item Compute the transport over a time step $\Delta t/2$, 
        \[
        f^{n+1}_{i,j} = \Pi_x((f^\ddagger_{k,j})_k)(x_i-v_j\Delta t/2)\,.
        \]
    \end{itemize}
    Further computational improvement could be easily obtained by parallelizing the transport step. We try this scheme on the test case of the previous section. The space and velocity steps are progressively refined with a fixed time step. On Table~\ref{tab:semiLag}, we report the duration of the computation as well as the errors between the computed and analytic solutions in absolute norm. The latter is obtained on a Dell Latitude 5490 laptop with an 8th gen Intel Core i7 CPU. We observe experimental convergence. Concerning the computational effort, the computation time on the $1024\times257$  mesh with the semi-Lagrangian version of the scheme is of the same order than the computation time of the original Eulerian scheme on a $128\times 64$ mesh. The structural properties (conservation of mass, long-time behavior, heavy-tails ...) are also preserved by the semi-Lagrangian scheme as in the previous numerical experiments.  We do not report it here for conciseness.
    
    \begin{table}
        \begin{tabular}{llll}
            $N_x$&$N_v$& Error& Duration \\
            \hline\hline
            $128$&$33$&$ 2.8\cdot10^{-1}$&$ 14$ sec\\\hline 
            $256$&$65$&$ 4.6\cdot10^{-2}$&$ 40$ sec\\\hline 
            $512$&$129$&$ 4.1\cdot10^{-2}$&$ 147$ sec\\\hline 
            $1024$&$257$&$ 1.4\cdot10^{-2}$&$ 593$ sec\\\hline 
            $2048$&$513$&$ 4.0\cdot10^{-3}$&$ 2488$ sec\\
        \end{tabular}
        \caption{\textbf{Semi-Lagrangian version of the scheme.} Experimental convergence result and calculation time for various mesh sizes. The time step is constant $\Delta t = 10^{-3}$ and the final time is $T=3.5$. The error is taken in $L^\infty_{t,x,v}$ norm.} \label{tab:semiLag}
    \end{table}

    \section*{Acknowledgements}
    Maxime Herda thanks the LabEx CEMPI (ANR-11-LABX-0007-01). 
    Isabelle Tristani thanks the ANR EFI:  ANR-17-CE40-0030  and the ANR SALVE: ANR-19-CE40-0004 for their support.
    
    This work is part of a collaborative research project that was initiated for the Junior Trimester Program in Kinetic Theory at the Hausdorff Research Institute for Mathematics in Bonn. Part of the work was carried out during the time spent at the institute. The authors are grateful for this opportunity and warmly acknowledge the HIM for the financial support and the hospitality they benefited during their stay.


    \appendix
    
    \section{Estimates on the continuous and discrete local equilibrium}    
    
    \subsection{Bounds on $\mu_\alpha$ and its derivatives}\label{sec:boundsmua}
    
    In this section, we provide some estimates on the stable density $\mu_\alpha$. The results of this section are not new but are scattered among the existing literature \cite{hawkes_1971_lower, doetsch_1974_introduction, uchaikin_1999_chance}. We gather them here. 
    
    As $\mu_\alpha$ is defined in \eqref{eq:defmua} by an oscillatory integral, it is not easy to derive pointwise estimates. For that purpose another representation formula is more appropriate. It reads
    \begin{equation}\label{eq:subordinate_Levy}
        \mua(v)\ =\  \int_0^\infty \frac{1}{(4\,\pi\,u)^{d/2}}\,\exp\left(-\frac{|v|^2}{ 4u}\right)\,\eta_{\alpha,\alpha^{-1}}(u)\,\dd u\,,
    \end{equation}
    where $\eta_{\alpha,\alpha^{-1}}(u)$ is a probability density which is characterized by its Laplace transform 
    \begin{equation}\label{eq:laplace_eta}
        \int_0^\infty e^{-\lambda\,u}\,\eta_{\alpha,t}(u)\,\dd u\ =\ e^{-t\,\lambda^{\alpha/2}}\,,\quad\forall \lambda\geq0\,.
    \end{equation}
    The formula \eqref{eq:subordinate_Levy} is obtained by taking $t = \alpha^{-1}$ and {$\lambda = |\xi|^2$}, for $\xi\in\RR^d$ in \eqref{eq:laplace_eta} and applying the inverse Fourier transform. The density $\eta_{\alpha,t}$ has the following properties.
    \begin{prop}\label{prop:def_subord}
        Let $\alpha\in(0,2)$ and $t>0$. The density $\eta_{\alpha,t}$ is smooth and uniquely defined by \eqref{eq:laplace_eta}.  For any $\mu,u>0$, one has the scaling property
        \begin{equation}\label{eq:scaling_eta}
            \eta_{\alpha,t}(u)\ =\ \mu^{\frac2\alpha}\,\eta_{\alpha,t\mu}(\mu^{\frac2\alpha}\,u)\,.
        \end{equation}
        Moreover, there are positive constants $c_1,c_2$ and $c_3$ depending only on $\alpha$ such that 
        \begin{equation}\label{eq:liminf_eta}
            \lim_{u\to\infty}\eta_{\alpha,1}(u)\,u^{1+\frac\alpha2}\ =\ c_1
        \end{equation}
        and
        \begin{equation}\label{eq:lim0_eta}
            \lim_{u\to0}\eta_{\alpha,1}(u)\,u^{\frac{4-\alpha}{4-2\alpha}}\,\exp\left(c_2\,u^{-\frac{\alpha}{2-\alpha}}\right)\ =\ c_3\,.
        \end{equation}
    \end{prop}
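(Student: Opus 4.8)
The plan is to regard $\eta_{\alpha,t}$ as the density of the one-sided $\tfrac{\alpha}{2}$-stable subordinator evaluated at time $t$, and to extract all four assertions from the defining Laplace transform \eqref{eq:laplace_eta} together with a Bromwich inversion formula. First I would settle the foundational claims. Since $\alpha\in(0,2)$ gives $\tfrac{\alpha}{2}\in(0,1)$, the map $\lambda\mapsto\lambda^{\alpha/2}$ is a Bernstein function, hence $\lambda\mapsto e^{-t\lambda^{\alpha/2}}$ is completely monotone (composition of the completely monotone $e^{-x}$ with a Bernstein function). By Bernstein's theorem there is a unique nonnegative Borel measure on $[0,\infty)$ admitting this completely monotone function as its Laplace transform, and evaluating at $\lambda=0$ shows its total mass equals $1$; injectivity of the Laplace transform on measures supported in $[0,\infty)$ yields uniqueness of $\eta_{\alpha,t}$. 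For smoothness I would write the density through the inverse-Laplace (Bromwich/Hankel) representation
\[
\eta_{\alpha,1}(u)\ =\ \frac{1}{2\pi i}\int_{\gamma}e^{\lambda u-\lambda^{\alpha/2}}\,\dd\lambda,
\]
where $\gamma$ wraps the branch cut of $\lambda^{\alpha/2}$ along the negative real axis; the integrand and all its $u$-derivatives decay appropriately on $\gamma$, so differentiation under the integral sign gives $\eta_{\alpha,1}\in\mathcal C^\infty(0,\infty)$, and the scaling relation below transfers this to every $t>0$.

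The scaling property \eqref{eq:scaling_eta} is a direct consequence of uniqueness. Setting $g(u):=\mu^{2/\alpha}\eta_{\alpha,t\mu}(\mu^{2/\alpha}u)$ and substituting $v=\mu^{2/\alpha}u$ in its Laplace transform, one finds
\[
\int_0^\infty e^{-\lambda u}g(u)\,\dd u\ =\ \int_0^\infty e^{-\lambda\mu^{-2/\alpha}v}\eta_{\alpha,t\mu}(v)\,\dd v\ =\ e^{-t\mu(\lambda\mu^{-2/\alpha})^{\alpha/2}}\ =\ e^{-t\lambda^{\alpha/2}},
\]
which is exactly the Laplace transform of $\eta_{\alpha,t}$; uniqueness then forces $g=\eta_{\alpha,t}$.

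For the heavy-tail behaviour \eqref{eq:liminf_eta} I would use a Tauberian argument at the edge $\lambda\to0^+$. From $1-e^{-\lambda^{\alpha/2}}\sim\lambda^{\alpha/2}$ as $\lambda\downarrow0$, Karamata's Tauberian theorem gives the tail of the distribution function, $\int_u^\infty\eta_{\alpha,1}\sim c\,u^{-\alpha/2}$; since $\eta_{\alpha,1}$ is eventually monotone (a consequence of its being the inverse transform of a completely monotone function), the monotone-density version of the Tauberian theorem transfers this to $\eta_{\alpha,1}(u)\sim c_1\,u^{-1-\alpha/2}$, i.e. \eqref{eq:liminf_eta}. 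Alternatively one may simply invoke the convergent large-argument asymptotic expansion of one-sided stable densities available in the cited references.

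The small-$u$ estimate \eqref{eq:lim0_eta} is the genuinely delicate point, and I expect it to be the main obstacle. I would apply the Laplace/steepest-descent method to the Bromwich integral with phase $\phi(\lambda)=\lambda u-\lambda^{\alpha/2}$. The saddle point $\lambda_0=\lambda_0(u)$ solves $u=\tfrac{\alpha}{2}\lambda^{\alpha/2-1}$, so $\lambda_0\sim(2u/\alpha)^{2/(\alpha-2)}\to\infty$ as $u\downarrow0$ (this is precisely the large-$\lambda$ regime that controls the behaviour near the origin). At the saddle one computes $\phi(\lambda_0)=-(1-\tfrac{\alpha}{2})\lambda_0^{\alpha/2}$, giving the exponential factor $\exp\!\big(-c_2u^{-\alpha/(2-\alpha)}\big)$ since $\lambda_0^{\alpha/2}\sim\mathrm{const}\cdot u^{-\alpha/(2-\alpha)}$; the Gaussian prefactor $|\phi''(\lambda_0)|^{-1/2}\sim\lambda_0^{(2-\alpha/2)/2}$ produces the algebraic factor $u^{-(4-\alpha)/(4-2\alpha)}$, which matches \eqref{eq:lim0_eta} with the stated constant $c_3$. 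The hard part will be the rigorous justification of this heuristic: deforming $\gamma$ onto the path of steepest descent through $\lambda_0$, and establishing the uniform-in-$u$ estimates that show the contribution away from the saddle is exponentially negligible relative to $e^{\phi(\lambda_0)}$, so that the leading-order approximation is sharp and the limit $c_3$ exists and is positive. I would carry out this contour analysis as the technical core, after which all four statements of Proposition~\ref{prop:def_subord} follow.
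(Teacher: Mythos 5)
Your proposal is correct in substance and, for most of the statement, follows the same route as the paper: the paper also works from the Bromwich inversion of \eqref{eq:laplace_eta}, obtains \eqref{eq:scaling_eta} directly from the Laplace transform and its injectivity, and proves \eqref{eq:lim0_eta} exactly as you outline, by moving the vertical contour to pass through the saddle $b=(\alpha/(2u))^{2/(2-\alpha)}$ of $\phi(\lambda)=\lambda u-\lambda^{\alpha/2}$ and applying the saddle point approximation (the paper, like you, defers the uniform-in-$u$ justification, citing \cite[Lemma~1]{hawkes_1971_lower} for the constants $c_2,c_3$); your exponent bookkeeping $\phi(\lambda_0)=-(1-\tfrac\alpha2)\lambda_0^{\alpha/2}$ and $|\phi''(\lambda_0)|^{-1/2}\sim u^{-(4-\alpha)/(4-2\alpha)}$ is correct. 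The one genuinely different ingredient is \eqref{eq:liminf_eta}: the paper inverts the convergent series $e^{-\lambda^{\alpha/2}}=\sum_n(-1)^n\lambda^{n\alpha/2}/n!$ term by term via \cite[Theorem~37.1]{doetsch_1974_introduction}, a complex-analytic expansion theorem that yields the full asymptotic expansion (with the explicit constant $c_1=\alpha/(2\Gamma(1-\tfrac\alpha2))$) from the behaviour of the transform at $\lambda=0$, whereas you use Karamata's Tauberian theorem plus the monotone density theorem, which gives only the leading term but under weaker hypotheses. Both are legitimate; the Tauberian route is more elementary and probabilistic, the paper's route is sharper.

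One step in your Tauberian argument is wrong as stated: eventual monotonicity of $\eta_{\alpha,1}$ is \emph{not} ``a consequence of its being the inverse transform of a completely monotone function'' --- complete monotonicity of $e^{-t\lambda^{\alpha/2}}$ only tells you (via Bernstein) that $\eta_{\alpha,t}$ is a nonnegative measure, which is what you already used for existence and uniqueness. To run the monotone density theorem you need a genuine input, e.g.\ Yamazato's unimodality theorem for stable densities, or the derivative asymptotics that come out of the very expansion you are trying to prove. Your hedge --- simply invoking the known large-argument expansion of one-sided stable densities from the literature --- repairs this, and is in effect what the paper does through Doetsch's theorem; but as written, the parenthetical justification would not survive refereeing and should be replaced by a citation to unimodality or dropped in favour of the expansion route.
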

    \begin{proof}
        Using the inverse Laplace transform (or Bromwich transform) one has that 
        \begin{equation}\label{eq:bromwich}
            \eta_{\alpha,t}(u)\ =\ \frac{1}{2\,i\,\pi}\int_{b+i\,\RR} e^{u\,\lambda - t\,\lambda^{\alpha/2}}\dd \lambda\,, \quad t>0\,,\ u\in(0,\infty)
        \end{equation}
        where $b>0$ is arbitrary and $\lambda\to\lambda^{\alpha/2}$ is the analytic continuation of its real counterpart on $\mathbb{C}\setminus(-\infty,0]$. 
        
        The scaling property is easily obtained from \eqref{eq:laplace_eta}. 
        
        It follows from \cite[Theorem 37.1]{doetsch_1974_introduction} that the series expansion $\exp(-\lambda^{\alpha/2}) = \sum_{n=0}^\infty (-1)^n\lambda^{n\alpha/2}/n!$ yields the asymptotic expansion
        \[
        \eta_{\alpha, 1}(u)\approx_{u\to\infty}\frac{\alpha}{2\Gamma\left(1-\frac\alpha2\right)u^{\frac\alpha2+1}}
        \]
 since $\Gamma(-\alpha/2) = -2\Gamma(1-\alpha/2)/\alpha$.       
       
        Finally for the asymptotic expansion at $u\to0$, one can use \eqref{eq:bromwich} with $b = (\frac{\alpha}{2u})^{2/(2-\alpha)}$ to write after a change of variable
        \[
        \eta_{\alpha,1}(u)\ =\ \frac{1}{2\,i\,\pi}\,\left(\frac{\alpha}{2u}\right)^{\frac{2}{2-\alpha}}\int_{1+i\RR} e^{\phi_{\alpha,u}(\lambda)}\dd\lambda
        \]
        where $\phi_{\alpha,u}(\lambda) = (\alpha/2)^{\frac{2}{2-\alpha}}u^{-\frac{\alpha}{2-\alpha}}\left(\lambda - 2\lambda^{\alpha/2}/\alpha\right)$. Then, one uses the saddle point approximation and obtains  an equivalent of the integral when $u\to0$ by replacing $\phi_{\alpha,u}(1+i\tau)$ by its equivalent when $\tau\to0$. It shows \eqref{eq:lim0_eta} (see \cite[Lemma 1]{hawkes_1971_lower} for the expressions of $c_2$ and $c_3$).
    \end{proof}

    \begin{prop}\label{prop:bounds_mua}
        The stable density $\mua$ satisfies
        \begin{equation}\label{eq:mualpha_est1}
            C_1^{-1}\ \leq\ (|v|^{d+\alpha}+1)\mu_\alpha(v)\ \leq\ C_1\,,
        \end{equation}
        \begin{equation}\label{eq:mualpha_est2}
            C_2^{-1}|v_i|\,\ \leq\ (|v|^{2+d+\alpha}+1)\partial_{v_i}\mu_\alpha(v)\,\mathrm{sgn}(-v_i)\ \leq\ C_2|v_i|\,,
        \end{equation}
        and
        \begin{equation}\label{eq:mualpha_est3}
            (|v|^{n+d+\alpha}+1)|\partial_{v_{i_1}\,\dots\,v_{i_n}}^n\mu_\alpha(v)|\ \leq\ C_3\,,
        \end{equation}
         for all $v\in\RR^d$ for some positive constants $C_1$, $C_2$ depending only on $\alpha$ and $d$ and $C_3$ depending additionally on $n$.
    \end{prop}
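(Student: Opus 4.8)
The plan is to work entirely from the subordination representation \eqref{eq:subordinate_Levy}, which writes $\mua$ as a Gaussian superposition against the one–sided density $\eta := \eta_{\alpha,\alpha^{-1}}$, and to transfer the two asymptotics of $\eta$ supplied by Proposition~\ref{prop:def_subord} onto $\mua$ and its derivatives. After using the scaling relation \eqref{eq:scaling_eta} to reduce $\eta_{\alpha,\alpha^{-1}}$ to the profile $\eta_{\alpha,1}$, whose behavior at $0$ and $\infty$ is given by \eqref{eq:liminf_eta}--\eqref{eq:lim0_eta}, I would reduce the whole statement to a single family of auxiliary integrals. For $k\geq 0$ set
\[
I_k(v)\ :=\ \int_0^\infty (4\pi u)^{-d/2}\,u^{-k}\,e^{-|v|^2/(4u)}\,\eta(u)\,\dd u\,,
\]
so that $\mua = I_0$. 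Differentiating \eqref{eq:subordinate_Levy} under the integral sign (justified by the decay below), the velocity derivatives of the Gaussian produce Hermite-type factors $v^\beta u^{-m}$, and a direct bookkeeping shows $\partial_{v_i}\mua(v) = -\tfrac{v_i}{2}I_1(v)$, and more generally that $\partial^n_{v_{i_1}\cdots v_{i_n}}\mua(v)$ is a finite sum of terms $c\,v^\beta I_{n-p}(v)$ with $|\beta| = n-2p$ and $0\le p\le \lfloor n/2\rfloor$. The whole proposition then reduces to the single claim that each $I_k$ is finite, smooth, radial, strictly positive, and satisfies $I_k(v)\asymp (|v|^{2k+d+\alpha}+1)^{-1}$.

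The core step is this two–sided bound on $I_k$. First I would check convergence: near $u=0$ the super-exponential decay \eqref{eq:lim0_eta} of $\eta$ dominates the factor $u^{-d/2-k}$ (and is reinforced by the Gaussian when $v\neq 0$), while near $u=\infty$ the tail $\eta(u)\sim c_1 u^{-1-\alpha/2}$ from \eqref{eq:liminf_eta} makes the integrand decay like $u^{-d/2-k-1-\alpha/2}$, which is integrable. On any compact set $I_k$ is then continuous and bounded below by a positive constant, so $(|v|^{2k+d+\alpha}+1)I_k(v)$ is pinched between positive constants for $|v|\le 1$. For the regime $|v|\to\infty$ I would rescale $u = |v|^2 t$, giving
\[
I_k(v)\ =\ |v|^{\,2-2k-d}\int_0^\infty (4\pi t)^{-d/2}\,t^{-k}\,e^{-1/(4t)}\,\eta(|v|^2 t)\,\dd t\,,
\]
and then feed in the tail asymptotic $\eta(|v|^2 t)\sim c_1(|v|^2 t)^{-1-\alpha/2}$. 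Dominated convergence yields $I_k(v)\sim K_k\,|v|^{-2k-d-\alpha}$ with $K_k = c_1\int_0^\infty (4\pi t)^{-d/2} t^{-k-1-\alpha/2} e^{-1/(4t)}\,\dd t\in(0,\infty)$, which is exactly the claimed decay, and with a strictly positive constant.

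With this lemma in hand the three estimates follow directly. Estimate \eqref{eq:mualpha_est1} is the case $k=0$. For \eqref{eq:mualpha_est2} I would use $\partial_{v_i}\mua(v) = -\tfrac{v_i}{2}I_1(v)$ with $I_1>0$; multiplying by $\mathrm{sgn}(-v_i)$ turns this into $\tfrac{|v_i|}{2}I_1(v)\ge 0$, and $I_1(v)\asymp(|v|^{2+d+\alpha}+1)^{-1}$ gives the two–sided control by $|v_i|$, explaining both the sign factor and the bounds. For \eqref{eq:mualpha_est3} only an upper bound is needed: each term $v^\beta I_{n-p}(v)$ is bounded, for large $|v|$, by $|v|^{\,n-2p}\cdot|v|^{-2(n-p)-d-\alpha} = |v|^{-n-d-\alpha}$, and is uniformly bounded on compact sets, so the finite sum is $O\big((|v|^{n+d+\alpha}+1)^{-1}\big)$.

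The main obstacle I anticipate is the rescaling/dominated-convergence step extracting the sharp power and the positive constant $K_k$: one must justify replacing $\eta(|v|^2 t)$ by its tail uniformly in $t$, and in particular control the small-$t$ region where $|v|^2 t$ is not yet large. This is where the super-exponential decay \eqref{eq:lim0_eta} is essential, since it guarantees that the contribution of $u\ll|v|^2$ is negligible and furnishes a dominating function; producing a single integrable majorant valid for all large $|v|$ is the only genuinely delicate point, everything else being routine bookkeeping on the derivative structure.
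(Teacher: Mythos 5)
Your proposal is correct and follows essentially the same route as the paper: the paper's proof also works from the subordination formula \eqref{eq:subordinate_Levy}, introduces the same auxiliary family $\nu_{\alpha,\beta}(v)=\int_0^\infty u^{-\beta}e^{-|v|^2/(4u)}\,\eta_{\alpha,\alpha^{-1}}(u)\,\dd u$ (your $I_k$ with $\beta=d/2+k$, up to a constant), expresses $\mu_\alpha$ and its derivatives through this family exactly as in your bookkeeping, and transfers the asymptotics \eqref{eq:liminf_eta}--\eqref{eq:lim0_eta} of $\eta_{\alpha,\alpha^{-1}}$ into the two-sided bound $k(\alpha,\beta)^{-1}\leq(|v|^{2\beta+\alpha}+1)\nu_{\alpha,\beta}(v)\leq k(\alpha,\beta)$. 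The one point worth noting is that the ``genuinely delicate point'' you flag --- producing a uniform integrable majorant to justify dominated convergence after the rescaling $u=|v|^2t$ --- is avoided altogether in the paper: for the upper bound it uses the global pointwise estimate $\eta_{\alpha,\alpha^{-1}}(u)\lesssim u^{-1-\alpha/2}$, after which the $u$-integral is an explicit Gamma-type computation giving $|v|^{-2\beta-\alpha}$, and for the lower bound it uses $\eta_{\alpha,\alpha^{-1}}(u)\gtrsim u^{-1-\alpha/2}$ only on $u\geq u_0(\alpha)$, restricting the integral to $u\geq 4u_0(\alpha)/|v|^2$ where the remaining factor is uniformly bounded below, so crude two-sided pointwise bounds suffice and no asymptotic equivalent is needed.
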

    \begin{proof}
        We shall use the asymptotic bounds on $\eta_{\alpha,\alpha^{-1}}$ in order to obtain the bounds on $\mua$ and its derivatives.
        Let us define, for $\beta>0$,
        \[
        \nu_{\alpha,\beta}(v)\ =\ \int_0^\infty\frac{1}{u^\beta}\,\exp\left(-\frac{|v|^2}{ 4u}\right)\,\eta_{\alpha,\alpha^{-1}}(u)\,\dd u\,.
        \]
        Observe that $\mua$ and its derivatives can be easily expressed in terms of $\nu_{\alpha,\beta}$ for appropriate $\beta$. Let us derive upper and lower bounds on this quantity. We know from Proposition~\ref{prop:def_subord} that $\eta_{\alpha,\alpha^{-1}}(u)\lesssim_{\alpha}u^{-1-\alpha/2}$.  
        Using this inequality in the expression of $\nu_{\alpha,\beta}$ yields for $v\neq0$ that
        \[
        \nu_{\alpha,\beta}(v)\ \lesssim_{\alpha,\beta}\ |v|^{-2\beta-\alpha}\,.
        \]
        Concerning the lower bounds, we split the cases of small and large $|v|$. First, if $|v|\leq1$, one has 
        \[
        \nu_{\alpha,\beta}(v)\ \geq\ e^{-1/4}\int_{1}^\infty\frac{1}{u^\beta}\,\eta_{\alpha,\alpha^{-1}}(u)\,\dd u\,,\quad \text{for}\ |v|\leq1 \, .
        \]
        For $|v|\geq1$, we use that there is $u_0(\alpha)$ such that for all $u\geq u_0(\alpha)$, one has $\eta_{\alpha,\alpha^{-1}}(u)\gtrsim_\alpha\,u^{-1-\alpha/2}$, implying that 
        \[
        \nu_{\alpha,\beta}(v)\ \gtrsim_{\alpha,\beta}\ |v|^{-2\beta-\alpha}\int_{\frac{ 4u_0(\alpha)}{|v|^2}}^\infty \frac{1}{u^{\beta+1+\alpha/2}}\,\exp\left(-\frac{1}{u}\right)\,\dd u\,,\quad \text{for}\ |v|\geq1\,.
        \]
        Observe that the integral is uniformly bounded from below since $|v|\geq1$. By regrouping everything, we have showed that for some constant $k(\alpha,\beta)$, one has 
        \[
        0<k(\alpha,\beta)^{-1}\ \leq\ (|v|^{2\beta+\alpha}+1)\nu_{\alpha,\beta}(v)\ \leq\ k(\alpha,\beta)\, .
        \]
        This bound can be used repeatedly with appropriate exponents $\beta>0$ to prove  \eqref{eq:mualpha_est1}, \eqref{eq:mualpha_est2} and \eqref{eq:mualpha_est3} from the representation formula \eqref{eq:subordinate_Levy}.

    \end{proof}

    \subsection{Bounds on the discrete equilibrium $M_j$ and its derivatives}
    \label{subsec:BoundsOnTheDiscreteEquilibrium}
    In this subsection, we prove a series of lemmas about decay properties of the equilibrium 
    $
    M_j\ =\ \mu_\alpha(v_j)
    $
    and its discrete derivatives which directly come from the estimates on the continuous equilibrium $\mu_\alpha$ and its derivatives staten in Section~\ref{sec:boundsmua}. { We introduce the notation $\langle \cdot \rangle := (1+|\cdot|^2)^{1/2}$ and notice that one has $(1+|\cdot|)/2 \leq \langle \cdot \rangle \leq 1+|\cdot|$.}
    
    {In all the results of this subsection and the next one, the multiplicative constants that appear in our estimates are all uniform in $j \in \ZZ$ but not in $m \in \ZZ$. As already mentioned, they are always uniform in the mesh size $h$, which has to be taken small in some cases. } 
    
    \begin{lem} \label{lem:DhM}
        For any $j \in \ZZ$, any $m \in \ZZ$, we have:
        $$
        |(D_h M)_{j+m}| \lesssim {1 \over \lla hj \rra^{2+\alpha}} \quad \text{and} \quad |(D_h^{+} M)_{j+m}| \lesssim {1 \over \lla hj \rra^{2+\alpha}} \, .
        $$
    \end{lem}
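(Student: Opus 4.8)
The plan is to reduce both bounds to the pointwise estimate \eqref{eq:mualpha_est2} on the first derivative of $\mu_\alpha$, after observing that the discrete difference operators $D_h$ and $D_h^+$ applied to $M$ are, up to controllable remainders, discrete samplings of $\mu_\alpha'$. The key structural fact I would exploit is that \eqref{eq:mualpha_est2} gives $|\partial_v\mu_\alpha(v)|\lesssim |v|/(|v|^{2+d+\alpha}+1)$, which in dimension $d=1$ and using the notation $\langle\cdot\rangle$ reads $|\mu_\alpha'(v)|\lesssim \langle v\rangle^{-(2+\alpha)}$ once one checks that $|v|/(|v|^{3+\alpha}+1)\sim \langle v\rangle^{-(2+\alpha)}$ for all $v\in\RR$ (the two regimes $|v|\le 1$ and $|v|\ge 1$ are handled separately and glued).

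First I would write the centered difference exactly via the mean value / integral form: $(D_hM)_{j+m}=\tfrac{1}{2h}(\mu_\alpha(v_{j+m+1})-\mu_\alpha(v_{j+m-1}))=\tfrac1{2h}\int_{v_{j+m-1}}^{v_{j+m+1}}\mu_\alpha'(w)\,\dd w$, so that $|(D_hM)_{j+m}|\le \sup_{|w-v_{j+m}|\le h}|\mu_\alpha'(w)|$. The same reasoning with the interval $[v_{j+m},v_{j+m+1}]$ handles $D_h^+$. Thus both quantities are bounded by the supremum of $|\mu_\alpha'|$ over a window of radius $h$ centered at $v_{j+m}$, and it remains to show this supremum is $\lesssim \langle hj\rangle^{-(2+\alpha)}$.

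The heart of the argument is therefore the comparison of weights $\langle v_{j+m}+\theta h\rangle$ (for $|\theta|\le 1$) with $\langle hj\rangle$, uniformly in $j\in\ZZ$ but with constants allowed to depend on $m$. I would prove a weight-comparison lemma: for fixed $m$ and for $h$ small (say $h\le 1$), one has $\langle v_{j+m}+\theta h\rangle \gtrsim_m \langle hj\rangle$ for all $j$ and all $|\theta|\le 1$. This follows from $|v_{j+m}+\theta h|\ge |v_j|-(|m|+1)h \ge |hj|-(|m|+1)$, together with the elementary fact that $\langle a\rangle \gtrsim_C \langle b\rangle$ whenever $|a|\ge |b|-C$ (again splitting into $|hj|$ large versus bounded). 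Combining this with the monotone decay of $\langle\cdot\rangle^{-(2+\alpha)}$ and with \eqref{eq:mualpha_est2} yields $\sup_{|w-v_{j+m}|\le h}|\mu_\alpha'(w)|\lesssim_m \langle hj\rangle^{-(2+\alpha)}$, which is exactly the claim.

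The main obstacle I anticipate is purely bookkeeping rather than conceptual: ensuring that the shift by $m$ and the window of radius $h$ do not degrade the decay rate, i.e. that the constant absorbs the loss from $|m|+1$ in the weight comparison uniformly in $j$. This is where the hypothesis $h$ small and the $m$-dependence of the constant are used; the delicate case is small $|hj|$ (near the origin), where $\langle\cdot\rangle$ is bounded above and below by constants and the estimate is trivial, versus large $|hj|$, where the shift is negligible relative to $|hj|$. Once the weight-comparison lemma is in hand, both inequalities of the lemma follow simultaneously and with no further computation, since $D_h$ and $D_h^+$ are controlled by the same supremum.
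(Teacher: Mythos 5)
Your proof is correct and follows essentially the same route as the paper's: both reduce the two difference quotients, via a Taylor/mean-value integral representation, to the pointwise bound $|\mu_\alpha'(v)|\lesssim \lla v\rra^{-(2+\alpha)}$ from Proposition~\ref{prop:bounds_mua} (the paper invokes \eqref{eq:mualpha_est3} where you invoke \eqref{eq:mualpha_est2}; both yield the needed decay), and then conclude with the same weight comparison, splitting $|j|\leq J_m$ (where the estimate is trivial since $\lla hj\rra\lesssim_m 1$ for bounded $h$) from $|j|\geq J_m$ (where the shift by $m$ and the window of size $h$ are absorbed), with constants depending on $m$ but uniform in $j$ and $h$. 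One cosmetic slip: the claimed equivalence $|v|/(|v|^{3+\alpha}+1)\sim\lla v\rra^{-(2+\alpha)}$ fails at $v=0$ in the lower-bound direction, but only the upper bound $\lesssim$ is used and it does hold.
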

    
    \begin{proof}
        We only prove the first inequality, the second one is proven exactly in the same way. Using Taylor formula and~\eqref{eq:mualpha_est3} in Proposition~\ref{prop:bounds_mua}, we have:
        $$
        |(D_h M)_{j+m}| \lesssim \int_0^1 \left|\mu_\alpha'(h(j+m-1)h+2hs)\right| \, \dd s \lesssim \int_0^1 {1 \over \lla h(j+m-1)+2hs \rra^{2+\alpha}} \, \dd s\,.
        $$
        Let $J_m:=4+2|m-1|$ then for any $|j| \ge J_m$ and any $s \in [0,1]$, $|(j+m-1)+2s| \ge |j|/2$. Consequently, for $|j| \ge J_m$, we get 
        $
        |(D_h M)_{j+m}| \lesssim {\lla hj \rra^{-2-\alpha}}\,. 
        $
        To conclude, we just remark that for $|j| \le J_m$, we have  
        $
        |(D_h M)_{j+m}| \lesssim 1 \lesssim {\lla hj \rra^{-2-\alpha}}\,.
        $
    \end{proof}
    
    \begin{cor} \label{cor:DhM-1/2}
        For any $j \in \ZZ$, any $m \in \ZZ$, we have:
        \begin{align*}
            &|(D_h^{+} M^{1/2})_{j+m}| \lesssim {1 \over \lla hj \rra^{(3+\alpha)/2}}, \quad |(D_h^{+} M^{-1/2})_{j+m}| \lesssim \lla hj \rra^{(-1+\alpha)/2} \\
            &\hspace{3cm} \text{and} \quad |(D_h^{+} M^{-1})_{j+m}| \lesssim \lla hj \rra^{\alpha}\,.
        \end{align*}
    \end{cor}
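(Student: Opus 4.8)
The plan is to reduce each discrete difference of a power of $M$ to the plain difference $(D_h^+M)_{j+m}$, which is already controlled by Lemma~\ref{lem:DhM}, by means of an elementary algebraic factorization, and then to absorb the resulting denominators using the pointwise two-sided bound \eqref{eq:mualpha_est1} on $M$. First I would record the exact identities: multiplying and dividing by the conjugate gives
\[
(D_h^+M^{1/2})_{j+m}\ =\ \frac{(D_h^+M)_{j+m}}{M_{j+m+1}^{1/2}+M_{j+m}^{1/2}}\,,
\]
and in the same spirit
\[
(D_h^+M^{-1/2})_{j+m}\ =\ -\,\frac{(D_h^+M^{1/2})_{j+m}}{M_{j+m}^{1/2}\,M_{j+m+1}^{1/2}}\,,\qquad (D_h^+M^{-1})_{j+m}\ =\ -\,\frac{(D_h^+M)_{j+m}}{M_{j+m}\,M_{j+m+1}}\,.
\]
These are exact identities, so no smallness of $h$ is required here. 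The numerators are bounded by $\lla hj\rra^{-2-\alpha}$ for $(D_h^+M)_{j+m}$ (directly from Lemma~\ref{lem:DhM}) and by $\lla hj\rra^{-(3+\alpha)/2}$ for $(D_h^+M^{1/2})_{j+m}$, once the first estimate of the corollary is established.

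The second ingredient is a lower bound on $M$ at the shifted indices. From \eqref{eq:mualpha_est1} one has $M_k\sim\lla hk\rra^{-1-\alpha}$, and for a fixed shift the weights at neighbouring indices are comparable, $\lla h(j+m)\rra\sim_m\lla hj\rra$ uniformly in $j$ and $h$; this is precisely the elementary index comparison already carried out in the proof of Lemma~\ref{lem:DhM} (splitting into $|j|\ge J_m$ and $|j|\le J_m$). It follows that $M_{j+m}^{1/2}+M_{j+m+1}^{1/2}\gtrsim\lla hj\rra^{-(1+\alpha)/2}$ and $M_{j+m}\,M_{j+m+1}\gtrsim\lla hj\rra^{-2(1+\alpha)}$.

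Combining the two ingredients and tracking the exponents then gives the three claims in turn: $-2-\alpha+\tfrac{1+\alpha}{2}=-\tfrac{3+\alpha}{2}$ for $M^{1/2}$; $-\tfrac{3+\alpha}{2}+(1+\alpha)=\tfrac{-1+\alpha}{2}$ for $M^{-1/2}$; and $-2-\alpha+2(1+\alpha)=\alpha$ for $M^{-1}$. I would prove them in exactly this order, since the estimate for $(D_h^+M^{-1/2})$ reuses the one just obtained for $(D_h^+M^{1/2})$. The only point demanding any care — and the step I expect to be the mild obstacle — is the uniformity in $h$ and $j$ of the shifted comparison $\lla h(j+m)\rra\sim_m\lla hj\rra$, which is what forces the implicit constants to depend on $m$ while remaining independent of the mesh size; everything else is routine bookkeeping of powers of $\lla hj\rra$.
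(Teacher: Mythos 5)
Your proof is correct and takes essentially the same approach as the paper: the conjugate-type algebraic factorizations reducing each difference to $(D_h^+M)_{j+m}$, then Lemma~\ref{lem:DhM} for the numerators and the two-sided bound \eqref{eq:mualpha_est1}, together with the shift comparison $\lla h(j+m)\rra\sim_m\lla hj\rra$ (valid uniformly for bounded $h$), for the denominators. The paper writes out only the $M^{1/2}$ case and states that the others follow in the same way; your variant of chaining the $M^{-1/2}$ estimate through the $M^{1/2}$ one is just an equivalent instance of that same computation, with the exponent bookkeeping matching exactly.
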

    
    \begin{proof}
        We only prove the first estimate, the others are proven exactly in the same way. We write that:
        $
        (D_h^{+} M^{1/2})_{j+m} = (D_h^+ M)_{j+m}(M^{1/2}_{j+m} + M^{1/2}_{j+m+1})^{-1}
        $ 
        so that using Lemma~\ref{lem:DhM} and~\eqref{eq:mualpha_est1} in Proposition~\ref{prop:bounds_mua}, we get 
        $
        \left|(D_h^{+} M^{1/2})_{j+m}\right| \lesssim {\lla hj \rra^{-2-\alpha}} \lla hj \rra^{(1+\alpha)/2} \lesssim {\lla hj \rra^{(-3-\alpha)/2}}\, .
        $
    \end{proof}
    
    \begin{lem} \label{lem:D2hM}
        For any $j \in \ZZ$, any $m \in \ZZ$, we have:
        $$
        |(D_h^2 M)_{j+m}| \lesssim {1 \over \lla hj \rra^{3+\alpha}}
\qquad \text{and} \qquad
            \left| \frac{M_{j+m+1}+M_{j+m-1}-2M_{j+m}}{h^2}\right|\lesssim  { 1 \over \lla hj \rra^{3+\alpha}}\, .
	$$
    \end{lem}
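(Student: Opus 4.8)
The plan is to prove both estimates in Lemma~\ref{lem:D2hM} by exactly the strategy already deployed in Lemma~\ref{lem:DhM}, namely a Taylor-with-integral-remainder argument feeding on the pointwise derivative bound \eqref{eq:mualpha_est3} of Proposition~\ref{prop:bounds_mua}, followed by a split into a near-origin regime and a far regime to convert the bracket $\lla h(j+m+\theta)\rra^{-k}$ into $\lla hj\rra^{-k}$ uniformly in $h$. The two quantities are slightly different ($D_h^2$ uses the centered second difference over spacing $2h$, whereas the second expression is the standard three-point Laplacian over spacing $h$), but both are second-order finite differences, so each will be expressed through an integral of $\mu_\alpha''$.

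First I would write the standard second difference as a double integral of the second derivative. The cleanest identity is
\[
\frac{M_{j+m+1}+M_{j+m-1}-2M_{j+m}}{h^2}\ =\ \int_0^1\int_{-1}^1 \mu_\alpha''\bigl(h(j+m)+h\,s\,\tau\bigr)\,|s|\;\dd\tau\,\dd s\,,
\]
or any equivalent Taylor remainder representation; the precise constants are irrelevant since we only track the decay. Applying \eqref{eq:mualpha_est3} with $n=2$ gives
\[
\left|\frac{M_{j+m+1}+M_{j+m-1}-2M_{j+m}}{h^2}\right|\ \lesssim\ \int_0^1\int_{-1}^1 \frac{1}{\lla h(j+m)+h\,s\,\tau\rra^{3+\alpha}}\;\dd\tau\,\dd s\,.
\]
For $D_h^2$ the identical computation applies with the argument ranging over $h(j+m)+2h\sigma$ for $\sigma\in[-1,1]$, again producing the exponent $3+\alpha$.

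Next I would perform the near/far split exactly as in Lemma~\ref{lem:DhM}. Setting $J_m := C(1+|m|)$ for a suitable constant, for $|j|\geq J_m$ and all integration parameters one has $|(j+m)+(\text{bounded shift})|\geq |j|/2$, so $\lla h(j+m)+\cdots\rra\gtrsim \lla hj\rra$ and hence the integrand is $\lesssim \lla hj\rra^{-3-\alpha}$ uniformly in the integration variables; integrating over the bounded domain preserves this bound. For the finitely many indices $|j|\leq J_m$ one simply bounds the difference quotient by a constant (using that $\mu_\alpha''$ is globally bounded by \eqref{eq:mualpha_est3} with $n=2$ at $v=0$), and since $\lla hj\rra^{-3-\alpha}\gtrsim 1$ on this finite range the bound $\lesssim \lla hj\rra^{-3-\alpha}$ holds there too. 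Combining the two regimes yields the claim.

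I do not expect a genuine obstacle here: this lemma is the second-order analogue of the already-proven Lemma~\ref{lem:DhM}, and the only points requiring mild care are bookkeeping ones. The main thing to watch is that the implicit constants depend on $m$ through $J_m$ but must remain uniform in the mesh size $h$ and in $j$; this is automatic because the shift $h\,s\,\tau$ (respectively $2h\sigma$) stays within a fixed multiple of $h$, so the comparison $\lla h(j+m)+\cdots\rra\gtrsim\lla hj\rra$ never sees $h$ in a problematic way once $|j|\geq J_m$. One should also make sure the smoothness of $\mu_\alpha$ (guaranteed by Proposition~\ref{prop:bounds_mua} and the discussion following \eqref{eq:defmua}) justifies the Taylor remainder representation, which it does.
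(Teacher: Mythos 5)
Your proposal is correct and follows essentially the same route as the paper's proof: Taylor's formula with integral remainder for the second difference, the pointwise bound \eqref{eq:mualpha_est3} with $n=2$ giving the exponent $3+\alpha$, and then the identical near/far split in $j$ (with a threshold $J_m$ and the comparison $\lla h(j+m)\pm\text{bounded shift}\rra\gtrsim\lla hj\rra$) already used in Lemma~\ref{lem:DhM}. Your double-integral representation of the three-point Laplacian is a correct variant of the remainder formula the paper uses, and your remarks on uniformity in $h$ and $j$ (with constants depending on $m$) match the paper's stated conventions.
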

    
    \begin{proof}
        The proof is similar to the previous one, indeed, using again Taylor formula and~\eqref{eq:mualpha_est3} in Proposition~\ref{prop:bounds_mua}, we have 
        \begin{align*}
            |(D_h^2 M)_{j+m}| &\lesssim \int_0^1 \left(\left|\mu_\alpha''(h(j+m) + 2hs)\right| + \left|\mu_\alpha''(h(j+m) - 2hs)\right|\right)  \dd s \\
            &\lesssim \int_0^1 \left( {1 \over \lla h(j+m) + 2hs \rra^{3 + \alpha}} +  {1 \over \lla h(j+m) - 2hs \rra^{3 + \alpha}} \right)  \dd s\,,
        \end{align*}
        we can thus conclude in the same way. The proof of the second inequality is similar.
    \end{proof}
    \begin{cor} \label{cor:D2h1/M}
        For any $j \in \ZZ$, any $m \in \ZZ$, we have:
        $$
        |(D_h^2 \left(M^{-1}))_{j+m}\right| \lesssim \lla hj \rra^{\alpha-1}
\qquad \text{and} \qquad 
        \left| \frac{1}{h^2} \left( \frac{1}{M_{j+m+1}} + \frac{1}{M_{j+m-1}} - 2\frac{1}{M_{j+m} } \right)\right| \lesssim \lla hj \rra^{\alpha-1}\, .
        $$
    \end{cor}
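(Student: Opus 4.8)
The plan is to mimic the derivation of Corollary~\ref{cor:DhM-1/2} from Lemma~\ref{lem:DhM}: first establish an exact discrete second-difference Leibniz identity for $M^{-1}$, and then feed in the decay estimates already proved for $M^{-1}$, $D_hM$ and $D_h^2M$. Writing $\ell := j+m$ and reducing $M_{\ell+2}^{-1}+M_{\ell-2}^{-1}-2M_\ell^{-1}$ to the common denominator $M_\ell M_{\ell+2}M_{\ell-2}$, I would reorganise the numerator into the discrete analogue of the continuous identity $(1/\mu_\alpha)'' = -\mu_\alpha''/\mu_\alpha^2 + 2(\mu_\alpha')^2/\mu_\alpha^3$. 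Concretely, an elementary rearrangement of $M_\ell(M_{\ell+2}+M_{\ell-2}) - 2M_{\ell+2}M_{\ell-2}$ gives
\begin{equation*}
(D_h^2(M^{-1}))_\ell = \frac{-M_\ell\,(D_h^2 M)_\ell + 2\,(D_h M)_{\ell+1}\,(D_h M)_{\ell-1}}{M_\ell\,M_{\ell+2}\,M_{\ell-2}}\,,
\end{equation*}
which I would verify by expanding both sides.

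The remaining work is purely a matter of counting exponents. Using \eqref{eq:mualpha_est1} together with the comparability $\lla h(j+m+k)\rra \sim \lla hj\rra$ for any fixed shifts (uniformly in $j$), one has $M_{\ell+k}^{-1}\lesssim \lla hj\rra^{1+\alpha}$ for $k\in\{-2,0,2\}$. Lemma~\ref{lem:D2hM} gives $|(D_h^2 M)_\ell|\lesssim \lla hj\rra^{-3-\alpha}$ and Lemma~\ref{lem:DhM} gives $|(D_h M)_{\ell\pm1}|\lesssim \lla hj\rra^{-2-\alpha}$. Substituting into the identity, the first term is bounded by $\lla hj\rra^{-3-\alpha}\lla hj\rra^{2(1+\alpha)}=\lla hj\rra^{\alpha-1}$, and the second by $\lla hj\rra^{2(-2-\alpha)}\lla hj\rra^{3(1+\alpha)}=\lla hj\rra^{\alpha-1}$; both exponents collapse to $\alpha-1$, which yields the first claimed bound.

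For the second estimate, with the plain (spacing-$h$) second difference, I would use the identical algebraic identity with $\ell\pm2$ replaced by $\ell\pm1$ and $D_h$ replaced by $D_h^+$, namely
\begin{equation*}
\frac{1}{h^2}\Bigl(M_{\ell+1}^{-1}+M_{\ell-1}^{-1}-2M_\ell^{-1}\Bigr) = \frac{-M_\ell\,\dfrac{M_{\ell+1}+M_{\ell-1}-2M_\ell}{h^2} + 2\,(D_h^+ M)_\ell\,(D_h^+ M)_{\ell-1}}{M_\ell\,M_{\ell+1}\,M_{\ell-1}}\,,
\end{equation*}
and then invoke the second inequality of Lemma~\ref{lem:D2hM} and the $D_h^+$-bound of Lemma~\ref{lem:DhM}; the exponent bookkeeping is word-for-word the same.

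There is no genuinely hard analytic step here: all decay is inherited from Proposition~\ref{prop:bounds_mua}, and the only points requiring care are getting the signs and the factor $1/(M_\ell M_{\ell+2} M_{\ell-2})$ right in the Leibniz identity, and checking that the two opposing powers of $M^{-1}$ and of the differences cancel to leave exactly $\lla hj\rra^{\alpha-1}$. The comparability of the weights at the $O(1)$-shifted indices is what keeps the constants uniform in $j$ (though not in $m$), exactly as in the proofs of Lemmas~\ref{lem:DhM}--\ref{lem:D2hM}.
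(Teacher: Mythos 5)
Your proposal is correct and follows essentially the same route as the paper: a discrete product-rule rearrangement of the second difference of $M^{-1}$ into a term carrying $D_h^2M$ (resp.\ the spacing-$h$ second difference of $M$) plus a term with products of first differences, then exponent counting via Lemma~\ref{lem:DhM}, Lemma~\ref{lem:D2hM} and \eqref{eq:mualpha_est1}. Your symmetric identity with common denominator $M_\ell M_{\ell+2}M_{\ell-2}$ checks out by direct expansion (the paper uses a slightly different, asymmetric grouping of the same numerator), and your exponent bookkeeping, including the $m$-dependent but $j$- and $h$-uniform comparability of the shifted weights, matches the paper's.
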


    \begin{proof}
        We first write that 
        \begin{align*}
            (D^2_h(M^{-1}))_{j+m} &=  \frac{1}{4h^2} \frac{2M_{j+m} - M_{j+m-2} - M_{j+m+2}}{M_{j+m-2}M_{j+m}}\\
            &\quad + \frac{1}{4h^2} \frac{(M_{j+m-2}-M_{j+m} + M_{j+m} -M_{j+m+2})(M_{j{\color{vert} +m}}-M_{j+m+2})}{M_{j+m-2}M_{j+m}M_{j+m+2}}\,.
        \end{align*}
        The first term is treated thanks to Lemma~\ref{lem:D2hM} and~\eqref{eq:mualpha_est1} in Proposition~\ref{prop:bounds_mua}:
        \begin{align*}
            &\left| \frac{1}{4h^2} \frac{2M_{j+m} - M_{j+m-2} - M_{j+m+2}}{M_{j+m-2}M_{j+m}} \right|
            \lesssim \frac{\left|(D^2_h M)_{j+m}\right|}{M_{j+m-2}M_{j+m}} \\
            &\qquad \lesssim {1 \over \lla hj \rra^{3+\alpha}} \lla h(j+m-2) \rra ^{1+\alpha} \lla h(j+m) \rra^{1+\alpha} \lesssim \lla hj \rra^{\alpha-1}\,.
        \end{align*}
        The second term is treated similarly using Lemma~\ref{lem:DhM} and~\eqref{eq:mualpha_est1} in Proposition~\ref{prop:bounds_mua}, we get:
        $$
        \left|\frac{1}{4h^2} \frac{(M_{j+m-2}-M_{j+m+2})(M_{j +m}-M_{j+m+2})}{M_{j+m-2}M_{j+m}M_{j+m+2}}\right| \lesssim {\lla hj \rra^{3(1+\alpha)} \over \lla hj \rra^{2(2+\alpha)}} \lesssim \lla hj \rra^{\alpha-1}\,. 
        $$
    \end{proof}
    
    \subsection{Bounds on $(VM)_{j+1/2}$}
    \label{subsec:BoundsOnTheDiscreteVM}
    
    We give here some decay estimates on $(VM)$ and its derivatives. Notice that those properties do not come straightfowardly from the estimates on the continuous equilibrium $\mu_\alpha$. Indeed, the formula which gives~$(VM)$ in~\eqref{def:VM} corresponds in the continuous case to write that 
    $$
    v \mu_\alpha(v) = {1 \over 2} \left(\int_{-\infty}^v (-\Delta)^{\alpha/2} \mu_\alpha(w) \, \dd w - \int_v^{+\infty} (-\Delta)^{\alpha/2} \mu_\alpha(w) \, \dd w\right)\,.
    $$
    This formulation of the operator is not so favorable to get decay estimates as $|v| \to \infty$. However, using the fact that $\int_{\RR}(-\Delta)^{\alpha/2} \mu_\alpha(w) \, \dd w=0$, one can remark that 
    $$
    v \mu_\alpha(v) =\int_{-\infty}^v (-\Delta)^{\alpha/2} \mu_\alpha(w) \, \dd w =-  \int_v^{+\infty} (-\Delta)^{\alpha/2} \mu_\alpha(w) \, \dd w\,.
    $$
    According to the sign of $v$, we can choose one or another of the two previous equalities and it allows to study the behavior of $v \mu_\alpha$ at infinity. We use the same type of ideas in what follows to get bounds on $(VM)$ in the discrete framework.

    \begin{lem} \label{lem:VM}
        There exists $h_0>0$ such that for any $j \in \ZZ$, any odd $m \in \ZZ$ and any~$h \le h_0$, we have:
        $$
        \left|(VM)_{j+{m \over 2}}\right| \lesssim {1 \over \lla hj \rra^\alpha} \,.
        $$
    \end{lem}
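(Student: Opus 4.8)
The plan is to reduce $(VM)_{j+\frac m2}$ to a \emph{one-sided} tail of $\Dh M$ and then to prove a pointwise decay estimate on that discrete Laplacian. First I would reduce to the case $v_{j+\frac m2}\geq 0$: the sequence $M$ is even and $\beta^h$ is even, so $\Dh M$ is even, and a change of index $k\mapsto -k$ in \eqref{def:VM} gives the antisymmetry $(VM)_{j+\frac m2}=-(VM)_{-(j+\frac m2)}=-(VM)_{(-j-m)+\frac m2}$. Since $-j-m\in\ZZ$ and $\lla h(-j-m)\rra\sim\lla hj\rra$ for fixed $m$, it suffices to treat indices with $v_{j+\frac m2}\geq 0$.

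Next, using the conservation of mass \eqref{eq:mass_discrLap}, $\sum_k(\Dh M)_k=0$, I would turn the signed sum \eqref{def:VM} into a tail. Setting $A=\sum_{k>j+\frac m2}(\Dh M)_k h$ and $B=\sum_{k<j+\frac m2}(\Dh M)_k h$ (there is no index exactly at $j+\frac m2$ because $m$ is odd), mass conservation forces $A+B=0$, whence
$$
(VM)_{j+\frac m2}=\tfrac12(A-B)=A=\sum_{k>j+\frac m2}(\Dh M)_k\,h .
$$
This is the exact discrete counterpart of choosing $\int_v^{+\infty}$ for $v\geq 0$ as explained before the lemma, and it is essential: the tail only involves indices with $v_k\geq v_{j+\frac m2}\geq 0$, where $\Dh M$ is small.

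The main work, and the principal obstacle, is the pointwise bound
$$
|(\Dh M)_k|=|(\Dh\mu_\alpha)(v_k)|\ \lesssim\ \lla hk\rra^{-1-\alpha},
$$
uniform in $k$ and in $h\leq h_0$, where I use the continuous reformulation \eqref{eq:DiscrLap_continuous}. Writing $(\Dh\mu_\alpha)(v)=\sum_{\ell\geq1}\beta_\ell^h\big(\mu_\alpha(v+\ell h)+\mu_\alpha(v-\ell h)-2\mu_\alpha(v)\big)h$ by evenness of $\beta$, I would split at $|\ell h|\approx\lla v\rra/2$. In the near region $\ell h\leq\lla v\rra/2$ one has $\lla v\pm t\ell h\rra\gtrsim\lla v\rra$, so a second-order Taylor expansion with the derivative bound \eqref{eq:mualpha_est3} and the coefficient bounds $\beta_\ell^h\sim|h\ell|^{-1-\alpha}$ of Lemma~\ref{lem:bounds_beta} gives a contribution $\lesssim\lla v\rra^{-3-\alpha}\int_0^{\lla v\rra/2}w^{1-\alpha}\,dw\lesssim\lla v\rra^{-1-2\alpha}$. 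In the far region $\ell h>\lla v\rra/2$, where $\beta_\ell^h\lesssim\lla v\rra^{-1-\alpha}$, I would treat the three terms separately: the $\mu_\alpha(v)$ and $\mu_\alpha(v+\ell h)$ pieces are controlled by the tail of $\sum|h\ell|^{-1-\alpha}h$ together with \eqref{eq:mualpha_est1}, while the delicate $\mu_\alpha(v-\ell h)$ piece (where $v-\ell h$ may cross the peak of $\mu_\alpha$) is bounded by $\lla v\rra^{-1-\alpha}\sum_\ell\lla v-\ell h\rra^{-1-\alpha}h\lesssim\lla v\rra^{-1-\alpha}$, since $\int\lla w\rra^{-1-\alpha}\,dw<\infty$. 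This last piece is exactly what pins the exponent at $-1-\alpha$, and it is where the integrability $1+\alpha>1$ and the smallness of $h$ (to make the Riemann-sum-to-integral comparison uniform) are genuinely used.

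Finally I would sum the tail: for $v_{j+\frac m2}\geq 0$,
$$
|(VM)_{j+\frac m2}|\ \leq\ \sum_{k>j+\frac m2}|(\Dh M)_k|\,h\ \lesssim\ \sum_{k>j+\frac m2}\frac{h}{\lla hk\rra^{1+\alpha}}\ \lesssim\ \lla hj\rra^{-\alpha},
$$
the last step being a comparison of the decreasing tail with $\int_{c\lla hj\rra}^{\infty}\lla w\rra^{-1-\alpha}\,dw\sim\lla hj\rra^{-\alpha}$ for some $c>0$. The implied constant depends on $m$ (through the threshold $j+\frac m2$ and the comparison $\lla(j+\tfrac m2)h\rra\sim\lla hj\rra$) but not on $h$ or $j$, consistent with the convention announced at the start of the subsection; the result matches the continuous heuristic $v\mu_\alpha(v)\sim\lla v\rra^{-\alpha}$.
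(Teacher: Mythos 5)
Your proof is correct, but it is organized quite differently from the paper's. You share with the paper the decisive first move: using that the total mass of $\Dh M$ vanishes to convert the signed sum \eqref{def:VM} into a one-sided tail $\sum_{k>j+m/2}(\Dh M)_k\,h$ (this is exactly the paper's identity \eqref{def:VMbis}); note only that invoking \eqref{eq:mass_discrLap} for $M$ tacitly requires the absolute convergence of the double sum $\sum_k\sum_\ell \beta_\ell^h\,|2M_k-M_{k+\ell}-M_{k-\ell}|\,h^2$ to justify the exchange of summations, which the paper makes explicit in its Step~0 — your near/far estimates do supply this, but you should say so. After that the routes diverge: the paper bounds the tail double sum directly, splitting into $|h\ell|\le 1/\sqrt{2}$ (second-order Taylor), then $|h\ell|>1/\sqrt2$ with $|\ell|\le j_m^+/2$ and $|\ell|>j_m^+/2$, and handles small $|j|$ by the crude bound $|(VM)|\lesssim 1$; you instead first prove the pointwise estimate $|(\Dh M)_k|\lesssim \lla hk\rra^{-1-\alpha}$ and then sum the tail. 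Your key pointwise bound is in fact literally equivalent to the paper's Lemma~\ref{lem:DhVM}, since $(VM)_{j+\frac m2}-(VM)_{j+\frac m2-1}=-(\Dh M)_{j_m}\,h$, so your organization amounts to proving Lemma~\ref{lem:DhVM} first and deducing Lemma~\ref{lem:VM} from it by a tail summation — a more modular arrangement, with the added merit that the intermediate bound is the transparent discrete analogue of $(-\Delta)^{\alpha/2}\mu_\alpha(v)\sim |v|^{-1-\alpha}$ (your far-region treatment of the $\mu_\alpha(v-\ell h)$ term, where the argument crosses the peak of $\mu_\alpha$, correctly isolates the piece that pins the exponent). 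The paper proves the two lemmas independently by parallel splittings; what its direct approach buys is that the same index bookkeeping extends to the second-difference estimate of Lemma~\ref{lem:D2hVM}, which requires cancellations between neighboring values of $(VM)$ and does not follow from your pointwise bound alone.
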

    \begin{proof}
        \noindent {\it Step 0.} We introduce the following notations:
        \begin{align*}
            &\widetilde S_1 := \sum_{k \in \ZZ} \sum_{\substack{\ell \in \ZZ^* \\|h\ell| \le 1/\sqrt{2}}} |\beta_\ell^h| \left|2M_k-M_{k+\ell}-M_{k-\ell}\right| h^2\,, \\
            &\widetilde S_2 := \sum_{k \in \ZZ} \sum_{\substack{\ell \in \ZZ^* \\|h\ell| > 1/\sqrt{2}}} |\beta_\ell^h| \left|2M_k-M_{k+\ell}-M_{k-\ell}\right| h^2
        \end{align*}
        and we are going to prove that $\widetilde S_1 + \widetilde S_2 \lesssim 1$, which will be useful in the sequel of the proof. 
        In order to bound $\widetilde S_1$, we use Taylor formula, Lemma~\ref{lem:bounds_beta} and~\eqref{eq:mualpha_est3} in Proposition~\ref{prop:bounds_mua}: 
        \begin{align*}
            \widetilde S_1 &\lesssim \sum_{k \in \ZZ} \sum_{\substack{\ell \in \ZZ^* \\|h\ell| \le 1/\sqrt{2}}} {1 \over |h\ell|^{1+\alpha-2}} \int_0^1 \left|\mu_\alpha''(hk+sh\ell) + \mu_\alpha''(hk-sh\ell)\right| \, \dd s \, h^2 \\
            &\lesssim \sum_{k \in \ZZ} \sum_{\substack{\ell \in \ZZ^* \\|h\ell| \le 1/\sqrt{2}}} {1 \over |h\ell|^{1+\alpha-2}} \int_0^1 \left({1 \over \lla hk+sh\ell \rra^{3+\alpha}} + {1 \over \lla hk-sh\ell \rra^{3+\alpha}}\right) \, \dd s \, h^2\, .
        \end{align*}
        Then, we notice that for $|h\ell| \le 1/\sqrt{2}$ and $s \in [0,1]$,
        $$
        1+|hk\pm s h\ell|^2 \ge 1+\frac{|hk|^2}{2}-|sh\ell|^2 \ge {1 \over 2} (1+|hk|^2)
        $$
        so that $\lla hk\pm sh\ell \rra^{-(3+\alpha)} \lesssim  \lla hk \rra^{-(3+\alpha)}$. Consequently, we obtain
        $$
        \widetilde S_1 \lesssim \sum_{k \in \ZZ} \frac{h}{\lla hk \rra^{3+\alpha}} \sum_{\substack{\ell \in \ZZ^* \\|h\ell| \le 1/\sqrt{2}}} \frac{h}{|h\ell|^{1+\alpha-2}} \lesssim 1\,. 
        $$
        For $\widetilde S_2$, we just use changes of indices in $k$ and~\eqref{eq:mualpha_est1} in Proposition~\ref{prop:bounds_mua} after Fubini-Tonelli's theorem to get:
        $$
        \widetilde S_2 \lesssim \sum_{k \in \ZZ} \frac{h}{\lla hk \rra^{1+\alpha}} \sum_{\substack{\ell \in \ZZ^* \\|h\ell| > 1/\sqrt{2}}} \frac{h}{|h\ell|^{1+\alpha}} \lesssim 1\,. 
        $$
        Thanks to this, we can deduce that
        $$
        \sum_{k \in \ZZ} \sum_{\ell \in \ZZ^*} \beta_\ell^h \left(2M_k-M_{k+\ell}-M_{k-\ell}\right) h^2 =0
        $$
        (which is the equivalent of $\int_{\RR}(-\Delta)^{\alpha/2} \mu_\alpha(w) \, \dd w=0$ in the continuous case). 
        Indeed, the bounds above on $\widetilde S_1$ and $\widetilde S_2$ allow us to use Fubini's theorem. Then, changes of indices in $k$ give the result.
        
        Coming back to the definition of $(VM)$ in~\eqref{def:VM} and denoting $j_m^\pm := j + (m\pm1)/2$ {and using the fact that the mass of $\Lambda_\alpha^h M$ vanishes}, it allows us to write that 
        \begin{equation} \label{def:VMbis}
            \begin{aligned}
                (VM)_{j+{m \over 2}} &= \frac12 \sum_{k=-\infty}^{j_m^-} \sum_{\ell \in \ZZ^*} \beta_\ell^h \left(2M_k-M_{k+\ell}-M_{k-\ell}\right) h^2  
                \\
                &= - \frac12 \sum_{k=j_m^+}^{+\infty} \sum_{\ell \in \ZZ^*} \beta_\ell^h \left(2M_k-M_{k+\ell}-M_{k-\ell}\right) h^2\,. 
            \end{aligned}
        \end{equation}
        In the forthcoming analysis, the idea is then to use either one of these formulas according to the sign of $j_m^\pm$ for large $|j|$. More precisely, we will use the first equality when $j_m^-< 0$ i.e. $j \le - (m+1)/2$ and the second one when $j_m^+> 0$ i.e. $j \ge -(m+1)/2+1$. The two cases being handled exactly in the same way, in what follows, we only treat the case $j_m^+ > 0$. 
        
        \smallskip
        \noindent {\it Step 1.}
        Let $J_m:= 4+|m+1|$. In this part, we consider $j \in \ZZ$ such that $|j|\le J_m$. In order to prove the wanted bound on~$(VM)_{j+\frac{m}{2}}$, it is actually enough to prove that $(VM)_{j+\frac{m}{2}}$ is bounded. But using the definition of $(VM)$ in~\eqref{def:VM}, we directly have for any $|j| \le J$:
        $$
        \left|(VM)_{j+\frac{m}{2}}\right| \lesssim \widetilde S_1 + \widetilde S_2 \lesssim 1 \lesssim {1 \over \lla hj \rra^\alpha}\,.
        $$
        
        \smallskip
        \noindent {\it Step 2.} Consider now $|j| \ge J_m$. We first split $(VM)_{j+m/2}$ into two parts:
        \begin{align*}
            (VM)_{j+{m \over 2}} &= - \frac12 \sum_{k\ge j_m^+}\sum_{\substack{\ell \in \ZZ^* \\ |h\ell| \le 1/\sqrt{2}}}  \beta_\ell^h \left(2M_k-M_{k+\ell}-M_{k-\ell}\right) h^2 \\
            &\quad - \frac12 \sum_{k\ge j_m^+}\sum_{\substack{\ell \in \ZZ^* \\ |h\ell| > 1/\sqrt{2}}}  \beta_\ell^h \left(2M_k-M_{k+\ell}-M_{k-\ell}\right) h^2 =: S^1_{j,m} + S^2_{j,m}\,. 
        \end{align*}
        To estimate $S^1_{j,m}$, we use the same method as in Step~0 to bound $\widetilde S_1$ and we obtain:
        \[
        |S^1_{j,m}| \lesssim \sum_{k \ge j_m^+} \frac{h}{\lla hk \rra^{3+\alpha}} \sum_{\substack{\ell \in \ZZ^* \\|h\ell| \le 1/\sqrt{2}}} \frac{h}{|h\ell|^{1+\alpha-2}}\,.
        \]
        Comparing the series in $k$ with an integral and using that $|j| \ge J_m$, we obtain that:
        \[
        |S^1_{j,m}| \lesssim {1 \over \lla hj \rra^{2+\alpha}}\,. 
        \]
        To deal with $S^2_{j,m}$, we separate it into two parts: 
        \begin{align*}
            S^2_{j,m} =
            &- \frac12 \sum_{k\ge j_m^+}\sum_{\substack{|h\ell| > 1/\sqrt{2} \\ |\ell|\le j_m^+/2}}  \beta_\ell^h \left(2M_k-M_{k+\ell}-M_{k-\ell}\right) h^2 \\
            &- \frac12 \sum_{k\ge j_m^+}\sum_{\substack{|h\ell| > 1/\sqrt{2} \\ |\ell|> j_m^+/2}}  \beta_\ell^h \left(2M_k-M_{k+\ell}-M_{k-\ell}\right) h^2 =: S^{21}_{j,m} + S^{22}_{j,m}\,. 
        \end{align*}
        For the first term $S^{21}_{j,m}$, we use~\eqref{eq:mualpha_est1} in Proposition~\ref{prop:bounds_mua} and the fact that for $|\ell| \le j_m^+/2$ and $k \ge j_m^+$, we have $|k \pm \ell| \ge |k|/2$. We obtain:
        \[
        |S^{21}_{j,m}| \lesssim  \sum_{k\ge j_m^+} {h \over \lla hk \rra^{1+\alpha}} \sum_{\substack{\ell \in \ZZ^* \\ |h\ell| > 1/\sqrt{2}}} {h \over |h\ell|^{1+\alpha}}\,.
        \]
        Once more, using a comparison between series and integrals and the fact that $|j| \ge J_m$, we get that:
        \[
        |S^{21}_{j,m}| \lesssim {1 \over \lla hj \rra^{\alpha}}\,. 
        \]
        For $S^{22}_{j,m}$, using changes of indices in $k$ and~\eqref{eq:mualpha_est1} in Proposition~\ref{prop:bounds_mua}, we have:
        \[
        |S^{22}_{j,m}| \lesssim \sum_{k \in \ZZ} {h \over \lla hk \rra^{1+\alpha}} \sum_{\substack{|h\ell| > 1/\sqrt{2} \\ |\ell|> j_m^+/2}} {h \over |h\ell|^{1+\alpha}} \lesssim \sum_{\substack{\ell \in \ZZ^* \\ |\ell|> j_m^+/2}} {h \over \lla h\ell \rra^{1+\alpha}}\,. 
        \]
        As previously, we can conclude that for any $|j| \ge J_m$,
        \[
        |S^{22}_{j,m}| \lesssim {1 \over \lla hj \rra^\alpha}\,.
        \]
        We have thus obtained that for $|j| \ge J_m$, 
        \[
        \left|(VM)_{j+{m \over 2}}\right| \lesssim  {1 \over \lla hj \rra^\alpha}\,.
        \]
        
        \smallskip
        \noindent{\it Conclusion in the case $j_m^+ > 0$.} Step 1 gives us the wanted result for $|j| \le J_m$ while Step 2 allows us to conclude when $|j| \ge J_m$. 
    \end{proof}
    
    \begin{lem} \label{lem:DhVM}
        For any $j \in \ZZ$ and any odd $m \in \ZZ$, we have:
        $$
        {1 \over h} \left|(VM)_{j+{m \over 2}}-(VM)_{j+{m \over 2}-1}\right| \lesssim {1 \over \lla hj \rra^{1+\alpha}}\,.
        $$
    \end{lem}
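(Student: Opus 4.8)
The plan is to reduce the claimed estimate to a pointwise decay bound on the discrete fractional Laplacian of the equilibrium, $(\Lambda_\alpha^h M)_i$, and then to establish that bound directly by the near/far splitting already used in the proof of Lemma~\ref{lem:VM}.

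First I would exploit the telescoping structure of $(VM)$. Since $m$ is odd, both $a := j+\tfrac m2$ and $a-1 = j+\tfrac{m-2}2$ are half-integers of the form considered in \eqref{def:VM}, so that
\[
(VM)_a - (VM)_{a-1} = \frac12\sum_{k\in\ZZ}\bigl[\mathrm{sgn}(k-a) - \mathrm{sgn}(k-a+1)\bigr](\Lambda_\alpha^h M)_k\,h\,.
\]
The bracket vanishes except for the unique integer $k_0 = \lfloor a\rfloor = j+\tfrac{m-1}2$ lying in $(a-1,a)$, where it equals $-2$; this is just the shifted form of the identity $(VM)_{j+1/2}-(VM)_{j-1/2}=-(\Lambda_\alpha^h M)_jh$ already used for the preservation of equilibrium in Lemma~\ref{lem:loceq}. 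Hence
\[
\frac1h\bigl|(VM)_{j+\frac m2} - (VM)_{j+\frac m2-1}\bigr| = \bigl|(\Lambda_\alpha^h M)_{j+\frac{m-1}2}\bigr|\,.
\]
Since $m$ is odd, $(m-1)/2\in\ZZ$, so it remains to prove $|(\Lambda_\alpha^h M)_i|\lesssim\langle hi\rangle^{-(1+\alpha)}$ and then to replace $\langle hi\rangle$ by $\langle hj\rangle$ via the usual $J_m$-threshold argument: the two are comparable up to $m$-dependent constants once $|j|\ge J_m$, while for $|j|\le J_m$ a uniform bound $|(\Lambda_\alpha^h M)_i|\lesssim 1$ suffices.

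For the pointwise bound I would split $(\Lambda_\alpha^h M)_i = \sum_{k\ge1}\beta_k^h(M_{i+k}+M_{i-k}-2M_i)h$ according to whether $|hk|\le\langle hi\rangle/2$ (near) or $|hk|>\langle hi\rangle/2$ (far), after reducing to $|hi|>R_0$ for a fixed large $R_0$. In the near regime every point $h(i\pm k)$ stays at distance $\gtrsim|hi|$ from the origin, so Taylor's formula and \eqref{eq:mualpha_est3} give $|M_{i+k}+M_{i-k}-2M_i|\lesssim (hk)^2\langle hi\rangle^{-(3+\alpha)}$; summing against $\beta_k^h\lesssim|hk|^{-(1+\alpha)}$ (Lemma~\ref{lem:bounds_beta}) and comparing with $\int_0^{\langle hi\rangle/2}r^{1-\alpha}\dd r\lesssim\langle hi\rangle^{2-\alpha}$ yields a contribution $\lesssim\langle hi\rangle^{-(1+2\alpha)}$. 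In the far regime I would use $|M_{i+k}+M_{i-k}-2M_i|\le M_{i+k}+M_{i-k}+2M_i$: for the $2M_i$ part the tail $\sum_{|hk|>\langle hi\rangle/2}\beta_k^h h\lesssim\langle hi\rangle^{-\alpha}$ combines with $M_i\lesssim\langle hi\rangle^{-(1+\alpha)}$ from \eqref{eq:mualpha_est1}, and for the $M_{i\pm k}$ part one uses $\beta_k^h\lesssim|hk|^{-(1+\alpha)}\lesssim\langle hi\rangle^{-(1+\alpha)}$ on this range together with $\sum_k M_k h\lesssim1$ (the uniform bounds on the discrete mass). Both far contributions are $\lesssim\langle hi\rangle^{-(1+\alpha)}$, giving the desired estimate.

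The main obstacle I anticipate is bookkeeping the two-scale splitting cleanly rather than any genuine difficulty: one must choose the cutoff $\langle hi\rangle/2$ so that it simultaneously keeps $h(i\pm k)$ far from the origin in the near regime (forcing the preliminary reduction to $|hi|>R_0$) and makes $\beta_k^h$ small in the far regime, which is exactly the delicate point already handled for $(VM)$ itself in Lemma~\ref{lem:VM}. Everything else reduces to the routine comparison of discrete tail sums with the corresponding integrals, uniformly in $h$.
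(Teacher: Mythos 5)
Your proposal is correct and is essentially the paper's own argument: the paper likewise uses \eqref{def:VM} to rewrite the difference quotient as $-(\Lambda_\alpha^h M)_{j_m}$ with $j_m=j+(m-1)/2$, i.e.\ as $\frac12\sum_{\ell\in\ZZ^*}\beta_\ell^h\,(2M_{j_m}-M_{j_m+\ell}-M_{j_m-\ell})\,h$, and then proves the decay $\lesssim \lla hj_m\rra^{-(1+\alpha)}$ by a near/far splitting resting on Lemma~\ref{lem:bounds_beta} and Proposition~\ref{prop:bounds_mua}, together with a separate uniform bound for $|j|\le J_m$. The only divergence is bookkeeping in the splitting: the paper cuts at the fixed scale $|h\ell|\le 1/\sqrt{2}$ (chosen so that $\lla hj_m\pm sh\ell\rra\gtrsim\lla hj_m\rra$ holds with no restriction to large $|hj_m|$) and then subdivides the far zone at $|\ell|\le |j_m|/2$, whereas your moving cutoff at $\lla hi\rra/2$ with the preliminary reduction to $|hi|>R_0$ yields the same bounds by the same tools.
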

    \begin{proof}
        In the subsequent proof, we note $j_m:=j+(m-1)/2$. Using the definition of $(VM)$ in~\eqref{def:VM}, we have:
        \begin{align*}
            &{1 \over h} \left((VM)_{j+{m \over 2}}-(VM)_{j+{m \over 2}-1}\right) \\
            & = {1 \over {4h}} \sum_{k \in \ZZ} \sum_{\ell \in \ZZ^*} \beta_\ell^h (2M_k-M_{k+\ell}-M_{k-\ell}) \\
            &\hspace{3cm} \left(\mathrm{sgn}\left(j_m+{1\over2}-k\right) - \mathrm{sgn}\left(j_m-{1\over 2}-k\right)\right)h^2 \\
            &= \frac12 \sum_{\ell \in \ZZ^*} \beta_\ell^h (2M_{j_m}-M_{j_m+\ell}-M_{j_m-\ell}) \, h\,.  
        \end{align*}
        As in the proof of Lemma~\ref{lem:VM}, we separate the analysis of the cases of small and large $|j|$. Let $J_m:=|m-1|$. 
        
        \noindent {\it Step 1.} The proof of the wanted estimate for $|j|$ follows the Steps~0 and~1 of the proof of Lemma~\ref{lem:VM}. We consider $j \in \ZZ$ such that $|j|\le J_m$. Introducing the notations 
        \begin{align*}
            &\widetilde S^1_{j,m} := \sum_{\substack{\ell \in \ZZ^* \\ |h\ell| \le 1/\sqrt{2}}} |\beta_\ell^h| \left|2M_{j_m}-M_{j_m+\ell}-M_{j_m-\ell}\right| h\,, \\
            &\widetilde S^2_{j,m} := \sum_{\substack{\ell \in \ZZ^* \\ |h\ell| > 1/\sqrt{2}}} |\beta_\ell^h| \left|2M_{j_m}-M_{j_m+\ell}-M_{j_m-\ell}\right| h\,,
        \end{align*}
        we get:
        \begin{align*}
            &{1 \over h} \left|(VM)_{j+{m \over 2}}-(VM)_{j+{m \over 2}-1}\right| \lesssim \widetilde S^1_{j,m} +  \widetilde S^2_{j,m}  
            \lesssim 1 \lesssim {1 \over \lla hj \rra^{1+\alpha}}\,. 
        \end{align*}
        
        \noindent {\it Step 2.} For $|j| \ge J_m$, we have $| j_m | \ge |j|/2$ so that doing as in the Step 0 of the proof of Lemma~\ref{lem:VM}, we get:
        $$
        \widetilde S^1_{j,m}  \lesssim {1 \over \lla hj_m\rra^{3+\alpha}} \lesssim {1 \over \lla hj\rra^{3+\alpha}}\,. 
        $$ 
        Concerning $\widetilde S^2_{j,m}$, we split it into two parts following ideas of the proof of Lemma~\ref{lem:VM}:
        \begin{align*}
            \widetilde S^2_{j,m} &=
            \sum_{\substack{|h\ell| > 1/\sqrt{2} \\ |\ell| \le |j_m|/2}} |\beta_\ell^h|\left|2M_{j_m}-M_{j_m+\ell}-M_{j_m-\ell}\right| h \\
            &\quad +  \sum_{\substack{|h\ell| > 1/\sqrt{2} \\ |\ell| > |j_m|/2}} |\beta_\ell^h| \left|2M_{j_m}-M_{j_m+\ell}-M_{j_m-\ell}\right|h 
            =: \widetilde S^{21}_{j,m} + \widetilde S^{22}_{j,m}\,.
        \end{align*}
        The first part is easily bounded because when $|\ell| \le |j_m|/2$, we have:
        $$
        |2M_{j_m}-M_{j_m+\ell}-M_{j_m-\ell}| \lesssim {1 \over \lla hj_m \rra^{1+\alpha}} \lesssim {1 \over \lla hj \rra^{1+\alpha}}
        $$
        where we used that $|j| \ge J_m$ to write the last inequality, and because
        $$
        \sum_{|h \ell|>{1/\sqrt{2}}} |\beta_\ell^h| h \lesssim 1\,. 
        $$
        For the second one, using that for $|h\ell| > 1/\sqrt{2}$ and $|\ell|>|j_m|/2$, we have 
        $$
        |h\ell|^{1+\alpha} \gtrsim \lla h \ell \rra^{1+\alpha} \gtrsim \lla hj_m\rra^{1+\alpha}\, ,
        $$
        so that using changes of indices, we get:
        \begin{align*}
            \widetilde S^{22}_{j,m} &\lesssim M_{j_m}  \sum_{|h\ell| > 1/\sqrt{2}} {h \over |h\ell|^{1+\alpha}}
            + 
            {1 \over \lla hj_m\rra^{1+\alpha}} \sum_{\ell \in \ZZ} (M_{j_m+\ell}+M_{j_m-\ell})h \\
            &\lesssim {1 \over \lla hj_m \rra^{1+\alpha}} + {1 \over \lla hj_m \rra^{1+\alpha}} \sum_{\ell \in \ZZ} {h \over \lla h\ell \rra^{1+\alpha}} \lesssim {1 \over \lla hj_m \rra^{1+\alpha}} \lesssim {1 \over \lla hj \rra^{1+\alpha}}\,,
        \end{align*}
        which yields the wanted result. 
    \end{proof}
    
    \begin{lem} \label{lem:D2hVM}
        There exists $h_0>0$ such that for any $j \in \ZZ$, any odd $m \in \ZZ$ and any~$h \le h_0$, we have:
        $$
        {1 \over h^2} \left|(VM)_{j+{m \over 2}}- (VM)_{j+{m \over 2}-1}-(VM)_{j+{m \over 2}-2}+(VM)_{j+{m \over 2}-3}\right| \lesssim {1 \over \lla hj \rra^{2+\alpha}}\, .
        $$ 	
    \end{lem}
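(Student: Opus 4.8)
The plan is to first collapse the second-order difference of $(VM)$ onto the discrete fractional Laplacian of $N:=D_hM$. Since $m$ is odd, $j_m:=j+(m-1)/2\in\ZZ$, and the four indices regroup as $j+\tfrac m2=j_m+\tfrac12$, $j+\tfrac m2-1=j_m-\tfrac12$, $j+\tfrac m2-2=(j_m-2)+\tfrac12$, $j+\tfrac m2-3=(j_m-2)-\tfrac12$. Using the equilibrium-preservation identity $\pare{VM}_{i+\frac12}-\pare{VM}_{i-\frac12}=-(\Dh M)_i\,h$ (from the proof of Lemma~\ref{lem:loceq}) on each consecutive pair, the bracket telescopes to $-h\big[(\Dh M)_{j_m}-(\Dh M)_{j_m-2}\big]$, so the quantity to estimate is exactly $\tfrac1h\big|(\Dh M)_{j_m}-(\Dh M)_{j_m-2}\big|$. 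Since $[D_h,\Dh]=0$ and $(D_hf)_{j_m-1}=(f_{j_m}-f_{j_m-2})/(2h)$, this equals $2\,\big|(\Dh N)_{j_m-1}\big|$ with $N=D_hM$. The target thus becomes: $\big|(\Dh N)_i\big|\lesssim\lla hj\rra^{-2-\alpha}$ at $i=j_m-1$.

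\textbf{Properties of $N$ and regime splitting.} The sequence $N=D_hM$ is the discrete analogue of $\mu_\alpha'$: it is odd ($N_{-i}=-N_i$, $N_0=0$, because $M$ is even), summable with $\sum_{i}N_i=0$ (telescoping), satisfies $|N_{i+p}|\lesssim\lla hi\rra^{-2-\alpha}$ by Lemma~\ref{lem:DhM}, and obeys the second-difference bound $|N_{i+\ell}+N_{i-\ell}-2N_i|\lesssim|h\ell|^2\lla hi\rra^{-4-\alpha}$ for $|h\ell|\le1$ via Taylor expansion and~\eqref{eq:mualpha_est3} with $n=3$. I would then mirror the architecture of the proofs of Lemmas~\ref{lem:VM} and~\ref{lem:DhVM}: for $|j|\le J_m$ it suffices to prove boundedness, splitting $\sum_{\ell\ge1}\beta_\ell^h(N_{i+\ell}+N_{i-\ell}-2N_i)h$ at $|h\ell|\le1/\sqrt2$; for $|j|\ge J_m$ (so $|i|\gtrsim|j|$) I split $\ell$ into (a) $|h\ell|\le1/\sqrt2$, where Taylor yields $\lla hi\rra^{-4-\alpha}$ after summation against $\beta_\ell^h h\sim|h\ell|^{-1-\alpha}h$; (b) $|h\ell|>1/\sqrt2$ with $|\ell|\le|i|/2$, where all indices stay $\gtrsim|i|$ and $\sum_{|h\ell|>1/\sqrt2}\beta_\ell^h h\lesssim1$ give $\lla hi\rra^{-2-\alpha}$; and (c) $|h\ell|>1/\sqrt2$ with $|\ell|>|i|/2$.

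\textbf{The main obstacle: extracting the extra power.} In region (c) the terms $N_{i+\ell}$ and $-2N_i$ decay fast and contribute $\lla hi\rra^{-2-2\alpha}$, but the delicate term is $\sum_{\ell>|i|/2}\beta_\ell^h N_{i-\ell}\,h$ (take $i>0$ by symmetry): here $i-\ell$ sweeps through the origin, so $N_{i-\ell}$ is $O(1)$ while $\beta_\ell^h\approx\lla hi\rra^{-1-\alpha}$, and a term-by-term bound saturates at $\lla hj\rra^{-1-\alpha}$, one power short. This is precisely the feature distinguishing this lemma from Lemma~\ref{lem:DhVM}, where $\Dh$ acted on $M$ with $\sum_iM_i\neq0$; the missing power must come from the mass cancellation $\sum_iN_i=0$, the discrete counterpart of $\int\mu_\alpha'=0$. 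Setting $q=i-\ell$ and using that $N$ is odd with $N_0=0$, the symmetric partial sum $\sum_{|q|\le i/2}N_q=0$, so pairing $q\leftrightarrow-q$ converts the dangerous piece $\sum_{|q|\le i/2}\beta_{i-q}^hN_q h$ into $\sum_{0<q\le i/2}(\beta_{i-q}^h-\beta_{i+q}^h)N_q\,h$. The pairing is designed so that $i-q$ and $i+q$ share the same parity, hence obey the \emph{same} integral representation~\eqref{eq:defbeta_even}/\eqref{eq:defbeta_odd} (recall $\varphi_\alpha^{(3)}(s)=s^{-1-\alpha}$); differentiating this representation yields the coefficient-regularity estimate $|\beta_{i-q}^h-\beta_{i+q}^h|\lesssim q\,h\,\lla hi\rra^{-2-\alpha}$ for $|q|\le i/2$. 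Feeding this in, $\sum_{0<q\le i/2}q\,h\,\lla hi\rra^{-2-\alpha}\lla hq\rra^{-2-\alpha}h\lesssim\lla hi\rra^{-2-\alpha}$ since $\sum_{q}q\lla hq\rra^{-2-\alpha}h\lesssim h^{-1}\int\lla x\rra^{-1-\alpha}\dd x$, while the remaining tail $q<-i/2$ is controlled by the joint decay of $\beta^h$ and $N$.

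The genuinely hard step is thus this cancellation in region (c): closing the one-power gap forces the combination of the exact oddness and mass-vanishing of $D_hM$ with a discrete first-difference bound on the weights $\beta_\ell^h$, and the latter must be set up through the parity-respecting pairing because $\beta^h$ is defined by different formulas on even and odd indices. Everything outside this term is a routine, if lengthy, replay of the regime-splitting bookkeeping already carried out in Lemmas~\ref{lem:VM} and~\ref{lem:DhVM}.
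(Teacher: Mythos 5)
Your proposal is correct, and up to the final region it runs parallel to the paper's proof: the telescoping identity $\pare{VM}_{i+\frac12}-\pare{VM}_{i-\frac12}=-(\Dh M)_i\,h$ reduces the statement to $\frac1h\big|(\Dh M)_{j_m}-(\Dh M)_{j_m-2}\big|$ (the paper writes this quantity as $\sum_{\ell}\beta_\ell^h\big(M_{j_m}-M_{j_m+\ell}-M_{j_m-2}+M_{j_m-2+\ell}\big)$ directly from \eqref{def:VM}), and your regime splitting — boundedness for $|j|\le J_m$, then cuts at $|h\ell|=1/\sqrt2$ and $|\ell|=|i|/2$ — is exactly the paper's $S^1_{j,m}$, $S^{21}_{j,m}$, $S^{221}_{j,m}$, $S^{222}_{j,m}$ bookkeeping. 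Where you genuinely diverge is the mechanism in the hard tail. The paper keeps the first difference on $M$ (its $S^{22}_{j,m}=\frac12\sum_\ell\beta_\ell^h(M_{j_m+\ell}-M_{j_m+\ell-2})$ is your $h\sum_\ell\beta_\ell^h N_{j_m+\ell-1}$ in disguise) and performs an Abel summation by parts in $\ell$ (see \eqref{eq:def_S222}), landing a difference on the weights via $|\beta_{\ell+2}^h-\beta_\ell^h|\lesssim h|h\ell|^{-2-\alpha}$ and using only the summability $\sum_kM_kh\lesssim1$; the price is boundary terms $\beta_N^h(M_{j_m-N}-M_{j_m+N-2})$ and $\beta_{N+1}^h(M_{j_m-N-1}-M_{j_m+N-1})$, whose estimation around the cut $N=\lfloor\max(1/(\sqrt2h),|j_m|/2)\rfloor+1$ occupies a substantial part of the paper's proof. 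You instead exploit the exact oddness of $N=D_hM$ (hence vanishing symmetric partial sums, the discrete $\int\mu_\alpha'=0$) and pair $q\leftrightarrow-q$, reducing the dangerous piece to $\sum_{0<q\le i/2}(\beta_{i-q}^h-\beta_{i+q}^h)N_q\,h$ together with the telescoped two-sided estimate $|\beta_{i-q}^h-\beta_{i+q}^h|\lesssim qh\lla hi\rra^{-2-\alpha}$. Both routes extract the missing power from the same ultimate source — the difference regularity of $\beta^h$ through the parity-respecting representations \eqref{eq:defbeta_even}--\eqref{eq:defbeta_odd}, which is why your insistence on pairing indices of equal parity is exactly right — but they trade different technicalities: your symmetrization avoids the Abel boundary terms entirely, at the cost of relying on the evenness $M_{-j}=M_j$ (the paper's argument would survive a non-symmetric equilibrium), while the paper pays with the delicate boundary analysis you skip. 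Three bookkeeping points you gesture at do require the care the paper takes, though all are benign: the telescoped $\beta$ estimate yields $|hi|^{-2-\alpha}$, which converts to $\lla hi\rra^{-2-\alpha}$ only when $|hi|\gtrsim1$; the indices $i\pm q$ must stay $\ge2$ for the integral representations to apply ($\beta_{\pm1}^h$ has a separate formula); and the pairing window $\ell=i-q\in[i/2,3i/2]$ sits inside $|h\ell|>1/\sqrt2$ only when $h|i|\gtrsim1$ — in each exceptional case $\lla hj\rra\sim1$ and the trivial bound $\sup|N|\cdot\sum_{|h\ell|>1/\sqrt2}\beta_\ell^hh\lesssim1$ closes the argument, which is precisely the role of the paper's cut at $\max(1/(\sqrt2h),|j_m|/2)$.
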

    
    \begin{proof}
        In the subsequent proof, we note $j_m:=j+(m-1)/2$. Using the definition of $(VM)$ in~\eqref{def:VM}, we have:
        \begin{align*}
            &{1 \over h^2} \left((VM)_{j+{m \over 2}}- (VM)_{j+{m \over 2}-1}-(VM)_{j+{m \over 2}-2}+(VM)_{j+{m \over 2}-3}\right) \\
            &\quad = {1 \over {h^2}} \sum_{\substack{\ell \in \ZZ^* \\ |h\ell| \le 1/\sqrt{2}}} \beta^h_\ell \big(M_{j_m}-M_{j_m+\ell}- M_{j_m-2}+M_{j_m-2+\ell}\big)h^2 \\
            &\qquad +{1 \over {h^2}} \sum_{\substack{\ell \in \ZZ^* \\ |h\ell| > 1/\sqrt{2}}} \beta^h_\ell \big(M_{j_m}-M_{j_m+\ell}- M_{j_m-2}+M_{j_m-2+\ell} \big)h^2 =: S^1_{j,m}+S^2_{j,m}\,. 
        \end{align*}
        To deal with $S^1_{j,m}$, we first notice that 
        \begin{align*}
            S^1_{j,m} &= {1 \over {{ 2} h^2}} \sum_{\substack{\ell \in \ZZ^* \\ |h\ell| \le 1/\sqrt{2}}} \beta^h_\ell \Big(\left(2M_{j_m}-M_{j_m+\ell}-M_{j_m-\ell}\right) \\[-\baselineskip]
            &\hspace{3,5cm} - \left(2M_{j_m-2}-M_{j_m-2+\ell}-M_{j_m-2-\ell}\right) \Big)h^2\,. 
        \end{align*}
        We use Taylor formula to write:
        \begin{align*}
            |S^{1}_{j,m}| 
            &\lesssim \sum_{\substack{\ell \in \ZZ^* \\ |h\ell| \le 1/\sqrt{2}}} \beta_\ell^h \int_0^1 \left|\mu_\alpha''(hj_m+sh\ell) - \mu_\alpha''(h(j_m-2)+sh\ell) \right| \, \dd s \, |h\ell|^2.
        \end{align*}
        Using once more Taylor formula, Lemma~\ref{lem:bounds_beta} and~\eqref{eq:mualpha_est3} in Proposition~\ref{prop:bounds_mua}, we obtain:
        \begin{align*}
            |S^{1}_{j,m}| &\lesssim  \sum_{\substack{\ell \in \ZZ^* \\ |h\ell| \le 1/\sqrt{2}}} \beta_\ell^h \int_0^1 \int_0^2 \left|\mu_\alpha'''(h(j_m-2)+sh\ell+ht) \right| \, \dd t \, \dd s \, h |h\ell|^2 \\
            &\lesssim   \sum_{\substack{\ell \in \ZZ^* \\ |h\ell| \le 1/\sqrt{2}}}  \frac{h}{|h\ell|^{1+\alpha-2}} \int_0^1 \int_0^2 {1 \over \lla h(j_m-2)+sh\ell+ht \rra^{4+\alpha}} \, \dd t \, \dd s\, .
        \end{align*}
        At this point defining $J_m:=8+|m-1|$, 
        we see that if $|j| \le J_m$, we directly have:
        $$
        |S^{1}_{j,m}| \lesssim 1 \lesssim {1 \over \lla hj \rra^{2+\alpha}}\,.
        $$
        If $|j| \ge J_m$, then one can show that for $s \in [0,1]$, $t \in [0,2]$ and $|h\ell| \le 1/\sqrt{2}$, we have:
        $$
        {1 \over \lla h(j_m-2)+sh\ell+ht \rra^{4+\alpha}} \lesssim {1 \over \lla hj\rra^{4+\alpha}}\,,
        $$
        which provides the wanted estimate for $S^{1}_{j,m}$ for large $|j|$. We thus have obtained that for any $j$, 
        \begin{equation} \label{eq:S1_est}
            |S^{1}_{j,m}|  \lesssim  {1 \over \lla hj \rra^{2+\alpha}}\,.
        \end{equation}
        
        We now come to the analysis of $S^2_{j,m}$ and define
        \begin{align*}
            &S^{21}_{j,m} := {1 \over {2h^2}} \sum_{\substack{\ell \in \ZZ^* \\ |h\ell| > 1/\sqrt{2}}} \beta^h_\ell \big(M_{j_m}-M_{j_m-2}\big)\,h^2\,, \\ 
            &S^{22}_{j,m} := {1 \over {2h^2}} \sum_{\substack{\ell \in \ZZ^* \\ |h\ell| > 1/\sqrt{2}}} \beta^h_\ell \big(M_{j_m+\ell}-M_{j_m+\ell-2}\big)\,h^2\,.
        \end{align*}
        Using Taylor formula, Lemma~\ref{lem:bounds_beta} and~\eqref{eq:mualpha_est3} in Proposition~\ref{prop:bounds_mua}, we have:
        \begin{align*}
            |S^{21}_{j,m}| &\lesssim \sum_{\substack{\ell \in \ZZ^* \\ |h\ell| > 1/\sqrt{2}}} \frac{h}{|h\ell|^{1+\alpha}} \int_0^2 |\mu_\alpha'(h(j_m-2)+sh)| \, \dd s \\
            &\lesssim \sum_{\substack{\ell \in \ZZ^* \\ |h\ell| > 1/\sqrt{2}}} \frac{h}{|h\ell|^{1+\alpha}} \int_0^2 {1 \over \lla h(j_m-2)  + sh \rra^{2+\alpha}} \, \dd s\,.
        \end{align*}
        Recalling that $J_m=8+|m-1|$ and separating the cases $|j| \le J_m$ and $|j| \ge J_m$, as previously, we obtain the wanted estimate: for any $j$, we have
        \begin{equation} \label{eq:S21_est}
            |S^{21}_{j,m}| \lesssim { 1 \over \lla hj \rra^{2+\alpha}}\,. 
        \end{equation}
        The analysis of $S^{22}_{j,m}$ is trickier and requires more attention. 
        We first separate it into two parts: noting $N = N(j,m,h) := \lfloor \max\left(1/(\sqrt{2}h),|j_m|/2\right) \rfloor +1$, 
        \begin{align*}
            S^{22}_{j,m} &= {1 \over {2}} \sum_{\substack{|\ell| \le |j_m|/2 \\ |h\ell| > 1/\sqrt{2}}} \beta^h_\ell \big(M_{j_m+\ell}-M_{j_m+\ell-2}\big) + {1 \over {2}} \sum_{|\ell| \ge N} \beta^h_\ell \big(M_{j_m+\ell}-M_{j_m+\ell-2}\big) \\
            &=: S^{221}_{j,m} + S^{222}_{j,m}\,.
        \end{align*}
        The first part is simply treated thanks to Taylor formula, Lemma~\ref{lem:bounds_beta} and~\eqref{eq:mualpha_est3} in Proposition~\ref{prop:bounds_mua}: 
        \[
        \left|S^{221}_{j,m}\right| \lesssim \sum_{\substack{|\ell| \le |j_m|/2 \\ |h\ell| > 1/\sqrt{2}}} \frac{h}{|h\ell|^{1+\alpha}} \int_0^2 {1 \over \lla h(j_m+\ell-2)+hs \rra^{2+\alpha}} \, \dd s\,.
        \]
        We note $\widetilde J_m := 16 + |m-1|$, then separating the cases $|j| \le \widetilde J_m$ and $|j| \ge \widetilde J_m$ (similarly as what we did for~$S^{21}_{j,m}$) and using that $|\ell| \le |j_m|/2$, we get that for any~$j$:
        \begin{equation} \label{eq:S221_est}
            \left|S^{221}_{j,m}\right| \lesssim {1 \over \lla hj \rra^{2+\alpha}}\,. 
        \end{equation}
        It remains to deal with the most complicated part $S^{222}_{j,m}$. We first perform an integration by parts:
        \begin{equation} \label{eq:def_S222}
            \begin{aligned} 
                2S^{222}_{j,m} &= \sum_{\ell \ge N} M_{j_m+\ell} (\beta^h_\ell - \beta^h_{\ell+2}) + \sum_{\ell \le -N-2} M_{j_m+\ell} (\beta^h_\ell - \beta^h_{\ell+2}) \\
                &\quad + \beta_N^h(M_{j_m-N}-M_{j_m+N-2}) - \beta_{N+1}^h(M_{j_m-N-1}-M_{j_m+N-1})
            \end{aligned}
        \end{equation}
        where we used that $\beta_j^h=\beta_{-j}^h$ for any $j \in \ZZ$ to rewrite the last two terms, on which we will first concentrate. If $|j| \le \widetilde J_m$, using Lemma~\ref{lem:bounds_beta} and~\eqref{eq:mualpha_est1} in Proposition~\ref{prop:bounds_mua}, we directly have:
        \[
        \left|  \beta_N^h(M_{j_m-N}-M_{j_m+N-2}) \right| 
        \lesssim 1 \lesssim { 1 \over \lla hj \rra^{2+\alpha}}
        \]
        where we used that $hN \gtrsim 1$ to bound $|hN|^{-1-\alpha}$ by $1$.
        On the other hand, if $|j| \ge \widetilde J_m$, using Taylor formula, Lemma~\ref{lem:bounds_beta}  and~\eqref{eq:mualpha_est3} in Proposition~\ref{prop:bounds_mua}, we have:
        \begin{equation} \label{eq:bordIPP_est1}
            \left|  \beta_N^h(M_{j_m-N}-M_{j_m+N-2}) \right| 
            \lesssim {h \over \lla hN \rra^{1+\alpha}} \int_0^{2(N-1)} {1 \over \lla h(j_m-N+s) \rra^{2+\alpha}} \, \dd s
        \end{equation}
        where we used that $hN \gtrsim 1$ to bound $|hN|^{-1-\alpha}$ by $\lla hN \rra^{-1-\alpha}$. 
        
        \noindent Let us first suppose that $N = \lfloor 1/(\sqrt{2}h) \rfloor +1$, then for $h$ small enough, $N \lesssim 1/h$. Then, since $N \ge |j_m|/2$, we have: 
        \begin{align*}
            \left|  \beta_N^h(M_{j_m-N}-M_{j_m+N-2}) \right| 
            &\lesssim {h \over \lla hj_m \rra^{1+\alpha}} \int_0^{2(N-1)} {1 \over \lla h(j_m-N+s) \rra} \, \dd s\,. 
        \end{align*}
        Moreover, using Peetre's inequality and the fact that for $s \in [0, 2(N-1)]$, $|s-N| \le N$, we get:
        \[
        \left|  \beta_N^h(M_{j_m-N}-M_{j_m+N-2}) \right| 
        \lesssim {1 \over \lla hj_m \rra^{2+\alpha}} 2h(N-1) \lla hN \rra \lesssim {1 \over \lla hj_m \rra^{2+\alpha}} \lesssim {1 \over \lla hj \rra^{2+\alpha}}
        \]
        since $hN \lesssim 1$ and $|j| \ge \widetilde J_m$. 
        
        \noindent We now suppose that $N = \lfloor |j_m|/2 \rfloor +1 \in[|j_m|/2, |j_m|/2+1]$. We come back to~\eqref{eq:bordIPP_est1} and notice that for $s \in [0,2(N-1)]$ and $|j| \ge \widetilde J_m$, we have 
        \[
        {1 \over \lla h(j_m-N+s) \rra^{2+\alpha}} \lesssim {1 \over \lla hj_m \rra^{2+\alpha}}
        \]
        so that 
        \begin{align*}
            &\left|  \beta_N^h(M_{j_m-N}-M_{j_m+N-2}) \right| \lesssim {1 \over \lla hj_m \rra^{1+\alpha}} 2h(N-1) {1 \over \lla hj_m \rra^{2+\alpha}} \\
            &\qquad \qquad \lesssim  {1 \over \lla hj_m \rra^{1+\alpha}} \lla hj_m \rra {1 \over \lla hj_m \rra^{2+\alpha}} \lesssim {1 \over \lla hj_m \rra^{2+2\alpha}} \lesssim {1 \over \lla  hj \rra^{2+2\alpha}}\,.
        \end{align*}
        In the end, we have obtained that for any $j$:
        \begin{equation} \label{eq:bordIPP_est2}
            \left|  \beta_N^h(M_{j_m-N}-M_{j_m+N-2}) \right| \lesssim {1 \over \lla hj \rra^{2+\alpha}}\,.
        \end{equation}
        Exactly in the same way, we can also prove that for any $j$, we have:
        \begin{equation} \label{eq:bordIPP_est3}
            \left| \beta_{N+1}^h(M_{j_m-N-1}-M_{j_m+N-1}) \right| \lesssim {1 \over \lla hj \rra^{2+\alpha}}\,.
        \end{equation}
        Coming back to~\eqref{eq:def_S222}, it remains to deal with the first two sums. We are first going to give an estimate of 
        $\beta^h_{\ell+2}-\beta^h_\ell$ for any $\ell \ge N$. Notice that $N \ge {1 / (\sqrt{2}h})$ so that for $h$ small enough, we have $N \ge 2$. We can thus use the definitions of $\beta_\ell^h$ given in~\eqref{eq:defbeta_even} and~\eqref{eq:defbeta_odd}. Let us restrict to the case where $\ell$ is even, the case where $\ell$ is odd is handled similarly. If $\ell$ is even, using Taylor formula, we have:
        \[
        \left|\beta^h_{\ell+2}-\beta^h_\ell\right| 
        = \frac{C_{1,\alpha}}{h^{1+\alpha}} \left| \int_0^1 (1-t^2) \int_0^2 \left(\varphi_\alpha^{(4)}(\ell+t+s) + \varphi_\alpha^{(4)}(\ell-t+s) \right) \, \dd s \, \dd t\right|\,.
        \]
        Recalling that $\varphi_\alpha^{(3)}(t)= t^{-1-\alpha}$, we deduce that
        \begin{equation} \label{eq:Dbeta_est1}
            \left|\beta^h_{\ell+2}-\beta^h_\ell\right| \lesssim h \int_0^1 \int_0^2 \left( {1 \over (h|\ell + t +s|)^{2+\alpha}} + {1 \over (h|\ell - t +s|)^{2+\alpha}}\right) \, \dd s \, \dd t\,.
        \end{equation}
        Consider $|j| \le \widetilde J_m$. Since $\ell \ge N \ge 1/(\sqrt{2}h)$, for any $t \in [0,1]$, $s \in [0,2]$, for $h$ small enough, we have:
        \[
        |h(\ell \pm t +s)| \ge 1/\sqrt{2} - 4h \gtrsim 1
        \]
        so that 
        \begin{equation} \label{eq:Dbeta_est2}
            \left|\beta^h_{\ell+2}-\beta^h_\ell\right| \lesssim h\,.
        \end{equation}
        Using~\eqref{eq:defbeta_odd}, we can prove that this estimate also holds when $\ell$ is odd. Then, from~\eqref{eq:Dbeta_est2}, we deduce that 
        \[
        \left|\sum_{\ell \ge N} M_{j_m+\ell} (\beta^h_\ell - \beta^h_{\ell+2}) \right| \lesssim \sum_{\ell \ge N} M_{j_m+\ell} h 
        \lesssim \sum_{\ell \in \ZZ} \frac{h}{\lla h \ell \rra^{1+\alpha}} \lesssim 1 \lesssim {1 \over \lla hj \rra^{2+\alpha}}\,.
        \]
        Consider now $|j| \ge \widetilde J_m$. Then, since $N \ge |j_m|/2$, for $\ell \ge N$, for any $s \in [0,2]$ and $t \in [0,1]$, we have:
        \[
        |\ell \pm  t + s| \gtrsim |\ell|\,. 
        \]
        Coming back to~\eqref{eq:Dbeta_est1}, we thus have that
        \begin{equation} \label{eq:Dbeta_est3}
            \left|\beta^h_{\ell+2}-\beta^h_\ell\right| \lesssim \frac{h}{|h \ell|^{2+\alpha}} \lesssim \frac{h}{\lla hj \rra^{2+\alpha}}
        \end{equation}
        where we used that $|h\ell| \ge hN \gtrsim 1$ so that $|h \ell|^{-2-\alpha} \lesssim \lla h \ell \rra^{-2-\alpha}$ and $|\ell| \ge N \gtrsim |j_m| \gtrsim |j|$ because $|j| \ge \widetilde J_m$. Using~\eqref{eq:defbeta_odd}, we can prove that this estimate also holds when $\ell$ is odd. Then, from~\eqref{eq:Dbeta_est3}, we deduce that 
        \begin{equation} \label{eq:sumIPP1}
            \left|\sum_{\ell \ge N} M_{j_m+\ell} (\beta^h_\ell - \beta^h_{\ell+2}) \right| \lesssim {1 \over \lla hj \rra^{2+\alpha}} \sum_{\ell \in \ZZ} \frac{h}{\lla h \ell \rra^{1+\alpha}} \lesssim {1 \over \lla hj \rra^{2+\alpha}}\,.
        \end{equation}
        Similarly, we can prove that 
        \begin{equation} \label{eq:sumIPP2}
            \left|\sum_{\ell \le -N-2} M_{j_m+\ell} (\beta^h_\ell - \beta^h_{\ell+2}) \right| 
            \lesssim {1 \over \lla hj \rra^{2+\alpha}}\,.
        \end{equation}
        Coming back to~\eqref{eq:def_S222}, gathering~\eqref{eq:bordIPP_est2}-\eqref{eq:bordIPP_est3} and~\eqref{eq:sumIPP1}-\eqref{eq:sumIPP2}, we can conclude that for any $j$: 
        \begin{equation} \label{eq:S222_est}
            \left| S^{222}_{j,m} \right| \lesssim  {1 \over \lla hj \rra^{2+\alpha}}\,.
        \end{equation}
        Estimates~\eqref{eq:S221_est} and~\eqref{eq:S222_est} give the wanted result for $S^{22}_{j,m}$, for any $j$, we have: 
        \begin{equation} \label{eq:S22_est}
            |S^{22}_{j,m}| \lesssim { 1 \over \lla hj \rra^{2+\alpha}}\,. 
        \end{equation}
        The bounds obtained in~\eqref{eq:S1_est},~\eqref{eq:S21_est} and~\eqref{eq:S22_est} yield the final result. \\
        
    \end{proof}

    \bibliographystyle{plain}
    \bibliography{bibli}
    
\end{document}